\def\2010mathclass#1{\par%
	\insert\footins{\footnotesize{\it\textup{2010} Mathematics
			Subject Classification:} #1}}
\numberwithin{equation}{section}
\newtheorem{Theorem}{Theorem}[section]
\newtheorem{Example}{Example}[section]
\newtheorem{Definition}{Definition}[section]
\newtheorem{Proposition}{Proposition}[section]
\newtheorem{Lemma}{Lemma}[section]
\newtheorem{Corollary}{Corollary}[section]
\newtheorem{Remark}{Remark}[section]
\newtheorem{assum}{Assumption} [section]
\def\supp{\operatorname{{supp}}}
\newcommand{\fonte}[1]{\LARGE #1}
\begin{document}
	
	\title[wave equation]{Decay rate estimates for the wave equation with subcritical semilinearities and locally distributed nonlinear dissipation}
	
	\author[Cavalcanti]{ M. M. Cavalcanti}
	\address{ Department of Mathematics, State University of
		Maring\'a, 87020-900, Maring\'a, PR, Brazil}
	\email{mmcavalcanti@uem.br}
	\thanks{Research of Marcelo M. Cavalcanti is partially supported by the CNPq Grant 300631/2003-0}
	
	\author[Domingos Cavalcanti]{ V. N. Domingos Cavalcanti}
	\address{ Department of Mathematics, State University of
		Maring\'a, 87020-900, Maring\'a, PR, Brazil}
	\email{vndcavalcanti@uem.br}
	\thanks{Research of Val\'eria N.Domingos Cavalcanti is partially supported by the CNPq Grant 304895/2003-2}
	
	\author[Gonzalez Martinez]{V. H. Gonzalez Martinez}
	\address{Department of Mathematics, State University of
		Maring\'a, 87020-900, Maring\'a, PR, Brazil}
	\email{victor.hugo.gonzalez.martinez@gmail.com}
	\thanks{Research of Victor Hugo Gonzalez Martinez is partially supported by CAPES Grant 88882.449176/2019-01.}
	
	\author[\"{O}zsari]{T. \"{O}zsar{\i}}
	\address{Department of Mathematics, Bilkent University,
		Ankara, 06800 Turkey}
	\email{turker.ozsari@bilkent.edu.tr}
	\thanks{Research of Türker \"{O}zsar{\i} is supported by the Science Academy's Young Scientist Award in Mathematics (BAGEP 2020)}
	\2010mathclass{35L05, 35L20, 35B40.}%

	\begin{abstract}
		We study the stabilization and the wellposedness of solutions of the wave equation with subcritical semilinearities and locally distributed nonlinear dissipation.  The novelty of this paper is that we deal with the difficulty that the main equation does not have good nonlinear structure amenable to a direct proof of a priori bounds and a desirable observability inequality. It is well known that observability inequalities play a critical role in characterizing the long time behaviour of solutions of evolution equations, which is the main goal of this study.   In order to address this, we truncate the nonlinearities, and thereby construct approximate solutions for which it is possible to obtain a priori bounds and prove the essential observability inequality.  The treatment of these approximate solutions is still a challenging task and requires the use of Strichartz estimates and some microlocal analysis tools such as microlocal defect measures.  We include an appendix on the latter topic here to make the article self contained and supplement details to proofs of some of the theorems which can be already be found in the lecture notes of \cite{Burq-Gerard}. Once we establish essential observability properties for the approximate solutions, it is not difficult to prove that the solution of the original problem also possesses a similar feature via a delicate passage to limit. In the last part of the paper, we establish various decay rate estimates for different growth conditions on the nonlinear dissipative effect.  We in particular generalize the known results on the subject to a considerably larger class of dissipative effects.
	\end{abstract}
	
	\maketitle
	
	

	\bigskip
	
	\qquad {\small Keywords:}~ Wave equation, nonlinear damping, decay rates, microlocal analysis, microlocal defect measures.
	
	\medskip

	
	%
	\tableofcontents
	\newpage
	\section{Introduction}\label{section1}
	
	In this article, our purpose is to study the wellposedness and decay properties of solutions of the wave equation with subcritical semilinearities and locally distributed nonlinear dissipation.  More precisely, we consider the initial boundary value problem given by
	\begin{equation}\label{mainproblem}
	\begin{cases}
	\partial_t^2 u -\Delta u + f(u) + a(x) g(\partial_t u) = 0 & ~ \hbox{ in } ~ \Omega \times (0, +\infty),\\
	u=0 & ~ \hbox{ on } ~ \partial \Omega \times (0,+\infty ),\\
	u(x,0)=u_0(x), ~ u_t(x,0)=u_1(x) & ~ \hbox{ in } ~ \Omega,
	\end{cases}
	\end{equation}
	where $\Omega\subset \mathbb{R}^n$ ($n\ge 1$) is a bounded domain with a smooth boundary $\partial \Omega$ and $f$ and $g$ are real valued functions satisfying Assumption \ref{assmpflabel} and Assumption \ref{assmpglabel} below, respectively:
	
	\begin{assum}\label{assmpflabel}
		$f\in C^2(\mathbb{R})$ with
		\begin{eqnarray}\label{main ass on f}
		f(0)=0, \ \ \ |f^{(j)}(s)| \leq k_{0}(1 + |s|)^{p-j}, j=1,2, \forall s \in \mathbb{R},
		\end{eqnarray} for some constant $k_0>0$, where \begin{equation}\label{ass on f'}
		1\leq p < \frac{n+2}{n-2} \hbox{ if } n\geq 3  \hbox{ and } p\geq 1 \hbox{ if } n=1,2.
		\end{equation}
		In addition,
		\begin{eqnarray}\label{ass on F}
		0 \leq F(s) \leq f(s) s, \forall s\in \mathbb{R},
		\end{eqnarray}
		where $F(\lambda):= \int_0^\lambda f(s)\,ds$.
	\end{assum}
	
	\begin{Remark}
		\eqref{main ass on f} in particular gives
		\begin{eqnarray}\label{ass on f}
		|f(r) - f(s)| \leq c \left(1 + |s|^{p-1} + |r|^{p-1} \right)|r-s|, \forall s,r\in \mathbb{R}
		\end{eqnarray}
		for some $c>0$.
	\end{Remark}
	
	\begin{assum}\label{assmpglabel}
		The feedback function $g:\mathbb{R}\rightarrow \mathbb{R}$  is continuous, monotone increasing and also satisfies
		the following:
		\begin{eqnarray}\label{ass on g}
		&& g( s)s>0 \hbox{ for }s\neq 0, \hbox{ and } \tilde{m} s^{r+1}\leq g\left(s\right)s \leq \tilde{M} s^{r+1} \hbox{ for }\left\vert s\right\vert >1,
		\end{eqnarray}
		where $\tilde{m}$ and $\tilde{M}$ are positive constants. In addition, we assume there is a continuous, concave and strictly increasing function $h_0:\mathbb{R}\to \mathbb{R}$
		satisfying
		\begin{equation}\label{ass on h}
		h_0(g(s)s)\geq s^2+g(s)^2,~ |s|\leq1.
		\end{equation}
	\end{assum}
	
	It is not difficult to construct such a function by using (\ref{ass on g}).
	
	In what follows, we will write $a(s) \lesssim b(s)$ in the sense $a(s) \leq C b(s)$, where $C>0$ is a constant. Moreover, $a(s)\sim b(s)$ is used whenever $a(s)\lesssim b(s)$ {and} $b(s)\lesssim a(s)$. We will classify the nonlinear feedbacks of various growth rates by using the notion of the polynomial order at infinity:
	\begin{Definition}\label{def:order}
		A monotone increasing map $g:\mathbb{R}\to\mathbb{R}$, $g(0)=0$, is of  the order $r:= \mathcal{O} (g)\geq 0$ at infinity if
		\begin{equation}\label{i:order}
		\begin{split}
		|s|^{r+1} \sim  g(s)s&\quad \text{for}\quad |s|\geq 1.
		\end{split}
		\end{equation}
		If the order $r$ is bigger than, less than, or equal to $1$ we say the map $f$ is superlinear, sublinear, or linearly bounded at infinity, respectively.
	\end{Definition}

	We in addition assume that the damping coefficient satisfies the following:
	
	\begin{assum}
		$a(x)$ is nonnegative and belongs to $C^0(\overline{\Omega})$. Moreover, $a(x) \geq a_0>0$ on a neighborhood $\omega$ of the entire boundary $\partial \Omega$.
	\end{assum}
	
	We associate the following energy functional with problem \eqref{mainproblem}:
	\begin{eqnarray}\label{Ident energ orig prob}
	E_{u}(t):= \frac12\int_\Omega |\partial_t u(x,t)|^2 + |\nabla u(x,t)|^2\,dx + \int_\Omega F(u(x,t))\,dx,
	\end{eqnarray} where $F$ is the antiderivative of $f$ vanishing at zero.
	
	In order to obtain the stabilization of the nonlinear energy $E_u(t)$ associated with problem (\ref{mainproblem}), two ingredients are essential: (A):~the energy identity:
	\begin{eqnarray}\label{ei}\qquad
	E_{u}(t_2) +  \int_{t_1}^{t_2}\int_\Omega a(x) g(\partial_t u(x,t)) \partial_t u(x,t)\,dxdt = E_{u}(t_1)
	\end{eqnarray}
	for all $0\leq t_1<t_2<+\infty$ and
	(B):~ the observability inequality: there exists some constant $C=C(\|\{u_0,u_1\}\|_{H_{0}^{1}(\Omega)\times L^2(\Omega)})>0$ so that
	\begin{eqnarray}\label{oi}
	E_{u}(0) \leq C \int_{0}^{T}\int_\Omega  a(x) \left(|\partial_t u|^2 + |g(\partial_t u)|^2\right) \,dxdt
	\end{eqnarray}
	holds true for all $T \geq T_0$ (where $T_0$ is a certain time for which the so called geometric control condition holds). Moreover, for the given geometry we take the following granted.
	
	\begin{assum}\label{assumption1.3}
		Given any {$T \geq T_0$}, the unique solution $$u \in C(]0,T[; L^2(\Omega)) \cap C^1(]0,T[,H^{-1}(\Omega))$$ that satisfies
		\begin{equation}\label{UCP}
			\begin{cases}
				u_{tt} - \Delta u + V(x,t)u=0  &~\hbox{ in }~ \Omega \times (0,T), \\
				u=0 &~\hbox{ on }~ \omega,\\
			\end{cases}
		\end{equation}
		where  $V \in L^\infty( ]0,T[,L^{\infty}(\Omega))$, is the trivial one.
	\end{assum}
	
	The approach based on the ingredients (A) and (B) is a well known strategy in stabilization theory; see for example \cite{Astudillo2}, \cite{Dehman0}, \cite{Dehman2}, \cite{Joly}, \cite{Zuazua} and references therein. These papers work directly on the original problem (\ref{mainproblem}) and use a \emph{unique continuation argument} as a crucial ingredient for proving the observability inequality. However, the direct approach has a challenge that was explained in \cite{Dehman2} as follows:
	
	At first one argues by contradiction that the observability fails and thereby obtains a sequence of (suitably normalized) solutions  violating the desired inequality. Then, via passage to the limit, one gets a function $u$ in the energy space, which solves
	\begin{equation}\label{A}
	\begin{aligned}
	&\partial_t^2 u -\Delta u + f(u)=0 ~\hbox{in}~\Omega \times (0,T)
	\end{aligned}
	\end{equation}
	with $\partial_t u|_\omega \equiv 0$. The goal is to show that (\ref{A}) has a unique solution and it is zero because this would mean that the only solution of (\ref{mainproblem}) without damping is the zero solution, which is absurd. To this end, one can differentiate in time, set $w = \partial_t u$ and consider the problem
	\begin{equation}\label{B}
	\begin{aligned}
	&\partial_t^2 w -\Delta w + f'(u)w=0 ~\hbox{in}~\Omega \times (0,T)
	\end{aligned}
	\end{equation}
	with $w|_{\omega}\equiv 0$. The next step is then to apply a unique continuation argument to the above problem that has a lower order potential to deduce that $w \equiv 0$. This would imply that $u$ is time independent, and therefore $u$ becomes the solution of a stationary semilinear elliptic problem whose solution is certainly zero thanks to right sign of the nonlinearity. Some unique continuation results assuming rather strong integrability conditions for the potential are available in the literature \cite{DZZ}, \cite{Ruiz}, \cite{Tataru}, \cite{Tataru-UCP}, \cite{Xu Zhang}. Part of these results are valid only locally while others work globally. However, none of them directly applies to the given class of nonlinearities because of the less restrictive assumptions on the power index.
	
	The difficulty explained above with the direct approach was treated in \cite{Dehman2} by unraveling hidden regularity properties associated with the potential $f'(u)$ in the case the domain is $\Omega=\mathbb{R}^3$. To handle the same difficulty on a general domain $\Omega\subset \mathbb{R}^n$, we first introduce a sequence of truncated problems rather than directly working on the original problem \eqref{mainproblem}:
	\begin{equation}\label{TP}
	\begin{cases}
	\partial_t^2 u_{k} -\Delta u_{k} + f_k(u_k) + a(x)  g(\partial_t u_k) = 0 & ~ \hbox{ in } ~ \Omega \times (0, +\infty),\\
	u_k=0 & ~ \hbox{ on } ~ \partial \Omega \times (0,+\infty ),\\
	u_k(x,0)=u_{0,k}(x), ~ \partial_t u_k(x,0)=u_{1,k}(x)  & ~ \hbox{ in } ~ \Omega,
	\end{cases}
	\end{equation}
	where $ f_k$ is defined in (\ref{trunc func}) and $\{u_{0,k},u_{1,k}\}$ is a sequence of initial regular data that approaches $\{u_0,u_1\}$ in the phase space. Such truncations were previously used within the context of stabilization of nonlinear wave equations \cite{Lasiecka-Tataru} or the nonlinear Schrödinger equation \cite{Ozs2011}, \cite{CavOzs2020}.   Once we show that for each $k\in \mathbb{N}$, $u_k$ possesses a strong regularity, then we can exploit additional regularity properties of the nonlinear term $f_k(u)$ by utilizing Strichartz estimates that are valid for solutions of the subcritical wave equation. Next, the ellipticity
	on the domain $\omega \times (0, T)$ and the property of propagation of singularities provide a desirable regularity for $u_k$ and integrability for $f_k'(u_k)$. Only then, one can utilize the unique continuation results cited above. Since the sequence of problems (\ref{TP}) converges to the original problem (\ref{mainproblem}), it is sufficient to obtain the energy identity and the observability inequality for each truncated problem because the same properties as well as the decay remain valid for the original problem via a delicate passage to the limit.  An advantage of the present approach is that we do not need to consider the well known {\em } due to G\'erard \cite{Gerard-JFA},  which, roughly speaking, guarantees that the semilinear (subcritical) solution is asymptotically close to the solution of the undamped linear wave equation. This work uses the above approach to deal with a larger class of dissipative effects, thus substantially generalizing the recent results of \cite{Astudillo2}.
	
	The observability of the truncated problem is the novel ingredient here for which we trigger some microlocal analysis tools such as microlocal defect measures. Indeed, H\"ormander \cite{Hormander2} and Duistermaat and H\"ormander \cite{Hormander3} described the propagation of singularities to a partial differential equation $Pu=0$ in $\mathcal{D}'$ in a domain without boundary. In particular, they defined the wave front set $WF(u)$ of a distribution $u\in \mathcal{D}'$, the subset of the cotangent bundle in which singularities may move, and showed that the wave front set is invariant under the Hamiltonian flow $H_p$ of the principal symbol $p$ of $P$. Later, H\"ormander  \cite{Hormander} incorporated his own work and that of others into his magnum opus \emph{The Analysis of Linear Partial Differential Operators}. Chapter 24 in Volume III of that work remains a standard reference on the subject. It is well-known that if $P$ is a proper, classical pseudo-differential operator and $u \in \mathcal{D}'$ is such that $Pu \in C^\infty$, then $WF(u)$ is a union of integral curves of the Hamiltonian of $p$. Inspired by the works mentioned above, \cite{Burq-Gerard-CR}, \cite{Burq-Gerard}, and \cite{Gerad} proved an analogous result related to a microlocal defect measure $\mu$ (a Radon measure) associated with a bounded sequence in $ H^1_{loc} $ which weakly converges to zero. Roughly speaking, the $\supp (\mu)$ is also invariant under the Hamiltonian flow $H_p$ of the principal symbol $p$ of a differential operator $P$ which satisfies certain properties. The main result reads as follows: {\em Let $P$ be a self-adjoint differential operator of order $m$ on an open set $\Omega$ and $(u_n)_{n\in \mathbb{N}}$ be a bounded sequence in $L_{loc}^2(\Omega)$ weakly converging to $0$ with a microlocal defect measure $\mu$. Suppose that $P u_n$ converges to $0$ in $H_{loc}^{-(m-1)}$. Then the support of $\mu$, $\supp(\mu)$, is a union of curves like $s\in I \mapsto \left(x(s), \frac{\xi(s)}{|\xi(s)|}\right)$, where $s\in I \mapsto (x(s),\xi(s))$ is a null-bicharacteristic of $p,$ where $p$ is the principal symbol of $P$ (see Theorem \ref{Theorem 4.60} below)}. This result is crucial to establish the control and stabilization of waves, which we will discuss later. The idea for proving the above result is found in the unpublished lecture notes of Burq and G\'erard \cite{Burq-Gerard}. To make this article self-contained, we supplement details to the proof of this remarkable result and give other related preliminaries in the appendix.
	
	\begin{Remark}\label{Remark1}
		The results of this paper extend to the case of nonconstant coefficients.
	\end{Remark}
	
	\begin{Remark}
		For $V \equiv 0$,  Assumption \ref{assumption1.3} is satisfied by the results in the lecture notes of Burq and G\'erard (see equations $(6.28)$ and $(6.29)$ on page 75 of \cite{Burq-Gerard}).  One can also give an example where the unique continuation principle holds globally and also applies to the Laplacian with coefficients, see for instance Section 2.1.2 and Theorem 2.2 in \cite{DZZ}.
	\end{Remark}
	
	Analyzing the behaviour of solutions of the wave equation subject to a localized frictional damping has been a major topic of interest for several decades. \cite{martinez} made improvements from the point of the geometrical conditions on the localization of the damping and eliminated the polynomial growth assumption of the damping at zero.  \cite{caval2018}, \cite{Cavalcanti2} and \cite{Cavalcanti3} studied the wave equation with localized nonlinear damping, the first one dealing with the case of compact surfaces while the last two focus on the case of compact manifolds. \cite{alabau10} obtained sharp or quasi-optimal upper energy decay
	rates for a range of nonlinear feedbacks characterized by their behavior at the origin.  Some other notable references on the subject are \cite{Bortot2013}, \cite{dao}, \cite{Lebeau}, and \cite{RT}.  The case of the \emph{semilinear} wave equation as in the present article also attracted the attention of many researchers.  There are some results in the literature where damping is either linear (see e.g., \cite{Dehman0}, \cite{Dehman2}, \cite{Joly}, \cite{laurent}, \cite{Nakao}, and \cite{Zuazua}) or linearly bounded \cite{Astudillo2}. The case when the dissipative mechanism is not necessarily linear or linearly bounded (with $n=3$) has been extensively investigated, considering a larger class of dissipative effects, as we can see in \cite{Cavalcanti0}, \cite{TAMS}, \cite{Moez}, \cite{las-tou:06}, \cite{Toundykov2} and \cite{Toundykov} where the source terms were considered with subcubic growth. Moreover, following the ideas introduced by \cite{Cavalcanti0}, \cite{AMO}, \cite{TAMS}, \cite{Moez}, \cite{las-tou:06}, \cite{Toundykov2} and \cite{Toundykov} we give effective decay rate estimates depending on the growth rate of the feedback at infinity, extending some previous results (see Corollary \ref{uniformdecay} and \ref{uniformdecay2}) to subquintic sources. Our work in particular extends those results in the literature (see e.g., \cite{Joly}) who put rather strong analyticity assumptions on the nonlinear source.
	
	Our paper is organized as follows. Section \ref{section4} studies the wellposedness of the original problem and shows that the solutions of the truncated problem converge to the solution of the original problem. In Section \ref{section5}, we prove the observability inequality for the truncated problem and then deduce an analogous inequality for the original problem. Finally, in Sections \ref{section6} and \ref{section7} we apply Lasicka-Tataru's method to derive explicit decay rates. In Appendix \ref{section2} we review some microlocal analysis tools and supplement details to proofs of some major results on microlocal defect measures.
	
	\section{Wellposedness}\label{section4}
	\setcounter{equation}{0}

	\subsection{Existence of solutions in the energy space}

	This section is devoted to the proof of the wellposedness  of solutions to problem
	\begin{equation}\label{eq:*}
	\begin{cases}
	\partial_t^2 u -\Delta u + f(u) + a(x) g(\partial_t u) = 0 & ~ \hbox{ in } ~ \Omega \times (0, +\infty),\\
	u=0 & ~ \hbox{ on } ~ \partial \Omega \times (0,+\infty ),\\
	u(x,0)=u_0(x) , ~ u_t(x,0)=u_1(x)  & ~ \hbox{ in } ~ \Omega.
	\end{cases}
	\end{equation}
	
	We shall first consider the following sequence of approximate problems with truncated nonlinearities:
	\begin{equation}\label{eq:AP}
	\begin{cases}
	\partial_t^2 u_{k} -\Delta u_{k} + f_k(u_k) + a(x)  g(\partial_t u_k) = 0 & ~ \hbox{ in } ~ \Omega \times (0, +\infty),\\
	u_k=0 & ~ \hbox{ on } ~ \partial \Omega \times (0,+\infty ),\\
	u_k(x,0)=u_{0,k}(x), ~ \partial_t u_k(x,0)=u_{1,k}(x)  & ~ \hbox{ in } ~ \Omega,
	\end{cases}
	\end{equation}
	where, $\{u_{0,k},u_{1,k}\}$ is a sequence that will be defined later, and for each $k\in \mathbb{N}$, $f_k: \mathbb{R} \longrightarrow \mathbb{R}$ is defined by
	\begin{equation}\label{trunc func}
	f_{k}(s):=\begin{cases}
	f(s), & |s|\leq k,\\
	f(k),& s> k,\\
	f(-k),& s < -k.
	\end{cases}
	\end{equation}

	We shall start this section with some auxiliary results whose proofs are easy.
	\begin{Lemma}\label{Lemma1}
		The distributional derivative $f_k'$ of the function defined in (\ref{trunc func}) is the essentially bounded function $g_k:\mathbb{R} \rightarrow \mathbb{R}$ given by
		\begin{equation}\label{drrivative}
		g_k(s):=
		\begin{cases}
		f'(s), & |s|\leq k,\\
		0, & s> k,\\
		0, & s < -k.
		\end{cases}
		\end{equation}
	\end{Lemma}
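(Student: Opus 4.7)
The plan is to verify the identity directly from the definition of distributional derivative, namely to show that
$$-\int_{\mathbb{R}} f_k(s)\,\varphi'(s)\,ds \;=\; \int_{\mathbb{R}} g_k(s)\,\varphi(s)\,ds$$
for every test function $\varphi\in C_c^\infty(\mathbb{R})$. First I would observe that $f_k$ is continuous on $\mathbb{R}$: the only candidates for discontinuity are $s=\pm k$, and there the one-sided limits coincide with $f(\pm k)$ by continuity of $f$. Consequently $f_k\in L^\infty_{\mathrm{loc}}(\mathbb{R})$, and the distributional derivative $f_k'$ makes sense as an element of $\mathcal{D}'(\mathbb{R})$.

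Next I would split the pairing $\langle f_k',\varphi\rangle = -\int_{\mathbb{R}}f_k\,\varphi'$ into three pieces according to the intervals $(-\infty,-k)$, $(-k,k)$ and $(k,\infty)$. On the two outer intervals $f_k$ is a constant (equal to $f(-k)$ and $f(k)$ respectively), so the fundamental theorem of calculus, together with the fact that $\varphi$ is compactly supported and hence vanishes at $\pm\infty$, collapses those pieces to $-f(-k)\varphi(-k)$ and $f(k)\varphi(k)$. On the middle interval I would apply classical integration by parts, which is legitimate because $f\in C^2$ makes $f'$ continuous on the compact interval $[-k,k]$; this produces $\int_{-k}^k f'(s)\varphi(s)\,ds$ together with boundary terms $-f(k)\varphi(k)+f(-k)\varphi(-k)$. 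Summing the three pieces, the boundary contributions at $s=\pm k$ cancel exactly, and only $\int_{-k}^{k} f'(s)\varphi(s)\,ds = \int_{\mathbb{R}} g_k(s)\varphi(s)\,ds$ survives, which proves $f_k'=g_k$ in $\mathcal{D}'(\mathbb{R})$.

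Finally I would verify the essential boundedness claim: since $f'$ is continuous on the compact interval $[-k,k]$ and $g_k$ vanishes outside it, one has $\|g_k\|_{L^\infty(\mathbb{R})}\leq \max_{|s|\leq k}|f'(s)|<\infty$. There is no serious obstacle in this argument; the only step that genuinely uses something is the cancellation of the two boundary contributions at $\pm k$, which is made possible precisely by the continuity of $f_k$ at those junction points. Had the truncation been chosen so that $f_k$ were discontinuous at $\pm k$, the distributional derivative would have picked up Dirac masses there, so the matching values $f(\pm k)$ in the outer branches of the definition of $f_k$ are essential.
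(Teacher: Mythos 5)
Your proof is correct and is precisely the standard argument the paper has in mind when it omits the proof as ``easy'': continuity of $f_k$ at $s=\pm k$ (forced by the matching values $f(\pm k)$ in \eqref{trunc func}) is exactly what kills the potential Dirac masses, and the three-interval integration by parts with cancelling boundary terms establishes $f_k'=g_k$ in $\mathcal{D}'(\mathbb{R})$, with essential boundedness following from continuity of $f'$ on $[-k,k]$. Nothing to add.
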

	
	In the proof of the next result we use the following:
	\begin{Theorem}[\cite{Brezis}]\label{brezis}
		Let $u \in W^{1,p}(I)$ with $1 \leq p \leq \infty$, where $I$ is a bounded interval of $\mathbb{R}$. Then, there exists $\widetilde{u} \in C(\bar{I})$ such that
		\begin{equation*}
		u=\widetilde{u} \hbox{ a.e. in } I
		\end{equation*}
		and
		\begin{equation*}
		\widetilde{u}(x)-\widetilde{u}(y)=\int_{y}^{x} u'(t)dt \hbox{ for all } x, y \in \bar{I}.
		\end{equation*}
	\end{Theorem}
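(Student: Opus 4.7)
Write $I = (a,b)$ with $-\infty < a < b < \infty$, so that $u' \in L^p(I) \subset L^1(I)$ since $I$ is bounded. The plan is to construct an explicit continuous representative of $u$ by antidifferentiating its weak derivative. Define $v: \bar{I} \to \mathbb{R}$ by
\[ v(x) := \int_a^x u'(t)\,dt. \]
By absolute continuity of the Lebesgue integral, $v$ is continuous (in fact absolutely continuous) on $\bar{I}$, and it automatically satisfies $v(x) - v(y) = \int_y^x u'(t)\,dt$ for all $x,y \in \bar{I}$.

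Next I would verify that the distributional derivative of $v$ on $I$ coincides with the weak derivative $u'$ of $u$. For any test function $\varphi \in C_c^\infty(I)$, Fubini's theorem yields
\[ \int_a^b v(x)\varphi'(x)\,dx = \int_a^b u'(t)\left(\int_t^b \varphi'(x)\,dx\right)dt = -\int_a^b u'(t)\,\varphi(t)\,dt, \]
since $\varphi(b)=0$. Hence $v' = u'$ in $\mathcal{D}'(I)$, and consequently $(u-v)' = 0$ in $\mathcal{D}'(I)$.

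The crucial step, and the only genuine obstacle, is to upgrade $(u-v)' = 0$ to the conclusion that $u-v$ is a.e.\ equal to a constant. I would invoke the classical lemma: if $w \in L^1_{loc}(I)$ satisfies $\int_I w\,\varphi' = 0$ for every $\varphi \in C_c^\infty(I)$, then $w$ is a.e.\ constant. The standard proof fixes $\psi_0 \in C_c^\infty(I)$ with $\int \psi_0 = 1$, observes that for any $\varphi \in C_c^\infty(I)$ the function $\varphi - \bigl(\int \varphi\bigr)\psi_0$ has zero mean and therefore equals $\Phi'$ for some $\Phi \in C_c^\infty(I)$, and then tests the hypothesis against $\Phi$ to obtain $\int w\varphi = \bigl(\int \varphi\bigr)\bigl(\int w\psi_0\bigr)$ for all such $\varphi$; this forces $w \equiv \int w\,\psi_0$ a.e.

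Applying this lemma to $w := u - v$ produces a constant $c$ with $u = v + c$ a.e.\ on $I$. Setting $\widetilde{u}(x) := v(x) + c$ on $\bar{I}$ furnishes a continuous function coinciding with $u$ almost everywhere, and the identity
\[ \widetilde{u}(x) - \widetilde{u}(y) = v(x) - v(y) = \int_y^x u'(t)\,dt \]
then holds for every $x,y \in \bar{I}$, completing the proof.
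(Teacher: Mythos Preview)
Your proof is correct and is precisely the standard argument (define the antiderivative of $u'$, show via Fubini that it has the same distributional derivative as $u$, then invoke the lemma that a distribution with vanishing derivative is a.e.\ constant). Note, however, that the paper does not supply its own proof of this statement: it is quoted verbatim as a known result from Brezis's textbook \cite{Brezis} and used only as a tool for the subsequent Lemma~\ref{Lema2}. Your write-up is in fact the proof given in that reference (Theorem~8.2 there), so there is nothing to compare.
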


	\begin{Lemma}\label{Lema2}
		For each $k\in \mathbb{N}$, there exists a positive constant $C_k$ for which
		\begin{equation*}
		|f_k(r)-f_k(s)|\leq C_k|r-s|, \forall r,s \in  \mathbb{R},
		\end{equation*}
		where $f_k$ is the function defined in \eqref{trunc func}.
	\end{Lemma}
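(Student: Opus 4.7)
The plan is to obtain the Lipschitz bound as a direct consequence of Lemma \ref{Lemma1} together with the fundamental-theorem-of-calculus representation in Theorem \ref{brezis}. Indeed, Lemma \ref{Lemma1} already identifies the distributional derivative of $f_k$ as the essentially bounded function $g_k$, so the estimate should follow by integrating $g_k$ between any two points.

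First I would observe that $f_k$ is continuous on $\mathbb{R}$ by construction, since the three branches in \eqref{trunc func} match at $s=\pm k$ by continuity of $f$. Next, I would bound $g_k$ in $L^\infty$: by Lemma \ref{Lemma1},
\[
\|g_k\|_{L^\infty(\mathbb{R})} \;=\; \sup_{|s|\leq k}|f'(s)| \;\leq\; k_0(1+k)^{p-1},
\]
where the last inequality uses Assumption \ref{assmpflabel}, equation \eqref{main ass on f} with $j=1$. Define $C_k:=k_0(1+k)^{p-1}$. In particular, on every bounded interval $I\subset\mathbb{R}$, $f_k$ is continuous with distributional derivative in $L^\infty(I)$, hence $f_k\in W^{1,\infty}(I)$.

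Given arbitrary $r,s\in\mathbb{R}$, I would pick a bounded interval $I$ with $r,s\in\bar{I}$ (for example $I=(-R,R)$ with $R>\max\{|r|,|s|\}$) and apply Theorem \ref{brezis} to obtain the absolute continuity representation
\[
f_k(r)-f_k(s) \;=\; \int_s^r g_k(t)\,dt.
\]
Taking absolute values and using the pointwise a.e. bound $|g_k(t)|\leq C_k$ yields $|f_k(r)-f_k(s)|\leq C_k|r-s|$, which is the desired inequality.

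I do not anticipate any real obstacle: the argument is a direct chaining of the two preceding results. The only mildly subtle point is the behaviour at the interface $|s|=k$, where one must confirm that no singular term appears in the distributional derivative of $f_k$; this is precisely what Lemma \ref{Lemma1} provides, thanks to the continuity of $f_k$ across those points.
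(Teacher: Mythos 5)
Your argument is correct and is exactly the route the paper intends: it places Theorem \ref{brezis} immediately before the lemma precisely so that the Lipschitz bound follows from the representation $f_k(r)-f_k(s)=\int_s^r g_k(t)\,dt$ together with the $L^\infty$ bound $\|g_k\|_{L^\infty}\leq k_0(1+k)^{p-1}$ coming from Lemma \ref{Lemma1} and \eqref{main ass on f}. The paper omits the proof as "easy," but your chaining of the two preceding results, including the check that no singular part arises at $s=\pm k$, is the intended argument.
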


	\medskip

	Let $A: \mathscr{D}(A) \subset H \longrightarrow  H$ be a maximal monotone operator on a Hilbert space $H$ (for more details see \cite{Barbu}). We denote the inner product of $H$ by $(\cdot, \cdot)$ and the corresponding norm by $\| \cdot \|$. Consider the following initial value problem.
	\begin{equation}\label{acp}
	u_t+Au+Bu \ni f \hbox{ and } u(0)=u_0 \in H.
	\end{equation}
	where $B:H \longrightarrow H$ is a locally Lipschitz operator, that is,
	\begin{equation*}
	\|Bu-Bv\|\leq L(K) \|u-v\|
	\end{equation*}
	provided $\|u\|$, $\|v\| \leq K$.
	
	Then, one has the following result:
	\begin{Theorem}[\cite{Lasiecka}]\label{cpde}
		Suppose that $A$ is a maximal monotone mapping and that $A0 \ni 0$. Assuming $u_0 \in \mathscr{D}(A)$, $f \in W^{1,1}(0,t;H)$ for all $t>0$ and a locally Lipschitz mapping $B:H \longrightarrow H$, there exists $t_{\max}\leq \infty$ such that problem \eqref{acp} has a strong solution $u$ on the interval $[0,t_{\max})$, i.e., $u \in W^{1,\infty}(0,t_{\max};H)$ and $u(t) \in \mathscr{D}(A)$ for all $t>0$. Furthermore, if we assume $u_0 \in \overline{\mathscr{D}(A)}$ we obtain a unique generalized solution $u \in C([0,t_{\max};H)$ to problem \eqref{acp}. In both cases one has $\displaystyle\lim_{t \rightarrow t_{\max}}\|u(t)\|=\infty$ provided $t_{\max}<\infty$.
	\end{Theorem}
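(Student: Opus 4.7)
My plan is to treat \eqref{acp} as a perturbed Cauchy problem and combine the classical theory of evolution equations governed by maximal monotone operators (Brezis, Barbu) with a Banach fixed-point argument. Fixing $R > \|u_0\|$, let $L = L(R+1)$ be the Lipschitz constant of $B$ on the closed ball of radius $R+1$. For $T > 0$ to be chosen small, I would define $\Phi$ on the complete metric space
\begin{equation*}
X_T := \{v \in C([0,T];H) : v(0)=u_0,\ \|v(t)-u_0\| \leq 1 \text{ on } [0,T]\}
\end{equation*}
by letting $\Phi(v) := w$ be the unique mild/integral solution of the inhomogeneous linear inclusion $w_t + Aw \ni f - Bv$, $w(0)=u_0$, whose existence is guaranteed by the standing hypotheses that $A$ is maximal monotone with $0 \in A0$ and that $f - Bv \in L^1(0,T;H)$.

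The first step is to verify that $\Phi$ maps $X_T$ into itself and is a strict contraction for $T$ small. Invariance follows from continuity of the generalized semigroup associated with $-A$ together with the fact that $\|f - Bv\|_{L^1}$ is uniformly controlled on $X_T$. For the contraction, the nonexpansivity of this semigroup — a hallmark consequence of maximal monotonicity — yields
\begin{equation*}
\|\Phi(v_1)(t) - \Phi(v_2)(t)\| \leq \int_0^t \|Bv_1(s) - Bv_2(s)\|\,ds \leq L T \sup_{[0,T]}\|v_1-v_2\|,
\end{equation*}
so choosing $T < L^{-1}$ produces a unique fixed point $u \in X_T$. When $u_0 \in \overline{\mathscr{D}(A)}$ this is precisely the generalized solution claimed. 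When instead $u_0 \in \mathscr{D}(A)$ and $f \in W^{1,1}$, I would promote $u$ to a strong solution by first establishing $u \in W^{1,\infty}(0,T;H)$ through a parallel fixed-point argument carried out on a Lipschitz subclass of $X_T$ — using that $Bv$ inherits Lipschitz-in-time regularity from $v$ because $B$ is locally Lipschitz — and then applying Barbu's inhomogeneous regularity theorem for maximal monotone evolutions to conclude that $u(t) \in \mathscr{D}(A)$ for each $t$.

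To obtain the maximal interval of existence $[0, t_{\max})$, I would iterate the local step by a standard continuation procedure: as long as $\|u(\tau)\| < \infty$, restart the construction at $\tau$ with updated Lipschitz constant $L(\|u(\tau)\|+1)$ and a possibly smaller time step. Setting $t_{\max} := \sup\{T : \text{a solution exists on } [0,T]\}$, the blow-up alternative $\lim_{t\to t_{\max}} \|u(t)\| = \infty$ when $t_{\max} < \infty$ follows by contraposition, since a uniform bound on $\|u\|$ near $t_{\max}$ would permit one more application of the local step and contradict maximality. The main obstacle I anticipate is the mismatch between the $W^{1,1}$ regularity required by Barbu's strong-solution theorem and the merely continuous framework in which the contraction naturally lives; reconciling these requires a careful bootstrap in which the iterates $Bv_n$ acquire uniform Lipschitz-in-time bounds from the previous iterate, and it is here that the \emph{local} Lipschitz hypothesis on $B$ enters the argument quantitatively rather than qualitatively.
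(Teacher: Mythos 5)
The paper does not prove this statement: it is quoted as a known result from the cited reference \cite{Lasiecka} and used as a black box, so there is no in-paper argument to compare against. Your outline is the standard perturbation proof for locally Lipschitz perturbations of maximal monotone evolution equations --- contraction for the integral-solution map $v\mapsto w$ with frozen forcing $f-Bv$, nonexpansivity of the underlying semigroup, a Lipschitz-in-time bootstrap to upgrade to strong solutions when $u_0\in\mathscr{D}(A)$ and $f\in W^{1,1}$, and continuation plus the blow-up alternative --- and this is essentially the argument in the cited source, so the proposal is correct in approach; the only details deserving care are that global uniqueness beyond the contraction ball requires a Gronwall step, and that the blow-up alternative should be closed by noting that a bound on $\|u\|$ near $t_{\max}<\infty$ keeps $f-Bu$ in $L^1(0,t_{\max};H)$ and hence lets $u$ extend continuously to $t_{\max}$ before restarting.
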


	\subsection{Abstract framework and energy functionals}
	
	We consider the weak phase space
	\begin{equation*}
	\mathcal{H} = H_{0}^{1}(\Omega) \times  L^{2}(\Omega),
	\end{equation*}
	which is endowed with the inner product
	\begin{equation*}
	\langle (u_{1}, u_{2}), (v_{1}, v_{2}) \rangle_{\mathcal{H}} = \int_{\Omega} \nabla u_{1} \cdot \nabla v_{1}+u_{2}v_{2}\,dx.
	\end{equation*}
	To study Hadamard's wellposedness for problem \eqref{eq:AP}, we write it in an abstract operator theoretic form. Setting $W_k(t)=(u_k,\partial_t u_k)$, the system \eqref{eq:AP} can be reformulated as the following Cauchy problem in $\mathcal{H}$:
	\begin{equation}\label{3.1}
	\begin{cases}
	\displaystyle \frac{\partial W_k}{\partial t}(t)+\mathbb{A}W_k(t) + \mathbb{B}_k(W_k(t)) = {}0  \\
	W(0)={}  (u_{0,k},  u_{1,k}),
	\end{cases}
	\end{equation}
	where the unbounded operator $\mathbb{A}: \mathscr{D}(\mathbb{A}) \subset \mathcal{H} \rightarrow \mathcal{H}$ is given by
	\begin{equation}\label{3.2}
	\mathbb{A}=\left(
	\begin{array}{cc}
	0 & -I \\
	-\Delta & a(x)g(\cdot)
	\end{array}
	\right),\end{equation}
	that is,
	\begin{equation}\label{3.3}
	\mathbb{A}(u,v) = \left(-v,-\Delta u +a(x)g(v)\right),
	\end{equation}
	with domain
	\begin{equation}\label{3.4}
	\mathscr{D}(\mathbb{A})  = \left\{(u,v) \in \mathcal{H}:
	\begin{array}{c}
	v \in H_{0}^{1}(\Omega),\\
	-\Delta u+a(x)g(v) \in L^2(\Omega),\\
	a(x)g(v) \in L^1(\Omega) \cap H^{-1}(\Omega)
	\end{array}
	\right\},
	\end{equation}
	and $\mathbb{B}_k: \mathcal{H} \rightarrow \mathcal{H}$ is the nonlinear operator
	\begin{equation}\label{3.5}
	\mathbb{B}_k(u,v) = \left(0, f_k(u) \right).
	\end{equation}
	Employing standard arguments of the nonlinear semigroup
	theory and Theorem \ref{cpde} we have the following result:
	
	\begin{Theorem}[Wellposedness]\label{thm:well-posedness}	If $g:\mathbb{R}\to \mathbb{R}$ is continuous monotone increasing and zero at the origin and $a(x)\in L^\infty(\Omega)$, then
		\begin{enumerate}
			\item [i)] If $\mathcal{O}(g)=1$, problem \eqref{eq:AP} generates a nonlinear semigroup on the Hilbert space $	\mathcal{H}$ so that for any initial data $( u_{0,k}, u_{1,k}) \in \mathcal{H}$, there exists a unique weak solution
			$u_k \in C(\mathbb{R}_{+};H_{0}^{1}(\Omega)) \cap C^{1}(\mathbb{R}_{+}; L^2(\Omega)).  \label{3.9}$
			\item [ii)] If $\mathcal{O}(g)=1$, $( u_{0,k}, u_{1,k}) \in \mathscr{D}(\mathbb{A})$ then
			$(u_k, \partial_tu_k) \in W^{1,\infty}(\mathbb{R}_+;\mathcal{H}).$
			\item [iii)] If $\mathcal{O}(g)\neq 1$, $1 \leq p <\frac{n}{n-2}$ and $( u_{0}, u_{1}) \in \mathscr{D}(\mathbb{A})$ then the problem \eqref{eq:*} admits a unique strong solution $(u,u_t) \in W^{1,\infty}(\mathbb{R}_+;\mathcal{H})$. Moreover, the energy identity \eqref{ei} is satisfied.
		\end{enumerate}
	\end{Theorem}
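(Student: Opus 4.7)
The wellposedness result should follow from the abstract semigroup framework already introduced in \eqref{3.1}--\eqref{3.5} together with the Lasiecka-type existence Theorem \ref{cpde} applied to the truncated Cauchy problem in parts (i)--(ii), and to the original problem in part (iii). The plan breaks naturally into three stages: verifying that $\mathbb{A}$ is maximal monotone, verifying that the nonlinear perturbation is (locally) Lipschitz, and extracting global existence from an energy bound.

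First I would check that $\mathbb{A}$ is maximal monotone on $\mathcal{H}$ with $\mathbb{A}(0,0)=(0,0)$ (the latter being immediate since $g(0)=0$). For monotonicity, a direct computation using the $\mathcal{H}$-inner product and integration by parts gives the cancellation of gradient terms, leaving
\begin{equation*}
\langle \mathbb{A}(u,v)-\mathbb{A}(\tilde u,\tilde v),(u-\tilde u,v-\tilde v)\rangle_{\mathcal{H}}=\int_\Omega a(x)\bigl(g(v)-g(\tilde v)\bigr)(v-\tilde v)\,dx\geq 0,
\end{equation*}
by monotonicity of $g$ and nonnegativity of $a$. Maximality amounts to solving $(I+\mathbb{A})(u,v)=(f_1,f_2)$ for arbitrary $(f_1,f_2)\in\mathcal{H}$; eliminating $v=u-f_1$ reduces this to the nonlinear elliptic equation
\begin{equation*}
u-\Delta u+a(x)g(u-f_1)=f_1+f_2\text{ in }\Omega,\qquad u|_{\partial\Omega}=0,
\end{equation*}
which is solved by the Browder--Minty theorem applied to the monotone, coercive, hemicontinuous operator $u\mapsto u-\Delta u+ag(u-f_1)$. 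Making sense of $ag(v)$ for general $g$ is precisely the reason the domain \eqref{3.4} includes the $L^1\cap H^{-1}$ integrability condition, and verifying the Browder--Minty hypotheses under this abstract definition of $\mathscr{D}(\mathbb{A})$ is the most technical step of this stage.

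Next, for parts (i) and (ii), I would show that $\mathbb{B}_k$ is globally Lipschitz on $\mathcal{H}$. Lemma \ref{Lema2} gives $|f_k(r)-f_k(s)|\leq C_k|r-s|$, hence by Poincar\'e
\begin{equation*}
\|\mathbb{B}_k(u,v)-\mathbb{B}_k(\tilde u,\tilde v)\|_{\mathcal{H}}=\|f_k(u)-f_k(\tilde u)\|_{L^2}\leq C_kC_P\|u-\tilde u\|_{H_0^1}.
\end{equation*}
Since $\mathcal{O}(g)=1$ ensures $\overline{\mathscr{D}(\mathbb{A})}=\mathcal{H}$, Theorem \ref{cpde} delivers a local generalized solution in $C(\mathbb{R}_+;\mathcal{H})$ for data in $\mathcal{H}$ (part (i)) and a local strong solution in $W^{1,\infty}(\mathbb{R}_+;\mathcal{H})$ for data in $\mathscr{D}(\mathbb{A})$ (part (ii)). Global existence follows from the energy identity: multiplying the truncated equation by $\partial_tu_k$ and integrating yields
\begin{equation*}
E_{u_k}(t)+\int_0^t\!\!\int_\Omega a(x)g(\partial_tu_k)\partial_tu_k\,dxds=E_{u_k}(0),
\end{equation*}
so $\|(u_k,\partial_tu_k)\|_{\mathcal{H}}$ remains bounded on any finite interval (using $F_k\geq 0$, inherited from \eqref{ass on F}). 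This rules out the blow-up alternative in Theorem \ref{cpde} and gives $t_{\max}=+\infty$.

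For part (iii), the unmodified nonlinearity $\mathbb{B}(u,v)=(0,f(u))$ is no longer globally Lipschitz, but the stronger restriction $1\leq p<n/(n-2)$ combined with the Sobolev embedding $H_0^1(\Omega)\hookrightarrow L^{2p}(\Omega)$ and the growth bound \eqref{ass on f} yields
\begin{equation*}
\|f(u)-f(\tilde u)\|_{L^2}^2\leq C\int_\Omega\bigl(1+|u|^{p-1}+|\tilde u|^{p-1}\bigr)^2|u-\tilde u|^2\,dx\leq C(R)\|u-\tilde u\|_{H_0^1}^2
\end{equation*}
for $u,\tilde u$ in a ball of radius $R$ in $H_0^1$, by H\"older and Sobolev. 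Thus $\mathbb{B}$ is locally Lipschitz on $\mathcal{H}$, and Theorem \ref{cpde} produces a local strong solution for $(u_0,u_1)\in\mathscr{D}(\mathbb{A})$. The energy identity then gives the a priori bound needed for global existence. I expect the main obstacle here to be the rigorous justification of the energy identity when $\mathcal{O}(g)\neq 1$: $g(\partial_tu)$ is only known a priori to lie in $L^1$ on finite cylinders, so one must carefully invoke the regularity $\partial_tu\in L^\infty(\mathbb{R}_+;H_0^1(\Omega))$ afforded by $(u_0,u_1)\in\mathscr{D}(\mathbb{A})$ to guarantee that all duality pairings used in the multiplier argument are well defined and that the resulting identity passes to the limit from smooth approximations.
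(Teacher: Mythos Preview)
Your proposal is correct and follows essentially the same route as the paper, which merely states that the result follows from ``standard arguments of the nonlinear semigroup theory and Theorem~\ref{cpde}'' without giving any details; you have supplied precisely those standard arguments (maximal monotonicity of $\mathbb{A}$, global/local Lipschitz continuity of $\mathbb{B}_k$ and $\mathbb{B}$ respectively, and the energy bound ruling out blow-up). The paper's subsequent Remark~\ref{regularitystrong1} and the energy identity~\eqref{est2} confirm that your outline matches the authors' intended mechanism.
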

	
	\begin{Remark}\label{regularitystrong1}
		Assume that $\mathcal{O}(g)=1$. Since $\mathbb{B}_k$ is globally Lipschitz and $\mathbb{B}_k(0)=0$ we have
		\begin{equation}\label{estimateB}
		\|\mathbb{B}_k(W_k(t))\|_{\mathcal{H}} \lesssim \|W_k(t)\|_{\mathcal{H}}\leq \|W_k\|_{L^\infty(\mathbb{R}_+;\mathcal{H})}< \infty.
		\end{equation}
		From \eqref{estimateB} it follows that $\mathbb{B}_k(W_k) \in L^\infty(\mathbb{R}_+;\mathcal{H})$. Employing item ii) of Theorem \ref{thm:well-posedness} we deduce that $\partial_t W_k \in L^\infty(\mathbb{R}_+;\mathcal{H})$. Therefore, from $\eqref{3.1}_1$ we obtain $\mathbb{A}W_k \in L^\infty(\mathbb{R}_+;\mathcal{H})$. Moreover,
		\begin{eqnarray}\label{uniformg1}
		\|a(\cdot)g(\partial_t u_k(\cdot, t))\|_{L^2(\Omega)}^{2}&=&\int_{\Omega} |a(x)g(\partial_t u_k(x,t))|^2 \, dx \nonumber \\
		&\leq& \|a\|_{L^\infty(\Omega)}^2\int_{\{x:|\partial_t u_k(x,t)| \leq 1\}} |g(\partial_t u_k(x,t))|^2 \, dx \nonumber  \\
		&&+\|a\|_{L^\infty(\Omega)}^2\int_{\{ x:|\partial_t u_k(x,t)| > 1\}}|g(\partial_t u_k(x,t))|^2 \, dx\nonumber  \\
		&\leq& \|a\|_{L^\infty(\Omega)}^2M_g\operatorname{meas}(\Omega)+\|a\|_{L^\infty(\Omega)}^2\int_{\Omega} |\partial_{t} u_k(x,t)|^2 \, dx \nonumber  \\
		&\lesssim & \|a\|_{L^\infty(\Omega)}^2M_g\operatorname{meas}(\Omega)+\|a\|_{L^\infty(\Omega)}^2 \|\partial_{t} u_k(\cdot,t)\|_{L^2(\Omega)}^2 \nonumber \\
		&\lesssim& \|a\|_{L^\infty(\Omega)}^2M_g\operatorname{meas}(\Omega)+\|a\|_{L^\infty(\Omega)}^2 \|\partial_{t} u_k\|_{L^\infty(\mathbb{R}_+;L^2(\Omega))}^2,
		\end{eqnarray}
		where $M_g=\max_{s \in [-1,1]}|g(s)|$. The estimates in \eqref{uniformg1} imply that
		\begin{equation}\label{uniformg}
		a(\cdot)g(\partial _tu_k) \in L^\infty(\mathbb{R}_+;L^2(\Omega))
		\end{equation}
		and from \eqref{3.3} we obtain
		\begin{equation}\label{regularitystrong2}
		u_k \in L^\infty(\mathbb{R}_+;H_{0}^{1}(\Omega) \cap H^2(\Omega)) \cap W^{1,\infty}(\mathbb{R}_+;H_{0}^{1}(\Omega)) \cap W^{2,\infty}(\mathbb{R}_+;L^2(\Omega)).
		\end{equation}
		
	\end{Remark}
	
	Take $\{u_0,u_1\} \in H_0^1(\Omega)\times L^2(\Omega)$. Since $\mathscr{D}(\mathbb{A})$ is dense in $\mathcal{H}$, there exists a sequence $\{u_{0,k}, u_{1,k}\}\in  \mathscr{D}(\mathbb{A})$ such that
	\begin{eqnarray}\label{conv init data}
	\{u_{0,k}, u_{1,k}\} \rightarrow \{u_0,u_1\} \hbox{ strongly in } H_0^1(\Omega) \times L^2(\Omega).
	\end{eqnarray}
	
	From Theorem \ref{thm:well-posedness} and Remark \ref{regularitystrong1}, for each $k\in \mathbb{N}$, the problem \eqref{eq:AP} possesses a unique regular solution $u_k$ in the class
	\begin{eqnarray*}
		u_k \in L^\infty(\mathbb{R}_+;H_{0}^{1}(\Omega) \cap H^2(\Omega)) \cap W^{1,\infty}(\mathbb{R}_+;H_{0}^{1}(\Omega)) \cap W^{2,\infty}(\mathbb{R}_+;L^2(\Omega)).
	\end{eqnarray*}

	Multiplying the main equation in (\ref{eq:AP}) by $\partial_t u_k$ and performing integration by parts, it follows that
	\begin{eqnarray}\label{est1}
	&&\frac12 \frac{d}{dt} \|\partial_t u_k(t)\|_{L^2(\Omega)}^2 + \frac12 \frac{d}{dt} \|\nabla u_k(t)\|_{L^2(\Omega)}^2 + \frac{d}{dt}\int_\Omega F_k(u_k(x,t))\,dx\\
	&&+ \int_\Omega a(x)g(\partial_t u_k(x,t)) \partial_t u_k(x,t)\,dx=0, \hbox{ for all } t\in [0,\infty),\nonumber
	\end{eqnarray}
	where $F_k(\lambda) = \int_0^\lambda f_k(s)\,ds.$
	
	Thus, taking (\ref{est1}) into account, we obtain the energy identity for the truncated problem \eqref{eq:AP}, that is,
	\begin{eqnarray}\label{est2}\qquad
	E_{u_k}(t_2) +  \int_{t_1}^{t_2}\int_\Omega a(x) g(\partial_t u_k(x,t)) \partial_t u_k(x,t)\,dxdt = E_{u_k}(t_1)
	\end{eqnarray}
	for all $0\leq t_1<t_2<+\infty$, where
	\begin{eqnarray}
	E_{u_k}(t):= \frac12 \int_\Omega  |\partial_t u_k(x,t)|^2 + |\nabla u_k(x,t)|^2\,dx + \int_\Omega F_k(u_k(x,t))\,dx,
	\end{eqnarray}
	is the energy associated with problem (\ref{eq:AP}).
	We observe that from (\ref{trunc func}) one has
	\begin{equation}\label{primitive trunc func}
	F_{k}(s):=\begin{cases}
	\displaystyle \int_0^sf(\xi)\,d\xi, & |s|\leq k,\\
	\displaystyle \int_0^k f(\xi) \,d\xi + f(k)[s-k], & s > k,\\
	\displaystyle f(-k)[s+k] + \int_0^{-k}f(\xi)\,d\xi, & s < -k.
	\end{cases}
	\end{equation}

	Since $f$ satisfies the sign condition, it follows that $F_k(s) \geq 0$ for all $s\in \mathbb{R}$ and for all $k\in \mathbb{N}$. In addition, from \eqref{ass on f} and \eqref{ass on F}, we obtain $|f(s)|\leq c [|s| + |s|^p]$ and $0 \leq F(s) \leq f(s) \,s$ for all $s\in \mathbb{R}$, respectively. Then, we infer that
	\begin{eqnarray}\label{bound on Fk}
	|F_k(s)| \leq c [|s|^2 + |s|^{p+1}] \hbox{ for all }s\in \mathbb{R} \hbox{ and for all }~k\in\mathbb{N}.
	\end{eqnarray}
	Consequently,
	\begin{eqnarray}\label{bound data L1}
	\int_\Omega |F_k(u_{0,k})|\,dx &\leq& c\int_\Omega \left[|u_{0,k}|^2 + |u_{0,k}|^{p+1} \right]\,dx\\
	&\lesssim&  \|u_{0,k}\|_{H_0^1(\Omega)}.\nonumber
	\end{eqnarray}
	
	Assuming that $p$ satisfies \eqref{ass on f'}, we have for every dimension $n\geq 1$ that $H_0^1(\Omega) \hookrightarrow L^{p+1}(\Omega)$, which implies that the RHS of  (\ref{bound data L1}) is bounded.
	So, identity (\ref{est2}), convergence property \eqref{conv init data} and estimate \eqref{bound data L1} yield a subsequence of $\{u_k\}$, still denoted $\{u_k\}$ such that
	\begin{eqnarray}
	&&u_k \rightharpoonup u \hbox{ weak-star in } L^\infty(0,\infty; H_0^1(\Omega)),\label{conver1}\\
	&&\partial_t u_k \rightharpoonup \partial_t u \hbox{ weak-star in } L^\infty(0,\infty; L^2(\Omega))\label{conver2},\\
	&& a(x)\,g(\partial_t u_{k}) \rightharpoonup \chi \hbox{ weakly in }L^{2}(0,T; L^{2}(\Omega)), \hbox{ for all } T>0. \label{eq:3.56}
	\end{eqnarray}
	Now, from standard compactness arguments (see Simon \cite{Simon}) we deduce
	\begin{eqnarray}\label{conver3}
	u_k \rightarrow u \hbox{ strongly in } L^\infty (0,T; L^{2^{\ast}-\varepsilon}(\Omega)); \hbox{ for all } T>0,
	\end{eqnarray}
	where $2^{\ast}:= \frac{2n}{n-2}$ and $\varepsilon>0$ is small enough. In addition, from (\ref{conver3}), we obtain
	\begin{eqnarray}\label{conver4}
	u_k \rightarrow u \hbox{ a.e. in  } \Omega \times (0,T), \hbox{ for all } T>0.
	\end{eqnarray}
	
	Then, from \eqref{ass on f}, \eqref{ass on f'},
	\begin{eqnarray}\label{chain}
	H_0^1(\Omega)\hookrightarrow L^{\frac{2n}{n-2}}(\Omega)\hookrightarrow L^{\frac{n+2}{n-2}}(\Omega) \hookrightarrow L^p(\Omega) \hookrightarrow L^1(\Omega),
	\end{eqnarray}
	(\ref{conv init data}), (\ref{est2}) and (\ref{bound data L1}), the following estimate holds:
	\begin{eqnarray}\label{estIII}\quad
	\|f_k(u_k)\|_{L^{\frac{p+1}{p}}(\Omega \times (0,T)) }^{\frac{p+1}{p}} &=& \int_0^{T} \int_\Omega |f_k(u_k(x,t))|^{\frac{p+1}{p}}\,dxdt \nonumber\\
	&\lesssim& \int_0^T\int_\Omega |u_k|^{\frac{p+1}{p}}\,dxdt + \int_{0}^{T}\int_\Omega |u_k|^{p+1}\,dx dt\nonumber\\
	&=& \int_0^T \|u_k\|_{L^{\frac{p+1}{p}}(\Omega) }^{\frac{p+1}{p}}\,dt+\int_0^T \|u_k\|_{L^{p+1}(\Omega)}^{p+1}\, dt\nonumber\\
	&\lesssim& \int_0^T \|u_k\|_{H_0^1(\Omega)}^{\frac{p+1}{p}}\,dt+\int_0^T \|u_k\|_{H_0^1(\Omega)}^{p+1}\, dt\nonumber\\
	&\lesssim& \|u_k\|_{L^\infty(0,T;H_0^1(\Omega))}^{\frac{p+1}{p}}+\|u_k\|_{L^\infty(0,T;H_0^1(\Omega))}^{p+1}\nonumber\\
	& \leq& c <+\infty, \hbox{ for all } t\geq 0,
	\end{eqnarray}
	provided that $1\leq p < \frac{n+2}{n-2}$ (we note that $p+1 < \frac{2n}{n-2}=2^{\ast}\Leftrightarrow 1\leq p < \frac{n+2}{n-2}$).
	
	It is easy to see that
	\begin{equation}\label{estIII.1}
	f(u) \in L^\infty(0,\infty;L^{\frac{p+1}{p}}(\Omega)).
	\end{equation}
	Indeed,
	\begin{eqnarray}\label{estIII.2}
	\int_\Omega |f(u(x,t))|^{\frac{p+1}{p}}\,dx &\lesssim& \int_\Omega |u(x,t)|^{\frac{p+1}{p}}\,dx + \int_\Omega |u(x,t)|^{p+1}\,dx \nonumber\\
	& \lesssim& \|u(\cdot,t)\|_{H_{0}^{1}(\Omega)}^{\frac{p+1}{p}}+ \|u(\cdot,t)\|_{H_{0}^{1}(\Omega)}^{p+1}\nonumber\\
	& \lesssim& \|u\|_{L^\infty(0,T;H_{0}^{1}(\Omega))}^{\frac{p+1}{p}} +\|u\|_{L^\infty(0,T;H_{0}^{1}(\Omega))}^{p+1}<+\infty, \hbox{ for all } t\geq 0.
	\end{eqnarray}
	From \eqref{estIII.2} and the definition of essential supremum we obtain \eqref{estIII.1}.
	
	Now, (\ref{conver4}) yields
	\begin{eqnarray}\label{conver5'}
	f_k(u_k) \rightarrow f(u) \hbox{ a.e. in  } \Omega \times (0,T), \hbox{ for all } T>0.
	\end{eqnarray}
	Indeed, the convergence \eqref{conver4} guarantees the existence of a set $Z_T \subset \Omega \times (0,T)$ with $\operatorname{meas}(Z_T)=0$  such that $u_k(x,t) \rightarrow u(x,t)$ for all $(x,t) \in \Omega \times (0,T) \setminus Z_T$ when $k \rightarrow \infty$. Therefore, for all $(x,t) \in \Omega \times (0,T) \setminus Z_T$ there exists a positive constant $L=L(x,t)>0$ such that $|u_k(x,t)|<L$ for all $k \in \mathbb{N}$.  Then, using the definition of $f_k$, we obtain that
	\begin{equation}\label{boundedness}
	\hbox{ if } |u_k(x,t)|<L, \hbox{ for all } k \in \mathbb{N}, \hbox{ then }~ f_k(u_k(x,t))=f(u_k(x,t)), \hbox{ for all } k\geq L,
	\end{equation}
	that is,
	\begin{equation}\label{boundednes1}
	f_k(u_k(x,t))-f(u_k(x,t)) \rightarrow 0 \hbox{ when } k \rightarrow \infty \hbox{ for all } (x,t) \in \Omega \times (0,T) \setminus Z_T.
	\end{equation}
	On the other hand, employing the continuity of $f$ it follows that
	\begin{equation}\label{boundednes2}
	f(u_k(x,t))-f(u(x,t)) \rightarrow 0 \hbox{ when } k \rightarrow \infty \hbox{ for all } (x,t) \in \Omega \times (0,T) \setminus Z_T.
	\end{equation}
	From \eqref{boundednes1} and \eqref{boundednes2} the convergence \eqref{conver5'} holds.
	
	\medskip
	
	\begin{Lemma}[Strauss]~Let $\mathcal{O}$ be an open and bounded subset of $\mathbb{R}^N$, $N\geq 1$, $1<q<+\infty$ and $\{u_n\}_{n\in \mathbb{N}}$ be a sequence which is bounded in $L^q(\mathcal{O})$. If $u_n \rightarrow u$ a.e. in $\mathcal{O}$, then $u\in L^q(\mathcal{O})$ and $u_n \rightharpoonup u$ weakly in $L^q(\mathcal{O})$. In addition, if $1\leq r <q$, we also have $u_n \rightarrow u$ strongly in $L^r(\mathcal{O})$.
	\end{Lemma}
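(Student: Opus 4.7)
The plan is to proceed in three stages. First I would establish that $u\in L^q(\mathcal{O})$: since $|u_n|^q\to |u|^q$ almost everywhere and $\{u_n\}$ is bounded in $L^q(\mathcal{O})$, Fatou's lemma yields $\int_\mathcal{O}|u|^q\,dx\leq \liminf_n \int_\mathcal{O}|u_n|^q\,dx\leq \sup_n \|u_n\|_{L^q}^q<\infty$.

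For the weak convergence $u_n\rightharpoonup u$, I would exploit the reflexivity of $L^q(\mathcal{O})$ (valid since $1<q<\infty$) together with a subsequence principle: it suffices to show that every weakly convergent subsequence has limit $u$, and then the whole sequence must converge weakly to $u$ (otherwise, extracting a weakly convergent sub-subsequence from a subsequence staying $\varepsilon$-away from $u$ against some test function produces a contradiction). So assume $u_{n_k}\rightharpoonup v$ in $L^q(\mathcal{O})$ for some $v\in L^q(\mathcal{O})$. The key step, which I expect to be the main obstacle, is reconciling a.e.\ convergence with weak $L^q$-convergence to identify $v=u$. I would test against indicator functions $\chi_E$ (which lie in $L^{q'}(\mathcal{O})$ since $\mathcal{O}$ has finite measure, $q'$ being the Hölder conjugate exponent) and combine Egorov's theorem, which produces $E_\varepsilon\subset E$ with $|E\setminus E_\varepsilon|<\varepsilon$ and $u_{n_k}\to u$ uniformly on $E_\varepsilon$, with Hölder's inequality:
\[
\left|\int_E (u_{n_k}-u)\,dx\right|\leq \int_{E_\varepsilon}|u_{n_k}-u|\,dx + \bigl(\|u_{n_k}\|_{L^q}+\|u\|_{L^q}\bigr)\,|E\setminus E_\varepsilon|^{1/q'}.
\]
Sending $k\to\infty$ and then $\varepsilon\to 0$, the first term vanishes by uniform convergence on the finite-measure set $E_\varepsilon$ and the second by the $L^q$-bound. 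This yields $\int_E u_{n_k}\,dx\to \int_E u\,dx$; comparing with the weak limit $\int_E u_{n_k}\,dx\to\int_E v\,dx$ forces $\int_E u\,dx=\int_E v\,dx$ for every measurable $E\subset\mathcal{O}$, whence $v=u$ a.e.

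For the strong convergence in $L^r(\mathcal{O})$ with $1\leq r<q$, I would invoke Vitali's convergence theorem. Uniform integrability of $\{|u_n-u|^r\}$ follows from Hölder's inequality on any measurable $A\subset\mathcal{O}$:
\[
\int_A |u_n-u|^r\,dx \leq |A|^{1-r/q}\,\|u_n-u\|_{L^q}^r \leq C\,|A|^{1-r/q},
\]
where $C$ is uniform in $n$ because $u\in L^q(\mathcal{O})$ and $\{u_n\}$ is $L^q$-bounded; the right-hand side tends to zero as $|A|\to 0$ since $r<q$. Because $\mathcal{O}$ has finite measure, $|u_n-u|^r\to 0$ a.e., and the family is uniformly integrable, Vitali's theorem gives $\|u_n-u\|_{L^r}\to 0$, completing the proof.
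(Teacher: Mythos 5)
Your argument is correct and complete: Fatou's lemma gives $u\in L^q(\mathcal{O})$, the reflexivity-plus-subsequence-uniqueness argument with Egorov's theorem correctly identifies every weak subsequential limit with $u$, and the Vitali/uniform-integrability step for the strong $L^r$ convergence is sound (the Hölder exponents $q/r$ and $q/(q-r)$ are used correctly). The paper does not supply its own proof but defers to Exercise 4.16 of Brezis and to Strauss's original article; your proof is essentially the standard argument those references intend, so there is nothing to reconcile.
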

	\begin{proof}
		See  Exercise 4.16 in \cite{Brezis} or \cite{Strauss}.
	\end{proof}

	Combining \eqref{estIII}, \eqref{estIII.2}, and \eqref{conver5'} and employing Lions' Lemma we deduce that
	\begin{eqnarray}\label{weak conv fk}
	f_k(u_k) \rightharpoonup f(u) \hbox{ weakly in } L^{\frac{p+1}{p}}(\Omega \times (0,T)).
	\end{eqnarray}
	From Strauss' Lemma, it follows that
	\begin{eqnarray}\label{conver6}
	f_k(u_k) \rightarrow f(u) \hbox{ strongly in }L^r(\Omega \times (0,T)), \hbox{ for all }1\leq r < \frac{p+1}{p}.
	\end{eqnarray}

	Take $\varphi \in C_0^\infty(\Omega)$ and $\theta \in C_0^\infty(0,T)$. Multiplying the first equation of problem (\ref{eq:AP}) and integrating by parts, we get
	\begin{eqnarray}\label{form var}
	&&-\int_0^T \theta'(t) \int_\Omega \partial_t u_{k}(x,t)\, \varphi(x) \,dx dt + \int_0^T \theta(t) \int_\Omega \nabla u_{k}(x,t)\cdot \nabla \varphi(x) \,dxdt\\
	&& +\int_0^T \theta(t) \int_\Omega f_k(u_k(x,t))\,\varphi(x) \,dxdt +\int_0^T \theta(t) \int_\Omega a(x) g(\partial_t u_k(x,t))\, \varphi(x)\,dxdt\nonumber = 0.
	\end{eqnarray}
	
	Passing to the limit in (\ref{form var}) taking (\ref{conver1}), (\ref{conver2}), (\ref{eq:3.56}) and (\ref{weak conv fk}) there exists
	$u\in L^\infty(0,T; H_0^1(\Omega))$, $\partial_t u\in L^\infty(0,T; L^2(\Omega)$ such that
	\begin{eqnarray}\label{limit}
	&&-\int_0^T \theta'(t) \int_\Omega \partial_t u(x,t)\, \varphi(x) \,dx dt + \int_0^T \theta(t) \int_\Omega \nabla u(x,t)\cdot \nabla \varphi(x) \,dxdt\\
	&& +\int_0^T \theta(t) \int_\Omega f(u(x,t))\,\varphi(x) \,dxdt +\int_0^T \theta(t) \int_\Omega \chi\, \varphi(x)\,dxdt\nonumber = 0
	\end{eqnarray}
	for all $\varphi \in C_0^\infty(\Omega)$ and $\theta \in C_0^\infty(0,T)$, from which we conclude that
	\begin{eqnarray}\label{dist sol}
	\partial_t^2 u - \Delta u + f(u) + \chi = 0 \hbox{ in }\mathcal{D}'(\Omega \times (0,T)).
	\end{eqnarray}
	Assuming $n=3$ and observing
	$$\chi \in L^2(0,T; L^2(\Omega)), ~ \Delta u \in L^\infty(0,T; H^{-1}(\Omega)) \hbox{ and } f(u) \in L^\infty(0,T; L^{\frac{p+1}{p}}(\Omega)),$$
	we deduce that
	\begin{eqnarray}\label{weak solution'}
	\partial_t^2 u = \Delta u - f(u) -\chi \in  L^{2}(0,T;  H^{-1}(\Omega)).
	\end{eqnarray}
	
	Applying Lemma 8.1 of Lions-Magenes \cite{Lions-Magenes} and Lemma 1.2 from \cite{Lions1}, we deduce that
	\begin{equation}\label{weak solution}
	u\in C_w(0,T;H_0^1(\Omega)) \hbox{ and } \partial_t u \in C_w(0,T; L^2(\Omega)),
	\end{equation}
	where $C_w(0,T;Y)$ is the space of $f\in L^\infty(0,T;Y)$ for which the mapping $[0,T] \mapsto Y$ is weakly continuous. That is, $t \mapsto \langle y', f(t) \rangle_{Y',Y}$ is continuous in $[0,T]$ for all $y' \in Y'$, dual of $Y$.
	
	\medskip
	
	\subsection{Recovering the regularity in time for the range \texorpdfstring{$1\leq p < \frac{n+2}{n-2}$}.}
	
	Throughout this subsection, assume $\mathcal{O}(g)=1$. The main goal of this subsection is to prove that if $1\leq p < \frac{n+2}{n-2}$ and
	\begin{eqnarray}\label{chi}
	\chi= a(x) g(\partial_t u),
	\end{eqnarray}
	then solutions satisfy
	{\small
		\begin{eqnarray}\label{regularity}\quad
		u\in C^0([0,T];H_0^1(\Omega)),~\partial_t u \in C^0([0,T]; L^2(\Omega)),~\partial_t^2 u \in L^2([0,T]; H^{-1}(\Omega)).
		\end{eqnarray}}
	In addition, one has
	\begin{eqnarray}\label{unif conv}
	\{u_k, \partial_t u_k\} \rightarrow \{u,\partial_tu\} \hbox{ in } C^0([0,T];H_0^1(\Omega)) \times C^0([0,T]; L^2(\Omega)).
	\end{eqnarray}
	
	
	In order to deal with the case $\frac{n}{n-2} \leq p < \frac{n+2}{n-2}$, we first recall some basic results.
	\begin{Theorem}[Strichartz estimates, \cite{Sogge}]\label{St-Est-3d} Let $(\Omega, \textbf{g})$ be a compact three-dimensional Riemannian manifold with boundary, $T > 0$ and $(q, r)$
		satisfying
		\begin{eqnarray}\label{Strichartz}
		\frac{1}{q} + \frac{3}{r} = \frac{1}{2}, \quad q \in \left[\frac{7}{2}, \infty \right].
		\end{eqnarray}
		There exists $C = C(T, q) > 0$ such that for every $G \in L^{1}(0, T; L^{2}(\Omega))$ and every
		$(u_{0}, u_{1}) \in H_{0}^{1}(\Omega) \times L^{2}(\Omega)$, the solution $u$ of
		\begin{eqnarray*}
			&& u_{tt} - \Delta_{\textbf{g}}u = G(t) \\
			&& (u, \partial_{t}u)(0) = (u_{0}, u_{1})
		\end{eqnarray*}
		satisfies the estimate
		\begin{eqnarray}
		\|u\|_{L^{q}(0, T; L^{r}(\Omega))} \leq C\left(\|u_{0}\|_{H^{1}_{0}(\Omega)} + \|u_{1}\|_{L^{2}(\Omega)} + \|G\|_{L^{1}(0, T; L^{2}(\Omega))} \right).
		\end{eqnarray}
	\end{Theorem}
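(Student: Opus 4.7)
The plan is to reduce to the homogeneous case via Duhamel's formula and then to combine a dyadic frequency decomposition with a local-in-time parametrix construction; the presence of $\partial\Omega$ forces the use of a Melrose--Taylor parametrix near the boundary, which is responsible for the restriction $q\ge 7/2$.

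First I would write $u=u_h+u_{\mathrm{inh}}$, where $u_h$ solves the homogeneous problem with data $(u_0,u_1)$ and
\[
u_{\mathrm{inh}}(t)=\int_0^t \frac{\sin((t-s)\sqrt{-\Delta_{\mathbf{g}}})}{\sqrt{-\Delta_{\mathbf{g}}}}\,G(s)\,ds.
\]
By Minkowski's inequality in $t$, it suffices to establish the homogeneous bound
\[
\|v\|_{L^q(0,T;L^r(\Omega))}\lesssim \|v_0\|_{H^1_0(\Omega)}+\|v_1\|_{L^2(\Omega)}
\]
for the free evolution with data $(v_0,v_1)$; applying this pointwise in $s$ to the propagator acting on $G(s)$ and then integrating in $s$ yields the required estimate on $u_{\mathrm{inh}}$.

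Next I would perform a Littlewood--Paley decomposition of the data using spectral projectors of $\sqrt{-\Delta_{\mathbf{g}}}$ onto dyadic bands $[2^k,2^{k+1})$. For each dyadic piece, a local-in-time parametrix for the half-wave operators $e^{\pm it\sqrt{-\Delta_{\mathbf{g}}}}$ combined with stationary phase yields a frequency-localized dispersive $L^1\to L^\infty$ estimate with decay of order $|t|^{-1}$ on a short time window (the $(n-1)/2$ power with $n=3$). Interpolating with the trivial $L^2\to L^2$ bound and running the standard $TT^*$ argument produces the scale-invariant Strichartz inequality at each frequency scale; summation in $k$ (by Minkowski, valid as soon as $q,r\ge 2$) followed by covering $[0,T]$ by finitely many short subintervals gives the final estimate with constant $C=C(T,q)$.

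The principal difficulty, and the step that pins down the admissible exponent range, is the behaviour of the parametrix near $\partial\Omega$. Away from the boundary the usual Fourier-integral parametrix suffices; near $\partial\Omega$ one must replace it by a Melrose--Taylor Airy-type parametrix to handle both reflected rays and, more delicately, the glancing set where the bicharacteristic flow becomes tangent to the boundary. The dispersive decay one obtains at glancing is strictly weaker than the interior $|t|^{-1}$, and this loss is exactly what restricts the admissible pairs to $q\ge 7/2$ in three dimensions. This glancing bound is the main analytic obstacle; I would invoke the Smith--Sogge and Blair--Smith--Sogge parametrix construction together with the oscillatory-integral estimates catalogued in Sogge's monograph to dispose of it, and then patch interior and boundary contributions together via a partition of unity to conclude.
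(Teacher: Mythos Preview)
The paper does not prove this theorem at all: it is quoted verbatim from Blair--Smith--Sogge (the reference labeled \texttt{[Sogge]} in the bibliography), and the paper simply invokes it as a black box in order to obtain the bounds \eqref{Stri-uk} and \eqref{Stri-uk'}. There is therefore nothing to compare your argument against in this manuscript.

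That said, your sketch is a faithful outline of the strategy in the cited reference: Duhamel reduction to the homogeneous flow, dyadic spectral localization, short-time parametrix plus $TT^\ast$, and---the genuinely hard step---the Melrose--Taylor Airy parametrix near the glancing set, whose degraded dispersion is indeed what forces $q\ge 7/2$ in dimension three. If you were writing this up independently you would need to be more precise about how the glancing estimate actually enters (in Blair--Smith--Sogge the route goes through the Smith--Sogge squarefunction/spectral-cluster bounds rather than a direct dispersive inequality), but for the purposes of this paper the correct move is simply to cite the result, as the authors do.
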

	

		\begin{Remark}\label{rem-St-1}
			It follows from Theorem \ref{St-Est-3d} and a fixed point argument that, given $T, E_0, M_1>0$, $F \in L^{1}(0, T; L^{2}(\Omega))$ fixed, and $(u_{0}, u_{1}) \in H_{0}^{1}(\Omega) \times L^{2}(\Omega)$ such that $$\|u_0\|_{H_0^1(\Omega)}+\|u_1\|_{L^2(\Omega)}+\|F\|_{L^1(0,T;L^2(\Omega))}<E_0+M_1,$$ there exists $C(T,E_0, M_1,\Omega)$, such that
			every solution u of the system
			\begin{align}\label{Linear-inhomog}
				\begin{cases}
					u_{tt} - \Delta_{\mathbf{G}}u + f(u) = F & \hbox{ in } \Omega \times (0,\infty),\\
					u = 0 & \hbox{ on }~\partial \Omega \times (0, \infty),\\
					u(x,0)=u_{0}(x),~~ u_{t}(x,0)=u_{1}(x)& x\in \Omega
				\end{cases}
			\end{align}
			satisfies
			\begin{eqnarray}
				\|u\|_{L^{q}(0, T; L^{r}(\Omega))} \leq C \left( \|u_{0}\|_{H^{1}_{0}} + \|u_{1}\|_{L^{2}} + \|F\|_{L^{1}(0, T; L^{2}(\Omega))} \right)
			\end{eqnarray}
			for all $(q, r)$ such that $q \geq \frac{7}{2}$ and $\frac{1}{q} + \frac{3}{r} = \frac{1}{2}$, provided that
			\begin{equation}\label{fstrichartz}
				|f(r) - f(s)| \leq c \left(1 + |s|^{p-1} + |r|^{p-1} \right)|r-s|, \forall s,r\in \mathbb{R},
			\end{equation}
			and $1 \leq p <5$.
	\end{Remark}
	
	
	First we observe that \eqref{estimateB} and \eqref{est2} yield
	\begin{equation}\label{uniformfk}
		{f_k(u_k) \in L^\infty(0,T;L^2(\Omega)) \hbox{ for all } T>0}
	\end{equation}
	Moreover, \eqref{est2} ensures that
	\begin{eqnarray}\label{trunc-limfraco-g}
		\|a(\cdot)g(\partial_t u_k)\|_{L^2(0,T;L^2(\Omega))}^2&=& \int_0^T \| a(\cdot) g(\partial_t u_k(t)) \|_{L^2(\Omega)}^2\,dt\\
		&\leq& \|a\|_{L^\infty(\Omega)} \int_0^T \int_\Omega a(x) |g(\partial_t u_k(x,t))|^2 \,dxdt\nonumber\\
		&\leq&    \|a\|_{L^\infty(\Omega)} E_{u_k}(0).\nonumber
	\end{eqnarray}
	{Thus, combining Remark \ref{rem-St-1}, \eqref{uniformfk} and \eqref{trunc-limfraco-g}, we can make use of Strichartz estimates, as
		in the Remark \ref{rem-St-1}, to get the bound}
	{
		\begin{eqnarray}\label{Stri-uk}\quad
		\|u_k\|_{L^5(0,T;L^{10}(\Omega))}&\lesssim& \left[\|u_{0,k}\|_{H_0^1(\Omega)} + \|u_{1,k}\|_{L^2(\Omega)} +  \|a(\cdot) g(\partial_t u_k) \|_{L^1L^2} \right]\nonumber \\
		&\lesssim& (E_{u_k}(0)+ T^{1/2}\|a\|_{L^\infty}^{1/2}\, [E_{u_k}(0)]^{1/2}) \leq C=C(\|a\|_{L^\infty},T)<+\infty
		\end{eqnarray}
		for all $k\in \mathbb{N}$, where $L^1L^2$ denotes the space $L^1(0,T;L^2(\Omega))$. Then, we conclude that $u_k \rightharpoonup u$ weakly in $L^5(0,T;L^{10}(\Omega))$ where $u$ is the solution associated with problem (\ref{eq:*}). Analogously, we also have
		\begin{eqnarray}\label{Stri-uk'}
		\|u_k\|_{L^4(0,T;L^{12}(\Omega))} \leq C=C(\|a\|_{L^\infty},T)<+\infty.
		\end{eqnarray}
		
		We observe that once we assume $1\leq p \leq \frac{n+2}{n-2}\leq 5$ for $n>2$, then $2p \leq 10$ and consequently, from (\ref{Stri-uk}), one has
		\begin{eqnarray}\label{bound f(u_k)}
		&&\|f(u_k)\|_{L^1(0,T;L^2(\Omega))}=\int_0^T \left[\int_\Omega |f(u_k(x,t))|^2\,dx\right]^{\frac{1}{2}}dt\\
		&&\lesssim  \int_0^T \left[\int_\Omega \left( |u_k(x,t)|^2 + |u_k(x,t)|^{2p}\right)\,dx\right]^{\frac{1}{2}}dt\nonumber\\
		&&\lesssim   T^{4/5}|\Omega|^\frac{2}{5}\,\|u_k\|_{L^5(0,T; L^{10}(\Omega))}+\int_{0}^T\left[\int_\Omega |u_k(x,t)|^{2p}dx\right]^\frac{1}{2}dt\nonumber\\
		&&\lesssim  T^{4/5}|\Omega|^\frac{2}{5}\,\|u_k\|_{L^5(0,T; L^{10}(\Omega))}+\int_{0}^T\left[\int_\Omega |u_k(x,t)|^{10}dx\right]^\frac{p}{10}\left[\int_\Omega dx\right]^\frac{5-p}{10}dt\nonumber\\
		&&\lesssim   T^{4/5}|\Omega|^\frac{2}{5}\,\|u_k\|_{L^5(0,T; L^{10}(\Omega))}+T^{(5-p)/5}|\Omega|^{\frac{5-p}{10}}\,\|u_k\|_{L^5(0,T; L^{10}(\Omega))}^p\leq C_T\nonumber
		\end{eqnarray}
		for all $k\in \mathbb{N}$.
		Now, observing that $|f_k(s)|\leq c [|s| + |s|^p]$ for all $s\in \mathbb{R}$ and $k\in \mathbb{N}$, analogously, we infer
		\begin{eqnarray}\label{bound f_k(u_k)}
		\|f_k(u_k)\|_{L^1(0,T;L^2(\Omega))} \leq C_T \hbox{ for all } k\in \mathbb{N}.
		\end{eqnarray}
		
		\medskip
		
		To prove (\ref{regularity}) and (\ref{unif conv}), we have to show that
		\begin{eqnarray}\label{main conv}
		f_k(u_k) \rightarrow f(u) ~\hbox{ strongly in }L^1(0,T; L^2(\Omega)).
		\end{eqnarray}

		In order to prove (\ref{main conv}), first we observe that
		\begin{eqnarray}\label{I}
		&&\|f_k(u_k) - f(u)\|_{L^1(0,T;L^2(\Omega))}\\
		&& \lesssim  \|f_k(u_k) - f(u_k)\|_{L^1(0,T;L^2(\Omega))} +  \|f(u_k) - f(u)\|_{L^1(0,T;L^2(\Omega))}.\nonumber
		\end{eqnarray}

		In view of (\ref{ass on f}) one has
		\begin{eqnarray*}
			|f(u_{k}) - f(u)| &\leq& c (1 + |u_{k}|^{p-1} + |u|^{p-1})|u_{k} - u|\\
			&=& c \left( |u_{k} - u| + |u_{k}|^{p-1}\, |u_{k} - u| + |u|^{p-1}\,|u_{k} - u|\right).
		\end{eqnarray*}

		
		Using H\"older and interpolation inequalities,
		\begin{eqnarray}\label{previous}
		\| |u_{k}|^{p-1}|u_{k} - u|\|_{L^{1}(0, T; L^{2}(\Omega))}  & \leq & \int_{0}^{T}{\|u_{k}\|_{L^{2p}}^{p-1} \|u_{k}-u\|_{L^{2p}}}dt \\
		& \leq & C \int_{0}^{T}{\|u_{k}\|_{L^{2p}}^{p-1}\|u_{k} - u\|_{L^{2}}^{\theta}\|u_{k} - u\|_{L^{10}}^{1-\theta} }dt\nonumber
		\end{eqnarray}
		with $\theta = \frac{5-p}{4p}$. Hence, using H\"older again with $q_{1} = \frac{5}{p-1}$, $q_{2} = \frac{8p}{5-p}$ and $q_{3} = \frac{4p}{p-1}$, we have
		\begin{eqnarray*}
			&&\| |u_{k}|^{p-1}|u_{k}- u|\|_{L^{1}(0, T; L^{2}(\Omega))}\\
			&& \leq  C(T) \|u_{k}\|_{L^{5}(0, T; L^{10}(\Omega))}^{p-1}\|u_{k} - u\|_{L^{2}(0, T; L^{2}(\Omega))}^{\theta} \|u_{k} - u\|_{L^{5}(0, T; L^{10}(\Omega))}^{1- \theta} \\
			&& \rightarrow  0 \quad \hbox{ as } k\to \infty.
		\end{eqnarray*}
		Note that in the above limit, due to $(\ref{Stri-uk})$ and $\|u_{k}\|_{L^{5}(0, T; L^{10}(\Omega))}, \|u^{k} - u\|_{L^{5}(0, T; L^{10}(\Omega))} \leq C$ for every fixed $T> 0$, we have $\|u_{k} - u\|_{L^{2}(0, T; L^{2}(\Omega))} \rightarrow 0$ by the Aubin-Lions-Simon lemma \cite{Simon}. Treating the other terms similarly, we conclude that
		\begin{eqnarray}\label{II}
		\| f(u_{k}) - f(u)\|_{L^{1}(0, T; L^{2}(\Omega))} \rightarrow 0.
		\end{eqnarray}
		From (\ref{I}) and (\ref{II}) it remains to prove that
		\begin{eqnarray}\label{III}
		\| f_k(u_{k}) - f(u_k)\|_{L^{1}(0, T; L^{2}(\Omega))}  \rightarrow 0 \hbox{ as } k \rightarrow \infty.
		\end{eqnarray}

		In fact, for each $t\in [0,T]$ we can set $$\Omega_k^t:=\{x\in \Omega:~ |u_k(x,t)| >k\}.$$
		
		Now we note from the definition of $f_k$ in (\ref{trunc func}) that
		\begin{eqnarray}\label{droping |u_k|<1}
		f_k(u_k) - f(u_k) =0, \hbox{ if } |u_k(x,t)|\leq k.
		\end{eqnarray}
		
		Consequently,  we can write
		\begin{eqnarray*}
			\| f_k(u_{k}) - f(u_k)\|_{L^{1}(0, T; L^{2}(\Omega))} = \| f_k(u_{k}) - f(u_k)\|_{L^{1}(0, T; L^{2}(\Omega_k^t))}.
		\end{eqnarray*}
		
		Thus, in view of (\ref{ass on f}) and $1\leq k < |u_k(x,t)|$ for all $x\in \Omega_k^t$ and $t\in [0,T]$, we have the following estimate for $1\leq p<5$:
		\begin{eqnarray}
		|f_k(u_k) - f(u_k)|^2 &\lesssim&  \left[  |f(u_k)|^2 +  |f(-k)|^2 +|f(k)|^2\right]\label{IV}\\
		&\lesssim& \left[ [|u_k|^{2} + |u_k|^{2p}] + [|k|^2 + |k|^{2p}]\right]\nonumber\\
		&\lesssim&\left[ |u_k|^{2p}+ |k|^{2p}\right],\nonumber\\
		&\lesssim& |u_k|^{2p}.\nonumber
		\end{eqnarray}
		It follows from (\ref{droping |u_k|<1}), (\ref{IV}) and $1\leq p <5$ that it is sufficient to prove
		\begin{eqnarray}
		&&\|\, |u_k|^{p} \, \|_{L^1(0,T;L^2(\Omega_k^t))} \rightarrow 0 \hbox{ as } k\rightarrow +\infty.\label{step1}
		\end{eqnarray}
		
		Before estimating the term in (\ref{step1})
		we observe that
		\begin{eqnarray*}
			\left( \int_{\Omega_k^t} k^{10}\,dx\right)^{1/2} \leq \left( \int_{\Omega_k^t}|u_k|^{10} \,dx\right)^{1/2},
		\end{eqnarray*}
		which implies $\left(\operatorname{meas}(\Omega_k^t)\right)^{1/2} \leq k^{-5} \|u_k(t)\|_{L^{10}(\Omega_k^t)}^5,$
		from which we conclude, taking (\ref{Stri-uk}) into account, that
		\begin{eqnarray}\label{meas(Omega_k)}
		\int_0^T\left(\operatorname{meas}(\Omega_k^t)\right)^{1/2}\,dt &\leq&  k^{-5} \|u_k\|_{L^5(0,T;L^{10}(\Omega))}^5
		\leq c(T)  k^{-5}.
		\end{eqnarray}
		
		Proceeding as in (\ref{previous}), we deduce
		\begin{eqnarray}\label{VIII'}
		\||u_k|^{p} \|_{L^1(0,T;L^2(\Omega_k^t))}&=&\int_0^T\left(\int_{\Omega_k^t} |u_k|^{2p}\,dx\right)^{1/2} \,dt\\
		&=& \int_0^T \|u_k(t)\|_{L^{2p}(\Omega_k^t)}^p\,dt\nonumber\\
		& \lesssim& \int_0^T \|u_k(t)\|_{L^{2p}(\Omega_k^t)}^{p-1}\|u_k(t)\|_{L^{2p}(\Omega_k)} \, dt\nonumber\\
		& \lesssim& \|u_k\|_{L^5(0,T;L^{10}(\Omega_k^t))}^{p-1}\,\|u_k\|_{L^2(0,T;L^{2}(\Omega_k^t))}^{\theta}\,\|u_k\|_{L^5(0,T;L^{10}(\Omega_k^t))}^{1-\theta}.\nonumber
		\end{eqnarray}
		
		In addition, (\ref{meas(Omega_k)}) yields
		\begin{eqnarray}\label{IX''}
		\|u_k\|_{L^2(0,T;L^{2}(\Omega_k^t))}^{\theta} &\leq& \left(\int_0^T\left(\operatorname{meas}(\Omega_k^t)^{1/2}\right)dt\right)^{\frac{\theta}{2}} \left(\int_0^T \|u_k(t)\|_{L^4(\Omega)}^2 \,dt\right)^{\frac{\theta}{2}}\\
		&\lesssim& k^{\frac{-5}{2}\theta} \rightarrow 0  \hbox{ as } k\rightarrow +\infty\nonumber
		\end{eqnarray}
		because  $H_0^1(\Omega) \hookrightarrow L^4(\Omega)$. Therefore, $\|u_k(t)\|_{L^4(\Omega)} \lesssim \|u_k(t)\|_{H_0^1(\Omega)} \lesssim E_{u_k}(0)$.
		Combining (\ref{Stri-uk}),  (\ref{VIII'}) and (\ref{IX''}), we deduce (\ref{step1}) and consequently (\ref{III}), as desired.


		
		The cases $n=1,2$ are not difficult to prove for $p\geq 1$ and consequently will be omitted.

		\medskip
		
		Defining $z_{\mu,\sigma}=u_{\mu}-u_{\sigma}$, $\mu,\sigma\in
		\mathbb{N}$, from (\ref{eq:AP}) it holds that
		\begin{eqnarray}\label{eq:3.59}
		&&\frac12 \frac{d}{dt} \left\{\|\partial_t z_{\mu,\sigma}(t)\|_{L^2(\Omega)}^2 +\|\nabla
		z_{\mu,\sigma}(t)\|_{L^2(\Omega)}^2 \right\}\\
		&&+ \int_{\Omega}a(x)\left(g(\partial_t u_{\mu})-
		g(\partial_t u_{\sigma})\right)(\partial_t u_{\mu}-\partial_t u_{\sigma}) \,dx \nonumber \\
		&&=  -\int_{\Omega}\left(f_{\mu}(u_{\mu})-
		f_{\sigma}(u_{\sigma})\right)(\partial_t u_{\mu}-\partial_t u_{\sigma}) \,dx.
		\nonumber
		\end{eqnarray}
		
		Integrating (\ref{eq:3.59}) over $(0,t)$ with $t\in [0,T]$, we obtain
		\begin{eqnarray}\label{eq:3.60}
		&&\frac12  \left\{\|\partial_t z_{\mu,\sigma}(t)\|_{L^2(\Omega)}^2 +\|\nabla
		z_{\mu,\sigma}(t)\|_{L^2(\Omega)}^2 \right\}\\
		&&+\int_0^t \int_{\Omega}a(x)\left(g(\partial_t u_{\mu})-
		g(\partial_t u_{\sigma})\right)(\partial_t u_{\mu}-\partial_t u_{\sigma}) \,dx ds\nonumber\\
		&&\leq  \frac12\left\{\|u_{1,\mu} - u_{1,\sigma}\|_{L^2(\Omega)}^2 + \|\nabla u_{0,\mu}
		- \nabla u_{0,\sigma}\|_{L^2(\Omega)}^2\right\}\nonumber\\
		&&+ \left|\int_0^t \int_{\Omega}\left(f_{\mu}(u_{\mu})-
		f_{\sigma}(u_{\sigma})\right)(\partial_t u_{\mu}-\partial_t u_{\sigma}) \,dxds\right|.
		\nonumber
		\end{eqnarray}

		We observe that the convergences (\ref{conv init data}), (\ref{conver2}) and (\ref{main conv}) (the latter being valid when $1\leq p < \frac{n+2}{n-2}$) imply the convergence to zero
		(as $\mu, \sigma \rightarrow +\infty$) of the terms on the RHS
		of (\ref{eq:3.60}). Thus, from (\ref{eq:3.60}) we deduce that
		\begin{eqnarray}\label{Cauchy conv}
		&&u_{\mu} \rightarrow u \hbox { in }
		C^0([0,T];H_{0}^1(\Omega)) \cap C^1 ([0,T]; L^2(\Omega)),\\
		&&\lim_{\mu,\sigma \rightarrow +\infty}
		\int_0^T \int_{\Omega}a(x)\left(g(\partial_t u_{\mu})-
		g(\partial_t u_{\sigma})\right)(\partial_t u_{\mu}-\partial_t u_{\sigma}) \,dx ds = 0\label{conv damp1}
		\end{eqnarray}
		for all $T>0$, which proves (\ref{regularity}).
		
		It remains to prove that $\chi = a(x) g(\partial_t u)$. From \eqref{Cauchy conv} it follows that $\partial_t u_k \rightarrow \partial_{t} u$ a.e. in $\Omega \times (0,T)$ as $k \rightarrow \infty$. The continuity of the function $g(\cdot)$ ensures that $a(x)g(\partial_tu_{k})\rightarrow a(x)g(\partial_{t} u)$ a.e. in $\Omega \times (0,T)$ as $k \rightarrow \infty$. Employing Strauss' Lemma we deduce that $\chi=a(x)g(\partial_t u)$ as desired.

		
		%
		
		The uniqueness of solution for the range $1\leq p < \frac{n+2}{n-2}$ follows the same idea in \cite{Astudillo2} and consequently will be omitted.
		
		\medskip
		Our first result reads as follows:
		\begin{Theorem}[Wellposedness]\label{theo 2}
			Assume that $a\in L^\infty(\Omega)$ is nonnegative, $f$ satisfies the conditions given in Assumption \ref{assmpflabel} with $n=3$ and $\mathcal{O}(g)=1$. Then, given $\{u_0,u_1\}\in H_0^1(\Omega) \times L^2(\Omega)$, the problem (\ref{eq:*}) possesses a unique global solution  $u$ which satisfies
			{\small
				\begin{eqnarray*}
					&u\in C^0([0,T];H_0^1(\Omega)),~\partial_t u \in C^1(0,T; L^2(\Omega)),~\partial_t^2 u \in C^0([0,T]; H^{-1}(\Omega)),&\\
					&u \in L^5(0,T,L^{10}(\Omega))\cap L^4(0,T;L^{12}(\Omega)).&
			\end{eqnarray*}}
		\end{Theorem}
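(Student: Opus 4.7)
The plan is to obtain the solution via the approximation scheme already set up in this section. For each $k\in\mathbb{N}$ I would invoke Theorem \ref{thm:well-posedness} to produce the regular solution $u_k$ of the truncated problem \eqref{eq:AP}, having chosen data $\{u_{0,k},u_{1,k}\}\in\mathscr{D}(\mathbb{A})$ converging strongly to $\{u_0,u_1\}$ in $\mathcal{H}$. The truncated energy identity \eqref{est2}, together with $F_k\geq 0$, yields uniform bounds on $u_k$ in $L^\infty(\mathbb{R}_+;H_0^1(\Omega))$, on $\partial_t u_k$ in $L^\infty(\mathbb{R}_+;L^2(\Omega))$, and on $a(\cdot)g(\partial_t u_k)$ in $L^2(0,T;L^2(\Omega))$ via \eqref{trunc-limfraco-g}. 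Standard weak-$\ast$ compactness extracts a limit $(u,\partial_t u,\chi)$ along a subsequence.

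The hardest step is to upgrade weak convergence to strong convergence in the energy space, and the real obstacle is showing $f_k(u_k)\to f(u)$ strongly in $L^1(0,T;L^2(\Omega))$ in the range $\tfrac{n}{n-2}\leq p<\tfrac{n+2}{n-2}=5$ with $n=3$, where the energy estimate alone is insufficient. My strategy is to view $u_k$ as a solution of the inhomogeneous linear wave equation with forcing $-f_k(u_k)-a(x)g(\partial_t u_k)$, whose $L^1(0,T;L^2(\Omega))$ norm is uniformly bounded by \eqref{bound f_k(u_k)} and \eqref{trunc-limfraco-g}. The three-dimensional Strichartz estimate of Theorem \ref{St-Est-3d} then supplies uniform bounds for $u_k$ in $L^5(0,T;L^{10}(\Omega))$ and $L^4(0,T;L^{12}(\Omega))$, giving at once the last regularity stated in the theorem. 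Using these spaces, I would split
\[
f_k(u_k)-f(u)=\bigl[f_k(u_k)-f(u_k)\bigr]+\bigl[f(u_k)-f(u)\bigr],
\]
handle the second bracket by the locally Lipschitz bound \eqref{ass on f}, Hölder interpolation between $L^2$ and $L^{10}$, and Aubin–Lions–Simon compactness, and handle the first bracket by noting that it is supported on $\Omega_k^t=\{|u_k(\cdot,t)|>k\}$, whose time-integrated measure decays like $k^{-5}$ by Chebyshev applied to the Strichartz bound.

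With strong convergence of $f_k(u_k)$ in hand, the energy-type inequality \eqref{eq:3.60} for the difference $z_{\mu,\sigma}=u_\mu-u_\sigma$ closes: the monotonicity of $g$ makes the damping term nonnegative on the left, while the initial-data term and the forcing term on the right vanish in the limit $\mu,\sigma\to\infty$. This delivers the Cauchy property in $C^0([0,T];H_0^1(\Omega))\cap C^1([0,T];L^2(\Omega))$, hence the first two regularities in the theorem. Pointwise a.e.\ convergence of $\partial_t u_k$ to $\partial_t u$ follows, and since $g$ is continuous and $a(\cdot)g(\partial_t u_k)$ is bounded in $L^2(0,T;L^2(\Omega))$, Strauss' Lemma identifies $\chi=a(x)g(\partial_t u)$. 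Passing to the limit in the variational formulation then shows that $u$ solves \eqref{eq:*} in the sense of distributions.

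Finally, the regularity $\partial_t^2 u\in C^0([0,T];H^{-1}(\Omega))$ can be read off directly from the equation $\partial_t^2 u=\Delta u-f(u)-a(x)g(\partial_t u)$, since each term on the right belongs to $C^0([0,T];H^{-1}(\Omega))$ (for $f(u)$ one uses the subcritical embedding $L^{(p+1)/p}(\Omega)\hookrightarrow H^{-1}(\Omega)$ in dimension three, together with continuity in time inherited from $u\in C^0([0,T];H_0^1(\Omega))$). Uniqueness is obtained by running the same energy estimate on the difference of two putative solutions $u^{(1)}$ and $u^{(2)}$: both enjoy the Strichartz bounds, so the locally Lipschitz estimate \eqref{ass on f} permits a Grönwall closure exactly as in \cite{Astudillo2}, forcing $u^{(1)}\equiv u^{(2)}$.
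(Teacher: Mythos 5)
Your proposal is correct and follows essentially the same route as the paper: truncated problems solved by nonlinear semigroup theory, uniform energy bounds, Strichartz estimates in $L^5(0,T;L^{10})$ to prove $f_k(u_k)\to f(u)$ strongly in $L^1(0,T;L^2)$ via the two-bracket splitting and the Chebyshev estimate on $\operatorname{meas}(\Omega_k^t)$, then the Cauchy argument in the energy norm, identification of $\chi$ by Strauss' Lemma, and uniqueness as in \cite{Astudillo2}. Your closing remark deducing $\partial_t^2 u\in C^0([0,T];H^{-1}(\Omega))$ directly from the equation is in fact slightly more explicit than the paper, which only records $\partial_t^2 u\in L^2(0,T;H^{-1}(\Omega))$.
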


		\subsection{Estimating \texorpdfstring{$F_k(u_k)$}. }
		
		\medskip
		
		Inequality (\ref{bound on Fk}) gives $|F_k(s)| \leq c [|s|^2 + |s|^{p+1}]$ for all $s\in \mathbb{R}$ and $k\in \mathbb{N}$.
		
		Now, assuming that $1\leq p < \frac{n+2}{n-2}$ for $n>2$, then $2\leq p+1 < \frac{2n}{n-2}=2^{\ast}$. Thus, there exists $\varepsilon >0$ such that $p+1+\varepsilon=2^{\ast}$, and, consequently, $H_0^1(\Omega) \hookrightarrow L^{p+1+\varepsilon}(\Omega)$. Then,
		\begin{eqnarray}\label{bound data Lp'}
		\int_\Omega |F_k(u_{0,k})|^\frac{p+1+\varepsilon}{p+1}\,dx &\leq& c\int_\Omega |u_{0,k}|^{\frac{2(p+1+\varepsilon)}{p+1}} + |u_{0,k}|^{p+1+\varepsilon} \,dx\\
		&\lesssim&  \|u_{0,k}\|_{H_0^1(\Omega)}^{p+1+\varepsilon} \leq C,\nonumber
		\end{eqnarray}
		from which we conclude that $E_{u_k}(0)$ is bounded.
		Analogously,
		\begin{eqnarray}\label{bound on Fk'}
		\int_\Omega |F_k(u_{k}(x,t_0))|^\frac{p+1+\varepsilon}{p+1}\,dx
		\lesssim \|u_{k}(\cdot,t_0)\|_{H_0^1(\Omega)}^{p+1+\varepsilon} \leq C E_{u_k}(0)^{p+1+\varepsilon}
		\end{eqnarray}
		for all $t_0\in [0,T]$.
		The boundedness of $E_{u_k}(0)$ implies that there exists $\Xi \in L^{\frac{2^{\ast}}{p+1}}(\Omega )$ so that
		\begin{eqnarray}\label{weak conv of Fk}
		F_k(u_k(\cdot,t_0))\rightharpoonup \Xi \hbox{ weakly in }L^{\frac{2^{\ast}}{p+1}}(\Omega ), \hbox{ as } k \rightarrow +\infty.
		\end{eqnarray}
		

		In what follows we are going to prove that $\Xi=F(u(\cdot,t_0))$. Indeed, from \eqref{Cauchy conv} we obtain $u_k(\cdot,t_0) \rightarrow u(\cdot,t_0)$ strongly in $L^2(\Omega)$.  Thus,
		\begin{equation}\label{victor1}
		u_k(x,t_0) \rightarrow u(x,t_0) \hbox{ a.e. in } \Omega.
		\end{equation}
		
		Note that
		\begin{eqnarray}\label{victor3}
		&&|F_k(u_k(x,t_0))-F(u(x,t_0))| \\
		&&\leq |F_k(u_k(x,t_0))-F(u_k(x,t_0))|+|F(u_k(x,t_0))-F(u(x,t_0))|.\nonumber
		\end{eqnarray}
		
		The convergence (\ref{victor1}) and the continuity of $F$ imply
		\begin{eqnarray}\label{victor4}
		F(u_k(x,t_0)) \rightarrow F(u(x,t_0)) \hbox{ a.e. in } \Omega.
		\end{eqnarray}
		
		In light of inequality \eqref{victor3}, to prove that
		\begin{eqnarray}\label{conv a.e. F_k}
		F_k(u_k(x,t_0)) \rightarrow F(u(x,t_0)) \hbox{ a.e. in } \Omega,
		\end{eqnarray}
		it is now enough to show that
		\begin{eqnarray*}
			F_k(u_k(x,t_0)) \rightarrow F(u_k(x,t_0)) \hbox{ a.e. in } \Omega.
		\end{eqnarray*}
		From (\ref{boundedness}), there exists a positive constant $L=L(x,t)>0$ such that \begin{eqnarray}\label{victor5}
		|F_k(u_k(x,t_0))-F(u_k(x,t_0))| &=& \left|\int_{0}^{u_k(x,t_0)} f_k(s)ds-\int_{0}^{u_k(x,t_0)} f(s)ds\right|\nonumber\\
		&\leq& \int_{-L}^{L} |f_k(s)-f(s)|ds =0, \hbox{ if }k\geq L.
		\end{eqnarray}

		Therefore, combining \eqref{victor3}, \eqref{victor4} and \eqref{victor5}, we obtain \eqref{conv a.e. F_k}. Thus, the above yields
		\begin{eqnarray}\label{main weak conv}
		F_k(u_k(\cdot,t_0))\rightharpoonup F(u(\cdot,t_0)) \hbox{ weakly in }L^{\frac{2^{\ast}}{p+1}}(\Omega ), \hbox{ as } k \rightarrow +\infty.
		\end{eqnarray}
		In addition, employing Strauss's Lemma we also deduce that
		\begin{eqnarray}\label{main strong conv}
		F_k(u_k(\cdot,t_0))\rightarrow F(u(\cdot,t_0)) \hbox{ strongly in }L^r(\Omega ), \hbox{ as } k \rightarrow +\infty
		\end{eqnarray}
		for all $1\leq r < \frac{2^{\ast}}{p+1}$.

		\medskip
		
		Now, we have the following result:
		\begin{Theorem}\label{existence}
			Assume that $a\in L^\infty(\Omega)$ is nonnegative and $f$ satisfies the conditions given in Assumption \ref{assmpflabel} with $n=3$ and $\mathcal{O}(g)=1$. Then, given $\{u_0,u_1\}\in H_0^1(\Omega) \times L^2(\Omega)$ problem (\ref{eq:*}) has a unique global solution
			{\small
				$$u\in C^0([0,T];H_0^1(\Omega)),~\partial_t u \in C^0([0,T]; L^2(\Omega)),~\partial_t^2 u \in L^2(0,T; H^{-1}(\Omega) ).$$}
			In addition, the energy identity below holds
			\begin{equation}\label{energy identity}
			E_u(t_1)+\int_{t_1}^{t_2}\int_{\Omega}a(x)g(\partial_t u(x,t)) \partial_t u(x,t) \, dx dt=E_u(t_2),
			\end{equation}
			where $E_u$ is as in \eqref{Ident energ orig prob}.
		\end{Theorem}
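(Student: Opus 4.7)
The plan is to invoke Theorem \ref{theo 2} for the existence, uniqueness, and bulk of the regularity, and then focus all remaining work on establishing the energy identity \eqref{energy identity}. Existence and uniqueness, together with $u\in C^0([0,T];H_0^1(\Omega))$ and $\partial_t u\in C^0([0,T];L^2(\Omega))$, are already part of Theorem \ref{theo 2}. The regularity $\partial_t^2 u\in L^2(0,T;H^{-1}(\Omega))$ follows by reading it directly off the equation: since $\Delta u\in L^\infty(0,T;H^{-1}(\Omega))$, $f(u)\in L^\infty(0,T;L^{\frac{p+1}{p}}(\Omega))\hookrightarrow L^2(0,T;H^{-1}(\Omega))$ by \eqref{estIII.1}, and $\chi=a(x)g(\partial_t u)\in L^2(0,T;L^2(\Omega))$ (by \eqref{trunc-limfraco-g} passed to the limit using Fatou), the identity \eqref{weak solution'} gives $\partial_t^2 u\in L^2(0,T;H^{-1}(\Omega))$.

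For the energy identity, my plan is to pass to the limit as $k\to\infty$ in the identity \eqref{est2} already satisfied by each truncated solution $u_k$. I therefore need each of the three ingredients of $E_{u_k}$ and the damping integral to converge to their natural limits. The kinetic term $\tfrac12\|\partial_t u_k(t)\|_{L^2}^2$ and the elastic term $\tfrac12\|\nabla u_k(t)\|_{L^2}^2$ converge at \emph{every} $t\in[0,T]$ as a consequence of the strong convergence \eqref{Cauchy conv}, namely $u_k\to u$ in $C^0([0,T];H_0^1(\Omega))\cap C^1([0,T];L^2(\Omega))$. The potential term converges because \eqref{main strong conv} gives $F_k(u_k(\cdot,t))\to F(u(\cdot,t))$ strongly in $L^1(\Omega)$ (using $\tfrac{2^*}{p+1}>1$), so $\int_\Omega F_k(u_k(x,t))\,dx\to\int_\Omega F(u(x,t))\,dx$. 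Hence $E_{u_k}(t)\to E_u(t)$ pointwise in $t$.

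For the damping term, I plan to argue as follows: from \eqref{Cauchy conv}, $\partial_t u_k\to \partial_t u$ strongly in $C^0([0,T];L^2(\Omega))$, hence up to a subsequence pointwise a.e.\ on $\Omega\times(0,T)$. Because $\mathcal{O}(g)=1$, the growth bound $|g(s)|\lesssim 1+|s|$ holds globally on $\mathbb{R}$, so both $g(\partial_t u_k)$ and the product $g(\partial_t u_k)\partial_t u_k$ admit the dominating estimate $|a(x)g(\partial_t u_k)\partial_t u_k|\lesssim \|a\|_{L^\infty}(1+|\partial_t u_k|^2)$ and the sequence $\{|\partial_t u_k|^2\}$ is uniformly integrable on $\Omega\times(0,T)$ thanks to its $L^\infty(0,T;L^2(\Omega))$-convergence in norm. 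By continuity of $g$ and Vitali's convergence theorem,
\begin{equation*}
\int_{t_1}^{t_2}\!\!\int_\Omega a(x)g(\partial_t u_k)\partial_t u_k\,dx\,dt \longrightarrow \int_{t_1}^{t_2}\!\!\int_\Omega a(x)g(\partial_t u)\partial_t u\,dx\,dt,
\end{equation*}
identifying the limit with the correct expression (which is consistent with $\chi=a(x)g(\partial_t u)$ established earlier). Passing to the limit in \eqref{est2} yields \eqref{energy identity}.

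The most delicate step is the convergence of the damping integral, because monotonicity arguments alone (as in \eqref{conv damp1}) only control the Cauchy behavior of $g(\partial_t u_\mu)-g(\partial_t u_\sigma)$, not the pointwise limit. The key enabling fact is the linear-growth assumption $\mathcal{O}(g)=1$, which together with the strong convergence of $\partial_t u_k$ in $C^0([0,T];L^2)$ provides the uniform integrability needed for Vitali; without $\mathcal{O}(g)=1$, one would have to rely instead on weak$\star$ convergence in the sense of Young measures or a Minty-type argument exploiting monotonicity, both of which would require additional technical work that is avoided here.
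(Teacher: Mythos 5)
Your proposal is correct and follows essentially the same route as the paper: the theorem is obtained by passing to the limit in the truncated energy identity \eqref{est2}, using the strong convergence \eqref{Cauchy conv} for the kinetic and elastic terms, \eqref{main strong conv} (with $\tfrac{2^*}{p+1}>1$) for the potential term, and the linear bound on $g$ together with the $L^2$-convergence of $\partial_t u_k$ for the damping term, exactly as the paper does implicitly via the preceding subsections and the dominated-convergence step \eqref{convag2}. Your explicit Vitali/uniform-integrability justification of the damping limit is a sound (and slightly more detailed) version of the paper's argument.
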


		\section{Decay rate estimates}\label{section5}
		\setcounter{equation}{0}
		Throughout this section we will assume that $\mathcal{O}(g)=1$, $1< p < \frac{n+2}{n-2}$ for $n>3$ and $p>1$ if $n=1,2$. However, for simplicity, we shall focus on the case of dimension $n =3$.
		In this section, we will first establish the observability inequality to the truncated auxiliary problem (\ref{eq:AP}).
		The energy associated to problem (\ref{eq:AP}) is given by
		\begin{eqnarray}\label{energy}\qquad
		E_{u_k}(t):= \frac12\int_\Omega |\partial_t u_k(x,t)|^2 + |\nabla u_k(x,t)|^2\,dx + \int_\Omega F_k(u_k(x,t))\,dx,
		\end{eqnarray}
		and the energy identity associated to problem \eqref{eq:AP} reads as follows:
		\begin{eqnarray}\label{ident energy mu}
		E_{u_k}(t_2) - E_{u_k}(t_1) = - \int_{t_1}^{t_2}\int_\Omega  a(x) g(\partial_t u_k(x,t)) \partial_t u_k(x,t)\,dxdt
		\end{eqnarray}
		for $0 \leq t_1 \leq t_2 <+\infty$.
		Let $T_0>0$ be associated with the geometric control condition, namely, every ray of the geometric optics enters in $\omega$ at a time $T<T_0$.
		
		\subsection{Observability for the truncated problem}
		
		{
		In this section, we abuse notation and set $$E(u_0,u_1)\equiv \frac{1}{2}\left(\|u_1\|_{L^2(\Omega)}^2+\|\nabla u_0\|_{L^2(\Omega)}^2\right)+\int_{\Omega}F(u_0)dx,$$ where $u_0$ and $u_1$ represent data for the original model and set
		$$E_k(u_{0,k},u_{1,k})\equiv \frac{1}{2}\left(\|u_{1,k}\|_{L^2(\Omega)}^2+\|\nabla u_{0,k}\|_{L^2(\Omega)}^2\right)+\int_{\Omega}F_k(u_{0,k})dx,$$ where $u_{0,k},u_{1,k}$ represent data for the truncated model.	
		
		Let $0<r<R$ and $u_0,u_1,u_{0,k},u_{1,k}$ be such that $r<E(u_0,u_1)<R$ and 
			\begin{eqnarray}\label{sequencebound}
			r<E_k(u_{0,k},u_{1,k}) <R, k\ge 1.
		\end{eqnarray}  
	
	Our first goal is to prove that the observability inequality for the truncated model holds true.  This is achieved in the first part of the lemma below in which the constant of the observability inequality depends on $k$ as well as the bounded set (annulus) from which data are taken. 

Our second goal is to show that if we choose a sequence of approximate data $\{u_{0,k},u_{1,k}\}$  which satisfies \eqref{conv init data} as well as \eqref{sequencebound}, then the constant of the observability inequality for approximate solutions can be made independent of $k$.
		 Note that a sequence which satisfies \eqref{conv init data} as well as \eqref{sequencebound} can easily be constructed by using density of smooth compactly supported functions in the class of functions from which $u_0,u_1$ are taken.
We claim the following lemma.

}
		
		\begin{Lemma}\label{ObsApprox} Let $r,R>0$, $T\ge T_0$, $k\ge 1$. Then, there is some $C_k=C_k(r,R,T)>0$ such that the corresponding solution $u_k$ of (\ref{eq:AP}) with $( u_{0,k}, u_{1,k}) \in \mathscr{D}(\mathbb{A})$ away from zero (i.e., verifying \eqref{sequencebound}) satisfies the observability inequality
			\begin{eqnarray}\label{obs ineq}
			E_k(u_{0,k}, u_{1,k})  \leq {C_k} \int_{0}^{T}\int_\Omega  a(x) \left(|\partial_t u_k|^2 + |g(\partial_t u_k)|^2\right) \,dxdt.
			\end{eqnarray}
{		Moreover, for a convergent sequence of data, i.e., if there is $(u_0,u_1)$ in $\mathcal{H}$ with $r<E(u_0,u_1)<R$ such that $( u_{0,k}, u_{1,k})\rightarrow (u_0,u_1)$ in $\mathcal{H}$ and \eqref{sequencebound} holds, then there is $T^*\ge T_0$ such that the constant of the inequality \eqref{obs ineq} can be chosen independent of $k$ provided $T\ge T^*$.}
		\end{Lemma}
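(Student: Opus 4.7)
\medskip

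\noindent\textbf{Proof plan.} Both assertions follow the classical compactness–uniqueness scheme of Bardos–Lebeau–Rauch and Dehman–Lebeau–Zuazua, but the crucial nonlinear compactness is controlled via Strichartz estimates together with microlocal defect measures (Theorem \ref{Theorem 4.60}). For the first statement, fix $k$ and argue by contradiction: suppose there is a sequence of data $\{(u_{0,k}^n,u_{1,k}^n)\}_{n\in\mathbb N}\subset \mathscr{D}(\mathbb A)$ with $r<E_k(u_{0,k}^n,u_{1,k}^n)<R$ whose solutions $u_k^n$ satisfy
\[
\int_0^T\!\!\int_\Omega a(x)\bigl(|\partial_t u_k^n|^2+|g(\partial_t u_k^n)|^2\bigr)\,dx\,dt\;\longrightarrow\;0.
\]
The energy identity \eqref{ident energy mu} and the bound $E_k(u_{0,k}^n,u_{1,k}^n)<R$ yield uniform bounds of $\{u_k^n\}$ in $L^\infty(0,T;H_0^1(\Omega))\cap W^{1,\infty}(0,T;L^2(\Omega))$; the Strichartz bounds \eqref{Stri-uk}--\eqref{Stri-uk'} give uniform control in $L^5(0,T;L^{10})\cap L^4(0,T;L^{12})$. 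Passing to a subsequence, we obtain a weak-$*$ limit $u$; Aubin–Lions provides strong convergence in $L^2(0,T;L^2(\Omega))$, so $\partial_t u\equiv 0$ on $\omega\times(0,T)$.

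\smallskip
Next, mimicking the estimates leading to \eqref{main conv}, the nonlinearity $f_k(u_k^n)\to f_k(u)$ in $L^1(0,T;L^2(\Omega))$, so the limit satisfies $\partial_t^2 u-\Delta u+f_k(u)=0$. Differentiate in time and set $w=\partial_t u$: by Lemma \ref{Lemma1}, $V(x,t):=f_k'(u(x,t))\in L^\infty(\Omega\times(0,T))$, and $w$ solves
\[
\partial_t^2 w-\Delta w+V(x,t)w=0\quad\text{in }\Omega\times(0,T),\qquad w|_\omega\equiv 0.
\]
Assumption \ref{assumption1.3} then forces $w\equiv 0$, whence $u$ is time–independent and satisfies the elliptic problem $-\Delta u+f_k(u)=0$ with $u|_{\partial\Omega}=0$; testing with $u$ and using $f_k(s)s\ge 0$ yields $u\equiv 0$.

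\smallskip
It remains to contradict the lower bound $E_k(u_{0,k}^n,u_{1,k}^n)>r$. This is the delicate point: we must upgrade the weak convergence to the convergence of energies. Following Burq–Gérard, we associate to the sequence $\{u_k^n\}$ a microlocal defect measure $\mu$ on $T^*(\Omega\times(0,T))\setminus 0$. The identity $\partial_t^2 u_k^n-\Delta u_k^n = -f_k(u_k^n)-a(x)g(\partial_t u_k^n)$, combined with the $L^1L^2$ and $L^2L^2$ bounds on the right-hand side (the damping term being controlled by the vanishing integral and Strichartz bounds giving the nonlinearity as a compact perturbation), places us in the hypotheses of Theorem \ref{Theorem 4.60}: $\operatorname{supp}(\mu)$ is a union of null-bicharacteristics of $\partial_t^2-\Delta$. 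Because $a\ge a_0>0$ on $\omega\supset\mathcal N(\partial\Omega)$ and the vanishing of the damping forces $\mu\equiv 0$ on $T^*(\omega\times(0,T))$, and since every bicharacteristic meets $\omega\times(0,T)$ for $T\ge T_0$ (geometric control), we conclude $\mu\equiv 0$. Thus $u_k^n\to 0$ strongly in $L^2(0,T;H^1_{\mathrm{loc}})$ and, combined with the energy identity and dominated convergence applied to $F_k(u_k^n)$, one gets $E_k(u_{0,k}^n,u_{1,k}^n)\to 0$, contradicting $E_k>r$.

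\smallskip
For the second (uniform in $k$) claim, take a sequence $k_n\uparrow\infty$ for which the optimal constant in \eqref{obs ineq} blows up; repeat the contradiction argument with the sequence $(u_{0,k_n},u_{1,k_n})\to(u_0,u_1)$. The truncations now vary with $n$, but the convergences $f_{k_n}(u_{k_n})\to f(u)$ in $L^1(0,T;L^2(\Omega))$ and the strong convergence in $C^0([0,T];H_0^1)\cap C^1([0,T];L^2)$ obtained in \eqref{Cauchy conv}--\eqref{main conv} show that the limit $u$ solves $\partial_t^2 u-\Delta u+f(u)=0$ with $\partial_t u|_\omega\equiv 0$. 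Since $p<(n+2)/(n-2)$ and $u\in L^5(0,T;L^{10})$, the potential $f'(u)\in L^\infty(0,T;L^{n})\subset L^\infty L^\infty_{\text{loc}}$ via the subcritical Strichartz bound, so Assumption \ref{assumption1.3} applies and, just as above, $u\equiv 0$. The same microlocal-defect-measure argument produces the contradiction, yielding a uniform constant for $T\ge T^*$ with $T^*$ only depending on $r,R$ (through the Strichartz bounds) and on the geometry.

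\smallskip
\textbf{Main obstacle.} The delicate step is not the unique continuation (which is handed to us by Assumption \ref{assumption1.3} once the potential's integrability is certified) but the passage from weak convergence to the vanishing of the energy: it requires that the semilinear perturbation and the dissipative term do not charge the microlocal defect measure, and for the uniform claim that this remains true as $k\to\infty$. The Strichartz estimates \eqref{Stri-uk}--\eqref{Stri-uk'} and the uniform bound \eqref{bound f_k(u_k)} are exactly what make these nonlinear contributions compact in the sense required by Theorem \ref{Theorem 4.60}.
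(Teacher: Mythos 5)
Your proof of the first assertion follows essentially the paper's own route (contradiction, splitting according to whether the weak limit vanishes, unique continuation via Assumption \ref{assumption1.3} using that $f_k'$ is bounded for fixed $k$, and propagation of a microlocal defect measure to kill the remaining concentration). Two imprecisions are worth flagging. First, you attach the measure to $\{u_k^n\}$ itself; but $u_k^n\to 0$ strongly in $L^2(0,T;L^2)$ already by Aubin--Lions, so its $L^2$-microlocal defect measure is trivially zero and yields nothing. The measure must be attached to $\partial_t u_k^n$ (the paper normalizes first, setting $v_k^m=u_k^m/\alpha_m$ with $\alpha_m=\sqrt{E_{u_k^m}(0)}$ bounded away from zero by \eqref{sequencebound}, and works with $\partial_t v_k^m$); it is the strong $L^2$ convergence of the \emph{velocity} that is not free and that the propagation theorem delivers. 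Second, vanishing of that measure gives $\partial_t v_k^m\to 0$ in $L^2$, not strong convergence in $L^2(0,T;H^1_{loc})$; to recover the gradient and potential parts of the energy one still needs the equipartition step (multiplying the equation by $v_k^m\theta$) together with monotonicity of the energy, as in the paper. These are repairable, and your overall architecture for part one matches the paper's.

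The second assertion is where your argument has a genuine gap. You pass to the limit in $k$, obtain a solution $u$ of the \emph{untruncated} problem $\partial_t^2u-\Delta u+f(u)=0$ with $\partial_t u|_\omega\equiv 0$, and invoke Assumption \ref{assumption1.3} for the potential $V=f'(u)$, asserting $f'(u)\in L^\infty(0,T;L^n)\subset L^\infty(0,T;L^\infty_{loc})$. That inclusion is false ($L^n(\Omega)\not\subset L^\infty_{loc}(\Omega)$), and more fundamentally $|f'(s)|\lesssim(1+|s|)^{p-1}$ with $p$ up to $\tfrac{n+2}{n-2}$ combined with the Strichartz bound $u\in L^5(0,T;L^{10})$ places $f'(u)$ only in spaces like $L^{5/(p-1)}_tL^{10/(p-1)}_x$, nowhere near the $L^\infty(]0,T[,L^\infty(\Omega))$ required by Assumption \ref{assumption1.3}. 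This is precisely the obstruction, explained in the introduction, that motivates the truncation in the first place: unique continuation is only ever applied to the truncated potentials $f_k'(u_k)\in L^\infty$. The paper's proof of uniformity in $k$ avoids unique continuation for the limit entirely: it shows that if the constants blow up then (after a diagonal extraction over expanding time intervals) the limit $u$ satisfies $\int_0^T\int_\Omega a(|\partial_t u|^2+|g(\partial_t u)|^2)\,dx\,dt=0$ for every $T\ge T_0$, hence solves the undamped equation and has \emph{constant} energy $\delta=E(u_0,u_1)>0$; on the other hand each truncated solution decays (by the $k$-dependent observability just proved plus the Lasiecka--Tataru machinery), so one can pick times $t_k\uparrow\infty$ with $E_{u_k}(t_k)\le\delta/4$, while Cantor diagonalization of the convergences $E_{u_k}(t_m)\to\delta$ forces $E_{u_k}(t_k)\to\delta$ --- a contradiction. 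You would need either this decay-plus-diagonalization argument or a unique continuation theorem valid for potentials with the low integrability dictated by the subcritical range; as written, your step fails.
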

		\begin{proof}
Our proof relies on a contradiction argument.  So, if Lemma is false, for every constant $C>0$ there exists initial data $( u^C_{0,k},u^C_{1,k}) \in \mathscr{D}(\mathbb{A})$ verifying (\ref{sequencebound}) for which corresponding solution $u^C_k$ violates (\ref{obs ineq}).
			
			In particular,  for each fixed $m\in\mathbb{N}$, we obtain the existence of initial data $\{u^m_{0,k},u^m_{1,k}\}$ verifying (\ref{sequencebound}) and for which corresponding solution $u^m_k$ satisfies the reverse inequality
			
			\begin{eqnarray}\label{false}
			E_{u^m_k}(0)> m \left(\int_{0}^{T}\int_\Omega  a(x) \left(|\partial_t u_k^m|^2 + |g(\partial_t u_k^m)|^2\right) \,dxdt\right).
			\end{eqnarray}
			Then, we obtain a sequence $\{u_k^m\}_{m\in \mathbb{N}}$ of solutions to problem (\ref{eq:AP}) such that
			\begin{eqnarray}\label{normal conv}
			\lim_{m \rightarrow +\infty}\frac{\int_{0}^{T}\int_\Omega a(x)\left(|\partial_t u_k^m|^2 + |g(\partial_t u_k^m)|^2\right) \,dxdt}{E_{u_k^m}(0)}=0.
			\end{eqnarray}
			
			Once $E_{u_k^m}(0)$ is uniformly bounded, from (\ref{normal conv}) we infer
			\begin{eqnarray}\label{conv damp}
			\lim_{m \rightarrow +\infty}\int_{0}^{T}\int_\Omega a(x)\left(|\partial_t u_k^m|^2 + |g(\partial_t u_k^m)|^2\right) \,dxdt=0.
			\end{eqnarray}
			
			Furthermore, we deduce there exists a subsequence of $\{u_k^m\}_{m\in \mathbb{N}}$, which from now on we will denote by the same notation, such that
			\begin{eqnarray}
			&&u_k^m \rightharpoonup u_k \hbox{ weakly-star in } L^{\infty}(0,T; H_0^1(\Omega)), \hbox{ as } m\rightarrow +\infty,\label{conv1'}\\
			&&\partial_t u_k^m \rightharpoonup \partial_t u_k \hbox{ weakly-star in } L^{\infty}(0,T; L^2(\Omega)), \hbox{ as }m\rightarrow +\infty,\label{conv2'}\\
			&& u_k^m \rightarrow u_k \hbox{ strongly in } L^{\infty} (0,T; L^q(\Omega)),  \hbox{ as }m\rightarrow +\infty,  \hbox{ for all } q\in \left[2, \frac{2n}{n-2}\right),\label{conv3'}
			\end{eqnarray}
			where the last convergence is due to Simon \cite{Simon}. At this point in the proof we will divide it into two cases: $u_k\ne 0$ and $u_k=0$.
			
			\medskip
			Case~(a):~$u_k\ne 0$. ~
			\medskip
			
			Passing to the limit in problem
			\begin{equation*}
			\begin{cases}
			\partial_t^2 u_{k}^{m} -\Delta u_{k}^{m} + f_k(u_{k}^{m}) + a(x)  g(\partial_t u_k^m) = 0 & \hbox{ in } \Omega \times (0, T),\\
			u_k^m=0&\hbox{ on } \partial \Omega \times (0, T)\\
			u_k^m(x,0)=u_{0,k}^m(x);\quad \partial_t u_k^m{(x,0)} =u_{1,k}^m(x), & \hbox{ in }\Omega,
			\end{cases}
			\end{equation*}
			and taking (\ref{conv damp}) into consideration, we deduce that $u_k$ solves
			\begin{equation}\label{limit1'}
			\begin{cases}
			\partial_t^2 u_k -\Delta u_k + f_k(u_k)= 0 & \hbox{ in } \Omega \times (0, T),\\
			u_k=0& \hbox{ on } \partial \Omega \times (0, T ),\\
			\partial_t u_k=0 & \hbox{ a.e. in }\omega,
			\end{cases}
			\end{equation}
			and for $y_k = \partial_t u_k$, in the distributional sense, one has
			\begin{equation*}
			\begin{cases}
			\partial_t^2 y_k -\Delta y_k + f_k'(u_k)y_k= 0 & \hbox{ in } \Omega \times (0, +\infty),\\
			y_k=0& \hbox{ on } \partial \Omega \times (0,+\infty ),\\
			y_k=0 & \hbox{ a.e. in }\omega.
			\end{cases}
			\end{equation*}
			
			Since $ f_k'(u_k)\in L^\infty (\Omega \times (0,T))$ because $f_k$ is globally Lipschitz, for each $k \in \mathbb{N}$, we deduce from  Assumption \ref{assumption1.3} that $y_k= \partial_t u_k\equiv 0$. Returning to (\ref{limit1'}), we deduce that $u_k\equiv 0$ as well, which is a contradiction.
			
			\medskip
			Case~(b):~$u_k= 0$. ~
			\medskip
			
			From (\ref{conv1'}), (\ref{conv2'}) and (\ref{conv3'}), we now have:
			\begin{eqnarray}
			&&u_k^m \rightharpoonup 0\hbox{ weakly-star in } L^{\infty}(0,T; H_0^1(\Omega)), \hbox{ as }m\rightarrow +\infty,\label{conv1''}\\
			&&\partial_t u_k^m \rightharpoonup  0 \hbox{ weakly-star in } L^{\infty}(0,T; L^2(\Omega)), \hbox{ as }m\rightarrow +\infty,\label{conv2''}\\
			&& u_k^m \rightarrow 0 \hbox{ strongly in } L^{\infty} (0,T; L^q(\Omega)),  \hbox{ as }m\rightarrow +\infty, \hbox{ for all } q\in \left[2, \frac{2n}{n-2}\right).\label{conv3''}
			\end{eqnarray}
			
			Setting
			\begin{eqnarray}\label{def v_k}
			\alpha_m := \sqrt{E_{u_k^m}(0)} ~ \hbox{ and }~v_k^m:= \frac{u_k^m}{\alpha_m},
			\end{eqnarray}
			(\ref{normal conv}) yields
			\begin{eqnarray}\label{damping conv}
			\lim_{m \rightarrow +\infty} \int_{0}^{T}\int_\Omega a(x) \left(|\partial_t v_k^m|^2 + \frac{1}{\alpha_m^2}|g(\partial_t u_k)|^2\right) \,dxdt=0.
			\end{eqnarray}
			
			Let $\{v_k^m\}_{m\in \mathbb{N}}$ be as in (\ref{def v_k}). Then we have the sequence of normalized problems
			\begin{equation}\label{eq:NP}
			\begin{cases}
			\partial_t^2 v_{k}^m -\Delta v_{k}^m + \frac{1}{\alpha_m} f_k(u_k^m)  + \frac{1}{\alpha_m} a(x)g(\partial_t u_k^m) = 0& \hbox{ in } \Omega \times (0, +\infty) \\
			v_k^m=0 & \hbox{ on } \partial \Omega \times (0,+\infty ) \\
			\displaystyle v_k^m(x,0)=\frac{u_{0,k}^m}{\alpha_m}; \partial_t v_k^m(x,0)=\frac{u_{1,k}^m}{\alpha_m} & \hbox{ in } \Omega.
			\end{cases}
			\end{equation}
			
			We observe that
			\begin{eqnarray*}
				\frac{1}{\alpha_m}\int_\Omega f_k(u_k^m) \partial_t v_k^m\,dx &=&\frac{1}{\alpha_m^2} \int_\Omega f_k(\alpha_m v_k^m) \partial_t(\alpha_m v_k^m)\,dx\\
				&=& \frac{1}{\alpha_m^2} \frac{d}{dt}\int_\Omega F_k(\alpha_m v_k^m)\,dx\\
				&=& \frac{1}{\alpha_m^2} \frac{d}{dt}\int_\Omega F_k(u_k^m)\,dx,
			\end{eqnarray*}
			so that
			\begin{eqnarray*}
				E_{v_k^m}(t) = \frac12 \int_\Omega \left(|\partial_t v_k^m|^2 + |\nabla v_k^m|^2 \right)\,dx + \frac{1}{\alpha_m^2} \int_\Omega F_k(u_k^m)\,dx.
			\end{eqnarray*}

			It is not difficult to check that $E_{v_k^m}(t)= \frac{1}{\alpha_m^2} E_{u_{k}^m}(t)$ for all $t\geq 0$. Then, in particular,
			\begin{eqnarray}\label{norm initial energy}
			E_{v_k^m}(0)= \frac{1}{\alpha_k^2} E_{u_{k}^m}(0)=1, \hbox{ for all } m\in \mathbb{N}.
			\end{eqnarray}
			
			In order to achieve a contradiction, our main goal is to prove that
			\begin{eqnarray}\label{main goal}
			\lim_{m\rightarrow +\infty} E_{v_k^m}(0)=0.
			\end{eqnarray}
			
			First we observe that from (\ref{norm initial energy}) we deduce there exists a subsequence of $\{v_k^m\}_{m\in \mathbb{N}}$, which from now on denote by the same notation, such that
			\begin{eqnarray}
			&&v_k^m \rightharpoonup v_k \hbox{ weakly-star in } L^{\infty}(0,T; H_0^1(\Omega)), \hbox{ as }m\rightarrow +\infty,\label{conv1}\\
			&&\partial_t v_k^m \rightharpoonup \partial_t v_k \hbox{ weakly-star in } L^{\infty}(0,T; L^2(\Omega)), \hbox{ as }m\rightarrow +\infty,\label{conv2}\\
			&& v_k^m \rightarrow v_k \hbox{ strongly in } L^{\infty} (0,T; L^q(\Omega)), \hbox{ as }m\rightarrow +\infty, \hbox{ for all } q\in [2, \frac{2n}{n-2}).\label{conv3}
			\end{eqnarray}
			
			For some eventual subsequence, still denoted with index $m$, we have $\alpha_m \rightarrow \alpha$ with $\alpha>0$. Case $\alpha=0$ is excluded, since \eqref{sequencebound} is assumed.
			
%
%
%
%
			Passing to the limit in (\ref{eq:NP}) as $m\rightarrow +\infty$ and taking (\ref{damping conv}), (\ref{conv1}), (\ref{conv2}) and (\ref{conv3}) into account, we deduce
			\begin{equation}\label{limit1}
			\begin{cases}
			\partial_t^2 v_{k} -\Delta v_{k} = 0 & \hbox{ in }\Omega \times (0, T), \\
			v_k=0 & \hbox{ on } \partial \Omega \times (0,T),\\
			\partial_t v_k=0 & \hbox{ a.e. in }\omega.
			\end{cases}
			\end{equation}
			$w_k= \partial_t v_{k}$ yields, in the distributional sense,
			\begin{equation}
			\begin{cases}
			\partial_t^2 w_{k} -\Delta w_{k} = 0 & \hbox{ in } \Omega \times (0, +\infty),\\
			w_k=0 & \hbox{ on } \partial \Omega \times (0,+\infty),\\
			w_k=0 & \hbox{ a.e. in }\omega.
			\end{cases}
			\end{equation}
			
			We deduce from the observability of the linear wave equation, see \cite{Bardos}, that $w_k= \partial_t v_k\equiv 0$. Therefore, returning to (\ref{limit1}), we deduce that $v_k\equiv 0$ as well.

			We observe that Lemma \ref{Lema2} yields
			\begin{eqnarray*}
			\frac{1}{\alpha_m^2} |f_k(u_k^m)|^2 \leq c_k \frac{1}{\alpha_m^2} |u_k^m|^2 = \frac{c_k}{\alpha_m^2}\alpha_m^2 |v_k^m|^2,
			\end{eqnarray*}
			from which we deduce that
			\begin{eqnarray}\label{crucial bound'}
			\frac{1}{\alpha_m^2} \int_0^T \int_\Omega |f_k(u_k^m)|^2 \,dxdt \leq  c_k \int_0^T\int_\Omega |v_k^m|^2\,dxdt
			\end{eqnarray}
			
			
			Coming back to (\ref{crucial bound'}), considering (\ref{conv3}) and $v_k=0$,  we deduce that
			\begin{eqnarray}\label{crucial bound}
			\frac{1}{\alpha_m^2} \int_0^T \int_\Omega |f_k(u_k^m)|^2 \,dxdt \rightarrow 0 \hbox{ in } L^2(0,T,L^2(\Omega)).
			\end{eqnarray}
			
			Let $\Box= \partial_{t}^{2} - \Delta$ the d'Alembert operator. We have from (\ref{eq:NP}) that
			\begin{eqnarray*}
				\Box v_k^m = -\frac{1}{\alpha_m} f_k(v_k^m)  - \frac{1}{\alpha_m}  a(x) g(\partial_t u_k^m),
			\end{eqnarray*}
			from which we deduce (see (\ref{damping conv}) and (\ref{crucial bound}) and having in mind that $v_k\equiv0$), that
			{\small
				\begin{eqnarray}\label{main identity}\quad
				\partial_t\Box v_k^m =\partial_t \left( -\frac{1}{\alpha_m} f_k(v_k^m)  - \frac{1}{\alpha_m}  a(x) g(\partial_t u_k^m)\right)\rightarrow 0 \hbox{ strongly in } H^{-1}_{loc}(\Omega \times (0,T)).
				\end{eqnarray}}

			Let $\mu$ be the microlocal defect measure associated with $\{\partial_t v_k^m\}_{m \in \mathbb{N}}$ in $L^2_{loc}(\Omega \times (0,T))$. In view of (\ref{main identity}), we deduce that:
			
			(i) The $\hbox{supp}(\mu)$ is contained in the characteristic set of the wave operator $\{\tau^2=\|\xi\|^2\}$ (see Theorem \ref{Theorem 4.55}).

			Our wish is to propagate the convergence of $\partial_t v_k^m$ from $L^2(\omega \times (0,T))$ to the whole $L^2(\Omega \times (0,T))$. Indeed, from (\ref{main identity}) we recall that:
			\begin{eqnarray}\label{Gerard2}
			\Box \partial_t v_k^m \rightarrow 0 \hbox{ in } H^{-1}_{loc}(\Omega \times (0,T)), \hbox{ as }m \rightarrow +\infty,
			\end{eqnarray}
			which also implies that
			
			(ii) $\mu$ propagates along the bicharacteristic  flow of this operator, which signifies, particularly, that if some point $\omega_0=(t_0,x_0,\tau_0,\xi_0)$ does not belong to the $\hbox{supp}(\mu),$ the whole bicharacteristic issued from $\omega_0$ is out of $\hbox{supp}(\mu)$.
			
			However, since $\hbox{supp}(\mu) \subset (\Omega \backslash \omega) \times (0,T)$ and we have a frictional damping acting in $\omega \times (0,T)$, we can propagate the kinetic energy from $\omega \times (0,T)$ towards $ (\Omega\backslash\omega) \times (0,T)$.
			
			Indeed, from Theorem \ref{Theorem 4.60} and Proposition \ref{Prop 4.63}, it follows that $\hbox{supp}(\mu)$ in $(0,T)\times \Omega\times S^n$ is a union of curves like
			\begin{eqnarray}\label{geodesics}
			t \in I\cap (0,\infty) \mapsto m_\pm(t)=\left(t, x(t), \frac{\pm1}{\sqrt{1+|\dot{x}|^2}},\frac{\mp  G(x)\dot{x}}{\sqrt{1+|\dot{x}|^2}} \right),
			\end{eqnarray}
			where $t\in I \mapsto x(t)\in \hbox{int}\,\Omega$ is a geodesic associated with the metric .
			
			From the convergence $\partial_t v_k^m \rightarrow 0$ strongly in $L^2(\omega\times (0,T))$ as $m \rightarrow +\infty,$ it follows that $\mu$ is supported in the set $(0,T) \times \Omega\backslash \omega \times S^n$. We affirm that $\operatorname{supp} \mu = \emptyset$. Indeed, let $(t_0,x_0,\tau_0,\xi_0) \in \operatorname{supp} \mu$ and $x$ be a geodesic of $G=I_d$ defined near $t_0$. Once the geodesics inside $\Omega\backslash \omega$ enters necessarily in the region $\omega$ we deduce from (\ref{geodesics}) that $m_{\mp}(t) \notin \operatorname{supp} \mu$ and, as consequence, $(t_0,x_0,\tau_0,\xi_0) \notin \operatorname{supp} \mu$. Therefore, $\hbox{supp}(\mu)$ is empty.
			
			From Remark \ref{Rem4.2}, it follows that
			\begin{equation}\label{convloc}
			\partial_t v_k^m \rightarrow 0 \hbox{ in } L_{loc}^{2}(\Omega \times (0,T)).
			\end{equation}
			Moreover, from the above convergences, we deduce that
			\begin{equation}\label{c 24}
			\partial_t v_{k}^{m} \rightarrow 0 \mbox{ in } L^{2}(\Omega \times (0,T)).
			\end{equation}
			
			Indeed,
			\begin{align}
			\begin{split}
			\int_{0}^{T} \int_{\Omega} |\partial_t v_{k}^{m}|^2 \, dx \, dt &= \int_{0}^{T} \int_{\omega} |\partial_t v_{k}^{m} |^{2}\, dx \, dt + \int_{0}^{T} \int_{\Omega\setminus\omega} |\partial_t v_{k}^{m}|^{2}\, dx \, dt \\
			&= L_1 + L_2,
			\end{split}
			\end{align}
			where
			$$L_1=\int_{0}^{T} \int_{\omega} |\partial_t v_{k}^{m} |^{2}\, dx \, dt \hbox{ and }  L_2=\int_{0}^{T} \int_{\Omega\setminus\omega} |\partial_t v_{k}^{m}|^{2}\, dx \, dt.$$
			From (\ref{normal conv}) we have that $L_1 \rightarrow 0$ where $m \rightarrow \infty.$ For $L_2,$ consider the following decomposition:
			\begin{align}
			\begin{split}
			L_2 &= \int_{0}^{\varepsilon} \int_{\Omega\setminus\omega} |\partial_t v_{k}^{m}|^{2} \, dx \, dt + \int_{\varepsilon}^{T - \varepsilon} \int_{\Omega\setminus\omega} |\partial_t v_{k}^{m}|^{2}\, dx \, dt + \int_{T - \varepsilon}^{T} \int_{\Omega \setminus \omega} |\partial_t v_{k}^{m}|^{2} \, dx \, dt\\
			&= J_1 + J_2 + J_3,
			\end{split}
			\end{align}
			where
			$$J_1=\int_{0}^{\varepsilon} \int_{\Omega\setminus\omega} |\partial_t v_{k}^{m}|^{2} \, dx \, dt, \ J_2=\int_{\varepsilon}^{T - \varepsilon} \int_{\Omega\setminus\omega} |\partial_t v_{k}^{m}|^{2}\, dx \, dt \hbox{ and } J_3=\int_{T - \varepsilon}^{T} \int_{\Omega \setminus \omega} |\partial_t v_{k}^{m}|^{2} \, dx \, dt.$$
			
			Note that
			\begin{align}
			\begin{split}
			J_1 = \int_{0}^{\varepsilon} \int_{\Omega\setminus \omega} |\partial_t v_{k}^{m}|^{2} \, dx \, dt \leq \int_{0}^{\varepsilon} 2 E_{v_{k}^{m}}(t) \, dx \, dt \leq 2 \varepsilon E_{v_{k}^{m}}(0) \leq 2 \varepsilon,
			\end{split}
			\end{align}
			since $ E_{v_{k}^{m}}(0) =1.$ Therefore, $\lim_{m \rightarrow \infty} J_1 \leq 2\varepsilon $ for all $T>\varepsilon>0$. Since $\varepsilon>0$ is arbitrary, it follows that $\lim_{m \rightarrow \infty} J_1=0$. Proceeding in the same way, we show that $J_3 \rightarrow 0$ as $m \rightarrow \infty$. Finally, from \eqref{convloc} we deduce that $J_2 \rightarrow 0$ as $m \rightarrow \infty$.
			
			Thus,
			\begin{eqnarray}\label{kinectic convergence}
			\int_0^T \int_\Omega |\partial_t  v_k^m(x,t)|^2\,dxdt \rightarrow 0, \hbox{ as } m \rightarrow + \infty.
			\end{eqnarray}
			
			Now, we are going to prove that $E_{v_k^m}(0)$ converges to zero. Indeed, let us consider the following cut-off function:
			\begin{align*}
			&\theta\in C^{\infty}(0,T),  \quad    0\leq \theta(t) \leq 1,  \quad   \theta(t)=1 \ \mbox{in} \ (\varepsilon,T-\varepsilon).
			\end{align*}
			
			Multiplying equation (\ref{eq:NP}) by $v_k^m \theta$ and integrating by parts, we infer
			\begin{eqnarray}\label{equipartition}
			&& -\int_0^T \theta(t)\int_\Omega |\partial_t v_k^m|^2\,dxdt - \int_0^T \theta'(t)\int_\Omega \partial_t v_k^m v_k^m\,dxdt\\
			&&+\int_0^T \theta(t) \int_\Omega  |\nabla v_k^m|^2 \,dxdt + \frac{1}{\alpha_m}\int_0^T \theta(t) \int_\Omega f_k(u_k^m) v_k^m \,dxdt\nonumber\\
			&&+ \frac{1}{\alpha_m}\int_0^T \theta(t) \int_\Omega a(x) g(\partial_t u_k^m)  v_k^m\,dxdt=0.\nonumber
			\end{eqnarray}
			
			Considering the convergences (\ref{damping conv}), (\ref{conv1}), (\ref{conv2}), (\ref{conv3}) and (\ref{kinectic convergence}) and having in mind that $v_k=0,$ from (\ref{equipartition}), we deduce that
			\begin{equation*}
			\lim_{m \rightarrow +\infty}\int_\varepsilon^{T-\varepsilon} \int_\Omega ( |\nabla v_k^m|^2 + \frac{1}{\alpha_m} f_k(u_k^m)v_k^m)\,dxdt = 0,
			\end{equation*}
			from which one also has
			\begin{eqnarray*}
				\lim_{m \rightarrow +\infty} \frac{1}{\alpha_m^2}\int_\varepsilon^{T-\varepsilon} \int_\Omega F_k(u_k^m) \,dxdt=0,
			\end{eqnarray*}
			which implies jointly with all convergences above that $\int_\varepsilon^{T-\varepsilon} E_{v_k^m}(t) \rightarrow 0$. Then, by the decrease of the energy, we obtain
			\begin{equation*}
			(T- 2\varepsilon) E_{v_k^m}(T-  \varepsilon) \rightarrow 0, \hbox{ as } m\rightarrow +\infty.
			\end{equation*}
			
			This implies, together with the energy identity
			\begin{equation*}
			E_{v_k^m}(T- \varepsilon) - E_{v_k^m}(\varepsilon) = - \frac{1}{\alpha_m}\int_{\varepsilon}^{T- \varepsilon} \int_{\Omega} a(x)\,g(\partial_t u_k^m) \partial_t v_k^m\,dxdt
			\end{equation*}
			and (\ref{damping conv}), that $E_{v_k^m}(0) \rightarrow 0$ as $m\rightarrow +\infty$ for $1 < p< \frac{n+2}{n-2}$, as we aimed to prove. \end{proof}

In order to prove the last statement of the lemma, we assume the contrary.  	
{
	
Our strategy has the following steps:
\begin{itemize}
	\item[(i)] We will first observe that if observability constants cannot be uniformly bounded, then for a subsequence of approximate solutions, the limit of $\partial_t u_k$ must vanish on $\omega$ as $k\rightarrow \infty$.
	\item[(ii)] The limit of approximate solutions found in the first step will imply that the energy of the original solution must be constant in time.
	\item[(iii)] We will use the fact that approximate solutions satisfy the observability inequality to deduce that their energy must decay in time in contrast with the original solution.  This will allow us to construct an increasing countable sequence of times at which the energy of approximate solutions are much smaller than the constant energy of the original solution.
	\item[(iv)] The strong convergence results obtained in the wellposedness section will allow us to extract a countable sequence of approximate solution-time pairs through Cantor's diagonalization so that the values of each element will be close to energy of original solution for large indices.
	\item[(v)] Finally, the last two steps will be combined to get a contradiction.
\end{itemize}

So, suppose $(u_0,u_1)$ are such that $r<E(u_0,u_1)<R$ and $(u_{0,k},u_{1,k})$ is a sequence of data strongly converging to $(u_0,u_1)$ verifying also \eqref{sequencebound}. Then, the observability inequality that was proved in the first part of the lemma applies for each $k$.  In order to prove the last part of the lemma, assume to the contrary that 
		\begin{equation}\label{supisinf}
			\sup_k  \frac{E_k(u_{0,k},u_{1,k})}{ \int_{0}^{T}\int_\Omega  a(x) \left(|\partial_t u_k|^2 + |g(\partial_t u_k)|^2\right) \,dxdt}=\infty.
			\end{equation} 
		{
			First, observe that in the contradiction argument we can assume \eqref{supisinf} for all $T\ge T_0$. Indeed, if \eqref{supisinf} were not true for some $T=T^*\ge T_0$, then for any $T'>T^*$,  \eqref{supisinf} would be false with $T=T'$ since the denominator gets larger as $T$ gets larger.    This would imply that the desired result (i.e., existence of a uniform bound) would readily hold true for all $T\ge T^*$, which would be sufficient for the purposes of this paper.
			
		Now, since the numerator in \eqref{supisinf}  is bounded from below and above, \eqref{supisinf} can only be possible if there is a subsequence still denoted with the index $k$ such that
		\begin{equation}\label{righthandside-k}
			 \lim_k\int_{0}^{T}\int_\Omega  a(x) \left(|\partial_t u_k|^2 + |g(\partial_t u_k)|^2\right) \,dxdt=0.
			\end{equation}
for all $T \geq T_0$. Now, let $T\ge T_0$ be a fixed time.  We know then there is a subsequence $\{u_{{k}_{1j}}\}$ which convergences to some $u^1$ in the sense of \eqref{conv1'}-\eqref{conv3'} on $[0,T]$. Since we are assuming \eqref{righthandside-k} for any $T\ge T_0$, it is in particular true if $T$ is replaced with $2T$. Since $\{u_{{k}_{1j}}\}$ is a subsequence of $\{u_{k}\}$, there exists also a subsequence $\{u_{{k}_{2j}}\}$ of $\{u_{{k}_{1j}}\}$ which convergences to some $u^2$ in the sense of \eqref{conv1'}-\eqref{conv3'} on $[0,2T]$ and moreover one has $u^2=u^1$ on $[0,T]$. Inductively, we obtain a subsequence $\{u_{{k}_{mj}}\}$ which converges to some $u^m$ on $[0,mT]$ in the sense of \eqref{conv1'}-\eqref{conv3'}, and $u^m=u^{m-1}$ on $[0,(m-1)T]$. 

 Considering the diagonal sequence $\{u_{{k}_{mm}}\}_{m \in \mathbb{N}}$ we see that this subsequence converges to $u$, solution of the original problem with data $(u_0,u_1)$ over compact subsets of $[0,\infty)$  satisfying $u(t)=u^{m}(t)$ on  $[0,mT]$. This allows us to conclude that
	 \begin{equation}\label{righthandside}
		\int_{0}^{T}\int_\Omega  a(x) \left(|\partial_t u|^2 + |g(\partial_t u)|^2\right) \,dxdt=0,
	\end{equation} 
for all $T \geq T_0$.
	Therefore, $u$ must solve the following problem:
	\begin{equation}\label{limit1'aaa}
		\begin{cases}
			\partial_t^2 u -\Delta u + f(u)= 0 & \hbox{ in } \Omega \times (0, \infty),\\
			u=0& \hbox{ on } \partial \Omega \times (0, \infty),\\
			u(x,0)=u_0(x), u_t(x,0)=u_1(x) & \hbox{ in } \Omega.
		\end{cases}
	\end{equation} 
}

We see from \eqref{limit1'aaa} that the energy of $u$ is constant and we have \begin{equation}E_u(t)=E(u_0,u_1)>0, \quad t\ge 0.\end{equation}  Now, define $\delta:=E_u(t)=E(u_0,u_1)$. 
Any approximate solution $u_k$ readily satisfies the same type of decay estimates (though with constants depending on $k$ of course) which are to be proven for the original solution in Section 4 below. This is because approximate solutions have observability property as proved in the first part of the lemma and they also satisfy Assumption \ref{as:regularity} since they are solutions of truncated problems with smooth data. Therefore, we have decay of truncated solutions.  Hence, for any fixed $k$, there corresponds sufficiently large time - say $t_k$ for which $E_{u_k}(t_k)$ is very small (say less than $\delta/4$). Moreover, we can choose $t_k$'s in such a way that $t_k \uparrow\infty$ (i.e., increasing unboundedly) as $k\rightarrow \infty$. For such $t_k$ we have
\begin{equation}
	E_{u_k}(t_k)\le \frac{\delta}{4}, k\ge 1.
\end{equation}

On the other hand, we know from the well-posedness analysis in the previous section that for fixed $t_0$, we have $E_{u_k}(t_0)\rightarrow E_u(t_0)=\delta$ as $k\rightarrow \infty$. In particular, for each fixed $t=t_m$, where $t_m$ is the sequence of times we just constructed above, we have
 $E_{u_k}(t_m)\rightarrow \delta$ as $k\rightarrow \infty$. We can consider $E_{u_k}(t_m)$ as a sequence of numbers indexed with $k$ for each fixed $m$. Note that each such sequence has the same limit $\delta$ as $k\rightarrow \infty$. Therefore, we can apply Cantor's diagonalization and deduce that the diaogonal subsequence of numbers denoted $E_{u_k}(t_k)$ also converges to $\delta$. Hence, we have both $E_{u_k}(t_k)\le \frac{\delta}{4}$, $\forall k\ge 1$ and 
 $E_{u_k}(t_k)\rightarrow \delta$ as $k\rightarrow \infty$. These two conditions yield a contradiction together.
}
Hence, one must have
$$	\sup_k  \frac{E_k(u_{0,k},u_{1,k})}{ \int_{0}^{T}\int_\Omega  a(x) \left(|\partial_t u_k|^2 + |g(\partial_t u_k)|^2\right) \,dxdt}=:C<\infty.$$
Therefore, we conclude that if $(u_0,u_1)$ in $\mathcal{H}$ with $r<E(u_0,u_1)<R$ such that $( u_{0,k}, u_{1,k})\in \mathscr{D}(\mathbb{A})$ and $( u_{0,k}, u_{1,k})\rightarrow (u_0,u_1)$ in $\mathcal{H}$ verifying also \eqref{sequencebound} , then the constant of inequality \eqref{obs ineq} can be chosen independent of $k$.

{
		
	\begin{Remark}
	In what follows, we will abuse notation and write $T_0$ in the sense of $T^*$ found in the second part of Lemma \ref{ObsApprox}.
	\end{Remark}
}

		\subsection{Observability for the original problem}
		
		The main result of this section reads as follows:
		
		\begin{Lemma}\label{obs}  Assume that $f$ satisfies the conditions given in Assumption \ref{assmpflabel} with $n=3$ and $\mathcal{O}(g)=1$. Then, for all $T\geq T_0$ and $r,R>0$, there exists a constant $C=C(r,R,T)>0$ such that for any $(u_0,u_1)\in \mathcal{H}$ with $r<E(u_0,u_1)<R$, the solution of \eqref{eq:*} satisfies
			\begin{eqnarray}\label{obs ineq1}
			E(u_0,u_1) \leq C \int_{0}^{T}\int_\Omega  a(x) \left(|\partial_t u|^2 + |g(\partial_t u)|^2\right) \,dxdt.
			\end{eqnarray}
		\end{Lemma}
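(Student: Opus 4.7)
The plan is to transfer the observability inequality established for the truncated problem (the uniform-constant version in the second part of Lemma \ref{ObsApprox}) to the original problem via passage to the limit. The first step is to fix $(u_0,u_1)\in\mathcal{H}$ with $r<E(u_0,u_1)<R$ and, using density of $\mathscr{D}(\mathbb{A})$ in $\mathcal{H}$ together with continuity of the energy functional, select a sequence $\{(u_{0,k},u_{1,k})\}\subset\mathscr{D}(\mathbb{A})$ which converges strongly to $(u_0,u_1)$ in $\mathcal{H}$ and satisfies $r<E_k(u_{0,k},u_{1,k})<R$ for all sufficiently large $k$ (possible since $F_k(u_{0,k})\to F(u_0)$ in $L^1(\Omega)$ by \eqref{main strong conv} applied at $t_0=0$). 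Let $u_k$ denote the corresponding solution of \eqref{eq:AP}.

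By the second part of Lemma \ref{ObsApprox} (in the sense of the Remark just above), there exists $T_0$ such that for every $T\geq T_0$ the constant in the observability inequality for $u_k$ may be chosen independent of $k$:
\begin{equation*}
E_k(u_{0,k},u_{1,k})\leq C\int_0^T\int_\Omega a(x)\bigl(|\partial_t u_k|^2+|g(\partial_t u_k)|^2\bigr)\,dx\,dt.
\end{equation*}
The strategy is then to pass to the limit $k\to\infty$ on both sides. On the left, the strong convergence of the data in $\mathcal{H}$ combined with \eqref{main strong conv} at $t_0=0$ gives $E_k(u_{0,k},u_{1,k})\to E(u_0,u_1)$.

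On the right, I would use the strong convergences from the wellposedness section, in particular \eqref{Cauchy conv}, which together with Theorem \ref{existence} yield $\partial_t u_k\to\partial_t u$ strongly in $C^0([0,T];L^2(\Omega))$; in particular, up to a subsequence, $\partial_t u_k\to\partial_t u$ a.e.\ in $\Omega\times(0,T)$. Continuity of $g$ then gives $g(\partial_t u_k)\to g(\partial_t u)$ a.e. The required $L^2$-bound for $g(\partial_t u_k)$ is supplied by the energy identity \eqref{ident energy mu} together with the growth condition \eqref{ass on g} on $g$ at infinity (treating $|\partial_t u_k|\leq 1$ and $|\partial_t u_k|>1$ separately, exactly as in \eqref{uniformg1}). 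Strauss' Lemma therefore yields $a(x)g(\partial_t u_k)\to a(x)g(\partial_t u)$ weakly in $L^2(\Omega\times(0,T))$, and combined with the strong $L^2$ convergence of $\partial_t u_k$ one obtains
\begin{equation*}
\int_0^T\!\!\int_\Omega a(x)\bigl(|\partial_t u_k|^2+|g(\partial_t u_k)|^2\bigr)\,dx\,dt\longrightarrow\int_0^T\!\!\int_\Omega a(x)\bigl(|\partial_t u|^2+|g(\partial_t u)|^2\bigr)\,dx\,dt,
\end{equation*}
where the convergence of the $g$-term uses lower-semicontinuity or, to get equality, the a.e. convergence together with a dominated/Vitali argument based on the uniform $L^2$-bound.

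The main delicate point I anticipate is the passage to the limit in the nonlinear damping term $|g(\partial_t u_k)|^2$, since $g$ need not be Lipschitz and the energy identity only controls $g(\partial_t u_k)\partial_t u_k$ rather than $|g(\partial_t u_k)|^2$ directly. The remedy is the split used in \eqref{uniformg1}: on the set $\{|\partial_t u_k|\leq 1\}$ continuity of $g$ on $[-1,1]$ gives a pointwise bound independent of $k$, while on $\{|\partial_t u_k|>1\}$ the upper bound $g(s)s\leq \tilde M|s|^{r+1}$ with $r=\mathcal{O}(g)=1$ allows one to estimate $|g(\partial_t u_k)|^2\lesssim g(\partial_t u_k)\partial_t u_k$, which is uniformly integrable by the energy identity. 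Combining these with the a.e.\ convergence and Vitali's theorem yields the passage to the limit, and the inequality \eqref{obs ineq1} follows.
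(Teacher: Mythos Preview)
Your proposal is correct and follows essentially the same route as the paper: approximate $(u_0,u_1)$ by regular data in $\mathscr{D}(\mathbb{A})$ satisfying \eqref{sequencebound}, invoke the uniform-in-$k$ observability inequality from Lemma~\ref{ObsApprox}, and pass to the limit using the strong convergences \eqref{Cauchy conv} and \eqref{main strong conv} from the wellposedness section. The paper's proof is more terse---it simply cites Lebesgue's Dominated Convergence Theorem for the passage to the limit in $\int a|g(\partial_t u_k)|^2$---whereas you spell out the splitting $|g(\partial_t u_k)|^2\le M_g^2+\tilde M^2|\partial_t u_k|^2$ (valid since $\mathcal{O}(g)=1$) and invoke a Vitali/generalized-DCT argument based on the strong $L^2$ convergence of $\partial_t u_k$; this is the same mechanism, only made explicit.
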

		\begin{proof} We can take a sequence of initial data $( u_{0,k}, u_{1,k})\in \mathscr{D}(\mathbb{A})$ converging strongly to $(u_0,u_1)$ in $\mathcal{H}$.
			Then for some $k_0\ge 1$ and $k \geq k_0\in \mathbb{N}$, (\ref{obs ineq}), will hold and moreover due to Lemma \ref{ObsApprox}
			\begin{equation}
			\label{final_inequality}
			E(u_{0,k}, u_{1,k}) \leq C\,\int_0^{T}\int_{\Omega}a(x)\left(  |\partial_t u_k|^2 +  |g(\partial_t u_k)|^2 \right)\,dx\,dt,
			\end{equation}
			 where $C$ is a positive constant which does not depend on $k$.
			
			Taking (\ref{Cauchy conv}) and Lebesgue's Dominated Convergence Theorem into account, we deduce that
			\begin{equation}\label{convag2}
			\int_{0}^{T}\int_{\Omega} a(x)|g(\partial_t u_k(x,t))|^2 \, dx \,dt \rightarrow \int_{0}^{T}\int_{\Omega} a(x)|g(\partial_t u(x,t))|^2 \, dx \,dt \hbox{ as } k \rightarrow \infty.
			\end{equation}
			From (\ref{main strong conv}), \eqref{final_inequality} and \eqref{convag2}, we also obtain the observability inequality associated with the original problem (\ref{eq:*}), that is,
			\begin{equation}
			\label{obser ineq original prob}
			E(u_0,u_1) \leq C\,\int_0^{T}\int_{\Omega} a(x)\left( |\partial_t u|^2  + |g(\partial_t u)|^2 \right)\,dx\,dt, \hbox{ for all } T\geq T_0.
			\end{equation}
		\end{proof}

		\section{Combining estimates at the origin and infinity}\label{section6}

		{In order to calculate explicit decay rates for the solutions of the system \eqref{mainproblem}, and for reader's convenience, we enunciate and give the proof of some nice results already existing in the literature, which extend some results introduced by I. Lasiecka, D. Tataru and D. Toundykov in \cite{Lasiecka-Tataru} and \cite{las-tou:06} and have already been used in \cite{AMO}, \cite{TAMS} and \cite{Moez}, \cite{Toundykov2} and \cite{Toundykov}.} As is well known, sublinear and superlinear at infinity feedback maps require more regularity of solutions. Thus, the regularity below is only needed when $\mathcal{O}(g) \neq 1$.
		
		\begin{assum}[Regularity for sub and superlinear feedbacks at infinity {(See Assumption 5.1 in \cite{AMO})}] \label{as:regularity}
			This assumption  is imposed only when $g$ is not linearly bounded at infinity:
			\begin{itemize}
				\item    If $g$ is sublinear at infinity, i.e. $\mathcal{O}(g)<1$, then assume that there exists $p_0>2$ such that $$\|u_t\|_{L^\infty(\mathbb{R}_+;L^{p_0}(\Omega))}<D_0.$$
				
				\item    If $g$ is superlinear at infinity, i.e. $\mathcal{O}(g)>1$, then assume that there exists $p_0>2r$ such that $$\|u_t\|_{L^\infty(\mathbb{R}_+;L^{p_0}(\Omega))}<D_0$$
			\end{itemize}
			for some positive constant $D_0$.
		\end{assum}
		\begin{Remark}
			Note that since the system is monotone dissipative,  the regularity Assumption \ref{as:regularity} can be satisfied to a certain extent by starting with smooth initial data. Thus, if a solution is \textbf{strong}, then, for all $T>0$, $\|u_{t}(t)\|_{H_{0}^{1}(\Omega)} \leq C$ for all $t \in (0,T)$ and some constant $C>0$ that does not depend on $T>0$. Hence, $u_{t}\in L^{\infty}(\mathbb{R}_{+};L^{p_0}(\Omega))$ for any $p_0<\frac{2n}{n-2}$ if $n \geq 3$ and $u_{t}\in L^{\infty}(\mathbb{R}_{+};L^{p_0}(\Omega))$ for any $p_0<\infty$ if $n \leq 2$.
		\end{Remark}
		
	{Following \cite{dao}, let} $h_0$ be defined as in \eqref{ass on h} and set
		\begin{equation}\label{h}
		h=h_1+h_0 \circ \frac{1}{\operatorname{meas}(Q_T)},
		\end{equation}
		where $h_1$ is defined as follows:
		\begin{equation}\label{h1}
		\begin{aligned}
		&\bullet \hbox{ if } g \hbox{ is sublinear at infinity, i.e., } r=\mathcal{O}(g)<1, \hbox{ then let } h_1(s):=s^{\frac{p_0-2}{p_0-r-1}},\\
		&\bullet \hbox{ if } g \hbox{ is superlinear at infinity, i.e., } r=\mathcal{O}(g)>1, \hbox{ then let } h_1(s):=s^{\frac{p_0-2r}{p_0-r-1}},\\
		&\bullet \hbox{ if } g \hbox{ is linear at infinity, i.e., } r=\mathcal{O}(g)=1, \hbox{ then let } h_1(s):=s
		\end{aligned}
		\end{equation}
		{and let (as in equation (11) of \cite{las-tou:06})}
		\begin{equation}\label{q}
		q=I-(I+(I+h)^{-1}\circ (K^{-1} \cdot I) )^{-1}
		\end{equation}
		for some constant $K>0$ to be specified later. Since $h$ is a strictly increasing concave function with $h(0)=0$, $q$ is a monotone increasing function vanishing at zero.
		
{
	
		\begin{Theorem}\label{theo 4}
			Suppose Assumption \ref{as:regularity} holds, $\mathcal{O}(g)\neq 1$ and $1 \leq p <\frac{n}{n-2}$. Let $T\geq T_0$, $r,R>0$, then there is a constant $C=C(r,R,T)>0$ such that for any strong solution of \eqref{eq:*} (see item iii) of Theorem \ref{thm:well-posedness}) with initial data satisfying $r<E(u_0,u_1)<R$, the following inequality holds:
			\begin{equation}\label{i:mixed}
			E(u_0,u_1) \leq C \int_{0}^{T}\int_\Omega  a(x) \left(|\partial_t u|^2 + |g(\partial_t u)|^2\right) \,dxdt.
			\end{equation}	
		\end{Theorem}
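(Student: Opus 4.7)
The plan is to mirror the two-step strategy of Lemmas \ref{ObsApprox} and \ref{obs}: first establish the inequality for solutions of the truncated problem \eqref{eq:AP} with a constant independent of the truncation parameter $k$, and then pass to the limit in the truncation. I would approximate $(u_0,u_1)$ by data $(u_{0,k},u_{1,k}) \in \mathscr{D}(\mathbb{A})$ still satisfying $r<E_k(u_{0,k},u_{1,k})<R$, and consider the strong solutions $u_k$ of \eqref{eq:AP} supplied by item iii) of Theorem \ref{thm:well-posedness}. Since the wellposedness analysis of Section \ref{section4} delivers $u_k\to u$ and $\partial_t u_k \to \partial_t u$ in the appropriate norms, with the nonlinear dissipation converging via Lebesgue's dominated convergence, it suffices to prove \eqref{i:mixed} for each $u_k$ with $C$ independent of $k$ and then to pass to the limit exactly as in the proof of Lemma \ref{obs}.

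I would then run the contradiction argument of Lemma \ref{ObsApprox} at the truncated level: if no uniform constant exists, one extracts a sequence $\{u_k^m\}$ of solutions with $r<E_{u_k^m}(0)<R$ along which the dissipation integral vanishes. Weak-star limits in the sense of \eqref{conv1'}--\eqref{conv3'} yield a limit $u_k$, and the dichotomy $u_k \neq 0$ versus $u_k \equiv 0$ is handled as before. In the first case, differentiation in time combined with Assumption \ref{assumption1.3} applied to $w_k=\partial_t u_k$ (valid since $f_k'\in L^\infty$) forces $u_k \equiv 0$, a contradiction. In the second case, the renormalization $v_k^m := u_k^m/\alpha_m$ with $\alpha_m = \sqrt{E_{u_k^m}(0)}$ is bounded below by $\sqrt{r}$, producing a sequence with $E_{v_k^m}(0)=1$ for which the goal is to show $E_{v_k^m}(0)\to 0$.

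The principal obstacle, and the genuinely new ingredient compared with Lemma \ref{ObsApprox}, is showing that the rescaled feedback $\alpha_m^{-1} a(x) g(\partial_t u_k^m)$ is negligible, so that the microlocal machinery applies to the free wave equation as before. To this end I would split the space-time domain into $\{|\partial_t u_k^m|\le 1\}$ and its complement. On the former region, \eqref{ass on h} controls $|g(\partial_t u_k^m)|^2$ through the concave majorant $h_0$ by the dissipation density, and Jensen's inequality produces the desired decay. On the latter region, the polynomial bound $|g(s)|\lesssim |s|^r$ for $|s|>1$ coming from \eqref{ass on g} combined with the higher-integrability $\|\partial_t u_k^m\|_{L^\infty(\mathbb{R}_+; L^{p_0}(\Omega))}\le D_0$ from Assumption \ref{as:regularity} affords an interpolation estimate whose scaling in $\alpha_m$ is precisely what is needed; the thresholds $p_0>2$ (sublinear) and $p_0>2r$ (superlinear) in Assumption \ref{as:regularity} are dictated exactly by this calculation. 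Once the feedback term is negligible, the propagation argument using the microlocal defect measure of $\{\partial_t v_k^m\}$---its support confined to $(\Omega\setminus\omega)\times(0,T)$, propagating along null bicharacteristics of $\Box$, hence empty by the geometric control condition---yields $\partial_t v_k^m \to 0$ in $L^2(\Omega\times(0,T))$.

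The argument concludes by equipartition: multiplying the normalized equation by $v_k^m\,\theta(t)$ with a suitable cut-off $\theta$ in time and using the strong convergences in subcritical $L^q$ norms forces $\int_\varepsilon^{T-\varepsilon} E_{v_k^m}(t)\,dt \to 0$, whence $E_{v_k^m}(T-\varepsilon)\to 0$ by monotonicity, and finally $E_{v_k^m}(0)\to 0$ via the energy identity together with the vanishing dissipation---contradicting the normalization $E_{v_k^m}(0)=1$. The ultimate passage $k\to\infty$ reproduces the convergence chain in the proof of Lemma \ref{obs}. I expect the technically delicate step to be precisely the rescaled-feedback analysis: the concave majorant $h_0$, the extra integrability constant $D_0$, and the factor $1/\alpha_m$ must be interlocked carefully in the sub- and superlinear cases, and once this estimate is stabilized, the remainder of the argument transfers directly from Lemma \ref{ObsApprox}.
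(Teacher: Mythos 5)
Your skeleton (contradiction argument, normalization by $\alpha_m=\sqrt{E_{u^m}(0)}$, microlocal defect measure, unique continuation, equipartition) is indeed the intended engine, but your proposal diverges from the paper on the two points that actually matter here. First, the paper's proof of Theorem \ref{theo 4} deliberately \emph{avoids} the truncation: for $1\leq p<\frac{n}{n-2}$ Sobolev embedding gives $f(u)\in L^\infty(0,T;L^2(\Omega))$ directly, so the contradiction argument of Lemma \ref{ObsApprox} is run on strong solutions of \eqref{eq:*} itself, with no $k\to\infty$ passage (this is precisely the content of the remark following the theorem). Your detour through \eqref{eq:AP} is not merely inefficient: the convergence package you invoke ($u_k\to u$ in $C^0([0,T];H_0^1)\cap C^1([0,T];L^2)$, dominated convergence for $a(x)|g(\partial_t u_k)|^2$) is established in Section \ref{section4} only under $\mathcal{O}(g)=1$; for instance the bound \eqref{trunc-limfraco-g} uses $|g(s)|^2\leq g(s)s$, i.e.\ linear boundedness, and fails for superlinear $g$. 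So the limit step you rely on is not available in the regime $\mathcal{O}(g)\neq 1$ without new estimates you do not supply.

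Second, what you single out as the ``principal obstacle'' --- negligibility of the rescaled feedback $\alpha_m^{-1}a(x)g(\partial_t u^m)$ --- is not an obstacle at all. The contradiction hypothesis is that $\int_0^T\int_\Omega a\left(|\partial_t u^m|^2+|g(\partial_t u^m)|^2\right)dxdt=o\left(E_{u^m}(0)\right)$, and since $E_{u^m}(0)\geq r>0$, dividing by $\alpha_m^2$ immediately yields $\alpha_m^{-1}a\,g(\partial_t u^m)\to 0$ in $L^2(Q_T)$, exactly as in \eqref{damping conv}, with no case analysis on the growth of $g$ and no use of $h_0$ or of Assumption \ref{as:regularity}. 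The splitting into $\{|\partial_t u|\leq 1\}$ and its complement, Jensen's inequality through the concave majorant $h_0$, and the interpolation against $\|u_t\|_{L^\infty(\mathbb{R}_+;L^{p_0})}\leq D_0$ belong to the proof of Corollary \ref{uniformdecay}, where one converts the right-hand side of \eqref{i:mixed} into a function of the dissipation $\int a\,g(u_t)u_t$; they play no role in proving the observability inequality itself. The genuine roles of Assumption \ref{as:regularity} and of the restriction to strong solutions in Theorem \ref{theo 4} are (i) to guarantee that the right-hand side of \eqref{i:mixed} is finite when $g$ is superlinear, and (ii) to make $f'(u)$ an admissible potential for the unique continuation step; your write-up attributes the hypothesis to a step where it is not needed and omits it where it is.
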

}
		
		\begin{Remark}
		Note that in Theorem \ref{theo 4}, the truncation is not necessary, since the non-linearity grows according to Sobolev's immersion.
		\end{Remark}
		
		{As in equation (6.7) of \cite{AMO}}, henceforth we will also use the notation
		\begin{equation}\label{def:damping}
		{\bf D}_{a}^{b}\big(g(s); u_{t}\big) :=
		\int_{a}^{b}\int_{\Omega}a(x)g(u_{t}) u_{t} \, dQ_T.
		\end{equation}

		{The main result of the literature, which allows the explicit derivation of decay rates, is the following:}

		\begin{Corollary}[{Theorem 2.1  in \cite{dao}, Lemma 8.2 in \cite{AMO}, Corollary 3.1.1 in \cite{TAMS}}]\label{uniformdecay}
			Suppose Assumption \ref{as:regularity} holds. Let $T_0>0$ be given by \eqref{obser ineq original prob} or Theorem \ref{theo 4} . Let $h$ and $q$ be the functions defined in $\eqref{h}$ and $\eqref{q}$, respectively. Then,
			\begin{equation}\label{decay}
			E_u(t) \leq S\left(\frac{t}{T_0}-1\right) \hbox{ for all } t \geq T_0,
			\end{equation}
			with $\lim_{t \rightarrow \infty} S(t)=0$, where $S$ is the solution to the following nonlinear ODE:
			\begin{equation}\label{ODE}
			S_t+q(S)=0, \quad S(0)=E_u(0),
			\end{equation}
			where
			\begin{equation}\label{Krequal}
			K=\left[C(T,a,\operatorname{meas}(Q_T))\right]
			\end{equation}
			if $g$ is linearly bounded near infinity,
			\begin{equation}\label{Krbig}
			K=\left[C(T,a,\operatorname{meas}(Q_T)) \cdot D_{0}^{\frac{p_0(r-1)}{p_0-r-1}} \right]
			\end{equation}
			if $g$ is superlinear near infinity, and
			\begin{equation}\label{Krsmall}
			K=\left[C(T,a,\operatorname{meas}(Q_T)) \cdot D_{0}^{\frac{p_0(1-\theta)}{p_0-r-1}} \right]
			\end{equation}
			if $g$ is sublinear near infinity.
		\end{Corollary}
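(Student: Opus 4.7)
My plan is to derive the decay from the observability inequality \eqref{obser ineq original prob} (or \eqref{i:mixed} in the sub/superlinear case) combined with the energy identity \eqref{energy identity}, by constructing a discrete recursive inequality on $E_u(nT_0)$ and then comparing it with the ODE \eqref{ODE} via a standard Lasiecka--Tataru nonlinear comparison lemma. Fix $T \geq T_0$, and for each integer $n \geq 0$ apply the observability inequality on the translated interval $[nT_0,(n+1)T_0]$ (after checking the argument of Lemma \ref{obs} is time-translation invariant because coefficients are autonomous). This gives
\begin{equation*}
E_u((n+1)T_0) \leq E_u(nT_0) \leq C \int_{nT_0}^{(n+1)T_0}\!\int_\Omega a(x)\left(|u_t|^2+|g(u_t)|^2\right)dx\,dt.
\end{equation*}

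The next step is to split $Q_{T_0}^n:=\Omega\times[nT_0,(n+1)T_0]$ into the regions $\Sigma_1=\{|u_t|\leq 1\}$ and $\Sigma_2=\{|u_t|>1\}$ and to dominate the right-hand side by the dissipation $\mathbf{D}_{nT_0}^{(n+1)T_0}(g(s);u_t) = E_u(nT_0)-E_u((n+1)T_0)$. On $\Sigma_2$, the growth assumption \eqref{ass on g} yields $|u_t|^2+|g(u_t)|^2 \lesssim g(u_t)u_t$, so this part is absorbed directly by the dissipation. On $\Sigma_1$, I will use \eqref{ass on h} to write $|u_t|^2+|g(u_t)|^2 \leq h_0(g(u_t)u_t)$, and then apply Jensen's inequality (since $h_0$ is concave) to obtain
\begin{equation*}
\iint_{\Sigma_1}\! h_0(g(u_t)u_t)\,dx\,dt \leq \operatorname{meas}(Q_T)\,h_0\!\left(\tfrac{1}{\operatorname{meas}(Q_T)}\,\mathbf{D}_{nT_0}^{(n+1)T_0}(g(s);u_t)\right).
\end{equation*}

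For the sub/superlinear cases I will interpolate, using Assumption \ref{as:regularity}, between the $L^{p_0}$ bound on $u_t$ and the dissipation: a Hölder-type argument gives $\iint_{\Sigma_2}(|u_t|^2+|g(u_t)|^2)\,dx\,dt \leq C\,D_0^{\alpha}\,[\mathbf{D}_{nT_0}^{(n+1)T_0}(g(s);u_t)]^{\beta}$ with exponents chosen so that the resulting composition matches $h_1$ in \eqref{h1}, explaining the exact powers $\tfrac{p_0-2}{p_0-r-1}$ and $\tfrac{p_0-2r}{p_0-r-1}$. This is the calculation that justifies the formulas \eqref{Krbig} and \eqref{Krsmall} for $K$, and it will be the main technical obstacle since one must verify that the interpolation exponents and $h_1$ yield a concave, strictly increasing composite function $h=h_1+h_0(\cdot/\operatorname{meas}(Q_T))$.

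Putting these pieces together, one obtains a recursive inequality of the form
\begin{equation*}
E_u((n+1)T_0) + (I+h)^{-1}\!\left(K^{-1}E_u((n+1)T_0)\right) \leq E_u(nT_0),
\end{equation*}
which, by the definition \eqref{q} of $q$, is equivalent to $q(E_u((n+1)T_0)) \leq E_u(nT_0)-E_u((n+1)T_0)$. Finally, I will invoke the discrete-to-continuous Lasiecka--Tataru comparison lemma (as used in \cite{Lasiecka-Tataru,las-tou:06,AMO}), which states that any nonnegative nonincreasing sequence $s_n$ satisfying $q(s_{n+1})\leq s_n-s_{n+1}$ is majorized by $S(n)$, where $S$ solves \eqref{ODE}. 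Rescaling $n \mapsto t/T_0-1$ yields \eqref{decay}, and monotonicity of $E_u$ between discrete times fills in the intervals. The limit $\lim_{t\to\infty}S(t)=0$ follows from the fact that $q$ is strictly positive on $(0,\infty)$ and vanishes only at zero.
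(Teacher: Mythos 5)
Your proposal is correct and follows essentially the same route as the paper's proof: the same splitting of $Q_T$ into $\{|u_t|\le 1\}$ and $\{|u_t|>1\}$, the same use of \eqref{ass on h} with Jensen's inequality near the origin, the same H\"older interpolation against the $L^{p_0}$ bound from Assumption \ref{as:regularity} in the sub/superlinear cases, and the same conclusion via the Lasiecka--Tataru discrete comparison lemma applied to $s_m=E_u(mT)$. The only cosmetic difference is that you translate the observability inequality to $[nT_0,(n+1)T_0]$ directly, while the paper invokes the semigroup property to the same effect.
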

		\begin{proof}
			In what follows, we proceed as in Lasiecka and
			Tataru's  work \cite{Lasiecka-Tataru} (see Lemma 3.2 and Lemma 3.3
			of the referred paper) adapted to our context. Let
			\begin{eqnarray*}
				\Sigma _{\alpha } &=&\left\{ \left( x,t\right) \in Q_T:=\Omega \times (0,T) : %
				\left\vert \partial_t u(x,t)\right\vert >1\text{  a.e. }\right\} , \\
				\Sigma _{\beta } &=&Q_T \backslash \Sigma _{\alpha }.
			\end{eqnarray*}
			
			The proof will require the following inequalities:
			
			\begin{enumerate}
				\item [I)]  \textbf{Damping near the origin}. From (\ref{ass on h}) and the fact that $h$ is concave
				and increasing, $a(x)\leq \|a\|_{\infty}+1,$
				and $\frac{a(x)}{1+\|a\|_{\infty}} < a(x),$ we deduce that
				\begin{eqnarray} \label{5.4.2}\qquad
				\int_{\Sigma _{\beta }}a(x)\left( \left[ g\left(\partial_t u\right)
				\right] ^{2}+\left(\partial_t  u\right) ^{2}\right) d\Sigma _{\beta }
				&\leq& \int_{\Sigma _{\beta }}a(x)h_0\left( g\left(\partial_t u\right)
				\partial_t u\right) d\Sigma_{\beta }\\
				&=& \int_{\Sigma _{\beta
				}}(1+\|a\|_{\infty})\frac{a(x)}{1+\|a\|_{\infty}}h_0\left( g\left(
				\partial_t u\right) \partial_t u\right) d\Sigma_{\beta }\nonumber\\
				&\leq& \int_{\Sigma _{\beta
				}}(1+\|a\|_{\infty})h_0\left(\frac{a(x)}{1+\|a\|_{\infty}} g\left(
				\partial_t u\right) \partial_t u\right) d\Sigma_{\beta }\nonumber\\
				&\leq& \int_{\Sigma _{\beta }}(1+\|a\|_{\infty})h_0\left(a(x)
				g\left( \partial_t u\right) \partial_t u\right) d\Sigma_{\beta }.\nonumber
				\end{eqnarray}
				
				Then, by Jensen's inequality,
				\begin{eqnarray}
				&&(1+\|a\|_{\infty})\int_{\Sigma _{\beta }}h_0\left(a(x) g\left(
				\partial_t u\right) \partial_t u\right) d\Sigma _{\beta }\label{5.4.3}\\
				&&\leq (1+\|a\|_{\infty})\operatorname{meas}\left( Q_T\right) h_0\left(
				\frac{1}{\operatorname{meas}\left( Q_T\right) }\int_{Q_T}a(x)g\left(\partial_t  u\right)
				\partial_t u\,dQ
				\right) \smallskip  \notag \\
				&=&(1+\|a\|_{\infty})\operatorname{meas}\left( Q_T\right) r\left(
				\int_{Q_T}a(x)g\left( \partial_t u\right) \partial_t u\, dQ\right) \nonumber,
				\end{eqnarray}
				where $r\left( s\right) =h_0\left( \frac{s}{\operatorname{meas}\left(Q_T \right) }
				\right)$.
				\begin{equation}\label{i:origin}
				\int_{\Sigma _{\beta }}a(x)\left( \left[ g\left(\partial_t u\right)
				\right] ^{2}+\left(\partial_t  u\right) ^{2}\right) d\Sigma _{\beta } \leq 	
				(1+\|a\|_{\infty})\operatorname{meas}\left( Q_T\right) r\left(
				\int_{Q_T}a(x)g\left( \partial_t u\right) \partial_t u\, dQ\right).
				\end{equation}
				
				\item[II)] \textbf{Linearly-bounded damping at infinity}. If $\mathcal{O}(g) =1$ according to Definition \eqref{def:order}, then, from hypothesis (\ref{ass on g}), we obtain
				\begin{equation}
				\int_{\Sigma _{\alpha }}a(x)\left( |g\left(\partial_t  u\right)
				|^{2}+|\partial_t  u|^{2}\right) d\Sigma _{\alpha
				}\leq \left( \tilde{m}^{-1}+ \tilde{M}\right) \int_{\Sigma _{\alpha }}a(x)g\left(
				\partial_t u\right) \partial_t u \,d\Sigma _{\alpha }.  \label{i:lin}
				\end{equation}%
				
				\item[III)] \textbf{Superlinear damping at infinity}.
				Suppose  $\mathcal{O}(g) = r> 1$. From \eqref{ass on g} we obtain $|g(s)| > \tilde{m} |s|$ for $|s|>1$ with $\tilde{m}>0$ independent of $s$, and we trivially estimate
				\begin{equation}\label{i:superlin:1}
				\int_{\Sigma_{\alpha }} a(x)|u_t|^2 \, d\Sigma_{\alpha } \leq \frac{1}{\tilde{m}^2}\int_{\Sigma_{\alpha }} a(x)|g(u_t)|^2 \, d\Sigma_{\alpha }.
				\end{equation}
				Next, for $\lambda \in ]0,1[,$
				\begin{equation}\label{i:superlin:2}
				\int_{\Sigma_{\alpha }} a(x)|g(u_t)|^2 \, d\Sigma_{\alpha }  = \int_{\Sigma_{\alpha }} a(x)|g(u_t)|^{2 \lambda}|g(u_t)|^{2(1-\lambda)} \, d\Sigma_{\alpha }=: J_1.\\
				\end{equation}
				Choose any $p_0>2r$ and estimate $J_1$ using H\"older's inequality with conjugate exponents $\frac{p_0}{2\lambda r}$ and $\frac{p_0}{p_0-2\lambda r}$ (splitting $a(x)$ as $a(x)^{2\lambda r/{p_0}} \cdot a(x)^{(p_0-2\lambda r)/{p_0}}$):
				\begin{equation}\label{i:superlin:3}
				J_1 \leq \left(\int_{\Sigma_{\alpha }} a(x)|g(u_t)|^{p_0/r} \, d\Sigma_{\alpha } \right)^{2\lambda r /{p_0}}\left(\int_{\Sigma_{\alpha }}
				a(x)|g(u_t)|^{\frac{2 (1-\lambda)p_0}{p_0-2\lambda r}} \, d \Sigma_{\alpha }\right)^{\frac{p_0-2\lambda r}{p_0}}.
				\end{equation}
				Note that $\mathcal{O}(g)=r$ implies
				\begin{equation}\label{g-sim-superlin}
				g(s)s \sim s^{r+1} \sim g(s)^{(r+1)/r},\quad  |s|>1.
				\end{equation}
				Thus,  for  $|g(u_t)|^{\frac{2 (1-\lambda)p_0}{p_0-2\lambda r}}$ to be equivalent to the dissipation integrand $g(u_t)u_t,$ we solve
				\[
				\frac{2(1-\lambda)p_0}{p_0-2\lambda r } = \frac{1+ r}{r}    \implies  \lambda = \frac{p_0(r-1)}{2r(p_0-r-1)}.
				\]
				With this choice of $\lambda,$ we combine \eqref{i:superlin:1}, \eqref{i:superlin:2} and \eqref{i:superlin:3} to  conclude
				\begin{equation}\label{i:superlin:final}
				\int_{\Sigma_{\alpha }} a(x) (|u_t|^2+|g(u_t)|^2 ) \, d \Sigma_{\alpha } \lesssim C_a\|u_t\|_{L^\infty(\mathbb{R}_+;L^{p_0}(\Omega) )}^{\frac{p_0(r-1)}{p_0-r-1}}\left(\int_{Q_T} a(x)g(u_t)u_t \, dQ_T \right)^{\frac{p_0-2\mathcal{O}(g)}{p_0-1-\mathcal{O}(g)}}
				\end{equation}
				where $C_a=(\sup a(x))^{\frac{2 \lambda r}{p_0}}$.
				
				\item[IV)] \textbf{Sublinear damping at infinity}.
				Assume  $\mathcal{O}(g) = r < 1$ in view of Definition \ref{def:order}. From \eqref{ass on g} we obtain  $\tilde{M}|s| >  |g(s)|$ for $|s|>1$ with $\tilde{M} >0$ independent of $s$:
				\begin{equation}\label{i:sublin:1}
				\int_{\Sigma_{\alpha }} a(x)|g(u_t)|^2 \, d \Sigma_{\alpha } \leq \tilde{M} \int_{\Sigma_{\alpha }} a(x)|g(u_t)||u_t| \, d \Sigma_{\alpha } =\tilde{M} \int_{\Sigma_{\alpha }} a(x)g(u_t)u_t \, d \Sigma_{\alpha }.
				\end{equation}
				For $\lambda \in ]0,1[,$
				\begin{equation}\label{i:sublin:2}
				\int_{\Sigma_{\alpha }} a(x)|u_t| \, d \Sigma_{\alpha } = \int_{\Sigma_{\alpha }} a(x) |u_t|^{2\lambda} |u_t|^{2(1-\lambda)} \, d \Sigma_{\alpha }=:J_2.
				\end{equation}
				Let $p_0>2$ and estimate $J_2$ using H\"older's inequality with exponents   $\frac{p_0}{2\lambda}$ and $\frac{p_0}{p_0-2\lambda}$ (splitting $a(x)$ as $a(x)^{2\lambda/p_0} \cdot a(x)^{(p_0-2 \lambda)/p_0}$):
				\begin{equation}\label{i:sublin:3}
				J_2 \leq \left(\int_{\Sigma_{\alpha }} a(x)|u_t|^{p_0} \, d \Sigma_{\alpha } \right)^{2\lambda/{p_0}}\left(\int_{\Sigma_{\alpha }}a(x)
				|u_t|^{\frac{2 (1-\lambda)p_0}{p_0-2\lambda}} \, d \Sigma_{\alpha }\right)^{\frac{p_0-2\lambda}{p_0}}
				\end{equation}
				The value of $\lambda\in]0,1[$ is chosen so that
				\begin{equation}\label{g-sim-sublin}
				|u_t|^{\frac{2 (1-\lambda)p_0}{p_0-2\lambda}}  = u_t^{r+1} \sim g(u_t)u_t \quad\text{for}\quad |u_t|>1,
				\end{equation}
				namely,
				\[
				\frac{2(1-\lambda)p_0}{p_0-2\lambda} = 1+ r    \implies  \lambda = \frac{p_0(1-r)}{2(p_0-1-r)}.
				\]
				Combining \eqref{i:sublin:1}-\eqref{i:sublin:3}:
				\begin{equation}\label{i:sublin:final}
				\int_{\Sigma_{\alpha }} a(x)(|u_t|^2+|g(u_t)|^2) \, d \Sigma_{\alpha } \lesssim C_a \|u_t\|_{L^\infty(\mathbb{R}_+;L^{p_0}(\Omega))}^{\frac{p_0(1-r)}{p_0-r-1}}\left(\int_{Q_T} a(x)g(u_t)u_t \, dQ_T \right)^{\frac{p_0-2}{p_0-1-\mathcal{O}(g)}}
				\end{equation}
				where $C_a=(\sup a(x))^{\frac{2 \lambda}{p_0}}$.
			\end{enumerate}
			We combine the above estimates, observability inequality \eqref{obser ineq original prob}, and \eqref{i:mixed} from Theorem \ref{theo 4} with  inequality \eqref{i:origin} and with either \eqref{i:lin}, \eqref{i:superlin:final}, or \eqref{i:sublin:final}, depending on whether  $\mathcal{O}(g)=1$, $\mathcal{O}(g)>1$, or $\mathcal{O}(g)<1$, respectively. Using definition  \eqref{def:damping} and relabeling constants, we obtain
			\begin{equation*}
			E_u(0) \leq K \left(I({\bf D}_{0}^{T}\big(g(s); u_{t}\big))+h_0\left( \frac{1}{\operatorname{meas}(Q_T)} {\bf D}_{0}^{T}\big(g(s); u_{t}\big) \right)+h_1({\bf D}_{0}^{T}\big(g(s); u_{t}\big)) \right),
			\end{equation*}
			where $h_1$ is given by \eqref{h1} and $K=C(T,a,\operatorname{meas}(Q_T))$ if $g$ is linearly bounded near infinity, $K=C(T,a,\operatorname{meas}(Q_T)) \cdot D_{0}^{\frac{p_0(r-1)}{p_0-r-1}}$
			if $g$ is superlinear near infinity, and $K=C(T,a,\operatorname{meas}(Q_T)) \cdot D_{0}^{\frac{p_0(1-\theta)}{p_0-r-1}}$ if $g$ is sublinear near infinity.
			
			In all cases there exists a constant $K>0$ and a function $h$ such that
			\begin{equation*}
			E_u(0) \leq K (I+h)({\bf D}_{0}^{T}\big(g(s); u_{t}\big) ).
			\end{equation*}
			By using the energy identity, we obtain
			\begin{equation*}
			(I+h)^{-1}(K^{-1} E_u(T)) \leq E_u(0)-E_u(T).
			\end{equation*}

			To finish the proof of the decay, we invoke the
			following result due to I. Lasiecka and Tataru
			\cite{Lasiecka-Tataru}:
			
			\noindent \textbf{Lemma A}:\textit{\ Let }$p$\textit{\ be a
				positive, increasing function such that }$p(0)=0$\textit{. Since
			}$p$\textit{\ is
				increasing, we can define an increasing function }$q,$ $q(x)=x-(I+p)^{-1}%
			\left( x\right) .$\textit{\ Consider a sequence }$s_{m}$\textit{\
				of
				positive numbers that satisfies}%
			\begin{equation*}
			s_{m+1}+p(s_{m+1})\leq s_{m}.
			\end{equation*}%
			\textit{Then }$s_{m}\leq S(m)$\textit{, where }$S(t)$\textit{\ is
				a solution
				of the differential equation}%
			\begin{equation*}
			\frac{d}{dt}S(t)+q(S(t))=0,\text{ \ }S(0)=s_{0}.
			\end{equation*}%
			\textit{Moreover, if }$p(x)>0$\textit{\ for }$x>0$\textit{, then }$\underset{%
				t\rightarrow \infty }{\lim }$\textit{\ }$S(t)=0.$
			
			The semigroup property of $U(t)=(u(t),u_t(t))$ yields
			\begin{equation}\label{decayl}
			E_u((m+1)T)+(I+h)^{-1}(K^{-1} E_u((m+1)T)) \leq E_u(mT)
			\end{equation}
			for all $m \in \mathbb{N}$.
			
			Applying Lemma A with $s_{m}=E_{u}(mT)$ thus results in%
			\begin{equation}
			E_{u}(mT)\leq S(m),\text{ \ \ }m=0,1,....  \label{3.35'}
			\end{equation}
			
			Finally, using the dissipativity of $E_{u}(t)$ , we have for $t=mT+\tau ,$ $0\leq \tau \leq T,$%
			\begin{equation}
			E_{u}(t)\leq E_{u}(mT)\leq S(m)= S\left( \frac{t-\tau }{T}\right) \leq
			S\left( \frac{t}{T}-1\right) \text{ \ \ for \ }t>T_0\text{.}
			\label{3.36'}
			\end{equation}%
			Above, we use the fact that $S(\cdot)$ is dissipative. The
			proof of decay for $E_{u}$ is now complete.
		\end{proof}

		The algorithm for computations of decay rates given by Corollary \ref{uniformdecay} is very general and
		provides explicit decay rates without any restrictions on the growth of the dissipation $g$ at the
		origin. Indeed, as shown in \cite{Lasiecka-Tataru}, this algorithm gives exponential decay rates for the damping
		that is bounded from below by a linear function and algebraic decay rates for polynomially
		decaying dissipation at the origin. We shall illustrate below how other cases can be treated as
		well. By specializing a bit further the class of nonlinear dissipation, we will be able to obtain
		an explicit description of the decay rates. The obtained decay rates are optimal, since they are the
		same as these optimal rates derived in \cite{alabau:05} for the model that does not account for the sources. In
		addition, we will be able to obtain decay rates for nondifferentiable dissipation, such as fractional
		powers.
		
		{Next, we present the result that allows the derivation of the explicit decay rates. }
		
		\begin{Corollary}[{Lasiecka-Toundykov, Corollary 1 in \cite{las-tou:06}}]\label{uniformdecay2}
			Suppose that the assumptions of Corollary \ref{uniformdecay} are satisfied and that the function $h$ in \eqref{h} can be expressed as $h=h_b+h_s$ where $h_b$ and  $h_s$ are concave, monotone increasing, zero at the origin, and, in addition, $h_s=o(h_b).$ The latter means that $\lim_{x \rightarrow 0^+} h_s(x)/h_b(x)=0$ and $h_b$ has no upper linear bound on $[0,1)$. Then, given any positive $\gamma<1$, there exists $t_0=t_0(\gamma) \geq T$ such that the following energy estimate holds:\begin{equation}
			E_u(t) \leq \bar{S}\left(\frac{t}{T_0}-1\right), \hbox{ for all } t \geq 2t_0,
			\end{equation}
			where $\bar{S}$ is the solution of the following nonlinear ODE:
			\begin{equation}\label{approximate}
			\bar{S}_t+h_b^{-1}\left( \gamma K^{-1} \bar{S}\right) =0, \quad \bar{S}(0)=E_u(0),
			\end{equation}
			with $K$ and $T_0$ as in Corollary \ref{uniformdecay}.
		\end{Corollary}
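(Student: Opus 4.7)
My plan is to refine the recursion that already appears in the proof of Corollary \ref{uniformdecay} by exploiting the finer structure $h=h_b+h_s$ with $h_s=o(h_b)$ near zero and $h_b$ having no linear upper bound on $[0,1)$. From Corollary \ref{uniformdecay}, we inherit the one-step estimate
\begin{equation*}
(I+h)^{-1}\!\left(K^{-1}E_u((m+1)T)\right)\le E_u(mT)-E_u((m+1)T),
\end{equation*}
together with the fact that $s_m:=E_u(mT)\to 0$. The idea is to replace the operator $(I+h)^{-1}\circ(K^{-1}\cdot I)$, which is hard to invert explicitly, by the cleaner $h_b^{-1}\circ(\gamma K^{-1}\cdot I)$ once $s_m$ has become small enough.

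Concretely, set $y_{m+1}:=(I+h)^{-1}(K^{-1}s_{m+1})$, so that $y_{m+1}+h(y_{m+1})=K^{-1}s_{m+1}$ and $y_{m+1}\le s_m-s_{m+1}$. Given $\gamma\in(0,1)$, choose $\delta=(1-\gamma)/(2\gamma)$; the assumption $h_s=o(h_b)$ near $0$ gives $h_s(y)\le\delta h_b(y)$ for $y$ small, while the absence of a linear upper bound on $h_b$, combined with the fact that the concavity of $h_b$ forces $h_b(y)/y$ to be monotone decreasing with limit $+\infty$ at $0$, gives $y\le\delta h_b(y)$ for $y$ small. Combining, for $y_{m+1}$ below a suitable threshold $\eta=\eta(\gamma)>0$,
\begin{equation*}
\gamma K^{-1}s_{m+1}=\gamma\bigl[y_{m+1}+h_b(y_{m+1})+h_s(y_{m+1})\bigr]\le \gamma(1+2\delta)h_b(y_{m+1})=h_b(y_{m+1}).
\end{equation*}
Applying $h_b^{-1}$ (monotone) we obtain $h_b^{-1}(\gamma K^{-1}s_{m+1})\le y_{m+1}\le s_m-s_{m+1}$, that is, the simplified recursion
\begin{equation*}
s_{m+1}+h_b^{-1}(\gamma K^{-1}s_{m+1})\le s_m.
\end{equation*}
Because $s_m\to 0$ by Corollary \ref{uniformdecay}, there exists $m_0=m_0(\gamma)$ such that $y_{m+1}<\eta$ for all $m\ge m_0$; take $t_0=t_0(\gamma):=m_0T$.

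Having reduced to a recursion of the form $s_{m+1}+p(s_{m+1})\le s_m$ with the monotone function $p(x)=h_b^{-1}(\gamma K^{-1}x)$ that vanishes at the origin, I would then apply Lasiecka–Tataru's comparison lemma (Lemma~A, used already in the proof of Corollary~\ref{uniformdecay}) to the shifted sequence $\tilde s_j:=s_{m_0+j}$. This yields $s_{m_0+j}\le\bar S(j)$, where $\bar S$ solves the autonomous ODE $\bar S_t+h_b^{-1}(\gamma K^{-1}\bar S)=0$ with $\bar S(0)=s_{m_0}\le E_u(0)$, so $\bar S$ may as well be taken with initial datum $E_u(0)$. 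Finally, for arbitrary $t\ge 2t_0$, write $t=(m_0+j)T+\tau$ with $0\le\tau<T$ and use monotonicity of both $E_u$ (by the energy identity) and $\bar S$ to pass from the discrete bound to $E_u(t)\le\bar S(t/T_0-1)$, absorbing the additive shift by~$t_0$ into the requirement $t\ge 2t_0$.

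The main technical obstacle is the passage in the second paragraph: one must arrange, in a single small-$y$ regime, both that the linear term $\gamma y$ and the remainder $\gamma h_s(y)$ are jointly dominated by a definite fraction of $h_b(y)$, so that the constant $\gamma(1+2\delta)$ can be made to equal exactly $1$. This is where the two structural hypotheses on $h_b,h_s$ are used in a delicately balanced way; any weaker version (e.g. allowing $h_b$ to be linearly bounded) would force the constant in front of $h_b(y_{m+1})$ to exceed $1$, and the clean ODE $\bar S_t+h_b^{-1}(\gamma K^{-1}\bar S)=0$ would have to be replaced by something strictly weaker. Once this step is in place, the rest is a direct transcription of the discrete-to-continuous argument already used in Corollary~\ref{uniformdecay}.
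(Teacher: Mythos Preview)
Your proposal is correct and follows essentially the same route as the paper. Both arguments reduce the Lasiecka--Tataru recursion $(I+h)^{-1}(K^{-1}s_{m+1})\le s_m-s_{m+1}$ to the simplified one $s_{m+1}+h_b^{-1}(\gamma K^{-1}s_{m+1})\le s_m$ once the energy is small, then compare with the ODE $\bar S_t+h_b^{-1}(\gamma K^{-1}\bar S)=0$. The paper reaches the key inequality by first computing $q=(Kh_b+Kh_s+(K+1)I)^{-1}$ explicitly and observing $Kh_s+(K+1)I=o(h_b)$, whereas you work directly with $y_{m+1}=(I+h)^{-1}(K^{-1}s_{m+1})$ and bound $\gamma[y_{m+1}+h_b(y_{m+1})+h_s(y_{m+1})]\le h_b(y_{m+1})$ via the choice $\delta=(1-\gamma)/(2\gamma)$; this is the same estimate unpacked differently, and your version is slightly more transparent because it bypasses the algebraic manipulation of $q$ and feeds the cleaned-up recursion straight into Lemma~A.
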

		\begin{proof}
			Assumptions on $h$ imply that $q \geq h_b^{-1} \circ (\gamma K^{-1} I)$ near the origin. Indeed, writing $\mathcal{F}=K(I+h)$ we obtain
			\begin{equation}
			\begin{aligned}
			q= {} &I-(I+\mathcal{F}^{-1})^{-1} \\
			= {} &\left[(I+\mathcal{F}^{-1}) \circ (I+\mathcal{F}^{-1})^{-1} \right]-(I+\mathcal{F}^{-1})^{-1}\\
			= {} & \mathcal{F}^{-1} \circ (I+\mathcal{F}^{-1})^{-1}\\
			= {} & \mathcal{F}^{-1} \circ (\mathcal{F} \circ \mathcal{F}^{-1}+\mathcal{F}^{-1})^{-1}\\
			= {} & \mathcal{F}^{-1} \circ \left[(K(I+h)) \circ \mathcal{F}^{-1} +\mathcal{F}^{-1} \right]^{-1}\\
			= {} & \mathcal{F}^{-1} \circ [K\mathcal{F}^{-1}+K h\circ \mathcal{F}^{-1}+\mathcal{F}^{-1} ]^{-1}\\
			= {} & \mathcal{F}^{-1} \left[(Kh+(K+1)I) \circ \mathcal{F}^{-1} \right]^{-1}\\
			= {} & \mathcal{F}^{-1} \circ \mathcal{F} \circ (Kh+(K+1)I)^{-1}\\
			= {}& (Kh_b+Kh_s + (K+1)I)^{-1}.
			\end{aligned}
			\end{equation}
			Note that $Kh_s + (K+1)I=o(h_b)$ since $\lim_{x \rightarrow 0^{+}} \frac{Kh_s(x)+(K+1)x}{h_b(x)}=0$. Thus, $q=(Kh_b+o(h_b))^{-1}$. Moreover, if $w=o(h_b)$ then, for all $\varepsilon>0$ there exists $\delta>0$ such that $\frac{o(h_b)(x)}{K h_b(x)}=\frac{w(x)}{Kh_b(x)}<\varepsilon$ whenever $x \in (0,\delta)$. Therefore, $o(h_b)(x) < (1+\varepsilon) K h_b(x)-Kh_b(x)$ for all $x \in (0,\delta)$, which implies that $(1+\varepsilon)Kh_b \geq Kh_b+o(h_b)$ in $[0,\delta)$. Consequently, if we let $\gamma=(1+\varepsilon)^{-1}$, then $h_b^{-1} \circ \gamma K^{-1} I \leq (Kh_b+o(h_b))^{-1}=q$ in $[0,\delta)$.
			
			Let $\ell: \mathbb{R}_+ \rightarrow \mathbb{R}_{+}$ be given by $\ell=(I+h^{-1}) \circ (K^{-1} I )$. From Corollary \ref{uniformdecay} we know $\displaystyle \lim_{t \rightarrow \infty} S(t)=0$. Thus, there exists $t_0\geq T_0$ such that $S(t_0)<\delta$. Define $s_m=E_u(mt_0)$. Let us show that
			\begin{equation}\label{decayapproximate}
			E_u(t) \leq \tilde{S}\left(\frac{t}{T_0}-1\right), \hbox{ for all } t \geq 2t_0.
			\end{equation}
			where
			\begin{equation}\label{node}
			\tilde{S}_t+h_b^{-1}(\gamma K^{-1}(\tilde{S}(t)))=0,\quad \tilde{S}(0)=E_u(t_0).
			\end{equation}
			To this end, it is sufficient to prove that
			\begin{equation}\label{fundamentalestimate}
			s_{m+1} \leq \tilde{S}(m), \hbox{ for all } m=0,1,2,\dots.
			\end{equation}
			Indeed, since every $t \geq 2t_0$ can be written as $t=(m+1)t_0+\tau$ with $m \geq 1$, it follows that $E_u(t) \leq E_u((m+1)t_0)\leq E_u(mt_0)\leq \tilde{S}(m+1)=\tilde{S} \left(\frac{t-\tau}{t_0}\right) \leq \tilde{S} \left(\frac{t}{t_0}-1\right)$.
			
			For $m=0$, estimate \eqref{fundamentalestimate} is clear. Assume $s_{m+1} \leq \tilde{S}(m)$. From \eqref{q} and \eqref{decayl} we deduce
			\begin{equation}\label{desfun}
			s_{m+2}+\ell(s_{m+2}) \leq s_{m+1}, \hbox{ for all } m\geq 1.
			\end{equation}Inequality \eqref{desfun} is equivalent to
			\begin{equation*}
			(I+\ell)s_{m+2} \leq s_{m+1}
			\end{equation*}
			and since $(I+\ell)^{-1}$ is monotone increasing, $s_{m+2} \leq (I+\ell)^{-1}s_{m+1}=s_{m+1}-q(s_{m+1})$, hence
			\begin{equation}\label{desfun2}
			s_{m+2} \leq s_{m+1}-q(s_{m+1}).
			\end{equation}
			Integrating equation \eqref{node} from $m$ to $m+1$ yields
			\begin{equation*}
			\tilde{S}(m+1)-\tilde{S}(m)=-\int_{m}^{m+1}h_b^{-1}(\gamma K^{-1}(\tilde{S}(t)))\, dt.
			\end{equation*}
			Therefore,
			\begin{equation}
			\begin{aligned}
			\tilde{S}(m+1) {} &= \tilde{S}(m)-\int_{m}^{m+1}h_b^{-1}(\gamma K^{-1}(\tilde{S}(t)))\, dt\\
			{} &\geq \tilde{S}(m) -h_b^{-1}(\gamma K^{-1}(\tilde{S}(m)))\\
			{} &\geq \tilde{S}(m) -q(\tilde{S}(m))\\
			{} &= (I+\ell)^{-1}(\tilde{S}(m))\\
			{} &\geq (I+\ell)^{-1}s_{m+1}\\
			{} &= s_{m+1}-q(s_{m+1})\\
			{} &\geq s_{m+2}.
			\end{aligned}
			\end{equation}
			If we consider the equation
			\begin{equation}\label{node1}
			\bar{S}_t+h_b^{-1}(\gamma K^{-1}(\bar{S}(t)))=0, \bar{S}(0)=E_u(0),
			\end{equation}
			then $\tilde{S}\left(\frac{t}{t_0}-1\right) \leq \bar{S}\left(\frac{t}{t_0}-1\right)$ for all $t \geq 2t_0$.
			
			Indeed, note that $\tilde{S}(t)=G^{-1}(-t)$ where $G(S)=\int_{E_u(t_0)}^{S} \frac{1}{h_b^{-1}(\gamma K^{-1} \tau)} \, d \tau$ and $\bar{S}(t)=\bar{G}^{-1}(-t)$ where $\bar{G}(S)=\int_{E_u(0)}^{S} \frac{1}{h_b^{-1}(\gamma K^{-1} \tau)} \, d \tau$. Thus,
			\begin{equation}\label{decres1}
			\begin{aligned}
			\int_{E_u(t_0)}^{\tilde{S}\left(\frac{t}{t_0}-1\right)} \frac{1}{h_b^{-1}(\gamma K^{-1} \tau)} \, d \tau {} = & G\left(\tilde{S}\left(\frac{t}{t_0}-1\right)\right)=-\left(\frac{t}{t_0}-1\right).
			\end{aligned}
			\end{equation}
			Since $\frac{t}{t_0} -1\geq 0$, $\tilde{S}\left(\frac{t}{t_0}-1\right) \leq \tilde{S}(0)=E_u(t_0) \leq E_u(0)$. From \eqref{decres1} we obtain
			\begin{equation}\label{decres2}
			\frac{t}{t_0}-1=\int_{\tilde{S}\left(\frac{t}{t_0}-1\right)}^{E_u(t_0)}\frac{1}{h_b^{-1}(\gamma K^{-1} \tau)} \, d \tau \leq \int_{\tilde{S}\left(\frac{t}{t_0}-1\right)}^{E_u(0)} \frac{1}{h_b^{-1}(\gamma K^{-1} \tau)} \, d \tau.
			\end{equation}
			Inequality \eqref{decres2} yields
			\begin{equation}\label{decres3}
			\frac{t}{t_0}-1 \leq -\int_{E_u(0)}^{\tilde{S}\left(\frac{t}{t_0}-1\right)} \frac{1}{h_b^{-1}(\gamma K^{-1} \tau)}\, d \tau =-\bar{G}\left(\tilde{S}\left(\frac{t}{t_0}-1\right) \right).
			\end{equation}
			Therefore, \eqref{decres3} implies
			\begin{equation}\label{decres4}
			\tilde{S}\left(\frac{t}{t_0}-1\right) \leq \bar{G}^{-1}\left( - \left(\frac{t}{t_0}-1\right)\right)=\bar{S}\left(\frac{t}{t_0}-1\right), \hbox{ for all } t \geq 2t_0.
			\end{equation}
			Combining \eqref{decayapproximate} and \eqref{decres4}, the result follows.
		\end{proof}
		
		{
		\begin{Remark}
			We emphasize that the Corollaries \ref{uniformdecay} and \ref{uniformdecay2} hold for solutions of system \eqref{mainproblem} with subcritical source terms and if $\mathcal{O}(g)=1$.
		\end{Remark}}
		
		\begin{Corollary}[{Corollary 2 in \cite{las-tou:06}}]\label{uniformdecay3}
			Under hypothesis of Corollary \ref{uniformdecay}, suppose that damping is linearly bounded at infinity.
			\begin{itemize}
				\item [a)] If $g$ is sublinear at the origin, i.e., $m_0x^{\theta+1} \leq g(x)x \leq M_0 x^{\theta+1}$ with $\theta \in (0,1)$ for $|x|<1$, the auxiliary function $h_0$ may be defined as  $h_0(x)=cx^{\frac{2\theta}{1+\theta}}$, where $c=\frac{2(M_0^2+1)}{m_{0}^{\frac{2\theta}{1+\theta}}}$. Then ODE \eqref{ODE} can be replaced with
				\begin{equation}\label{ODE1}
				S_t+\frac{1}{c^{\frac{1+\theta}{2\theta}}}(\gamma K^{-1}S(t))^{\frac{1+\theta}{2\theta}}=0, \quad S(0)=E_u(0),
				\end{equation}
				and energy decay \eqref{decay} holds for all $t \geq 2t_0$ with sufficiently large threshold value $t_0>0$.
				
				\item[b)] If $g$ is superlinear at the origin, i.e., $m_0x^{r+1} \leq g(x)x \leq M_0 x^{r+1}$ with $r>1$ for $|x|<1$ and the function 	$x \mapsto \sqrt{x}g(\sqrt{x})$ is convex and superlinear (no lower linear bound with positive slope) on interval $[0,\delta)$, for some $\delta>0$, then ODE \eqref{ODE} can be replaced with
				\begin{equation}\label{ODE2}
				S_t+\sqrt{\gamma K^{-1}} S^{\frac{1}{2}}g[(\gamma K^{-1} S)^{\frac{1}{2}}]=0, \quad S(0)=E_u(0),
				\end{equation}
				and energy estimate \eqref{decay} holds whenever $t \geq 2t_0$ for a sufficiently large $t_0>0$.
			\end{itemize}
		\end{Corollary}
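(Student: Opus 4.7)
The approach is to apply Corollary \ref{uniformdecay2} to each case by constructing an explicit $h_0$ from the behavior of $g$ at the origin, then decomposing $h=h_b+h_s$ and inverting $h_b$ in closed form. In both cases, since $g$ is linearly bounded at infinity, $h_1(x)=x$, so $h(x)=x+h_0(x/\operatorname{meas}(Q_T))$, and it is natural to take $h_b(x):=h_0(x/\operatorname{meas}(Q_T))$ and $h_s(x):=x$; the factor $\operatorname{meas}(Q_T)$ will be absorbed into the constant $K$ at the end. The work reduces to two things: produce an admissible $h_0$ (concave, strictly increasing, zero at the origin, verifying \eqref{ass on h}) and check that $h_s=o(h_b)$ with $h_b$ admitting no upper linear bound near $0$.

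For part (a), the prescribed bounds yield $|g(x)|\leq M_0|x|^{\theta}$, hence $x^2+g(x)^2\leq (1+M_0^2)|x|^{2\theta}$ for $|x|\leq 1$. Setting $y=g(x)x$ and using $|x|\leq (y/m_0)^{1/(1+\theta)}$, the function $h_0(y):=cy^{2\theta/(1+\theta)}$ with $c:=2(M_0^2+1)m_0^{-2\theta/(1+\theta)}$ is concave (since $2\theta/(1+\theta)\in(0,1)$), strictly increasing, vanishes at $0$, and satisfies \eqref{ass on h}. Then $h_s(x)/h_b(x)\sim x^{(1-\theta)/(1+\theta)}\to 0$ as $x\to 0^+$ (using $\theta<1$), giving $h_s=o(h_b)$, while $h_b(x)/x\to\infty$ eliminates any upper linear bound. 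Since $h_b^{-1}(y)=\operatorname{meas}(Q_T)\,(y/c)^{(1+\theta)/(2\theta)}$, substituting into \eqref{approximate} and absorbing $\operatorname{meas}(Q_T)$ into $K$ produces exactly \eqref{ODE1}.

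For part (b), I would set $h_0(y):=(1+M_0^2)\,\tilde{g}^{-1}(y)$ where $\tilde{g}(z):=\sqrt{z}\,g(\sqrt{z})$ on the interval $[0,\delta)$ where $\tilde{g}$ is convex and superlinear. The crucial identity $\tilde{g}(s^2)=sg(s)$ for $s\geq 0$ gives $\tilde{g}^{-1}(g(s)s)=s^2$, so $h_0(g(s)s)=(1+M_0^2)s^2\geq s^2+g(s)^2$, using $g(s)^2\leq M_0^2 s^{2r}\leq M_0^2 s^2$ for $|s|\leq 1$ and $r>1$. Convexity and strict monotonicity of $\tilde{g}$ make $h_0$ concave and strictly increasing with $h_0(0)=0$, and superlinearity of $\tilde{g}$ at $0$ yields $h_0(x)/x\to\infty$ as $x\to 0^+$, which both delivers $h_s=o(h_b)$ and rules out an upper linear bound. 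Finally, $h_b^{-1}(y)=(1+M_0^2)^{-1}\operatorname{meas}(Q_T)\,\tilde{g}(y)=\operatorname{const}\cdot\sqrt{y}\,g(\sqrt{y})$, so the ODE \eqref{approximate} becomes \eqref{ODE2} after the multiplicative constants are swept into $K$.

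The main technical obstacle is to ensure that $h_0$, whose admissibility is most transparent from the power-law behavior of $g$ near zero, actually defines a concave strictly increasing function on the whole range needed for \eqref{ass on h}, which is indexed by $|s|\leq 1$. In part (b) this is delicate because the hypothesis only provides convexity and superlinearity of $\tilde{g}$ on some $[0,\delta)$ with $\delta$ possibly small; beyond $\delta$ one extends $h_0$ concavely (for instance by its tangent line at the endpoint), preserving concavity, monotonicity and the inequality \eqref{ass on h}. Since Corollary \ref{uniformdecay2} only depends on the behavior of $h$ near the origin, this extension does not affect the final ODE \eqref{ODE2}. The analogous issue in part (a) is strictly easier because the power $2\theta/(1+\theta)$ is already $<1$ on the whole line, so no surgery is required.
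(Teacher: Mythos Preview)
Your proposal is correct and matches the paper's proof, which simply defers to Corollary~2 of \cite{las-tou:06} and proceeds exactly as you describe: construct $h_0$ explicitly from the behaviour of $g$ near the origin, set $h_b=h_0\circ(\cdot/\operatorname{meas}(Q_T))$ and $h_s=h_1=\mathrm{id}$, verify the hypotheses of Corollary~\ref{uniformdecay2}, and read off the simplified ODE from $h_b^{-1}$. One minor slip in part~(b): the correct inverse is $h_b^{-1}(y)=\operatorname{meas}(Q_T)\,\tilde g\bigl(y/(1+M_0^2)\bigr)$ rather than $(1+M_0^2)^{-1}\operatorname{meas}(Q_T)\,\tilde g(y)$ (since $\tilde g$ is not linear), but the factor $(1+M_0^2)$ still absorbs into $K$ exactly as you intend, so the conclusion stands.
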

		\begin{proof}{Replace the sentence ``Corollary 1'' by ``Corollary \ref{uniformdecay2}'' in the proof of Corollary 2 in \cite{las-tou:06}.}
		\end{proof}
		
		\begin{Remark}
			By integrating the differential equation \eqref{ODE2} we obtain $S(t)=G^{-1}\left(-\frac{\tilde{C}}{2} t,E_u(0)\right)$ where $$G(y,E_u(0))=\int_{\sqrt{\tilde{C}E_u(0)}}^{\sqrt{\tilde{C}y}} \frac{1}{g(u)}du$$ with $\tilde{C}=\gamma K^{-1}$.
		\end{Remark}
		
		\section{Computing the decay rates}\label{section7}

		Examples in this section illustrate the above results.  We shall account for various configurations that include sub and superlinear damping, both at infinity and the origin. {We emphasize that the examples found here are adapted from (\cite{Cavalcanti0}, section 8) and Examples 3.2 and 3.3 in \cite{TAMS}.}
		
		\subsection{Linearly bounded case}\label{6.1}
		
		Throughout this section assume that $mx^2 \leq g(x)x \leq M x^2$ for all $|x|\geq 1$. In the examples below, the constant $T_0$ comes from Lemma \ref{obs ineq1} and the constant $K$ comes from  \eqref{Krequal}.
		
		\begin{Example}[Linearly bounded damping at the origin]\label{Example0}
			In this case one obtains exponential decay rates. Suppose $m_0x^2 \leq g(x)x \leq M_0 x^2$ for all $|x|< 1$. Then $h_0(x)=(M_0+m_{0}^{-1})x$, function $q$ in \eqref{q} can be computed explicitly, and ODE \eqref{ODE} is
			\begin{equation*}
			S_t+\tilde{C}S=0, \quad S(0)=E_u(0),
			\end{equation*}
			where $\tilde{C}=\frac{K^{-1}\operatorname{meas}(Q_T) }{\operatorname{meas}(Q_T)(2+K^{-1})+M_0+m_{0}^{-1}}$. This gives us $S(t)=E_u(0)e^{-\tilde{C}t}$ and $E_u(t) \leq E_u(0)e^{ \left(-\tilde{C} \left(\frac{t}{T_0}-1\right) \right)}$ for all $t \geq T_0$.
		\end{Example}
		
		\begin{Example}[Superlinear polynomial damping near the origin]\label{Example1}
			Suppose $m_0x^{p+1} \leq g(x)x \leq M_0 x^{p+1}$ for all $|x|<1$ and some $p>1$. Since the function $s \mapsto \sqrt{s}g(\sqrt{s})= s^{(p+1)/2}$ is convex for $p \geq 1$, By Corollary \ref{uniformdecay3} item b), the energy decay is determined by the solution of
			\begin{equation*}
			S_t+\tilde{C}S^{\frac{p+1}{2}}=0, \quad S(0)=E_u(0),
			\end{equation*}
			where $\tilde{C}=(\gamma K^{-1})^{\frac{p+1}{2}}$. This equation can be integrated directly, of course. However, for sake of illustrating the general formula, we find
			\begin{equation*}
			G(y,E_u(0))=\int_{\sqrt{E_u(0)}}^{\sqrt{y}}u^{-p} du=\frac{1}{1-p}[y^{\frac{-p+1}{2}}-E_u(0)^{\frac{-p+1}{2}}].
			\end{equation*}
			From here, $G^{-1}(t)=[E_u(0)^{\frac{-p+1}{2}}+t(1-p)]^{\frac{2}{-p+1}}$. Thus,
			\begin{equation*}
			E_u(t) \leq S\left(\frac{t}{T_0}-1\right)=G^{-1}\left(-\frac{\tilde{C}}{2}\left(\frac{t}{T_0}-1\right)\right)= \left[E_u(0)^{\frac{-p+1}{2}}+\frac{(p-1)\tilde{C}}{2} \left(\frac{t}{T_0}-1\right)\right]^{-\frac{2}{p-1}}
			\end{equation*}
			for all  $t \geq T_0$.
		\end{Example}
		
		\begin{Example}[Sublinear damping at the origin]\label{Example2}
			We consider $g(s)=s^\theta$ for all $|s|<1$ and some $\theta \in (0,1)$. By Corollary \ref{uniformdecay3} item a), the energy decay is determined by the solution of 	\begin{equation*}
			S_t+\tilde{C}S^{\frac{\theta+1}{2\theta}}=0, \quad S(0)=E_u(0),
			\end{equation*}
			where $\tilde{C}=\left(\frac{\gamma K^{-1}}{c} \right)^{\frac{1+\theta}{2\theta}}$ with $c=\frac{2(M_0^2+1)}{m_{0}^{\frac{2\theta}{1+\theta}}}$.
			In this case,
			\begin{equation*}
			G(s,E_u(0))=\frac{\sqrt{S}^{-\frac{1}{\theta}+1}}{-\frac{1}{\theta}+1}-\frac{\sqrt{E_u(0)}^{-\frac{1}{\theta}+1}}{-\frac{1}{\theta}+1}.
			\end{equation*}
			Therefore, $G^{-1}(t)=\left[ \left( -\frac{1}{\theta}+1 \right)t+ \sqrt{E_u(0)}^{-\frac{1}{\theta}+1}\right]^{\frac{2\theta}{\theta-1}} $. Thus,
			\begin{equation*}
			E_u(t) \leq \left[ \left( \frac{1-\theta}{2\theta} \right)\frac{\tilde{C}}{2}\left(\frac{t}{T_0}-1\right)+ E_u(0)^{\frac{-1+\theta}{2 \theta}} \right]^{-\frac{2\theta}{1-\theta}} \hbox{ for all } t \geq T_0.
			\end{equation*}
		\end{Example}
		
		\begin{Example}[Exponential damping at the origin]\label{Example3}
			We take $g(s)=s^3e^{-1/s^2}$ for $s$ at the origin. Since the function $h^{-1}(s)=g(\sqrt{s})\sqrt{s}=s^2e^{-1/s}$is convex on $[0,\delta)$ for some small $\delta>0$, we solve
			\begin{equation*}
			S_t+\tilde{C}^{2}S^2e^{-(\tilde{C}S)^{-1}}=S_t+\sqrt{\tilde{C} S}g[(\tilde{C} S)^{\frac{1}{2}}]=0,
			\end{equation*}
			where $\tilde{C}=\gamma K^{-1}$. In this case, $G(y,E_u(0))=-\frac{1}{2}[e^{1/(\tilde{C} y)}-e^{1/(\tilde{C} E_u(0))}]$ and $G^{-1}(t,E_u(0))=\tilde{C}^{-1}[\ln(e^{1/(\tilde{C}E_u(0))}-2t) ]^{-1}$. Hence,
			\begin{equation*}
			E_u(t) \leq \tilde{C}^{-1}\left[\ln\left(\tilde{C} \left(\frac{t}{T_0}-1 \right)+e^{1/(\tilde{C}E_u(0))} \right)\right]^{-1} \hbox{ for all } t \geq T_0.
			\end{equation*}
		\end{Example}
		
		\begin{Example}\label{Example4}
			We consider $g(s)=s|s|e^{-1/|s|}$ for $s$ near zero. Since the function $g(\sqrt{s})\sqrt{s}=s^{3/2}e^{-\frac{1}{\sqrt{s}}}$ is convex on $[0,\delta)$ for some small $\delta>0$ we are led to differential equation
			\begin{equation*}
			S_t+(\tilde{C}S)^{\frac{3}{2}}e^{-\frac{1}{\sqrt{\tilde{C}S}}}=0, \quad S(0)=E_u(0),
			\end{equation*}
			where $\tilde{C}=\gamma K^{-1}$. In this case $S(t)=\frac{1}{\tilde{C}} \ln^{-2}\left(\frac{1}{2} (\tilde{C}t+2e^{\frac{1}{\sqrt{\tilde{C}E_u(0)}}} ) \right)$. Therefore,
			\begin{equation*}
			E_u(t) \leq \frac{1}{\tilde{C}}  \frac{1}{\ln^2\left( e^{\frac{1}{\sqrt{\tilde{C}E_u(0)}}}+\frac{\tilde{C}}{2}\left(\frac{t}{T_0}-1 \right) \right)} \hbox{ for all } t \geq T_0.
			\end{equation*}
		\end{Example}
		
		\subsection{Sublinear and superlinear cases}
		
		\begin{Example}[Sublinear damping at infinity] Suppose $g$ is linearly bounded near $0$, namely that it satisfies $m_0s^2 \leq g(s)s \leq M_0 s^2$ for all $|s|< 1$, whereas at infinity $\mathcal{O}(g)<1$, which means that for a given positive $\theta<1$,
			\begin{equation*}
			ms^{\theta+1} \leq g(s)s \leq Ms^{\theta+1}, \hbox{ for all } |s|>1.
			\end{equation*}
			Then, $h_0$, as in Example \ref{Example0} is linear: $h_0(s)=(M_0+m_{0}^{-1})s$, whereas $h_1(s)=s^{\frac{p_0-2}{p_0-\theta-1}}$ as in \eqref{h1}. Assuming there is $p_0>2$ so that $u_t \in L^\infty(\mathbb{R}_+;L^{p_0}(\Omega))$ we obtain $\frac{h_0(x)}{h_1(x)} \rightarrow 0$ as $x \rightarrow 0^+$, and from Corollary \ref{uniformdecay2} the equation \eqref{approximate} reads as
			\begin{equation*}
			S_t+(\alpha S)^{\rho}=0, \quad S(0)=E_u(0),
			\end{equation*}
			where $\rho=\frac{p_0-\theta-1}{p_0-2}$ and $\alpha=\gamma K^{-1}$ for any positive $\gamma<1$.
			
			As stated in Corollary \ref{decay}, the constant $K>0$ is given by $$K=\left[C(T,a,\operatorname{meas}(Q_T)) \cdot D_{0}^{\frac{p_0(1-\theta)}{p_0-r-1}} \right].$$ The solution to this ODE is
			\begin{equation*}
			S(t)=(\alpha^\rho(\rho-1)(t+c_0))^{-\frac{1}{\rho-1}}, \quad c_0=\frac{E_u(0)^{1-\rho}}{\alpha^\rho(\rho-1) }.
			\end{equation*}
			Thus, for $t$ sufficiently large ($t\geq 2t_0$), where $t_0$ is given in Corollary \ref{uniformdecay2}, we have
			\begin{equation*}
			E_u(t) \leq \left[ E_u(0)^{1-\rho} +\alpha^\rho(\rho-1)\left(\frac{t}{t_0}-1 \right)\right]^{-\frac{p_0-2}{1-\theta}}.
			\end{equation*}
		\end{Example}
		
		\begin{Example}[Superlinear damping at infinity] Suppose again that $g$ is linearly bounded near $0$, and at infinity $\mathcal{O}(g)>1$, i.e., for a given $r>1$,
			\begin{equation*}
			ms^{r+1} \leq g(s)s \leq Ms^{r+1}, \hbox{ for all } |s|>1.
			\end{equation*}
			As in the previous example $h_0(x)=(M_0+m_{0}^{-1})s$, whereas $h_1(s)=s^{\frac{p_0-2r}{p_0-r-1}}$ as defined in \eqref{h1}. Assuming there is $p_0>2r$ so that $u_t \in L^\infty(\mathbb{R}_+;L^{p_0}(\Omega))$ we obtain $\frac{h_0(x)}{h_1(x)} \rightarrow 0$ as $x \rightarrow 0^+$, and from Corollary \ref{uniformdecay2} the equation \eqref{approximate} reads as
			\begin{equation*}
			S_t+(\alpha S)^{\rho}=0, \quad S(0)=E_u(0),
			\end{equation*}
			where $\rho=\frac{p_0-r-1}{p_0-2r}$ and $\alpha=\gamma K^{-1}$ for any positive $\gamma<1$.
			
			As stated in Corollary \ref{decay}, the constant $K>0$ is given by $$K=\left[C(T,a,\operatorname{meas}(Q_T)) \cdot D_{0}^{\frac{p_0(r-1)}{p_0-r-1}} \right].$$ The solution to this ODE is
			\begin{equation*}
			S(t)=(\alpha^\rho(\rho-1)(t+c_0))^{-\frac{1}{\rho-1}}, \quad c_0=\frac{E_u(0)^{1-\rho}}{\alpha^\rho(\rho-1) }.
			\end{equation*}
			Thus, for $t$ sufficiently large ($t\geq 2t_0$), where $t_0$ is given in Corollary \ref{uniformdecay2}, we have
			\begin{equation*}
			E_u(t) \leq \left[ E_u(0)^{1-\rho} +\alpha^\rho\left(\frac{r-1}{p_0-2r}\right)\left(\frac{t}{t_0}-1 \right)\right]^{-\frac{p_0-2r}{r-1}}.
			\end{equation*}
		\end{Example}
		
		Below, we summarize the results obtained in Section \ref{6.1}. Decay rates computed in Table $1$ assume that the feedback map is linearly bounded at infinity.
		
		\begin{table}[H]
			\begin{center}
				{\def\arraystretch{4}\tabcolsep=5pt \resizebox{0.99999\textwidth}{!}{	\begin{tabular}{|c|c|c|c|c|c|}
							\hline
							& \multicolumn{5}{c|}{\fonte Behavior of the feedback near the origin ($|s|<1$)}                                                                                                                                                                                                                                                                                                                                                                                                \\ \hline
							& \fonte{$s$}                     & \fonte{$s^{p>1}$                                                                        } & \fonte{$s^{\theta<1}$                                                                                                                             } & \fonte{$s^{3}e^{-1/s^2}$                                                                   } & \fonte{$s|s|e^{-1/|s|}$                                                                                         } \\ \hline
							\fonte{Regularity} & \multicolumn{5}{c|}{\fonte{Finite energy}    }                                                                                                                                                                                                                                                                                                                                                                                                                       \\ \hline
							\fonte{ $S_t+y=0$}        & \fonte{$y=\tilde{C}S$}      & \fonte{$y=\tilde{C}S^{\frac{p+1}{2}}$              }                                  & \fonte{$y=\tilde{C}S^{\frac{\theta+1}{2\theta}}$   }                                                                                            & \fonte{$y=\tilde{C}^{2}S^2e^{-(\tilde{C}S)^{-1}}$                                      } & \fonte{$y=(\tilde{C}S)^{\frac{3}{2}}e^{-\frac{1}{\sqrt{\tilde{C}S}}}$                                      }  \\ \hline
							\fonte{$S(t)$}     & \fonte{$c_0e^{-\tilde{C}t}$} & \fonte{$\left[c_0+c_1(p-1) t \right]^{-\frac{2}{p-1}}$} & \fonte{$\left[ \left( \frac{1-\theta}{2\theta} \right)c_1t+ c_0 \right]^{-\frac{2\theta}{1-\theta}}$ }& \fonte{${c_1}^{-1}\ln^{-1}\left(c_1 t+c_0 \right)$} & \fonte{$c_1^{-1} \ln^{-2}\left(\frac{1}{2} (c_1t+c_0 ) \right)$} \\ \hline
				\end{tabular}}}
				\caption{\small Asymptotic energy decay rates for linearly bounded feedback $g(s)$.}
			\end{center}
		\end{table}

		The asymptotic decay rates computed in Table $2$ assume that the feedback map is linearly bounded at the origin and has order not equal to $1$ at infinity, in view of Definition \ref{def:order}. In this case decay in finite energy space requires regularity of solutions in stronger topology.
		
		\begin{table}[H]
			\begin{center}
				{\def\arraystretch{2.2}\tabcolsep=15pt	\begin{tabular}{|c|c|c|}
						\hline
						& \multicolumn{2}{c|}{Behavior of the feedback for $|s|>1$}                                                                                                                             \\ \hline
						& $g(s)=s^{\theta<1}$                                                                       & $g(s)=s^{r>1}$                                                                            \\ \hline
						Regularity & $u_t \in L^\infty(\mathbb{R}_{+};L^{p_0}(\Omega))$, $p_0>2$                               & $u_t \in L^\infty(\mathbb{R}_{+};L^{p_0}(\Omega))$, $p_0>2r$                              \\ \hline
						ODE        & $S_t+(\alpha S)^{\frac{p_0-\theta-1}{p_0-2}}=0$                                               & $S_t+(\alpha S)^{\frac{p_0-r-1}{p_0-2r}}=0$                                               \\ \hline
						$S(t)$     & $(c_1\frac{(1-\theta)}{p_0-2}(t+c_0))^{-\frac{p-2}{1-\theta}}$ & $(c_1\frac{(r-1)}{p_0-2r}(t+c_0))^{-\frac{p_0-2r}{r-1}}$ \\ \hline
				\end{tabular}}
				\caption{\small Asymptotic energy decay rates for feedbacks $g(s)$ linearly bounded at the origin (for $|s|<1$) and not linearly bounded at infinity (for $|s|\geq 1$).}
			\end{center}
		\end{table}
		
		\appendix
		\section{Preliminaries on microlocal analysis}\label{section2}
		In this section, we supplement details to proofs of some of the theorems on pseudo-differential operators and microlocal defect measures, whose original versions in French  can be found in the {elegant} lecture notes of  Burq and G\'erard \cite{Burq-Gerard}.
		
		\subsection{Pseudo-differential operators}
		Let $\Omega$ be an open and nonempty subset of $\mathbb{R}^d$, $d\ge 1$. A differential operator on $\Omega$ is a linear map $P:\mathcal{D}'(\Omega) \rightarrow \mathcal{D}'(\Omega)$ of the form
		\begin{eqnarray}\label{1.1}
		P u(x) := \sum_{|\alpha|\leq m} a_\alpha(x) \partial^\alpha_x u(x),~(\partial^\alpha:=\partial_{x_1}^{\alpha_1} \cdots \partial_{x_d}^{\alpha_d})
		\end{eqnarray}
		where $a_\alpha\in C^\infty(\Omega)$  are complex valued functions. The greatest integer $m$ such that the functions $a_\alpha$, ~$|\alpha|=m$ are not all zero is called the order of $P$.
		The map $p:\Omega \times \mathbb{R}^d \rightarrow \mathbb{C}$, defined by
		\begin{eqnarray*}
			p(x,\xi):= \sum_{|\alpha|\leq m}a_\alpha(x)\, (i\xi)^{\alpha},
		\end{eqnarray*}
		is called the symbol of $P$.
		
		We observe that $P$ is characterized by the identity
		\begin{eqnarray}\label{1.2}
		P(e_\xi)(x)=p(x,\xi)\,e_\xi(x),~~\hbox{ where }~e_\xi(\cdot) =e^{ i\,\left<(\cdot),\xi\right>}= e^{  i\,(\cdot)\cdot\xi}.
		\end{eqnarray}
		
		Adopting to the notation
		\begin{eqnarray*}
			D=\frac{1}{i} \partial, \ D_j=\frac{1}{ i}\partial_j \hbox{ and }D^\alpha = \frac{1}{i^{|\alpha|}}\partial^\alpha,
		\end{eqnarray*}
		the operator $P$ can be rewritten as
		\begin{eqnarray}\label{1.3}
		P= \sum_{|\alpha|\leq m} a_\alpha(x) i^{|\alpha|}D^\alpha = p(x,D).
		\end{eqnarray}
		
		The formula (\ref{1.2}) can be generalized as follows: for all $(x,\xi)\in \Omega \times \mathbb{R}^d$ and for all $u\in \mathcal{D}(\Omega)$,
		\begin{eqnarray}\label{1.4}
		P(u e_\xi) = p(x,D)(u e_\xi)= e_\xi\,p(x,\xi+D)(u)=e_\xi\, \sum_{| \alpha | \leq m} \frac{\partial_\xi^\alpha p(x,\xi)}{\alpha!} D^\alpha u,
		\end{eqnarray}
		where the previous sum is finite once $p$ is a polynomial in the variable $\xi$.
		
		If $P$ is a differential operator of order $m$
		and symbol $p$, then the principal symbol of order $m$, denoted by $\sigma_m(P)$, is the homogeneous part of degree $m$ in $\xi$ of the polynomial function $p(x,\xi)$, namely
		\begin{eqnarray}\label{1.5}
		\sigma_m(P)(x, \xi) = \sum_{|\alpha|=m}a_\alpha(x) (i\xi)^\alpha.
		\end{eqnarray}
		
		\begin{Definition}
			Let $m\in \mathbb{R}$. Then a symbol of order at most $m$ in $\Omega$ is said to be a function $a:\Omega \times \mathbb{R}^d \rightarrow \mathbb{C}$ of class $C^\infty$, with support in $K \times \mathbb{R}^d$, where $K$ is a compact subset of $\Omega$, such that for all $\alpha \in \mathbb{N}^d$, $\beta \in \mathbb{N}^d$, there exists a constant $C_{\alpha,\beta}$ with
			\begin{eqnarray}\label{1.6}
			\left|\partial_x^\alpha \partial_\xi^\beta a(x,\xi) \right| \leq C_{\alpha,\beta,K} \left(1+ |\xi|\right)^{m-|\beta|} .
			\end{eqnarray}
			We shall denote the vectorial space of all symbols of order at most $m$ in $\Omega$ by $\mathcal{S}_c^m(\Omega)$ .
		\end{Definition}
		
		\begin{Proposition}
			If $a\in \mathcal{S}_c^m(\Omega )$, the formula
			\begin{eqnarray}\label{1.7}
			Au(x) = \frac{1}{(2 \pi)^d}\int_{\mathbb{R}^d} e^{i x\cdot \xi} a(x,\xi)\hat u(\xi)\,d\xi
			\end{eqnarray}
			defines, for all $u\in C_0^\infty(\Omega)$, an element $Au$ of $C_0^\infty(\Omega)$.
		\end{Proposition}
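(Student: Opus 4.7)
The plan is to verify three things in sequence: the integral defining $Au(x)$ converges for each $x\in\Omega$, the resulting function is $C^\infty$, and it has compact support in $\Omega$. First, since $u\in C_0^\infty(\Omega)\subset \mathcal{S}(\mathbb{R}^d)$, its Fourier transform $\hat u$ lies in $\mathcal{S}(\mathbb{R}^d)$, so for every $N\in\mathbb{N}$ there exists $C_N>0$ with $|\hat u(\xi)|\leq C_N(1+|\xi|)^{-N}$. Combined with the symbol estimate $|a(x,\xi)|\leq C(1+|\xi|)^m$ from \eqref{1.6} (taking $\alpha=\beta=0$), the integrand is pointwise bounded by $CC_N(1+|\xi|)^{m-N}$, which is integrable on $\mathbb{R}^d$ once $N>m+d$. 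Hence $Au(x)$ is well-defined for every $x\in\Omega$.

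Next, I would prove smoothness by justifying differentiation under the integral sign. Formally,
\begin{equation*}
\partial_x^\alpha (Au)(x) = \frac{1}{(2\pi)^d}\int_{\mathbb{R}^d} \partial_x^\alpha\!\left[e^{ix\cdot\xi}a(x,\xi)\right]\hat u(\xi)\,d\xi,
\end{equation*}
and by Leibniz's rule the bracket expands into a finite sum of terms of the form $c_{\beta,\gamma}(i\xi)^\beta e^{ix\cdot\xi}\partial_x^\gamma a(x,\xi)$ with $\beta+\gamma=\alpha$. By \eqref{1.6}, each such term is bounded in absolute value by $C_{\alpha,K}(1+|\xi|)^{|\beta|+m}$, uniformly in $x$ on the compact set $K$ supporting $a$. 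Multiplying by the rapid decay of $\hat u$ gives an $L^1(\mathbb{R}^d)$ dominating function, so the Lebesgue dominated convergence theorem legitimizes the differentiation. Iterating the argument on $\alpha$ yields $Au\in C^\infty(\Omega)$.

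Finally, compactness of support follows immediately from the support condition on $a$. By hypothesis $\supp(a)\subset K\times\mathbb{R}^d$ with $K\Subset\Omega$; hence $a(x,\xi)=0$ for every $\xi\in\mathbb{R}^d$ whenever $x\notin K$, and therefore $Au(x)=0$ outside $K$. This gives $\supp(Au)\subset K$, which is compact in $\Omega$, and we conclude $Au\in C_0^\infty(\Omega)$.

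The only genuinely delicate point in this proof is the application of dominated convergence to justify differentiating under the integral sign, but since the symbol definition already encodes the precise polynomial growth of each $x$-derivative of $a$ and the Schwartz decay of $\hat u$ beats any polynomial, the dominating function is produced effortlessly. No deeper machinery (partitions of unity, oscillatory integrals, stationary phase, etc.) is needed here because $u$ is already compactly supported smooth; the more technical issues around pseudo-differential calculus only arise when one extends $A$ to larger distribution spaces, which is beyond the present statement.
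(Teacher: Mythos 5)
Your proof is correct. The paper states this proposition without giving a proof (it is one of the standard facts taken from the Burq--G\'erard lecture notes), and your argument---absolute convergence from the Schwartz decay of $\hat u$ against the bound $|a(x,\xi)|\lesssim(1+|\xi|)^m$ from \eqref{1.6}, differentiation under the integral sign justified by the same symbol estimates applied to $\partial_x^\gamma a$, and the support condition $\supp a\subset K\times\mathbb{R}^d$ forcing $Au$ to vanish outside $K$---is exactly the standard and expected one, with no gaps.
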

		
		The formula \eqref{1.7} defines a linear map $A:C_0^\infty(\Omega) \longrightarrow C_0^\infty(\Omega)$, which is called the pseudo-differential operator of order $m$ and symbol $a$. We will often denote the map $A$ by $a(x,D)$.
		
		The set of all pseudo-differential operators of order $m$ on $\Omega$ will be denoted by $\Psi^m_c(\Omega)$.
		
		\begin{Definition}\label{essehomo}
			An operator $A\in \Psi^m_c(\Omega)$ is essentially homogeneous if there exists a function $a_m=a_m(x,\xi)$ with $\supp a_m \subset K \times (\mathbb{R}^{d}\setminus\{0\})$, homogeneous of order $m$ in $\xi$ and smooth except at $\xi=0$ and a function $\chi\in C^\infty(\mathbb{R}^d)$ being zero near $0$ and $1$ in the infinity such that
			\begin{eqnarray}\label{1.8}
			a(x,\xi)=a_m(x,\xi)\chi(\xi)+ r(x,\xi),
			\end{eqnarray}
			for some $r\in \mathcal{S}^{m-1}_{c}(\Omega)$.
		\end{Definition}
		
		\begin{Proposition}\label{Prop1.1}
			Let $A\in \Psi^m_c(\Omega)$ be essentially homogeneous. Then, for all $u\in C_0^\infty(\Omega)$, $\xi \in \mathbb{R}^d\backslash \{0\}$, and $x\in \Omega$,
			\begin{eqnarray}\label{1.9}
			t^{-m} e^{- i (t x)\cdot\xi} A(u e_{t\xi})(x) \rightarrow a_m(x,\xi)u(x),~\hbox{ as }t\rightarrow +\infty.
			\end{eqnarray}
		\end{Proposition}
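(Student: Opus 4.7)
My plan is to reduce the identity to a direct computation on the Fourier side and then apply dominated convergence, exploiting essential homogeneity on the $a_m$ part and the symbol estimate on the remainder $r$.

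First, I would compute $A(u e_{t\xi})(x)$ explicitly. Since $\widehat{u e_{t\xi}}(\eta) = \hat u(\eta-t\xi)$, the definition \eqref{1.7} of a pseudo-differential operator followed by the change of variables $\eta = \zeta + t\xi$ gives
\begin{equation*}
A(u e_{t\xi})(x) \;=\; \frac{1}{(2\pi)^d}\int_{\mathbb{R}^d} e^{i x\cdot \eta} a(x,\eta)\,\hat u(\eta-t\xi)\,d\eta \;=\; e^{i t x\cdot \xi}\,\frac{1}{(2\pi)^d}\int_{\mathbb{R}^d} e^{i x\cdot\zeta}\, a(x,\zeta+t\xi)\,\hat u(\zeta)\,d\zeta.
\end{equation*}
Multiplying by $t^{-m} e^{-i t x\cdot \xi}$ and inserting the decomposition $a(x,\eta) = a_m(x,\eta)\chi(\eta) + r(x,\eta)$ from Definition \ref{essehomo} therefore produces
\begin{equation*}
t^{-m}e^{-i t x\cdot\xi}A(u e_{t\xi})(x) \;=\; \frac{1}{(2\pi)^d}\int_{\mathbb{R}^d} e^{i x\cdot\zeta}\Bigl[t^{-m}\chi(\zeta+t\xi)a_m(x,\zeta+t\xi) + t^{-m}r(x,\zeta+t\xi)\Bigr]\hat u(\zeta)\,d\zeta.
\end{equation*}

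Next I would identify the pointwise limit of the bracketed integrand for each fixed $\zeta$ and $\xi\neq 0$. Since $|\zeta+t\xi|\to\infty$ as $t\to\infty$, we have $\chi(\zeta+t\xi)\to 1$. Writing $\zeta+t\xi = t(\xi+\zeta/t)$ and using that $a_m(x,\cdot)$ is homogeneous of degree $m$ and smooth away from zero, one gets $t^{-m}a_m(x,\zeta+t\xi) = a_m(x,\xi+\zeta/t) \to a_m(x,\xi)$. For the remainder, the symbol estimate $|r(x,\eta)|\lesssim (1+|\eta|)^{m-1}$ for $r\in\mathcal{S}^{m-1}_c(\Omega)$ gives $|t^{-m}r(x,\zeta+t\xi)|\lesssim t^{-m}(1+|\zeta|+t|\xi|)^{m-1} = O(t^{-1})$ for each fixed $\zeta$, hence it vanishes in the limit. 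Thus the bracketed expression converges pointwise to $a_m(x,\xi)$.

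To pass the limit under the integral, I would establish a uniform dominant. From \eqref{1.6} applied with $\alpha=\beta=0$ to both $a_m\chi$ and $r$, one has $|a(x,\eta)|\lesssim (1+|\eta|)^m$, so for $t\geq 1$
\begin{equation*}
\bigl|t^{-m}a(x,\zeta+t\xi)\bigr| \;\lesssim\; t^{-m}\bigl(1+|\zeta|+t|\xi|\bigr)^m \;\lesssim\; (1+|\xi|)^m\,(1+|\zeta|)^m.
\end{equation*}
Since $u\in C_0^\infty(\Omega)$, the Fourier transform $\hat u$ belongs to the Schwartz class, so $(1+|\zeta|)^m|\hat u(\zeta)|$ is integrable on $\mathbb{R}^d$. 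The Lebesgue dominated convergence theorem then yields
\begin{equation*}
t^{-m}e^{-i t x\cdot\xi}A(u e_{t\xi})(x) \;\longrightarrow\; a_m(x,\xi)\,\frac{1}{(2\pi)^d}\int_{\mathbb{R}^d} e^{i x\cdot\zeta}\,\hat u(\zeta)\,d\zeta \;=\; a_m(x,\xi)\,u(x),
\end{equation*}
by Fourier inversion, proving \eqref{1.9}. The only mildly delicate point, which I would emphasize in the write-up, is verifying that the dominating bound is independent of $t$; everything else is direct from the definitions and Schwartz decay of $\hat u$.
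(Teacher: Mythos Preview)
The paper states this proposition without proof (it is one of several preliminary results from the Burq--G\'erard lecture notes that are quoted without argument in the appendix), so there is no ``paper's own proof'' to compare against. Your approach is the standard one and is correct: reduce to an oscillatory integral via Fourier inversion and a translation in frequency, split according to the decomposition $a=a_m\chi+r$, identify the pointwise limit using homogeneity of $a_m$ and the order-$(m-1)$ bound on $r$, and close with dominated convergence using Schwartz decay of $\hat u$.

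One small point worth tightening in the write-up: your uniform dominant $t^{-m}(1+|\zeta|+t|\xi|)^m \lesssim (1+|\xi|)^m(1+|\zeta|)^m$ is obtained from $1+|\zeta|+t|\xi|\leq (1+|\zeta|)(1+t|\xi|)$, which only goes the right way when $m\geq 0$. For $m<0$ this inequality reverses and your displayed bound fails as stated; instead use $1+|\zeta|+t|\xi|\geq t|\xi|$ to get $t^{-m}(1+|\zeta|+t|\xi|)^m\leq |\xi|^m$, a constant in $\zeta$, which is equally good as a dominant against the Schwartz function $\hat u$. With that adjustment the argument covers all $m\in\mathbb{R}$.
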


		\begin{Definition}\label{definition 3.50}
			Under the conditions of Proposition \ref{Prop1.1}, we say that $A$ admits a principal symbol of order $m$. The function $a_m$ characterized by (\ref{1.9}) is called  the principal symbol of order $m$ of $A$ and is denoted by $\sigma_m(A)$.
		\end{Definition}

		\begin{Theorem}\label{Theorem 3.10'}
			Let $A$ be a pseudo-differential operator of symbol $a\in \mathcal{S}_{c}^{m}(\Omega )$ with $\supp a \subset K \times \mathbb{R}^{d}$, and let $\chi \in C_0^\infty(\Omega)$ satisfy $\chi=1$ in a neighborhood of $K$. Then,  there exists a pseudo-differential operator $A_{\chi}^\ast$ on $\Omega$ such that, for all $u,v \in C_0^\infty(\Omega)$,
			\begin{eqnarray*}
				\left(A(\chi u),v\right)_{L^2(\Omega)} = \left(u, A_{\chi}^\ast v \right)_{L^2(\Omega)}.
			\end{eqnarray*}
			In addition, $A_{\chi}^\ast$ admits a symbol $a_\chi^\ast\in \mathcal{S}_{c}^{m}(\Omega )$ verifying, for all $N\in \mathbb{N}$,
			\begin{eqnarray}\label{1.10}
			a_\chi^\ast - \sum_{|\alpha|\leq N} \frac{1}{\alpha!} D_x^\alpha \partial_\xi^\alpha \overline{a}\in \mathcal{S}_{c}^{m-N-1}(\Omega ).
			\end{eqnarray}
			In particular, if $A$ admits a principal symbol of order $m$, it is the same of $A^\ast$, and
			\begin{eqnarray}\label{1.11}
			\sigma_m(A_\chi^\ast)= \overline{\sigma_m(A)}.
			\end{eqnarray}
		\end{Theorem}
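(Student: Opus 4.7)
My plan is to derive the formula for $A_\chi^\ast$ by a direct computation starting from the oscillatory integral definition \eqref{1.7} of $A$, and then to reduce the resulting double oscillatory integral (whose amplitude depends on both spatial variables $x$ and $y$) to a standard pseudo-differential operator via the classical amplitude-to-symbol reduction. First I would unfold the $L^2$ pairing using Fubini to obtain
\begin{equation*}
(A(\chi u),v)_{L^2} = \frac{1}{(2\pi)^d}\iiint e^{i(x-y)\cdot\xi}\, a(x,\xi)\chi(y) u(y)\overline{v(x)}\,dy\,dx\,d\xi,
\end{equation*}
then take complex conjugates and compare with $(u,A_\chi^\ast v)_{L^2}=\int u(y)\overline{A_\chi^\ast v(y)}\,dy$. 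This reading immediately suggests defining
\begin{equation*}
A_\chi^\ast v(y) := \chi(y)\,\frac{1}{(2\pi)^d}\iint e^{i(y-x)\cdot\xi}\,\overline{a(x,\xi)}\,v(x)\,dx\,d\xi,
\end{equation*}
which is an operator given by an oscillatory integral with amplitude $b(y,x,\xi) := \chi(y)\overline{a(x,\xi)} \in \mathcal{S}_c^m$, compactly supported in $y$.

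The next step is to invoke the standard reduction result that any operator given by an oscillatory integral with amplitude $b(y,x,\xi)$ equals, modulo a smoothing operator, the pseudo-differential operator $p(y,D)$ whose symbol admits the asymptotic expansion
\begin{equation*}
p(y,\xi) \sim \sum_{\alpha}\frac{1}{\alpha!}\,\partial_\xi^\alpha D_x^\alpha b(y,x,\xi)\Big|_{x=y}.
\end{equation*}
I would justify this by inserting Fourier inversion for $v$, performing the change of variables $\xi=\eta+\zeta$, Taylor-expanding $b(y,x,\eta+\zeta)$ in $\zeta$ about $0$ up to order $N$, and recognizing each resulting term as a mixed derivative through integration by parts combined with the Fourier inversion formula. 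Specializing to $b(y,x,\xi)=\chi(y)\overline{a(x,\xi)}$ and using that $\chi \equiv 1$ on a neighborhood of $K \supset \supp_x a$ (so that the factor $\chi(y)$ acts as the identity on each $D_x^\alpha\partial_\xi^\alpha\overline{a}(y,\xi)$ up to a smoothing error), the $\alpha$-th term collapses to $\tfrac{1}{\alpha!}D_x^\alpha\partial_\xi^\alpha\overline{a}(y,\xi)$, which is precisely \eqref{1.10}. Identity \eqref{1.11} then follows by isolating the $\alpha=0$ contribution and comparing with Definition \ref{definition 3.50}.

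The main obstacle will be the rigorous handling of the amplitude-to-symbol reduction. The oscillatory integral defining $A_\chi^\ast$ is not absolutely convergent, so it must be regularized by cutoffs of the form $\chi_\varepsilon(\zeta)=\chi(\varepsilon\zeta)$ with $\varepsilon\downarrow 0$; controlling the Taylor remainder after $N$ steps in the expected symbol class $\mathcal{S}_c^{m-N-1}$ requires iterating the symbol bounds \eqref{1.6} uniformly in these cutoffs and verifying them for all derivatives in $(y,\xi)$. Once this classical machinery is in place, the remainder of the statement is a direct specialization and the adjoint identity itself will follow by undoing the Fubini computation with the asymptotic formula for $a_\chi^\ast$ in place.
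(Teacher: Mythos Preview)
The paper does not actually provide a proof of this theorem. Theorem~\ref{Theorem 3.10'} is stated in the appendix as one of several standard preliminary results on pseudo-differential operators taken from the Burq--G\'erard lecture notes \cite{Burq-Gerard}; the paper only supplements detailed proofs for the later results on microlocal defect measures (Lemma~\ref{Lemma 4.48} through Proposition~\ref{Prop 4.63}), not for the basic pseudo-differential calculus recalled in Theorems~\ref{Theorem 3.10'}, \ref{Theorem 3.13}, \ref{Theorem 3.14} and \ref{Theorem 3.59}.

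That said, your proposed argument is the standard classical proof of the adjoint formula, and it is correct. Writing out the $L^2$ pairing via Fubini, reading off an amplitude $b(y,x,\xi)=\chi(y)\overline{a(x,\xi)}$ depending on both spatial variables, and then reducing to a left-quantized symbol by Taylor expansion in the frequency shift is exactly the mechanism behind \eqref{1.10}. Your observation that $\chi\equiv 1$ on a neighborhood of $K$ makes the factor $\chi(y)$ disappear from each term of the expansion (up to a smoothing error supported where $\chi\neq 1$ while $a$ vanishes) is the right way to obtain the clean formula involving only $\overline{a}$. The one technical point you flag yourself---regularizing the non-absolutely-convergent oscillatory integral and controlling the Taylor remainder uniformly in the cutoff so that it lands in $\mathcal{S}_c^{m-N-1}$---is genuine but entirely standard; it is handled by integration by parts against $(1+|\zeta|^2)^{-M}(1-\Delta_x)^M$ together with the symbol estimates \eqref{1.6}, as in H\"ormander \cite{Hormander} or the Burq--G\'erard notes. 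There is no gap in your plan.
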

		
		\begin{Theorem}\label{Theorem 3.13}
			Let $A$ and $B$ be pseudo-differential operators of symbols $a\in \mathcal{S}^{m}_{c}(\Omega )$, $b\in \mathcal{S}_{c}^{n}(\Omega )$, respectively. Then, the composition $AB$ is a pseudo-differential operator admitting a symbol $a\#b \in S_c^{m+n}(\Omega)$ which satisfies
			\begin{eqnarray}\label{1.12}
			a\# b -\sum_{|\alpha|\leq N} \frac{1}{\alpha!} \partial_\xi^\alpha a D_x^\alpha b \in \mathcal{S}_{c}^{m+n-N-1}(\Omega ),
			\end{eqnarray}
			for all $N \in \mathbb{N}$.
			
			In addition, if $A$ admits a principal symbol of order $m$ and $B$ admits a principal symbol of order $n$, then $AB$ admits a principal symbol of order $m+n$ and $[A,B]$ admits a principal symbol of order $m+n-1$,  given by
			\begin{eqnarray}
			&& \sigma_{m+n}(AB) =\sigma_m(A) \sigma_n(B),\label{1.13}\\
			&& \sigma_{m+n-1}([A,B])=\frac{1}{i}\left\{\sigma_m (A), \sigma_n(B)\right\}\label{1.14}.
			\end{eqnarray}
		\end{Theorem}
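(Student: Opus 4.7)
My plan is to compute the Schwartz kernel (equivalently, the symbol) of $AB$ directly from the representation \eqref{1.7}, extract the asymptotic expansion by Taylor expansion in the frequency variable, and then read off the principal symbols via Definition \ref{definition 3.50}. First, I would apply Fourier inversion to obtain
\[\widehat{Bu}(\eta)=(2\pi)^{-d}\int \hat b(\eta-\zeta,\zeta)\hat u(\zeta)\,d\zeta,\]
where $\hat b(\mu,\zeta)=\int e^{-iy\cdot\mu}b(y,\zeta)\,dy$ is the Fourier transform of $b$ in its first variable; since $b(\cdot,\zeta)$ has compact support in a fixed $K\subset \Omega$, this Fourier transform is rapidly decreasing in $\mu$. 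Substituting into $ABu=A(Bu)$ and shifting the frequency by $\eta\mapsto\zeta+\xi$ yields
\[(ABu)(x)=(2\pi)^{-d}\int e^{ix\cdot\zeta}c(x,\zeta)\hat u(\zeta)\,d\zeta,\qquad c(x,\zeta)=(2\pi)^{-d}\int e^{ix\cdot\xi}a(x,\zeta+\xi)\hat b(\xi,\zeta)\,d\xi,\]
so that, once $c\in\mathcal{S}_c^{m+n}(\Omega)$ is established, the composition $AB$ will be a pseudo-differential operator with symbol $a\# b=c$.

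Next, I would Taylor-expand $a(x,\zeta+\xi)$ in $\xi$ around $\xi=0$,
\[a(x,\zeta+\xi)=\sum_{|\alpha|\leq N}\frac{\xi^\alpha}{\alpha!}\partial_\zeta^\alpha a(x,\zeta)+R_{N+1}(x,\zeta,\xi),\]
and exploit the algebraic identity $\xi^\alpha\hat b(\xi,\zeta)=\widehat{D_y^\alpha b(\cdot,\zeta)}(\xi)$ together with Fourier inversion to obtain $(2\pi)^{-d}\int e^{ix\cdot\xi}\xi^\alpha\hat b(\xi,\zeta)\,d\xi=D_x^\alpha b(x,\zeta)$. Substituting back gives
\[c(x,\zeta)=\sum_{|\alpha|\leq N}\frac{1}{\alpha!}\partial_\zeta^\alpha a(x,\zeta)\,D_x^\alpha b(x,\zeta)+r_N(x,\zeta),\]
matching exactly the desired expansion \eqref{1.12} modulo the symbol-class analysis of the remainder.

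The main obstacle is verifying $r_N\in \mathcal{S}_c^{m+n-N-1}(\Omega)$, where
\[r_N(x,\zeta)=(2\pi)^{-d}\sum_{|\alpha|=N+1}\frac{N+1}{\alpha!}\int e^{ix\cdot\xi}\xi^\alpha\left(\int_0^1(1-t)^N\partial_\zeta^\alpha a(x,\zeta+t\xi)\,dt\right)\hat b(\xi,\zeta)\,d\xi\]
is only conditionally convergent. The standard device I would employ is to split the $\xi$-integration into the low-frequency region $\{|\xi|\leq|\zeta|/2\}$, where $|\partial_\zeta^\alpha a(x,\zeta+t\xi)|\lesssim\langle\zeta\rangle^{m-N-1}$ uniformly in $t\in[0,1]$ and the rapid decay of $\hat b$ in $\xi$ gives absolute convergence; and the high-frequency region $\{|\xi|\geq|\zeta|/2\}$, where the rapid decay of $\hat b(\xi,\zeta)$ absorbs the $\xi^\alpha$ factor and converts it into arbitrary negative powers of $\zeta$ (after, if necessary, integration by parts using $\langle\xi\rangle^{-2M}(1-\Delta_x)^Me^{ix\cdot\xi}=e^{ix\cdot\xi}$ to handle the $x$-dependence of $a$ without creating new powers of $\xi$). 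Iterating the argument after $\partial_x^\beta\partial_\zeta^\gamma$ yields the full set of seminorm bounds \eqref{1.6}.

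Finally, for the principal-symbol statements, applying the expansion with $N=0$ gives $a\# b-a\cdot b\in \mathcal{S}_c^{m+n-1}(\Omega)$. If $A,B$ are essentially homogeneous in the sense of Definition \ref{essehomo} with essentially homogeneous parts $a_m=\sigma_m(A)$ and $b_n=\sigma_n(B)$, then $a\cdot b$ is essentially homogeneous at leading order $m+n$ with principal part $a_m b_n$, and the limit formula in Proposition \ref{Prop1.1} / Definition \ref{definition 3.50} gives \eqref{1.13}. For the commutator I would apply the expansion with $N=1$ to both $AB$ and $BA$ and subtract; the $|\alpha|=0$ terms cancel and I am left with
\[a\# b-b\# a=\sum_{j=1}^{d}\bigl(\partial_{\xi_j}a\,D_{x_j}b-\partial_{\xi_j}b\,D_{x_j}a\bigr)\pmod{\mathcal{S}_c^{m+n-2}}=\tfrac{1}{i}\{a,b\}\pmod{\mathcal{S}_c^{m+n-2}},\]
from which extracting the top-order essentially homogeneous part of order $m+n-1$ and invoking Definition \ref{definition 3.50} yields \eqref{1.14}, completing the proof.
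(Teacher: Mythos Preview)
The paper does not actually supply a proof of this theorem: in the appendix, Theorem \ref{Theorem 3.13} is stated as a preliminary result (drawn from the Burq--G\'erard lecture notes) and is immediately followed by the statement of Theorem \ref{Theorem 3.14} with no intervening argument. So there is no ``paper's own proof'' to compare against.

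That said, your proposal is the standard and correct route. The computation of the composite symbol $c(x,\zeta)$ via Fourier inversion, the Taylor expansion of $a(x,\zeta+\xi)$ in $\xi$, and the identification of the polynomial terms with $\partial_\zeta^\alpha a\,D_x^\alpha b$ are all accurate. Your remainder analysis by splitting into $|\xi|\le |\zeta|/2$ and $|\xi|\ge |\zeta|/2$ is exactly the usual mechanism, and the integration-by-parts device you mention correctly handles the oscillatory integral when derivatives in $x$ are applied. The deduction of \eqref{1.13} from the $N=0$ expansion and of \eqref{1.14} from the $N=1$ expansion (with cancellation of the zeroth-order terms in $a\#b-b\#a$) is also correct. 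One small point worth being explicit about: in the high-frequency region you should note that the rapid decay of $\hat b(\xi,\zeta)$ in $\xi$ comes with at most polynomial growth in $\zeta$ (since $b\in\mathcal{S}_c^n$), so you need to trade enough powers of $|\xi|^{-1}\lesssim |\zeta|^{-1}$ to beat both the $\xi^\alpha$ factor and that polynomial growth; your sketch implicitly does this, but it is the place where a careless estimate can go wrong.
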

		
		\begin{Definition}
			For any compact set $K$ contained in $\Omega$ and for all $s\in \mathbb{R}$, $H_K^s(\Omega)$ shall denote the space of distributions with compact support in $K$ whose extensions by zero belong to $H^s(\mathbb{R}^d)$, i.e.,
			\begin{eqnarray*}
				H_K^s(\Omega) & \equiv  & \{u \in \mathcal{E}'(\Omega): \tilde{u} \in H^s(\mathbb{R}^d) \hbox{ and } supp\, u \subset K  \}
			\end{eqnarray*}
			We set  $H_{comp}^s(\Omega)=\underset{K}\bigcup H_K^s(\Omega)$ where $K$ ranges over all compact subsets of $\Omega$.
			We equip $H_{comp}^s(\Omega)$ with the finest locally convex topology such that all the inclusion maps
			\begin{eqnarray*}
				H_K^s(\Omega) \hookrightarrow H_{comp}^s(\Omega)
			\end{eqnarray*}
			are continuous.
		\end{Definition}

		\begin{Theorem}\label{Theorem 3.14}
			Let $a\in \mathcal{S}_{c}^{m} (\Omega )$ and let $K$ be the projection on $\Omega$ of $\supp(a)$. Thus, for all $s\in \mathbb{R}$, the operator defined in (\ref{1.7}) admits a unique extension to a linear and continuous map from $H_{comp}^s(\Omega)$ in $H_K^{s-m}(\Omega)$.
		\end{Theorem}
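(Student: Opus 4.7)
The plan is to reduce continuity of $A$ to a weighted $L^2$ continuity of an integral operator on the Fourier side, for which Schur's lemma applies. The support of $Au$ being contained in $K$ comes essentially for free from the assumption $\supp(a)\subset K\times\mathbb{R}^d$: given $u\in C_0^\infty(\Omega)$, the function $Au(x)$, as defined by \eqref{1.7}, vanishes whenever $x\notin K$, so $Au\in C_0^\infty(\Omega)$ with $\supp(Au)\subset K$. It therefore suffices to bound $\|Au\|_{H^{s-m}(\mathbb{R}^d)}$ by $\|u\|_{H^s(\mathbb{R}^d)}$.

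First I would take the Fourier transform in $x$ of \eqref{1.7}. Writing $\tilde{a}(\eta,\xi)$ for the partial Fourier transform of $a(x,\xi)$ in the first variable, a direct computation (Fubini applied to $e^{ix\cdot(\xi-\eta)}$) gives the kernel representation
\begin{equation}\label{Plan:kernel}
\widehat{Au}(\eta)\;=\;\frac{1}{(2\pi)^d}\int_{\mathbb{R}^d} \tilde{a}(\eta-\xi,\xi)\,\hat{u}(\xi)\,d\xi .
\end{equation}
The symbol estimates \eqref{1.6} together with the compact $x$-support of $a$ yield, for any $N\in\mathbb{N}$, a constant $C_N$ with
\begin{equation}\label{Plan:decay}
|\tilde{a}(\eta,\xi)|\;\le\;C_N\,(1+|\eta|)^{-N}(1+|\xi|)^m
\end{equation}
(integration by parts in $x$ against $(I-\Delta_x)^{N/2}$ produces the decay in $\eta$, while the compact $x$-support ensures integrability).

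Next, rewrite \eqref{Plan:kernel} as an operator on weighted $L^2$ spaces by setting $v(\xi):=(1+|\xi|)^s \hat{u}(\xi)$ and $w(\eta):=(1+|\eta|)^{s-m}\widehat{Au}(\eta)$, so that proving the bound $\|w\|_{L^2}\lesssim \|v\|_{L^2}$ is equivalent to the desired $H^s\to H^{s-m}$ continuity. The kernel of the transformed operator is
\[
K(\eta,\xi)\;=\;\frac{1}{(2\pi)^d}\,\tilde{a}(\eta-\xi,\xi)\,\frac{(1+|\eta|)^{s-m}}{(1+|\xi|)^s}.
\]
Using Peetre's inequality $(1+|\eta|)^{|s-m|}\le 2^{|s-m|}(1+|\xi|)^{|s-m|}(1+|\eta-\xi|)^{|s-m|}$ in conjunction with \eqref{Plan:decay} (taking $N$ large enough that $N-|s-m|>d$), we obtain
\[
|K(\eta,\xi)|\;\lesssim\;(1+|\eta-\xi|)^{-N+|s-m|},
\]
which is integrable in either variable with a bound independent of the other. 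Schur's test then yields $L^2$–boundedness.

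Finally, I would combine the norm bound with density: for every compact $L\Subset\Omega$, $C_0^\infty(L)$ is dense in $H_L^s(\Omega)$, and the estimate just proved is uniform on this subspace. Thus $A$ extends uniquely to a continuous map $H_L^s(\Omega)\to H_K^{s-m}(\Omega)$. Since the topology on $H_{\mathrm{comp}}^s(\Omega)$ is by definition the finest locally convex topology making all inclusions $H_L^s(\Omega)\hookrightarrow H_{\mathrm{comp}}^s(\Omega)$ continuous, the family of extensions glues to a unique continuous map $H_{\mathrm{comp}}^s(\Omega)\to H_K^{s-m}(\Omega)$, as claimed. The main technical obstacle is keeping track of the weights and the value of $N$ in \eqref{Plan:decay}, but the Peetre trick combined with a sufficiently large $N$ makes the Schur bound routine; no Calderón–Vaillancourt argument is needed thanks to the compactness of the $x$-support of $a$.
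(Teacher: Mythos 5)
Your argument is correct and is the standard proof of this statement; the paper itself states Theorem \ref{Theorem 3.14} without proof (it is quoted from the Burq--G\'erard notes), and the route you take --- Fourier-side kernel $\widehat{Au}(\eta)=(2\pi)^{-d}\int \tilde a(\eta-\xi,\xi)\hat u(\xi)\,d\xi$, rapid decay of $\tilde a$ in its first argument obtained by integrating by parts over the compact $x$-support, then Peetre's inequality and Schur's test on the weighted kernel --- is exactly the argument one would supply. Two cosmetic points: for integer $N$ you should pair $(1+|\eta|^2)^{-N}$ with $(I-\Delta_x)^{N}$ rather than writing $(I-\Delta_x)^{N/2}$, and in the density step note that mollification of $u\in H^s_L(\Omega)$ produces smooth functions supported in a slightly larger compact subset $L'\Subset\Omega$, which is harmless since the operator norm bound is uniform and $H^{s-m}_K(\Omega)$ is closed in $H^{s-m}(\mathbb{R}^d)$, so the extension still lands in $H^{s-m}_K(\Omega)$ and glues over the inductive limit defining $H^s_{comp}(\Omega)$.
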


		\begin{Remark}\label{compact}
			If $A \in \Psi_{c}^{m}(\Omega)$ is a pseudo-differential operator with
			$m < 0$ then $A: L_{comp}^{2}(\Omega) \rightarrow L^{2}(\Omega)$ is a compact operator. Indeed, from Rellich's Theorem the inclusion $H_{comp}^{-m} \hookrightarrow L^2(\Omega)$ is compact.
		\end{Remark}
		
		\begin{Remark}\label{extension}
			Let $P \in \Psi^m(\Omega)$ be a compactly supported operator, then $P$ extends continuously from $H_{loc}^{s}(\Omega)$ to $H^{s-m}_{comp}(\Omega).$
		\end{Remark}
		
		\begin{Theorem}[{\bf A G$\rm\overset{\circ}a$rding type inequality}]\label{Theorem 3.59}~Let $A$ be a pseudo-differential operator of order $0$ on $\Omega$ whose principal symbol $\sigma_0(A)$ exists and is a positive function in $\mathcal{S}_{c}^{0}(\Omega)$. Then, for all $\delta>0$ there exists $C_\delta$ such that
			\begin{eqnarray}\label{1.15}
			Re\, (Av,v)_{L^2}\geq -\delta \|v\|_{L^2}^2 - C_\delta \|v\|_{H^{-1/2}}^2,~\hbox{ for all }v\in L_{comp}^2.
			\end{eqnarray}
			
			Furthermore, there exists $C>0$ such that
			\begin{eqnarray}\label{1.16}
			|Im (Av,v)_{L^2}| \leq C \|v\|_{H^{-1/2}}^2, ~\hbox{ for all }v\in L_{comp}^2.
			\end{eqnarray}
		\end{Theorem}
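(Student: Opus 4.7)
The plan is the classical square-root regularization trick. Since $\sigma_0(A) \in \mathcal{S}_c^0(\Omega)$ is nonnegative with $x$-support in some compact $K \subset \Omega$, fix a cut-off $\chi \in C_c^\infty(\Omega)$ with $0\leq \chi \leq 1$ and $\chi \equiv 1$ on a neighborhood of $K$. For each $\delta > 0$ I would set
$$b_\delta(x,\xi) := \chi(x)\sqrt{\sigma_0(A)(x,\xi) + \delta}.$$
Since the radicand is bounded below by $\delta > 0$ on all of $\Omega\times\mathbb{R}^d$, $\sqrt{\sigma_0(A)+\delta}$ is smooth with all derivatives satisfying order-$0$ symbol bounds, and multiplication by $\chi$ ensures compact $x$-support; hence $b_\delta \in \mathcal{S}_c^0(\Omega)$. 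Let $B_\delta := b_\delta(x,D) \in \Psi_c^0(\Omega)$.

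By Theorems \ref{Theorem 3.10'} and \ref{Theorem 3.13}, the product $B_\delta^* B_\delta$ admits a principal symbol of order $0$ equal to $|b_\delta|^2 = \chi^2(\sigma_0(A)+\delta)$. Because $\chi\equiv 1$ on $K$, one has $\chi^2\sigma_0(A) = \sigma_0(A)$ on $\Omega\times\mathbb{R}^d$, so $|b_\delta|^2 = \sigma_0(A) + \delta\chi^2$, and consequently
$$R_\delta := A + \delta\chi^2 I - B_\delta^* B_\delta$$
has vanishing principal symbol of order $0$, i.e., $R_\delta \in \Psi_c^{-1}(\Omega)$. For the real-part bound I would split
$$\operatorname{Re}(Av,v)_{L^2} = \|B_\delta v\|_{L^2}^2 \;-\; \delta(\chi^2 v,v)_{L^2} \;+\; \operatorname{Re}(R_\delta v,v)_{L^2}.$$
The first summand is $\geq 0$, the second is bounded in absolute value by $\delta\|v\|_{L^2}^2$, and for the third, Theorem \ref{Theorem 3.10'} gives $R_\delta^* \in \Psi_c^{-1}(\Omega)$ while Theorem \ref{Theorem 3.14} (with $m=-1$, $s=-1/2$) extends $R_\delta^*$ to a bounded map $H^{-1/2}_{comp}(\Omega) \to H^{1/2}(\Omega)$, so by Sobolev duality
$$|(R_\delta v,v)_{L^2}| = |(v,R_\delta^* v)_{L^2}| \leq \|v\|_{H^{-1/2}}\,\|R_\delta^* v\|_{H^{1/2}} \leq C_\delta\,\|v\|_{H^{-1/2}}^2.$$
Summing the three contributions yields the first inequality.

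For the imaginary part, positivity of $\sigma_0(A)$ forces $\overline{\sigma_0(A)} = \sigma_0(A)$, so Theorem \ref{Theorem 3.10'} gives $\sigma_0(A^*) = \sigma_0(A)$ and hence $A - A^* \in \Psi_c^{-1}(\Omega)$. Writing $\operatorname{Im}(Av,v)_{L^2} = \tfrac{1}{2i}\bigl((A-A^*)v,v\bigr)_{L^2}$ and repeating the $H^{-1/2}/H^{1/2}$ duality argument above—now with $A-A^*$ independent of $\delta$—yields the second inequality with a constant $C$ independent of $\delta$. The main obstacle is essentially bookkeeping: verifying that $b_\delta$ is a genuine element of $\mathcal{S}_c^0(\Omega)$ with uniform symbol estimates across all multi-indices, which is precisely what forces the $+\delta$ regularization inside the radicand and, in turn, the $\delta\|v\|_{L^2}^2$ loss on the right-hand side; the sharper version of G\aa rding (without this loss) would instead require a Friedrichs-type symmetrization, but the weaker statement of the theorem is accessible by the simple square-root construction described above.
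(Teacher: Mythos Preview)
The paper states this theorem without proof, listing it among the preliminary results borrowed from Burq--G\'erard's lecture notes; there is therefore no in-paper argument to compare against. Your approach---the square-root regularization $b_\delta=\chi\sqrt{\sigma_0(A)+\delta}$ reducing the problem to $\|B_\delta v\|^2\geq 0$ plus a remainder of order $-1$---is the classical one and is correct in outline.

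Two technical points deserve tightening. First, $\sigma_0(A)$ is homogeneous of degree $0$ in $\xi$ and hence only smooth on $\Omega\times(\mathbb{R}^d\setminus\{0\})$, so $b_\delta$ as you wrote it is not smooth at $\xi=0$ and therefore not a genuine element of $\mathcal{S}_c^0(\Omega)$. You should insert a high-frequency cutoff $\gamma(\xi)$ (zero near the origin, $1$ for $|\xi|$ large) into $b_\delta$; the extra terms this produces are supported in a compact $\xi$-region and hence smoothing, so they are absorbed into $R_\delta$. Second, $\sqrt{\sigma_0(A)+\delta}$ is \emph{not} homogeneous in $\xi$, so $B_\delta$ is not essentially homogeneous in the sense of Definition~\ref{essehomo}, and the ``principal symbol'' clause of Theorem~\ref{Theorem 3.13} does not literally apply. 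What you actually need is the asymptotic expansion \eqref{1.12} with $N=0$, which holds for arbitrary symbols in $\mathcal{S}_c^0$ and gives the symbol of $B_\delta^\ast B_\delta$ as $|b_\delta|^2$ modulo $\mathcal{S}_c^{-1}$; this suffices for $R_\delta\in\Psi_c^{-1}(\Omega)$. With these adjustments your argument goes through, and the imaginary-part estimate via $A-A^\ast\in\Psi_c^{-1}$ is exactly right.
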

		
		\subsection{Microlocal defect measures}
		Let $\{u_n\}_{n\in \mathbb{N}}$ be a bounded sequence in $L^2_{loc}(\Omega)$, i.e.,
		\begin{eqnarray*}
			\sup_{n\in \mathbb{N}} \int_K |u_n(x)|^2\,dx<+\infty,
		\end{eqnarray*}
		for any compact set $K$ contained in $\Omega$.
		
		We shall say that $u_n$ converges weakly to $u\in L^2_{loc}(\Omega)$ if one has
		\begin{eqnarray*}
			\int_\Omega u_n(x)f(x)\,dx \underset{n\rightarrow \infty}\longrightarrow \int_\Omega u(x)f(x)\,dx
		\end{eqnarray*} for each $f\in L_{comp}^2(\Omega)$.
		
		We describe the loss of strong convergence of the sequence $u_n$ to $0$ in $L^2_{loc}(\Omega)$, by means of a positive Radon measure on $\Omega \times S^{d-1}$, that is, $u_n \rightarrow 0$ in $L^2_{loc}(\Omega)$ if, and only if, $\mu=0$.
		
		\begin{Lemma}\label{Lemma 4.48}
			Let $\{u_n\}_{n\in \mathbb{N}}$ be a bounded sequence in $L_{loc}^2(\Omega)$  weakly converging to $0$. Let $A$ be a pseudo-differential operator of order $0$ on $\Omega$ that admits a principal symbol $\sigma_0(A)$ of order $0$. If $\sigma_0(A) \geq 0$, then one has
			\begin{eqnarray*}
				Im (A(\chi u_n), \chi u_n)_{L^2}\underset{n\rightarrow +\infty}\longrightarrow 0~\hbox{ and }~\liminf_{n\rightarrow +\infty} Re(A(\chi u_n),\chi u_n)_{L^2}\geq0.
			\end{eqnarray*}
		\end{Lemma}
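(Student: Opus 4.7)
The plan is to deduce both conclusions directly from the G\r{a}rding-type inequality of Theorem \ref{Theorem 3.59} applied to $v_n := \chi u_n$, the cornerstone ingredient being that $v_n \to 0$ strongly in $H^{-1/2}$ thanks to a Rellich-type compactness.

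First I would set up the relevant sequence. Fix $\chi \in C_0^\infty(\Omega)$ and let $K := \operatorname{supp}(\chi)$, which is a compact subset of $\Omega$. Then $v_n := \chi u_n$ is supported in $K$ and satisfies
\begin{equation*}
\|v_n\|_{L^2(\Omega)} \leq \|\chi\|_{L^\infty}\,\|u_n\|_{L^2(K)} \leq M < \infty,
\end{equation*}
uniformly in $n$, since $\{u_n\}$ is bounded in $L^2_{loc}(\Omega)$. Moreover, for any $\varphi \in L^2(\Omega)$ the product $\chi\varphi \in L^2_{comp}(\Omega)$, so the weak convergence $u_n \rightharpoonup 0$ in $L^2_{loc}(\Omega)$ yields $\int v_n \varphi \, dx = \int u_n (\chi\varphi)\,dx \to 0$; that is, $v_n \rightharpoonup 0$ weakly in $L^2(\Omega)$.

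The key step is to upgrade this to strong convergence in $H^{-1/2}$. Since the $v_n$ all have support in the fixed compact $K$, the inclusion of the closed subspace $\{f \in L^2(\Omega): \operatorname{supp} f \subset K\}$ into $H^{-1/2}(\Omega)$ is compact (this is the dual form of the Rellich embedding $H^{1/2}(\Omega) \hookrightarrow L^2_{loc}(\Omega)$, and is the standard fact behind such arguments). Combining the compactness of this embedding with the weak convergence $v_n \rightharpoonup 0$ in $L^2$ gives
\begin{equation*}
\|v_n\|_{H^{-1/2}(\Omega)} \longrightarrow 0 \quad\text{as } n \to +\infty.
\end{equation*}

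Finally I would plug this into Theorem \ref{Theorem 3.59}. Since $\sigma_0(A) \geq 0$, for any $\delta > 0$ there exists $C_\delta$ such that
\begin{equation*}
\operatorname{Re}(Av_n, v_n)_{L^2} \geq -\delta \|v_n\|_{L^2}^2 - C_\delta \|v_n\|_{H^{-1/2}}^2 \geq -\delta M^2 - C_\delta \|v_n\|_{H^{-1/2}}^2.
\end{equation*}
Taking $\liminf_{n\to\infty}$ and using the strong $H^{-1/2}$-convergence gives $\liminf_n \operatorname{Re}(Av_n, v_n) \geq -\delta M^2$; letting $\delta \to 0^+$ yields the second claim. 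For the imaginary part, the companion inequality $|\operatorname{Im}(Av_n, v_n)_{L^2}| \leq C\|v_n\|_{H^{-1/2}}^2$ immediately forces $\operatorname{Im}(Av_n, v_n)_{L^2} \to 0$. The only nontrivial input is the Rellich-type compactness $L^2_K \hookrightarrow H^{-1/2}$ used above; everything else is a routine application of the already established G\r{a}rding inequality.
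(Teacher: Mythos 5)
Your proof is correct and follows the same overall strategy as the paper: establish that $\|\chi u_n\|_{H^{-1/2}}\to 0$ and then feed this into the G\r{a}rding-type inequality of Theorem \ref{Theorem 3.59}, whose two estimates immediately give the $\liminf$ bound on the real part and the vanishing of the imaginary part. The only place where you diverge is the proof of the intermediate step $\|\chi u_n\|_{H^{-1/2}}\to 0$: you invoke the compactness of the embedding $L^2_K\hookrightarrow H^{-1/2}$ for fixed compact $K$ (Rellich for compactly supported distributions) together with the fact that compact operators send weakly convergent sequences to strongly convergent ones, whereas the paper proves this convergence by hand --- writing $\|\chi u_n\|_{H^{-1/2}}^2$ via Plancherel, using pointwise convergence and uniform boundedness of $\widehat{\chi u_n}(\xi)$ plus dominated convergence on $\{|\xi|\le R\}$, and bounding the tail $\{|\xi|>R\}$ by $R^{-1}\|\chi u_n\|_{L^2}^2$. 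Your route is shorter and cleaner if the Rellich-type compactness is taken as known (and the paper does record essentially this fact in Remark \ref{compact}); the paper's computation has the advantage of being self-contained and of exhibiting exactly where the compact support and the weak convergence enter. Either way the remainder of the argument is identical and the proposal stands.
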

		\begin{proof}
			First, note that since $u_n$ is bounded in $L_{loc}^2(\Omega)$ and converges weakly to $0$, we have
			\begin{eqnarray}\label{1.17}
			\|\chi u_n\|_{H^{-1/2}}^2 \underset{n\rightarrow +\infty}\longrightarrow 0.
			\end{eqnarray}
			
			Indeed, recall that
			\begin{eqnarray*}
				\|\chi u_{n}\|_{H^{-1/2}}^2 = (2\pi)^{-d}\int_{\mathbb{R}^d}(1+|\xi|^2)^{-1/2}|\widehat{\chi u_n}(\xi)|^2\,d\xi,
			\end{eqnarray*}
			where
			\begin{eqnarray*}
				\widehat{\chi u_n}(\xi) =\int_{\mathbb{R}^d} \chi(x) u_n(x) e^{- ix\cdot \xi}\,dx
			\end{eqnarray*}
			tends to $0$ for all $\xi \in \mathbb{R}^d$ and remains uniformly bounded in $\xi$ and $n$. Indeed, from the Cauchy-Schwarz inequality we have
			\begin{eqnarray*}
				\left| \int_{\mathbb{R}^{d}}{\chi(x)u_{n}(x)e^{- i x \xi}}dx \right| & = &
				\left| \int_{\supp( \chi )}{\chi(x)u_{n}(x)e^{- i x \xi}}dx \right| \\
				& \leq & \limsup_{n \to \infty} \| \chi u_{n} \|_{L^{2}}\int_{\supp (\chi)} 1 dx  < \infty  .
			\end{eqnarray*}
			Applying the dominated convergence theorem, we have, for all $R>0$,
			\begin{eqnarray*}
				(2\pi)^{-d}\int_{|\xi|\leq R} (1+|\xi|^2)^{-1/2}|\widehat{\chi u_n}(\xi)|^2\,d\xi\leq	(2\pi)^{-d}\int_{|\xi|\leq R} |\widehat{\chi u_n}(\xi)|^2\,d\xi \underset{n\rightarrow +\infty}\longrightarrow 0.
			\end{eqnarray*}
			
			On the other hand, the Plancherel theorem yields
			\begin{eqnarray*}
				(2\pi)^{-d}\int_{|\xi|>R} |\widehat{\chi u_n}(\xi)|^2(1+|\xi|^2)^{-1/2}\,d\xi\leq \frac{1}{R}\|\chi u_n\|_{L^2}^2,
			\end{eqnarray*}
			so,
			\begin{eqnarray*}
				\limsup_{n\rightarrow +\infty}\|\chi u_n\|_{H^{-1/2}}^2 \leq \frac{1}{R}\limsup_{n\rightarrow +\infty}\|\chi u_n\|_{L^2}^2,
			\end{eqnarray*}
			which proves (\ref{1.17}), since $R$ is arbitrary. Applying (\ref{1.15}) and (\ref{1.16}) to $v=\chi u_n$, letting $n\rightarrow +\infty$, and using (\ref{1.17}), we obtain
			\begin{eqnarray*}
				\liminf_{n\rightarrow +\infty}Re (A(\chi u_n),\chi u_n)_{L^2}
				&\geq&\liminf_{n\rightarrow +\infty} [-\delta \|\chi u_n\|_{L^2}^2] + \liminf_{n\rightarrow 0}[-C_\delta \|\chi u_n \|_{H^{-1/2}}] \\
				&\geq& -\delta \limsup_{n\rightarrow +\infty}\|\chi u_n\|_{L^2}^2.
			\end{eqnarray*}
			In addition,
			\begin{equation*}
			Im (A(\chi u_n), \chi u_n)_{L^2} \underset{n\rightarrow +\infty}\longrightarrow 0,
			\end{equation*}
			which proves the lemma since $\delta>0$ is arbitrary.
		\end{proof}
		
		\begin{Theorem}\label{Theorem 4.49}
			Let $(u_n)_{n\in \mathbb{N}}$ be a bounded sequence in $L_{loc}^2(\Omega)$  weakly converging to $0$. Then, there exists a subsequence $(u_n)_{n \in \mathbb{N}'}$ and a positive Radon measure $\mu$ on $\Omega \times S^{d-1}$ such that for any essentially homogeneous pseudo-differential operators $A \in \Psi^{0}_{c}(\Omega)$ with principal symbol $\sigma_{0}(A)$, one has
			\begin{eqnarray}\label{1.18}
			\left(A(\chi u_n), \chi u_n\right)_{n \in \mathbb{N}'} \underset{n\rightarrow +\infty}\longrightarrow \int_{\Omega \times S^{n-1}}\sigma_0(A)(x,\xi)\,d\mu
			\end{eqnarray}
			for all $\chi \in C_{0}^{\infty}(\Omega)$ such that $\chi=1$ in $\pi_x(\supp (\sigma_0(A)))$.
		\end{Theorem}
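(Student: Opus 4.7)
The argument follows the classical Gérard--Tartar strategy for constructing H-measures. The goal is to build a positive linear functional $L$ on $C_c^0(\Omega \times S^{d-1})$ via
\[
L(\sigma_0(A)) := \lim_{n\in \mathbb{N}'}\, \bigl(A(\chi u_n), \chi u_n\bigr)_{L^2}
\]
for a suitably extracted subsequence $\mathbb{N}'$, and then to invoke the Riesz--Markov theorem. The first step is a diagonal extraction: choose a countable family $\{a_k\}$ of $C^\infty$ functions on $\Omega \times (\mathbb{R}^d\setminus\{0\})$, homogeneous of degree zero in $\xi$ and compactly supported in $x$, whose restrictions to $\Omega \times S^{d-1}$ are dense in $C_c^0(\Omega \times S^{d-1})$ for the uniform norm; let $A_k$ be associated essentially homogeneous operators with cut-offs $\chi_k$. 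Since $\chi_k u_n$ is bounded in $L^2$ and $A_k$ acts continuously on $L^2$ by Theorem \ref{Theorem 3.14}, each sequence $(A_k(\chi_k u_n),\chi_k u_n)_{L^2}$ is bounded in $\mathbb{C}$, so Cantor's diagonal procedure yields a subsequence $\mathbb{N}'$ along which every such numerical limit exists.

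Next one checks that the limit depends only on the principal symbol $a=\sigma_0(A)$, not on the representing operator or on the cut-off. If $A_1,A_2$ share the same principal symbol then $A_1-A_2\in \Psi^{-1}_c(\Omega)$, which is compact from $L^2_{comp}$ to $L^2$ by Remark \ref{compact}; combined with $\chi u_n \rightharpoonup 0$ in $L^2_{comp}$, this forces $(A_1-A_2)(\chi u_n)\to 0$ strongly in $L^2$ and the associated inner products vanish. For cut-off independence, when $\chi_1,\chi_2$ both equal $1$ on $\pi_x(\supp a)$, decompose
\begin{equation*}
\bigl(A(\chi_1 u_n),\chi_1 u_n\bigr)-\bigl(A(\chi_2 u_n),\chi_2 u_n\bigr) = \bigl(A((\chi_1-\chi_2)u_n),\chi_1 u_n\bigr) + \bigl(A(\chi_2 u_n),(\chi_1-\chi_2)u_n\bigr).
\end{equation*}
Theorem \ref{Theorem 3.13} yields $\sigma_0(A\circ M_{\chi_1-\chi_2}) = a(\chi_1-\chi_2)\equiv 0$, so $A\circ M_{\chi_1-\chi_2} \in \Psi^{-1}_c$, killing the first term; the second term is treated symmetrically by passing to the adjoint via Theorem \ref{Theorem 3.10'}, noting that $A^*\circ M_{\chi_1-\chi_2}$ also lies in $\Psi^{-1}_c$. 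Hence $L$ is a well-defined linear functional on the dense subspace $\{a_k\}$.

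Finally I establish positivity and the continuity estimate needed for the Riesz representation. Lemma \ref{Lemma 4.48} applied to an operator $A$ with $\sigma_0(A)=a\geq 0$ yields $\liminf \mathrm{Re}(A(\chi u_n),\chi u_n)\geq 0$ and vanishing imaginary part; since the limit exists along $\mathbb{N}'$, one deduces $L(a)\geq 0$. To obtain the continuity bound, fix a compact $K\subset \Omega$ and a nonnegative symbol $\psi \in C_c^0(\Omega\times S^{d-1})$ equal to $1$ on $K\times S^{d-1}$: for any real symbol $a$ supported in $K\times S^{d-1}$ with $\|a\|_\infty\leq M$, the symbols $M\psi\pm a$ are nonnegative, whence positivity gives $|L(a)|\leq M\,L(\psi)$; complex symbols are handled by splitting into real and imaginary parts. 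This continuity estimate extends $L$ by density to all of $C_c^0(\Omega\times S^{d-1})$, and its validity for a general $A$ (not only the chosen $A_k$'s) follows from the same Gårding-type bound applied to $A-A_{k_j}$ with $\sigma_0(A_{k_j}) \to \sigma_0(A)$ uniformly. The Riesz--Markov theorem then produces the desired positive Radon measure $\mu$ satisfying \eqref{1.18}. The principal difficulty lies in this continuity/density extension, which hinges on the Gårding-type inequality (Theorem \ref{Theorem 3.59}) underpinning Lemma \ref{Lemma 4.48}; without this coarse upper bound one cannot pass from a countable dense family of test symbols to the full space $C_c^0(\Omega\times S^{d-1})$.
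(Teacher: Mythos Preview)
Your proposal is correct and follows essentially the same route as the paper: diagonal extraction on a countable dense family of symbols, well-definedness via compactness of $\Psi_c^{-1}$-operators, cut-off independence, density extension through the G{\aa}rding-type Lemma~\ref{Lemma 4.48}, and Riesz--Markov to produce the positive measure. The only cosmetic difference is that the paper obtains the key continuity bound $\limsup_n |(A(\chi u_n),\chi u_n)| \leq C\|\sigma_0(A)\|_\infty$ by applying Lemma~\ref{Lemma 4.48} to the operator $M^2\chi I - A_\chi^\ast A$ (whose principal symbol $\chi(M^2-|\sigma_0(A)|^2)$ is nonnegative, so that $\|A(\chi u_n)\|_{L^2}$ is controlled directly and Cauchy--Schwarz finishes), whereas you reach the same bound via the positivity trick on $M\psi \pm a$; both are equivalent invocations of the same G{\aa}rding inequality.
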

		\begin{proof}
			From the assumption of the theorem, one has
			\begin{eqnarray}\label{1.19}
			\limsup_{n \rightarrow +\infty}\|A(\chi u_{n})\|_{L^{2}(\Omega)}\leq C(K,\chi)\,\underset{(x,\xi)\in \Omega \times S^{d-1}}{\max}|\sigma_{0}(A)|.
			\end{eqnarray}
			
			Indeed, first observe that since $A\in \Psi_c^0(\Omega)$, it maps $H_{comp}^s(\Omega)$ into $H_K^s(\Omega)$ for all $s\in \mathbb{R}$ and some compact set $K\subset \Omega$. In particular, $A$ maps $L_{comp}^2(\Omega)$ continuously into $L_K^2(\Omega)$. Since $(u_n)_{n\in \mathbb{N}}$ is bounded in $L^2_{loc}(\Omega)$, it follows that $(A (\chi u_n))$ is bounded in $L_{K}^2(\Omega)$.
			
			Now, from the Cauchy-Schwarz inequality,
			\begin{eqnarray}\label{1.20}
			|(A(\chi u_n), \chi u_n)_{L^2(\Omega)}|&\leq& \|A(\chi u_n)\|_{L^2(\Omega)}\|\chi u_n\|_{L^2(\Omega)} \nonumber \\
			& = & (A(\chi u_{n}), A(\chi u_{n}))_{L^{2}(\Omega)}\|\chi u_n\|_{L^2(\Omega)}  \nonumber \\
			& = &   (A_{\chi}^{\ast} A (\chi u_n), u_n)\|\chi u_n\|_{L^2(\Omega)}.
			\end{eqnarray}
			The principal symbol of the operator $A_{\chi}^{\ast}A$ is $|\sigma_0(A)|^2$. Hence, $A_{\chi}^{\ast}A= b(x,D) +r(x,D)$, where $r(x,D)\in \Psi_{c}^{-1}(\Omega)$ and $b(x,D)$ is an operator with symbol $\gamma(\xi)|\sigma_{0}(A)|^{2}$, in which $\gamma$ is $0$ near the origin and $1$ at infinity.
			
			Then,
			$$\gamma(\xi) \sigma_{0}(A_{\chi}^{\ast}A) =   \gamma(\xi) \chi |\sigma_{0}(A)|^{2} \leq  \chi \left(\displaystyle\max_{(x,\xi) \in \Omega \times S^{d-1}}|\sigma_{0}(A)| \right)^{2} = \chi M^{2}.$$
			
			Observe that $\chi u_{n} \rightharpoonup 0$ weakly in $L_{comp}^{2}(\Omega)$. Indeed, let $g \in L_{loc}^{2}(\Omega)$. Then, $\chi g \in L_{comp}^{2}(\Omega)$. Thus
			\begin{eqnarray*}
				\int_{\Omega}(\chi u_n)gdx &=& \int_{\Omega} u_n(\chi g)dx \rightarrow 0,
			\end{eqnarray*}
			since $u_n \rightharpoonup 0$ in $L_{loc}^{2}(\Omega)$.
			
			From Remark \ref{compact}, $r(x, D)(\chi u_{n}) \rightarrow 0$ strongly in $L^{2}(\Omega)$, and hence
			\begin{eqnarray*}
				(r(x, D)(\chi u_{n}), u_{n})_{L^2(\Omega)} \rightarrow 0 \hbox{ as }  n \to \infty.
			\end{eqnarray*}
			On the other hand, since $\chi=1$ in $K = \pi_{x}(\supp (\sigma_{0}(A)))$,
			\begin{eqnarray*}
				(b(x,D)(\chi u_{n}), u_{n}) & = & \int_{K} b(x,D)(\chi u_{n})(x) u_{n}(x)dx \\
				& = & (b(x,D)(\chi u_{n}), \chi u_{n}) \\
				& = & - ((M^{2}\chi I - b(x,D))(\chi u_{n}), \chi u_{n}) +(( M^{2}\chi I)\chi u_{n}, \chi u_{n}).
			\end{eqnarray*}
			We estimate the term $\displaystyle\limsup_{n \to \infty} \| A(\chi u_{n}) \|_{L^{2}(\Omega)}^{2}$ as follows:
			\begin{eqnarray*}
				\limsup_{n \to \infty} \| A(\chi u_{n}) \|_{L^{2}(\Omega)}^{2} & = & \limsup_{n \to \infty}  ( A_{\chi}^{\ast}A(\chi u_{n}), u_{n}) \\
				& = & \limsup_{n \to \infty}  Re ( A_{\chi}^{\ast}A(\chi u_{n}), u_{n})\\
				& \leq & \limsup_{n \to \infty} Re ( b(x,D)(\chi u_{n}), u_{n})+ \limsup_{n \to \infty}Re(r(x,D)(\chi u_n),u_n)\\
				& \leq & \limsup_{n \to \infty} Re -(M^2 \chi I - b(x,D)(\chi u_n),\chi u_n)\\&& + \limsup_{n \to \infty}(( M^{2}\chi I)\chi u_{n}, \chi u_{n}) \\
				& \leq & - \liminf_{n \to \infty}Re((M^{2} \chi I - b(x,D))(\chi u_{n}), \chi u_{n})\\&& + M^{2} \max_{x \in \Omega}\chi(x) \limsup_{n \to \infty}\|\chi u_n\|^{2}\\
				& \leq &  M^{2} \max_{x \in \Omega}\chi(x) \limsup_{n \to \infty}\|\chi u_{n}\|^{2},
			\end{eqnarray*}
			where the last inequality follows from Lemma $\ref{Lemma 4.48}$.
			
			Therefore, from the properties of $\limsup$, it follows that
			\begin{eqnarray*}
				\limsup_{n \to \infty} \|A(\chi u_{n})\| \leq C(\chi, K)\underset{(x,\xi)\in \Omega \times S^{d-1}}{\max}|\sigma_{0}(A)|.
			\end{eqnarray*}
			Thus, from \eqref{1.20} we obtain
			\begin{eqnarray}\label{1.21}
			\limsup_{n \to \infty}|(A(\chi u_{n}), \chi u_{n})| \leq C \underset{(x,\xi)\in \Omega \times S^{d-1}}{\max}|\sigma_{0}(A)|.
			\end{eqnarray}
			
			For any compact set $K$ of $\Omega$, $C_K^\infty(\Omega \times S^{d-1})$ will denote the vectorial space of functions $C^\infty$ on $\Omega \times S^{d-1}$, with compact support in $K\times S^{d-1}$, endowed with the $L^\infty$ norm. The space $C_K^\infty(\Omega \times S^{d-1})$ is separable, since it is isometric to a subspace of the separable space $C(K \times S^{d - 1})$.
			
			So, let $D := \operatorname{span}\{ a_i : i\in \mathbb{N}\}$ be a countable and dense subset of $C_K^\infty(\Omega \times S^{d-1})$. For all $i \in \mathbb{N}$, let $A_i$ be a pseudo-differential operator such that $\sigma_0(A_i)=a_i$.
			
			Taking \eqref{1.21} into account, for each  fixed $i$ there exists a constant $$C_i:=C(\chi,K)\underset{(x,\xi)\in \Omega \times S^{d-1}}{\max}|\sigma_{0}(A_i)|$$ such that
			$$\limsup_{n\rightarrow +\infty}|(A_i(\chi u_n),\chi u_n)|\leq C_i\hbox{ for all }n\in \mathbb{N}.$$
			
			By virtue of  Cantor's diagonal argument there exists a infinite subset $\mathbb{N}'$ of $\mathbb{N}$ such that the quantity $(A_i(\chi u_{n}),\chi u_n)_{n \in \mathbb{N}'}$ has a limit for all $i$.
			
			In fact, the sequence $\{(A_1(\chi u_n),\chi u_n)\}_{n \in \mathbb{N}}$, being bounded, has a convergent subsequence. Thus there exist an infinite subset $\mathbb{N}_{1} \subset \mathbb{N}$ and a number $\alpha_1$ for which $\underset{n \in \mathbb{N}_1}{\lim} (A_1(\chi u_n),\chi u_n)=\alpha_1$. The sequence $(A_2(\chi u_n),\chi u_n)_{n \in \mathbb{N}_1}$ is also bounded. So there exist an infinite subset $\mathbb{N}_2 \subset \mathbb{N}_1$ and a number $\alpha_2$ such that $\underset{n \in \mathbb{N}_2}{\lim} (A_1(\chi u_n),\chi u_n)=\alpha_2$. Proceeding in the same fashion, we obtain, for each $i \in \mathbb{N}$, an infinite subset $\mathbb{N}_i \subset \mathbb{N}$, such that $\mathbb{N}_1 \supset \mathbb{N}_2 \supset \cdots \supset \mathbb{N}_i \supset \cdots$ and a number $\alpha_i$ such that $\underset{n \in \mathbb{N}_i}{\lim} (A_i(\chi u_n),\chi u_n)=\alpha_i$. Let us then define an infinite subset $\mathbb{N}' \subset \mathbb{N}$, taking the $i$-th element of $\mathbb{N}'$ as the $i$-th element of $\mathbb{N}_i$. For each $i \in \mathbb{N}$, the sequence $(A_i(\chi u_n),\chi u_n)_{n \in \mathbb{N}'}$ is, from its $i$-th element, a subsequence of $(A_i(\chi u_n),\chi u_n)_{n \in \mathbb{N}_i}$ and therefore converges. Thus, $(A_i(\chi u_n),\chi u_n)_{n \in \mathbb{N}'} \rightarrow \alpha_i$ for all $i \in \mathbb{N}$.
			
			Define $L: D \subset C_{K}^{\infty}(\Omega \times S^{d-1}) \longrightarrow \mathbb{C}$ by setting $L(a_k)=\alpha_k$. From \eqref{1.21}, it follows that
			\begin{eqnarray}\label{1.22}
			|L(a_k)|&=&|\alpha_ k| \nonumber\\
			&=& \lim_{n \in \mathbb{N}'} |(A_k(\chi u_n),\chi u_n)|\nonumber \\
			&\leq & C  \underset{(x,\xi)\in \Omega \times S^{d-1}}{\max}|\sigma_{0}(A_k)|\nonumber \\
			&=& C \underset{(x,\xi)\in \Omega \times S^{d-1}}{\max}|a_k|.
			\end{eqnarray}
			
			Note that $\alpha_k$ does not depend on the choice of the operator $A_k$ satisfying $\sigma_0(A_k)=a_k$. In fact, let $B_k$ be another operator with $\sigma_0(B_k)=a_k$, then from \eqref{1.21}, it follows that
			$$\limsup_{n \in \mathbb{N}'} |((B_k-A_k)(\chi u_n),\chi u_n)|\leq C  \underset{(x,\xi)\in \Omega \times S^{d-1}}{\max}|\sigma_{0}(B_k-A_k)|=0.$$
			Therefore,
			\begin{eqnarray*}
				\lim_{n \in \mathbb{N}'}(B_k(\chi u_n),\chi u_n)&=& \lim_{n \in \mathbb{N}'}((B_k-A_k)(\chi u_n),\chi u_n)+\lim_{n \in \mathbb{N}'}(A_k(\chi u_n),\chi u_n)\\&=& \alpha_k.
			\end{eqnarray*}
			Thus, the mapping $L$ is well defined and densely defined. Hence, it uniquely extends to a bounded linear functional $\tilde{L}: C_{K}^{\infty}(\Omega\times S^{d-1}) \rightarrow \mathbb{C}$ satisfying the estimate
			$$|\tilde{L}(a)|\leq C \underset{(x,\xi)\in \Omega \times S^{d-1}}{\max}|a|.$$

			By construction, given $\sigma_0(A) \in C_{K}^{\infty}(\Omega \times S^{d-1})$, we have
			\begin{equation*}
			(A(\chi u_n),\chi u_n)_{n \in \mathbb{N}'} \underset{n \rightarrow \infty}{\longrightarrow} \tilde{L}(\sigma_0(A)).
			\end{equation*}
			
			In fact, let $(a_i) \subset D$ with $a_i \rightarrow \sigma_0(A)$ as $i\rightarrow +\infty$; that is, given $\varepsilon > 0,$ there exists $i_{0} \in \mathbb{N}$ such that
			\begin{equation}\label{1.23}
			\|\sigma_{0}(A) - a_{i}\|_{\infty} < \frac{\varepsilon}{3C}, \hbox{ for all } i \geq i_{0}.
			\end{equation}
			
			Moreover, there is $n_{0} \in \mathbb{N},$ such that for all $n \geq n_{0}$,
			\begin{eqnarray}\label{1.24}
			| (A(\chi u_{n}), \chi u_{n}) - (A_{i_{0}}(\chi u_{n}), \chi u_{n})| &\leq& C \|\sigma_{0}(A) - a_{i_{0}}\|_{\infty} \nonumber\\
			&\leq& C \frac{\varepsilon}{3 C} =\frac{\varepsilon}{3}.
			\end{eqnarray}
			
			On the other hand, there exists $n_{1} \in \mathbb{N}$ such that for all $n \geq n_{1}$,
			\begin{eqnarray}\label{1.25}
			| (A_{i_{0}}(\chi u_{n}), \chi u_{n}) - \tilde{L}(a_{i_{0}})| < \frac{\varepsilon}{3}.
			\end{eqnarray}
			
			So, taking $n_{2} = \max\{n_{0}, n_{1}\}$, from (\ref{1.23}), (\ref{1.24}) and (\ref{1.25}), it follows that for all $n \geq n_{2},$
			\begin{eqnarray*}
				| (A(\chi u_{n}), \chi u_{n}) - \tilde{L}(\sigma_{0}(A))| &\leq& |(A(\chi u_{n}), \chi u_{n}) - (A_{i_{0}}(\chi u_{n}), \chi u_{n})| \\
				&&+ |(A_{i_{0}}(\chi u_{n}), \chi u_{n}) - \tilde{L}(a_{i_{0}})|\\
				&&+|\tilde{L}(a_{i_{0}}) - \tilde{L}(\sigma_{0}(A))|\\
				&\leq& \frac{\varepsilon}{3} + \frac{\varepsilon}{3} + C \|\sigma_{0}(A) - a_{i}\|_{\infty}\\
				&\leq& \frac{\varepsilon}{3} + \frac{\varepsilon}{3} + \frac{\varepsilon}{3} = \varepsilon.
			\end{eqnarray*}
			
			Then,
			\begin{equation*}
			(A(\chi u_n),\chi u_n)_{n \in \mathbb{N}'} \underset{n \rightarrow \infty}{\longrightarrow} \tilde{L}(\sigma_0(A)).
			\end{equation*}
			
			Note that $L$ does not depend on the choice of $\chi$. Indeed, let $\chi, \tilde{\chi} \in C_{0}^{\infty}(\Omega)$ with
			$\chi \sigma_{0}(A) = \sigma_{0}(A) = \tilde{ \chi}\sigma_{0}(A)$. For $\eta \in C_{0}^{\infty}(\Omega)$ such that $\eta=1$ in a neighborhood of $\supp \chi \cup \supp \tilde{\chi}$, we have
			\begin{small}
				\begin{eqnarray*}
					\left(A(\tilde{\chi}u_{n}), \tilde{\chi} u_{n} \right) - \left(A(\chi u_{n}), \chi u_{n} \right) & = & \left(b(x,D)(\tilde{\chi}u_{n}), \tilde{\chi} u_{n} \right) - \left(b(x,D)(\chi u_{n}), \chi u_{n} \right) \\
					&&+ \left(r(x, D)(\tilde{\chi}u_{n}), \tilde{\chi} u_{n} \right) - \left(r(x, D)(\chi u_{n}), \chi u_{n} \right) \\
					&=& \left(b(x,D)((\tilde{\chi}-\chi)u_{n}), \tilde{\chi} u_{n} \right) + \left(b(x,D)(\chi u_{n}), (\tilde{\chi}-\chi) u_{n} \right) \\
					&&+ \left(r(x, D)(\tilde{\chi}u_{n}), \tilde{\chi} u_{n} \right) - \left(r(x, D)(\chi u_{n}), \chi u_{n} \right) \\
					&=& \left(b(x,D)(\eta(\tilde{\chi}-\chi)u_{n}), \tilde{\chi} u_{n} \right) + \left(b(x,D)(\chi u_{n}), (\tilde{\chi}-\chi) u_{n} \right) \\
					&&+ \left(r(x, D)(\tilde{\chi}u_{n}), \tilde{\chi} u_{n} \right) - \left(r(x, D)(\chi u_{n}), \chi u_{n} \right) \\
					&=& \left((\tilde{\chi}-\chi)u_{n}, b(x,D)_{\eta}^{\ast}(\tilde{\chi} u_{n}) \right) + \left(b(x,D)(\chi u_{n}), (\tilde{\chi}-\chi) u_{n} \right) \\
					&&+ \left(r(x, D)(\tilde{\chi}u_{n}), \tilde{\chi} u_{n} \right) - \left(r(x, D)(\chi u_{n}), \chi u_{n} \right) \\
					&=& \left((\tilde{\chi}-\chi)u_{n}, \overline{b(x,D)}(\tilde{\chi} u_{n}) \right) + \left(b(x,D)(\chi u_{n}), (\tilde{\chi}-\chi) u_{n} \right) \\
					&&+ \left(r(x, D)(\tilde{\chi}u_{n}), \tilde{\chi} u_{n} \right) - \left(r(x, D)(\chi u_{n}), \chi u_{n} \right) \\
					&&+ \left((\tilde{\chi}-\chi)u_{n}, r_1(\tilde{\chi} u_{n}) \right),
				\end{eqnarray*}
			\end{small}
			since $A=b(x,D) + r(x,D)$, where $b_{\eta}^{\ast}(x,\xi)=\overline{b(x,\xi)}+r_1$ with $r_1 \in \mathcal{S}_{c}^{-1}(\Omega)$. Since $\chi - \tilde{\chi} =0$ in $\pi_{x}(\supp \sigma_{0}(A))$, we have
			\begin{eqnarray*}
				\left(A(\tilde{\chi}u_{n}), \tilde{\chi} u_{n} \right)_{n \in \mathbb{N}'} - \left(A(\chi u_{n}), \chi u_{n} \right)_{n \in \mathbb{N}'} \rightarrow 0.
			\end{eqnarray*}
			
			Therefore,
			\begin{eqnarray*}
				\left(A(\tilde{\chi}u_{n}), \tilde{\chi} u_{n} \right)_{n \in \mathbb{N}'} &=&  	\left(A(\tilde{\chi}u_{n}), \tilde{\chi} u_{n} \right)_{n \in \mathbb{N}'}- \left(A(\chi u_{n}), \chi u_{n} \right)_{n \in \mathbb{N}'} +\left(A(\chi u_{n}), \chi u_{n} \right)_{n \in \mathbb{N}'}\\
				&\rightarrow& L(\sigma_0(A)).
			\end{eqnarray*}
			
			So far we have extracted a subsequence of $(u_n)$ such that $$(A(\chi u_n),\chi u_n) \underset{n \rightarrow \infty}{\longrightarrow} \tilde{L}(\sigma_0(A))$$ for all $A \in \Psi_{c}^{0}(\Omega)$ with $\supp \sigma_0(A) \subset K \times \mathbb{R}^{d}$ in some fixed compact $K$. We need to remove the dependence on $K$.  To this end, let $K_i \subset \overset{\circ}{K}_{i+1}$ be a monotone sequence of compact subsets of $\Omega$ such that $\Omega = \overset{\infty}{\underset{n=1}\bigcup}K_n$. From the construction, there exists an infinite subset $\mathbb{N}_1 \subset \mathbb{N}$ and a continuous linear form $\tilde{L}_1$ in $C_{K_1}^{\infty}(\Omega \times S^{d-1})$ such that
			$$ (A_1(\chi u_n),\chi u_n)_{\mathbb{N}_{1}} \underset{n \rightarrow \infty}{\longrightarrow} \tilde{L}_1(\sigma_0(A_1))$$ for all $A_1 \in \Psi_{c}^{0}(\Omega)$ with $\supp \sigma_0(A_1)\subset K_1 \times \Omega$. The sequence $(u_n)_{n \in \mathbb{N}_1}$ is still a bounded sequence in $L_{loc}^2(\Omega)$  weakly converging to $0$. Thus we can obtain an infinite subset $\mathbb{N}_{2} \subset \mathbb{N}_1$ and a continuous linear form $\tilde{L}_2$ in $C_{K_2}^{\infty}(\Omega \times S^{d-1})$ such that
			$$ (A_2(\chi u_n),\chi u_n)_{\mathbb{N}_{2}} \underset{n \rightarrow \infty}{\longrightarrow} \tilde{L}_2(\sigma_0(A_2)),$$ for all $A_2 \in \Psi_{c}^{0}(\Omega)$ with $\supp \sigma_0(A_2)\subset K_2 \times \Omega$.
			
			Proceeding in the same fashion, we obtain, for each $i \in \mathbb{N}$, an infinite subset $\mathbb{N}_i \subset \mathbb{N}$  and a continuous linear form $\tilde{L}_i$ in $C_{K_i}^{\infty}(\Omega \times S^{d-1})$ such that $\mathbb{N}_1 \supset \mathbb{N}_2 \supset \cdots \supset \mathbb{N}_i \supset \cdots$ and, for all $i\in \mathbb{N}$,
			$$ (A_i(\chi u_n),\chi u_n)_{\mathbb{N}_{i}} \underset{n \rightarrow \infty}{\longrightarrow} \tilde{L}_i(\sigma_0(A_i))$$ for all $A_i \in \Psi_{c}^{0}(\Omega)$ with $\supp \sigma_0(A_i)\subset K_i \times \Omega$.
			Let us then define an infinite subset $\mathbb{N}' \subset \mathbb{N}$, whose $i$-th element is the $i$-th element of $\mathbb{N}_i$. In this way, for each $i \in \mathbb{N}$, the sequence $(u_n)_{n \in \mathbb{N}'}$ is, starting from its $i$-th element, a subsequence of $(u_n)_{n \in \mathbb{N}_i}$.
			
			We want to define a linear functional $L:C_{0}^{\infty}(\Omega \times S^{d-1}) \longrightarrow \mathbb{C}$ satisfying the limit
			$$ (A(\chi u_n),\chi u_n)_{\mathbb{N}'} \underset{n \rightarrow \infty}{\longrightarrow} L(\sigma_0(A))$$ for all $A \in \Psi_{c}^{0}(\Omega)$ with $\sigma_0(A)$ having compact support in the variable $x$.
			
			Now, if $a \in C_{0}^{\infty}(\Omega \times S^{d-1})$, $\supp a \subset K \times S^{d-1}$, and $a=\sigma_0(A)$, with $K \subset K_{i_0}$, the same argument used to show the independence of the cut-off functions implies
			\begin{equation*}
			\lim_{n \in \mathbb{N}'}(A(\chi u_n),\chi u_n)=	\lim_{n \in \mathbb{N}'}(A(\chi_{i_0} u_n),\chi_{i_0} u_n)=\tilde{L}_{i_0}(\sigma_0(A)),
			\end{equation*}
			where $\chi=1$ in a neighborhood of $K$, $\chi_{i_0}=1$ in a neighborhood of $K_{i_0}$ and that the following inequality holds:
			\begin{equation*}
			|\tilde{L}_{i_0}(\sigma_0(A))|\leq C(\chi_{i_0})\|\sigma_0(A)\|.
			\end{equation*}
			Defining $L(\sigma_0(A))=\tilde{L}_{i_0}(\sigma_0(A))$, we obtain
			\begin{equation*}
			|L(\sigma_0(A))|\leq C(\chi_{i_0})\|\sigma_0(A)\|, \hbox{ if } \supp \sigma_0(A) \subset K \times S^{d-1}.
			\end{equation*}
			
			From Lemma \ref{Lemma 4.48} it follows that, $L$ extends to a Radon measure $\mu\geq 0$ on $\Omega \times S^{d-1}$, which finishes the proof.
		\end{proof}
		
		\begin{Definition}
			$\mu$ is said to be the microlocal defect measure of the sequence $\{u_n\}_{n\in \mathbb{N}'}$ in Theorem \ref{Theorem 4.49}.
		\end{Definition}
		
		\begin{Remark}\label{Rem4.2}
			Theorem \ref{Theorem 4.49} assures that any bounded sequence $(u_n)_{n\in \mathbb{N}}$ in $L_{loc}^2(\mathcal{O})$ that weakly converges to zero has a subsequence associated with a microlocal defect measure (in short, m.d.m.). We observe from (\ref{1.18}) that if $A=f\in C_0^\infty(\mathcal{O})$, then in particular
			\begin{eqnarray}\label{1.26}
			\int_\Omega f(x) |u_{\varphi(n)}(x)|^2\,dx \rightarrow \int_{\mathcal{O} \times S^{d-1}}f(x)\,d\mu(x,\xi),
			\end{eqnarray}
			so that $(u_n)_{\mathbb{N}'}$ strongly converges to $0$ if and only if $\mu=0$.
		\end{Remark}
		
		\begin{Theorem}\label{Theorem 4.56}
			Let $P$ be a differential operator of order $m$ on $\Omega$ with $P^\ast=P$, and let $(u_n)_{n\in \mathbb{N}}$ be a bounded sequence in $L_{loc}^2(\Omega)$ weakly converging to $0$ associated with a microlocal defect measure $\mu$. Let us assume that $P u_n \underset{n\rightarrow +\infty}\longrightarrow 0$ strongly in $H_{loc}^{1-m}$. Then, for any $a \in C^\infty(\Omega \times (\mathbb{R}^d)\backslash\{0\})$ homogeneous of degree $1-m$ in the second variable and with compact support in the first variable,
			\begin{eqnarray}\label{1.27}
			\int_{\Omega \times S^{d-1}}\{a,p\}(x,\xi)\,d\mu(x,\xi)=0.
			\end{eqnarray}
		\end{Theorem}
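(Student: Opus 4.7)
The plan is to realize the integral $\int_{\Omega\times S^{d-1}}\{a,p\}\,d\mu$ as the limit along $\mathbb{N}'$ of a commutator inner product, and then to show that this limit vanishes by combining the self-adjointness of $P$, the hypothesis $Pu_n\to 0$ in $H^{1-m}_{loc}$, and a principal-symbol cancellation. First I would extend $a$ past $\xi=0$ via a standard cutoff $\psi(\xi)\in C^\infty$ that is zero near $0$ and equal to $1$ for large $|\xi|$, and let $A\in\Psi_c^{1-m}(\Omega)$ be the resulting essentially homogeneous pseudo-differential operator (Definition \ref{essehomo}), so that $\sigma_{1-m}(A)=a$. By Theorem \ref{Theorem 3.13} the commutator $[A,P]$ lies in $\Psi_c^{0}(\Omega)$ with principal symbol $\sigma_0([A,P])=\tfrac{1}{i}\{a,p\}$, which is homogeneous of degree $0$ in $\xi$ (because $a$ has degree $1-m$ and $p$ has degree $m$) and whose $x$-support is contained in $\pi_x\supp(a)$. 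Choosing $\chi\in C_0^\infty(\Omega)$ identically equal to $1$ on an open neighborhood of $\pi_x\supp(a)$ and applying Theorem \ref{Theorem 4.49} to the subsequence defining $\mu$, I would obtain
\begin{equation*}
\bigl([A,P]\chi u_n,\chi u_n\bigr)_{L^2}\longrightarrow\int_{\Omega\times S^{d-1}}\tfrac{1}{i}\{a,p\}\,d\mu,
\end{equation*}
so the theorem reduces to proving that the left-hand side converges to zero.

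To analyze this left-hand side, I would use $P=P^*$ to rewrite
\begin{equation*}
\bigl([A,P]\chi u_n,\chi u_n\bigr)=\bigl(P\chi u_n,A^*\chi u_n\bigr)-\bigl(A\chi u_n,P\chi u_n\bigr),
\end{equation*}
and then substitute the Leibniz identity $P\chi u_n=\chi Pu_n+[P,\chi]u_n$ into both terms. Theorem \ref{Theorem 3.14} ensures that $A\chi u_n$ and $A^*\chi u_n$ are uniformly bounded in $H^{m-1}$, whereas by hypothesis $\chi Pu_n\to 0$ in $H^{1-m}$; since $H^{1-m}$ and $H^{m-1}$ are in duality, the two pairings involving $\chi Pu_n$ vanish in the limit. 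What remains is the pair of residual pieces $\bigl([P,\chi]u_n,A^*\chi u_n\bigr)$ and $\bigl(A\chi u_n,[P,\chi]u_n\bigr)$.

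The main obstacle is to show that these residual contributions also vanish, and this is where the geometry of the support of $a$ enters. Moving $A$ and $A^*$ across by adjunction recasts them as $\bigl(A[P,\chi]u_n,\chi u_n\bigr)$ and $\bigl(\chi u_n,A^*[P,\chi]u_n\bigr)$, so it suffices to prove that $A[P,\chi]u_n$ and $A^*[P,\chi]u_n$ both converge strongly to zero in $L^2$. The key observation is that $[P,\chi]$ is a differential operator of order $m-1$ with principal symbol $\tfrac{1}{i}\{p,\chi\}$, which is proportional to $\partial_x\chi$ and therefore vanishes identically on a neighborhood of $\supp(a)$ by our choice of $\chi$. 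Consequently, by the product rule in Theorem \ref{Theorem 3.13}, the principal symbols of the order-$0$ compositions $A[P,\chi]$ and $A^*[P,\chi]$, namely $a\cdot\tfrac{1}{i}\{p,\chi\}$ and $\bar a\cdot\tfrac{1}{i}\{p,\chi\}$, are identically zero, so both compositions actually lie in $\Psi_c^{-1}(\Omega)$. By Remark \ref{compact} they are compact from $L^2_{comp}(\Omega)$ to $L^2(\Omega)$, and since $[P,\chi]$ has coefficients of compact support, they send the weakly null sequence $u_n$ strongly to zero in $L^2$. Combined with the uniform $L^2$-boundedness of $\chi u_n$, this forces the residual pieces to vanish in the limit, yielding $\int\tfrac{1}{i}\{a,p\}\,d\mu=0$, which is equivalent to \eqref{1.27}.
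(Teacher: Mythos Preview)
Your proof is correct and follows the same commutator-plus-Leibniz strategy as the paper: realize $\int\{a,p\}\,d\mu$ as the limit of $i([A,P]\chi u_n,\chi u_n)$, expand $P\chi u_n=\chi Pu_n+[P,\chi]u_n$, kill the $\chi Pu_n$ terms by the $H^{1-m}$--$H^{m-1}$ duality, and dispose of the $[P,\chi]$ residuals using the disjointness of $\pi_x\supp(a)$ and $\supp(\nabla\chi)$. The only notable variation is in this last step: you argue symbolically that $\sigma_0(A[P,\chi])=a\cdot\tfrac{1}{i}\{p,\chi\}\equiv 0$, so $A[P,\chi]\in\Psi_c^{-1}$ and compactness applies, whereas the paper observes more directly that $(A\chi u_n,[P,\chi]u_n)$ is \emph{exactly} zero for every $n$, since $A\chi u_n$ is supported in $\pi_x\supp(a)$, where all derivatives of $\chi$---hence all coefficients of the differential operator $[P,\chi]$---vanish identically; the paper then reserves the compactness argument only for the lower-order remainder $r(x,D)\in\Psi_c^{-m}$ arising from the adjoint $A_{\chi_1}^*=\overline{b}(x,D)+r(x,D)$.
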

		\begin{proof}
			Let $\chi, \chi_1 \in C_{0}^{\infty}(\Omega)$ such that $\chi=1$ in a neighborhood of $\pi_1(\supp (a))$ and $\chi_1=1$ in a neighborhood of the support of the function $\chi$. Consider $\gamma \in C^\infty(\Omega)$ with $\gamma=0$ in a neighborhood of $0$ and $\gamma=1$ at infinity. Let $A$ be the operator defined by the symbol $b(x,\xi)=\gamma(\xi)a(x,\xi)$. The symbol of the adjoint operator $A_{\chi_1}^{\ast}$ is given by $b^*(x,\xi)=\overline{b(x,\xi)}+r(x,\xi)$, where $r \in S_{c}^{-m}(\Omega)$.  For a sufficiently regular function $u$, we have
			\begin{eqnarray}\label{1.28}
			\left([A,P](\chi u),\chi u\right)_{L^2(\Omega)}&=&\left( (AP-PA)(\chi u), \chi u\right)_{L^2(\Omega)}\nonumber\\
			&=&\left(AP(\chi u), \chi u\right)_{L^2(\Omega)}-\left( PA(\chi u),\chi u\right)_{L^2(\Omega)}\nonumber\\
			&=&\left(A(P(\chi u)), \chi u \right)_{L^2(\Omega)}-\left(A(\chi u), P^{\ast}(\chi u) \right)_{L^2(\Omega)}\nonumber\\
			&=&\left(A(\chi_1P(\chi u)), \chi u \right)_{L^2(\Omega)}-\left(A(\chi u), P(\chi u) \right)_{L^2(\Omega)}\nonumber\\
			&=&\left(P(\chi u)),A_{\chi_1}^{\ast} (\chi u) \right)_{L^2(\Omega)}-\left(A(\chi u), P(\chi u) \right)_{L^2(\Omega)}\nonumber\\
			&=&\left(P(\chi u)),A_{\chi_1}^{\ast} (\chi u) \right)_{L^2(\Omega)}-\left(A(\chi u), \chi Pu \right)_{L^2(\Omega)}\nonumber\\
			&&-(A(\chi u), [P,\chi]u)_{L^2(\Omega)}.
			\end{eqnarray}
			Note that
			\begin{eqnarray*}
				P(\chi u)&=&\sum_{|\alpha|\leq m}a_\alpha \partial_{x}^{\alpha}(\chi u)\\
				&=& \sum_{|\alpha|\leq m}a_\alpha\sum_{\beta \leq \alpha}\binom{\alpha}{\beta}\partial_{x}^{\beta}\chi \partial_{x}^{\alpha-\beta}u\\
				&=& \chi Pu+ \sum_{|\alpha|\leq m} a_\alpha \sum_{0<\beta\leq \alpha}\binom{\alpha}{\beta} \partial_{x}^{\beta} \chi \partial_{x}^{\alpha-\beta}u\\
				&=&\chi Pu+ [P,\chi]u.
			\end{eqnarray*}
			Therefore, $[P,\chi]$ is a differential operator of order $m-1$ involving the partial derivatives of $\chi$. We note that $A(\chi u)(x)=0$ for $x \not\in \pi_1(\supp (a))$. Thus, we can write
			\begin{eqnarray}\label{1.29}
			(A(\chi u),[P,\chi]u)_{L^2(\Omega)}&=& (A(\chi u),[P,\chi]u)_{L^2(\pi_1(\supp (a)))} \nonumber\\
			&=& 0
			\end{eqnarray}
			since $\chi=1$ in a neighborhood of this set. Therefore, $\partial_{x}^{\beta} \chi=0$ for $\beta>0$.
			
			Regarding the term $(P(\chi u),A_{\chi_1}^{\ast}(\chi u)_{L^2(\Omega)}$, we have
			\begin{equation}\label{1.30}
			(P(\chi u),A_{\chi_1}^{\ast}(\chi u))_{L^2(\Omega)}=(P(\chi u),\overline{b}(x,D)(\chi u))_{L^2(\Omega)}+(P(\chi u),r(x,D)(\chi u))_{L^2(\Omega)}.
			\end{equation}
			By the same argument above,
			\begin{eqnarray}\label{1.31}
			(P(\chi u),\overline{b}(x,D)(\chi u))_{L^2(\Omega)}&=& (\chi Pu+[P,\chi]u,\overline{b}(x,D)(\chi u))_{L^2(\Omega)}\nonumber\\
			&=& (\chi Pu,\overline{b}(x,D)(\chi u))_{L^2(\Omega)}\nonumber\\
			&&+([P,\chi]u,\overline{b}(x,D)(\chi u))_{L^2(\pi_1(\supp(a)))}\nonumber\\
			&=&(\chi Pu,\overline{b}(x,D)(\chi u))_{L^2(\Omega)}.
			\end{eqnarray}
			With respect to the term $(P(\chi u),r(x,D)(\chi u))_{L^2(\Omega)}$, we have,
			\begin{eqnarray*}
				(P(\chi u),r(x,D)(\chi u))_{L^2(\Omega)} &=& (P(\chi u),r(x,D)(\chi_1 \chi u))_{L^2(\Omega)}\\
				&=&(r(x,D)_{\chi_1}^{\ast}P(\chi u),\chi u)_{L^2(\Omega)}\\
				&=&(r(x,D)_{\chi_1}^{\ast}(\chi Pu),\chi u)_{L^2(\Omega)}+(r(x,D)_{\chi_1}^{\ast}([P,\chi]u),\chi u)_{L^2(\Omega)}.
			\end{eqnarray*}
			From the hypothesis, $\chi P u_n \rightarrow 0$ in $H_{comp}^{1-m}(\Omega)$. Then, considering $r(x,D)_{\chi_1}^{\ast}:H_{comp}^{1-m}(\Omega)\longrightarrow H_{K_1}^{1}$, $K_1=\supp \chi_1$, we have $r(x,D)_{\chi_1}^{\ast}(\chi P u_n) \rightarrow 0$ in $L^2(\Omega)$. Thus,
			\begin{equation*}
			|(r(x,D)_{\chi_1}^{\ast}(\chi Pu_n),\chi u_n)_{L^2(\Omega)}|\leq \|r(x,D)_{\chi_1}^{\ast}(\chi Pu_n)\|_{L^2(\Omega)}\|\chi u_n\|_{L^2(\Omega)} \rightarrow 0, \hbox{ as } n \rightarrow \infty.
			\end{equation*}
			Note that $[P,\chi]$ has order $m-1$ and is compactly supported, that is, has compact kernel. Thus, $[P,\chi]:L_{loc}^{2}(\Omega)\longrightarrow H_{comp}^{1-m}$. By Rellich's theorem,
			\begin{equation*}
			r(x,D)_{\chi_1}^{\ast} \circ [P,\chi]: L_{loc}^{2}(\Omega) \longrightarrow L^2(\Omega)
			\end{equation*}
			is a compact operator. Therefore, $r(x,D)_{\chi_1}^{\ast} \circ [P,\chi]$ maps weakly convergent sequences to strongly convergent sequences. Hence,
			\begin{eqnarray*}
				|(r(x,D)_{\chi_1}^{\ast}  [P,\chi](u_n),\chi u_n)_{L^2(\Omega)}|&\leq& \|(r(x,D)_{\chi_1}^{\ast}  [P,\chi](u_n)\|_{L^2(\Omega)}\|\chi u_n\|_{L^2(\Omega)}\\
				&\rightarrow& 0, \hbox{ as } n \rightarrow \infty.
			\end{eqnarray*}
			Hence,
			\begin{equation}\label{pchiun}
			|(P(\chi u_n),r(x,D)(\chi u_n))_{L^2(\Omega)}| \rightarrow 0, \hbox{ as } n \rightarrow \infty.
			\end{equation}
			From \eqref{1.28}, \eqref{1.29} and \eqref{1.30}, we obtain
			\begin{eqnarray*}
				([A,P](\chi u_n),\chi u_n)_{L^2(\Omega)}&=& (\chi Pu_n,\overline{b}(x,D)(\chi u_n))_{L^2(\Omega)}+(P(\chi u_n),r(x,D)(\chi u_n))_{L^2(\Omega)}\\
				&&-(A(\chi u_n),\chi Pu_n)_{L^2(\Omega)}.
			\end{eqnarray*}
			
			Observing that
			\begin{eqnarray*}
				\sigma_0([A,P])=\frac{1}{
					i}\{a,p\},~\hbox{ where }a=\sigma_{1-m}(A)\hbox{ and }p=\sigma_m(P),
			\end{eqnarray*}
			the second term of (\ref{1.27}) is
			\begin{eqnarray*}
				J:=\lim_{n\rightarrow +\infty} i\left([A,P](\chi u_n), \chi u_n \right)_{L^2(\Omega)}&=& i\lim_{n\rightarrow +\infty}(\chi Pu_n,\overline{b}(x,D)(\chi u_n))_{L^2(\Omega)}\\
				&&+i\lim_{n\rightarrow +\infty}(P(\chi u_n),r(x,D)(\chi u_n))_{L^2(\Omega)}\\
				&&i i\lim_{n\rightarrow +\infty}(A(\chi u_n),\chi Pu_n)_{L^2(\Omega)}.
			\end{eqnarray*}
			
			Combining the fact that $\chi P u_n \rightarrow 0$ in $H^{1-m}_{comp}(\Omega)$ with $\overline{b}(x,D)_{\chi_1}^{\ast}:H_{comp}^{1-m}(\Omega) \longrightarrow L^2(\Omega)$, we obtain
			\begin{eqnarray*}
				|(\chi Pu_n,\overline{b}(x,D)(\chi u_n))_{L^2(\Omega)}|&= &|(\overline{b}(x,D)_{\chi_1}^{\ast}(\chi Pu_n),\chi u_n)_{L^2(\Omega)}|\\
				&\leq& \|\overline{b}(x,D)_{\chi_1}^{\ast}(\chi Pu_n)\|_{L^2(\Omega)}\|\chi u_n\|_{L^2(\Omega)}\\
				&\rightarrow& 0, \hbox{ as } n \rightarrow \infty.
			\end{eqnarray*}
			Due to \eqref{pchiun}, it is necessary to prove that the term $|(A(\chi u_n),\chi Pu_n)_{L^2(\Omega)}|$ converges to $0$. In fact, $A:L_{comp}^{2}(\Omega) \longrightarrow H_{K}^{m-1}(\Omega)$ is a linear continuous operator. Therefore, $A$ maps bounded sets to bounded sets. In particular, $\{A(\chi u_n)\}$ is a bounded set of $H_{K}^{m-1}(\Omega)$. Therefore,
			\begin{eqnarray*}
				|(A(\chi u_n),\chi Pu_n)_{L^2(\Omega)}|&\leq & \|A(\chi u_n)\|_{H^{m-1}_{K}(\Omega)}\|\chi Pu_n\|_{H^{1-m}_{K}(\Omega)}\\
				&\rightarrow& 0, \hbox{ as } n \rightarrow \infty.
			\end{eqnarray*}
			Hence,
			\begin{eqnarray*}
				\int_{\Omega \times S^{d-1}}\{a,p\}(x,\xi)\,d\mu(x,\xi)=\lim_{n\rightarrow \infty}([A,P](\chi u_n),\chi u_n)_{L^2(\Omega)}=0,
			\end{eqnarray*}
			which concludes the proof.
		\end{proof}

		\begin{Theorem}\label{invariante0}
			Let $X$ be a locally compact Hausdorff space and $\mu$ be a positive Radon measure on $X$.
			\begin{itemize}
				\item [a)] Let $N$ be the union of all open $U \subset X$ such that $\mu(U)=0$. Then $N$ is open and $\mu(N)=0$. The complement of $N$ is called the support of $\mu$.
				\item [b)] $x \in \supp \mu$ if and only if $\int_{X} f d\mu>0$ for every $f \in C_{0}(X)$ with $f(x)>0$.
			\end{itemize}
		\end{Theorem}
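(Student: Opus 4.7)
For part (a), $N$ is open as a union of open sets. To show $\mu(N)=0$, I will use the inner regularity of the Radon measure $\mu$ on open sets: $\mu(N)=\sup\{\mu(K):K\subset N \text{ compact}\}$. Given any compact $K\subset N$, by definition every $x\in K$ lies in some open $U_x\subset X$ with $\mu(U_x)=0$. By compactness, finitely many of these $U_x$ cover $K$, hence by subadditivity $\mu(K)=0$. Taking the supremum yields $\mu(N)=0$.

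For part (b), I will prove both implications via the contrapositive.

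($\Leftarrow$) Suppose $x\notin\supp\mu$, i.e., $x\in N$. Then there is an open set $U\subset N$ with $\mu(U)=0$ and $x\in U$. Since $X$ is locally compact Hausdorff, we may choose an open neighborhood $V$ of $x$ with $\overline{V}$ compact and $\overline{V}\subset U$. By Urysohn's lemma for locally compact Hausdorff spaces, there exists $f\in C_0(X)$ with $f(x)=1$, $0\leq f\leq 1$, and $\supp f\subset V\subset U$. Then $\int_X f\,d\mu\leq \mu(U)=0$, contradicting the assumed positivity of the integral. Hence some $f\in C_0(X)$ with $f(x)>0$ fails the property.

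($\Rightarrow$) Suppose $x\in\supp\mu$ and let $f\in C_0(X)$ with $f(x)>0$. By continuity, the set $V:=\{y\in X : f(y)>f(x)/2\}$ is an open neighborhood of $x$. Since $x\notin N$, we cannot have $V\subset N$; in particular $\mu(V)>0$ (otherwise $V$ would be among the open sets whose union defines $N$, forcing $x\in N$). Therefore
\[
\int_X f\,d\mu\;\geq\;\int_V f\,d\mu\;\geq\;\frac{f(x)}{2}\,\mu(V)\;>\;0,
\]
as required.

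The only subtle step is the use of inner regularity of $\mu$ on open sets in part (a); this is part of the standing definition of a Radon measure, so no real obstacle arises. The rest is the standard Urysohn/continuity argument.
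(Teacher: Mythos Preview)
Your proof is correct and follows essentially the same route as the paper's: inner regularity plus a finite subcover for part (a), and for part (b) the continuity/half-level-set argument in the forward direction and Urysohn's lemma in the reverse direction. The only cosmetic difference is that the paper states (a) in one line without spelling out the compactness step, and both your proof and the paper's implicitly read the hypothesis in (b) as $f\in C_0(X,[0,1])$ (or at least $f\ge 0$), since the inequality $\int_X f\,d\mu\ge\int_V f\,d\mu$ requires this.
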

		\begin{proof}
			a) clearly follows from $\mu(N)=\sup\{\mu(K):K \subset N, K \hbox{ compact}\}.$
			
			To prove b), suppose $x \in \supp \mu=N^c$. Let $f \in C_c(X,[0,1])$ such that $f(x)>0$. By continuity, there is an open neighborhood $U$ of $x$ such that $f(y)>\frac{1}{2}f(x)$ for all $y \in U$. If $\mu(U)=0$, then since $U$ is open, $U \subset N$, which contradicts $x \in N^c$. Thus, $\mu(U)>0$. Hence, $\int_{X}fd\,\mu\geq \int_U f\, d\mu\geq \frac{1}{2}f(x)\mu(U)>0$.
			
			Now, suppose $x \notin \supp \mu$ and let $F=\supp \mu$. Then, there exists an open neighbourhood $U$ of $x$ such that $\mu(U)=0$ and $U \cap F=\emptyset$. Let $K$ be a compact set so that $x \in K \subset U$. From Urysohn's theorem, there exists a function $f \in C_0(X,[0,1])$ such that $f=1$ in $K$ and $f=0$ outside of a compact subset of $U$. In particular, $f(x)=1>0$ and
			\begin{eqnarray*}
				\int_{X} f d\mu &=& \int_{U} f d \,\mu +\int_{X \setminus U} f \, d\mu\\
				&=& \int_{U}f d \, \mu+\int_{X \setminus U} 0 \, d\mu\\
				&=&0.
			\end{eqnarray*}
		\end{proof}
		\begin{Theorem}\label{suporte}
			Let $\{u_n\}_n$ be a bounded sequence in $L_{loc}^2(\Omega)$ that weakly converges to zero and admits a microlocal defect measurement $\mu$. Then, $(x_0,\xi_0) \notin \supp \mu$ if and only if there exists $A \in \Psi_{c}^{0}(\Omega)$ essentially homogeneous such that $\sigma_0(A)(x_0,\xi_0)\neq 0$ and $A(\chi u_n) \rightarrow 0$ in $L^2(\Omega)$ for all $\chi \in C_{0}^{\infty}(\Omega)$.
		\end{Theorem}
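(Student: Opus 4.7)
The proof establishes both implications by exploiting Theorem \ref{Theorem 4.49} together with the composition and adjoint calculi from Theorems \ref{Theorem 3.10'} and \ref{Theorem 3.13}. The key observation is that for an essentially homogeneous $A \in \Psi_c^0(\Omega)$, the composition $A_\eta^\ast A$ is again an essentially homogeneous pseudo-differential operator of order $0$, with principal symbol $|\sigma_0(A)|^2 \ge 0$, and this symbol controls $\|A(\chi u_n)\|_{L^2}^2$.

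For the ``if'' direction, assume such an $A$ exists with $\sigma_0(A)(x_0,\xi_0)\ne 0$ and $A(\chi u_n)\to 0$ in $L^2$ for every $\chi \in C_0^\infty(\Omega)$. Choose $\eta \in C_0^\infty(\Omega)$ with $\eta=1$ on $\pi_x(\supp\sigma_0(A))$, and set $B := A_\eta^\ast A$, which by Theorem \ref{Theorem 3.13} lies in $\Psi_c^0(\Omega)$ and has principal symbol $|\sigma_0(A)|^2$. Applying Theorem \ref{Theorem 4.49} with cutoff $\eta$, using the adjoint identity from Theorem \ref{Theorem 3.10'} (extended by density to compactly supported $L^2$ functions) and the hypothesis, we get
\begin{equation*}
\int_{\Omega \times S^{d-1}} |\sigma_0(A)|^2 \, d\mu \;=\; \lim_n \bigl( B(\eta u_n), \eta u_n \bigr)_{L^2} \;=\; \lim_n \|A(\eta u_n)\|_{L^2}^2 \;=\; 0.
\end{equation*}
Since $|\sigma_0(A)|^2$ is continuous and nonnegative, $\mu$ must vanish on the open set $U := \{(x,\xi)\in \Omega \times S^{d-1} : \sigma_0(A)(x,\xi) \ne 0\}$, which contains $(x_0,\xi_0)$; by Theorem \ref{invariante0}, $(x_0,\xi_0) \notin \supp\mu$.

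For the converse, suppose $(x_0,\xi_0) \notin \supp\mu$. By Theorem \ref{invariante0}(a), there is an open neighborhood $V$ of $(x_0,\xi_0)$ in $\Omega \times S^{d-1}$ with $\mu(V)=0$. I would construct $A$ explicitly: pick $a_0 \in C_c^\infty(\Omega \times S^{d-1})$ supported in $V$ with $a_0(x_0,\xi_0)=1$, extend $a_0$ to a function $a$ on $\Omega\times(\mathbb{R}^d\setminus\{0\})$ homogeneous of degree $0$ in $\xi$, and set $b(x,\xi):=\gamma(\xi)a(x,\xi) \in \mathcal{S}_c^0(\Omega)$ where $\gamma\in C^\infty(\mathbb{R}^d)$ vanishes near $0$ and equals $1$ at infinity. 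The corresponding $A$ is essentially homogeneous with $\sigma_0(A)(x_0,\xi_0)=a_0(x_0,\xi_0)=1\ne 0$. To verify $A(\chi u_n)\to 0$ in $L^2$ for \emph{every} $\chi\in C_0^\infty(\Omega)$, pick $\chi_1\in C_0^\infty(\Omega)$ equal to $1$ on $\supp\chi \cup \pi_x(\supp\sigma_0(A))$ and write $\chi u_n = \chi\,\chi_1 u_n$. Then by Theorems \ref{Theorem 3.10'} and \ref{Theorem 3.13}, the operator $(AM_\chi)_{\chi_1}^\ast(AM_\chi)$ lies in $\Psi_c^0(\Omega)$ with principal symbol $\chi(x)^2|\sigma_0(A)(x,\xi)|^2$, and via the adjoint identity
\begin{equation*}
\|A(\chi u_n)\|_{L^2}^2 \;=\; \bigl((AM_\chi)_{\chi_1}^\ast (AM_\chi)(\chi_1 u_n),\; \chi_1 u_n\bigr)_{L^2} \;\longrightarrow\; \int \chi(x)^2|\sigma_0(A)|^2\,d\mu \;=\; 0,
\end{equation*}
where the last equality holds because $|\sigma_0(A)|^2$ is supported in $V$ and $\mu(V)=0$.

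The main obstacle is not conceptual but technical: one must manage adjoints and cutoffs carefully so that (i) the composition $A_\eta^\ast A$ and analogous operators remain in $\Psi_c^0(\Omega)$ with the claimed principal symbol $|\sigma_0(A)|^2$, (ii) the adjoint identity $\|A(\chi u_n)\|^2 = (A_\eta^\ast A(\chi u_n),\chi u_n)$ holds exactly (or up to an $o(1)$ term) on our compactly supported sequences, and (iii) the lower-order remainder terms from Theorems \ref{Theorem 3.10'}--\ref{Theorem 3.13}, which are compact operators on $L^2_{loc}$ by Remark \ref{compact}, contribute $o(1)$ because $u_n \rightharpoonup 0$ weakly and compact operators map weak to strong convergence.
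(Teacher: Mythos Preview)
Your proposal is correct and follows essentially the same approach as the paper: both directions hinge on the identity $\|A(\chi u_n)\|_{L^2}^2 = (A_{\chi_1}^\ast A(\chi u_n),\chi u_n) \to \int_{\Omega\times S^{d-1}} |\sigma_0(A)|^2\,d\mu$, combined with Theorem \ref{invariante0}(b) to relate the vanishing of this integral to membership in $\supp\mu$. Your treatment is in fact slightly more careful than the paper's in the forward direction, since you explicitly handle arbitrary $\chi\in C_0^\infty(\Omega)$ by passing to the composite operator $AM_\chi$ with principal symbol $\chi(x)\sigma_0(A)(x,\xi)$, whereas the paper's displayed computation \eqref{1.32} tacitly assumes $\chi=1$ on $\pi_x(\supp\sigma_0(A))$ when invoking Theorem \ref{Theorem 4.49}.
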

		\begin{proof}
			If $(x_0,\xi_0) \notin \supp \mu$, then there exist open subsets $U \subset \Omega$, $V \subset S^{d-1}$  such that $(x_0,\xi_0) \in U \times V$ and $U \times V \cap \supp \mu = \emptyset$. Let $K$ be a compact neighborhood of $\xi_0$, such that $K \subset V$. Let $\psi \in C^\infty(S^{d-1})$ such that $\supp \psi \subset K$ and $\psi=1$ in a neighborhood of $\xi_0$ contained in $K$. Consider $\phi \in C_{0}^{\infty}(\Omega)$ such that $\phi =1$ in a neighborhood of $x_0$ and $\supp \phi \subset U$. Pick $\eta \in C^\infty(\Omega)$ such that $\eta=0$ near $0$ and $\eta=1$ at infinity. Define $a(x,\xi)= \eta(\xi)\phi(x)\psi\left(\frac{\xi}{|\xi|}\right)$. Let $A$ be the operator defined by $a(\cdot,\cdot)$. Note that $\sigma_0(A)(x_0,\xi_0)=1$ and for all $\chi \in C_{0}^{\infty}(\Omega)$
			\begin{eqnarray}\label{1.32}
			\|A(\chi u_n)\|_{L^{2}(\Omega)}^{2} &=& (A(\chi u_n), A(\chi u_n ))\nonumber \\
			&=&(A(\chi u_n), A(\chi_1 \chi u_n ))\nonumber \\
			&=&(A_{\chi_1}^{*}\circ A (\chi u_n),\chi u_n )\nonumber \\
			&\rightarrow& \int_{\Omega\times S^{d-1} }|\sigma_0(A)(x,\xi)|^2 d \mu,
			\end{eqnarray}
			where $\chi_1 =1$ in a neighborhood of $\supp \chi$. Since $\supp \sigma_0(A) \subset \supp \phi \times \supp \psi \subset \Omega \times S^{d-1} \setminus \supp \mu$, we have
			\begin{equation*}
			\int_{\Omega \times S^{d-1}}|\sigma_0(A)|^2 d \mu=0.
			\end{equation*}
			Therefore, $\|A(\chi u_n)\|_{L^{2}(\Omega)}\rightarrow 0$ in $L^2(\Omega)$.
			
			Conversely, if there exists $A \in \Psi_{c}^{0}(\Omega)$ essentially homogeneous such that $\sigma_0(A)(x_0,\xi_0)\neq 0$ and $A(\chi u_n) \rightarrow 0$ in $L^2(\Omega)$, from \eqref{1.32} and by the uniqueness of the limit, we obtain
			\begin{equation*}
			\int_{\Omega \times S^{d-1}}|\sigma_0(A)|^2 d \mu=0.
			\end{equation*}
			By using Theorem \ref{invariante0}, we deduce that $(x_0,\xi_0) \notin \supp \mu$.
		\end{proof}
		
		We will call the vector field defined in $\Omega \times {\mathbb{R}}^d\backslash\{0\}$ and given by
		$$H_p(x,\xi)= \left(\frac {\partial p} { \partial \xi_1} (x,\xi), \cdots, \frac {\partial p} {\partial \xi_n}  (x,\xi); -\frac {\partial p} {\partial x_1} (x,\xi), \cdots, -\frac {\partial p} {\partial x_n} (x,\xi)\right)$$
		a Hamiltonian field of $p$, denoted by $H_p$.
		
		The Lie derivative of a function $f$ with respect to the Hamiltonian field $H_p$ is given by $H_p (f)=\{p,f\}$, where
		\begin{eqnarray*}
			\{p,f\}(x,\xi)=\sum_{j=1}^d \left(\frac{\partial p}{\partial \xi_j}\frac{\partial f}{\partial x_j}-\frac{\partial p}{\partial x_j}\frac{\partial f}{\partial \xi_j}\right).
		\end{eqnarray*}
		
		A Hamiltonian curve of $p$ is an integrable curve of the vector field $H_p$, which is a maximal solution $s\in I \mapsto (x(s),~\xi(s))$ to the Hamilton-Jacobi equations
		\begin{equation}\label{1.33}
		\left\{
		\begin{aligned}
		& \dot{x}=p_\xi(x,\xi)=\frac{\partial p}{\partial \xi}(x,\xi),\quad \dot{\xi}=-p_x(x,\xi)=- \frac{\partial p}{\partial x},\\
		\end{aligned}
		\right.
		\end{equation}
		where $I$ is an open interval of $\mathbb{R}$. By null bicharacteristics we mean those integral curves of $H_p$ along which $p=0$.

		\begin{Remark}\label{v11}
			Since $H_p p=0$, $p$ is constant along the integral curves of $H_p$. Indeed, let $\alpha(t)=(H_p)_t(x_0,\xi_0)$ be an integral curve of $H_p$ starting at $(x_0,\xi_0)$. Then, \begin{eqnarray*}
				0&=&H_p p (\alpha(t))\\
				&=& d(p)_{\alpha(t)}H_p(\alpha(t))\\
				&=&d(p)_{\alpha(t)}\alpha'(t)\\
				&=&(p \circ \alpha)'(t).
			\end{eqnarray*}
			Now, from the connectedness of the domain of $\alpha$, the remark follows.
		\end{Remark}
		
		\begin{Lemma}\label{v12}
			Assume $p(x_0,\xi_0)=0$ and $\lambda$ is a $C^\infty$ function on $T^0\Omega$ with real values that never vanishes. Then, for all $t \in (T_0,T_1)$ there exists $s_t \in \mathbb{R}$ such that $(H_p)_t(x_0,\xi_0)=(H_{\lambda p})_{s_t}(x_0,\xi_0)$. Moreover, there exists $S_0<0<S_1$ for which the mapping $t \mapsto s_t$ is a $C^\infty$ diffeomorphism from $(T_0,T_1)$ onto $(S_0,S_1)$.
		\end{Lemma}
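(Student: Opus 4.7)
The plan is to show that the integral curves of $H_{\lambda p}$ and $H_p$ issued from $(x_0,\xi_0)$ trace out the same set and differ only by a smooth reparameterization of the time variable. The whole argument rests on two inputs: the Leibniz-type identity $H_{\lambda p}=\lambda H_p+p\,H_\lambda$ for Hamiltonian vector fields, and Remark \ref{v11}, which guarantees that $p$ vanishes identically along the integral curve of $H_p$ starting at $(x_0,\xi_0)$ (since $p(x_0,\xi_0)=0$).

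First, I would set $\gamma(t):=(H_p)_t(x_0,\xi_0)$ for $t\in(T_0,T_1)$. Applying Remark \ref{v11}, one has $p(\gamma(t))\equiv 0$, so the Leibniz identity reduces, along $\gamma$, to
\begin{equation*}
H_{\lambda p}(\gamma(t))=\lambda(\gamma(t))\,H_p(\gamma(t)).
\end{equation*}
Second, I would define the candidate reparameterization
\begin{equation*}
s(t):=\int_0^t\frac{d\tau}{\lambda(\gamma(\tau))},\qquad t\in(T_0,T_1).
\end{equation*}
Because $\lambda\in C^\infty$ and never vanishes, the integrand is $C^\infty$ with a constant sign (that of $1/\lambda(x_0,\xi_0)$). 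Hence $s$ is $C^\infty$ and strictly monotone, so its image is an open interval $(S_0,S_1)$; from $s(0)=0$ we get $S_0<0<S_1$, and the inverse $t:(S_0,S_1)\to(T_0,T_1)$ is also $C^\infty$.

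Third, I would verify that $\tilde\gamma(\sigma):=\gamma(t(\sigma))$ is the integral curve of $H_{\lambda p}$ through $(x_0,\xi_0)$. Differentiating and using the displayed identity,
\begin{equation*}
\tilde\gamma'(\sigma)=\gamma'(t(\sigma))\,t'(\sigma)=H_p(\gamma(t(\sigma)))\cdot\lambda(\gamma(t(\sigma)))=H_{\lambda p}(\tilde\gamma(\sigma)),
\end{equation*}
and $\tilde\gamma(0)=(x_0,\xi_0)$. Uniqueness of solutions to the Hamilton--Jacobi system \eqref{1.33} for $\lambda p$ then yields $\tilde\gamma(\sigma)=(H_{\lambda p})_\sigma(x_0,\xi_0)$. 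Setting $s_t:=s(t)$, this rewrites as $(H_p)_t(x_0,\xi_0)=(H_{\lambda p})_{s_t}(x_0,\xi_0)$, and the diffeomorphism property of $t\mapsto s_t$ was established in the second step.

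No serious obstacle is anticipated; the argument is essentially the classical fact that multiplying a vector field by a nonvanishing scalar does not change its integral curves, only their parameterization. The only point that requires a trace of care is that the identity $H_{\lambda p}=\lambda H_p+p\,H_\lambda$ holds globally on $T^0\Omega$, but the contribution $p\,H_\lambda$ is eliminated only along the specific orbit $\gamma$, which is precisely where Remark \ref{v11} does the work. Domains of definition of the flows are handled automatically because $s$ is a $C^\infty$ bijection from $(T_0,T_1)$ onto $(S_0,S_1)$.
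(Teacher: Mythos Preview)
Your proof is correct and follows essentially the same route as the paper: both show that along the $H_p$-orbit through $(x_0,\xi_0)$ the two Hamiltonian fields are proportional (you via the Leibniz identity $H_{\lambda p}=\lambda H_p+pH_\lambda$, the paper via explicit coordinate expansion of $\partial(\lambda p)/\partial\xi$ and $\partial(\lambda p)/\partial x$), and both use the identical reparameterization $s(t)=\int_0^t \lambda(\gamma(\tau))^{-1}\,d\tau$ together with ODE uniqueness. The only difference is cosmetic---your vector-field formulation is a bit more concise than the paper's coordinate computation, but the substance is the same.
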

		\begin{proof}
			Let us write $b=\frac{1}{\lambda}$ and $a=\lambda p$. Since $p=ab$, using Remark \ref{v11} we have \linebreak $(ab)((H_p)_t(x_0,\xi_0)=0$. Since $b$ never vanishes,  we obtain $a((H_p)_t(x_0,\xi_0))=0$ for all $t \in (T_0,T_1)$. Thus,
			\begin{equation*}
			\left\{
			\begin{aligned}
			& \dot{x}(t)=\frac{\partial p}{\partial \xi}=\frac{\partial a}{\partial \xi}b+\frac{\partial b}{\partial \xi}a=b\frac{\partial a}{\partial \xi}(x(t),\xi(t)),\\
			&\dot{\xi}(t)=-\frac{\partial p}{\partial x}=-\frac{\partial a}{\partial x}b-\frac{\partial b}{\partial x}a=-b\frac{\partial a}{\partial x}(x(t),\xi(t)).
			\end{aligned}
			\right.
			\end{equation*}
			Writing $f(t)=b(x(t),\xi(t))$, we have
			\begin{equation*}
			\left\{
			\begin{aligned}
			& \dot{x}(t)=f(t)\frac{\partial a}{\partial \xi}(x(t),\xi(t)),\\
			&\dot{\xi}(t)=-f(t)\frac{\partial a}{\partial x}(x(t),\xi(t)),
			\end{aligned}
			\right.
			\end{equation*}
			with $x(0)=x_0$ and $\xi(0)=\xi_0$. Now, let $(y(t),\eta(t))=(H_a)_t(x_0,\xi_0)=(H_{\lambda p})_t(x_0,\xi_0)$ and define
			\begin{equation*}
			s(t)=\int_{0}^{t}f(s)ds.
			\end{equation*}
			By virtue of the mean value theorem and the fact that $f$ never vanishes, we have either $f>0$ or $f<0$. Let's see that $s$ is injective. Indeed, let $t_1,t_2 \in (T_0,T_1)$ such that $t_1 \neq t_2 $ and suppose by contradiction that $s(t_1)=s(t_2)$. Without loss of generality, we can suppose $t_1<t_2$. Then
			\begin{equation*}
			\int_{0}^{t_1}f(s)ds=\int_{0}^{t_2}f(s)ds=\int_{0}^{t_1}f(s)ds+\int_{t_1}^{t_2}f(s)ds.
			\end{equation*}
			Therefore,
			\begin{equation*}
			\int_{t_1}^{t_2}f(s)ds=0.
			\end{equation*}
			In the same fashion, if $f>0$, then $\int_{t_1}^{t_2}f(s)ds>0$ and if $f<0$, then $\int_{t_1}^{t_2}f(s)ds<0$, which yields a contradiction in both cases. Therefore, $s(t_1)=s(t_2)$. Moreover, we have $s'(t)=f(t)\neq0$ for all $t\in (T_0,T_1)$. Hence, $s$ is a $C^\infty$ injective local diffeomorphism. Since $s(0)=0$, there exists an open neighbourhood of $0$, $(S_0,S_1)$, such that $s$ is a diffeomorphism from $(T_0,T_1)$ onto $(S_0,S_1)$. Let $(z(t),\gamma(t))=(y(s(t)),\eta(s(t)))$ and note that
			\begin{equation*}
			\left\{
			\begin{aligned}
			&\dot{z}(t)=\dot{y}(s(t))s'(t)=f(t)\frac{\partial a}{\partial \xi}(y(s(t)),\eta(s(t)))=f(t)\frac{\partial a}{\partial \xi}(z(t),\gamma(t))\\
			&\dot{\gamma}(t)=\dot{\eta}(s(t))s'(t)=-f(t)\frac{\partial a}{\partial x}(y(s(t)),\eta(s(t)))=-f(t)\frac{\partial a}{\partial x}(z(t),\gamma(t)),
			\end{aligned}
			\right.
			\end{equation*}
			with $z(0)=y(s(0))=y(0)=x_0$ and $\gamma(0)=\eta(s(0))=\eta(0)=\xi_0$.
			By virtue of uniqueness of solutions, we obtain $(x(t),\xi(t))=(y(s(t)),\eta(s(t))$. Therefore, the bicharacteristics of $\lambda p$  and $p$ coincide modulo a reparametrization.
		\end{proof}
		
		\begin{Theorem}\label{Theorem 4.55}
			Let $P$ be a differential operator of order $m$ on $\Omega$ and let $(u_n)_{n\in \mathbb{N}}$ be a bounded sequence in $L_{loc}^2(\Omega)$ weakly converging to $0$, associated with a microlocal defect measure $\mu$. Then, the following statements are equivalent:
			\begin{itemize}
				\item [(i)] $Pu_n \underset{n \rightarrow +\infty}\longrightarrow 0 \hbox{ strongly in }H_{loc}^{-m}(\Omega)~(m>0)$,
				\item [(ii)]	$\supp (\mu) \subset \{(x,\xi)\in \Omega \times S^{d-1}: \sigma_m(P)(x,\xi)=0\}.$
			\end{itemize}
		\end{Theorem}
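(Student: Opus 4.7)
\textbf{Plan for Theorem \ref{Theorem 4.55}.} I will prove the two implications separately, in each case combining the symbol calculus of the preceding subsection with either the microlocal characterisation of $\supp(\mu)$ given by Theorem \ref{suporte} or the limiting identity \eqref{1.18} of Theorem \ref{Theorem 4.49}.

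\textbf{(i) $\Rightarrow$ (ii).} I argue contrapositively: fix $(x_0,\xi_0)\in\Omega\times S^{d-1}$ with $\sigma_m(P)(x_0,\xi_0)\ne 0$, and build a microlocal parametrix for $P$ at $(x_0,\xi_0)$. Choose $\phi\in C_0^\infty(\Omega)$ with $\phi(x_0)=1$, $\psi\in C^\infty(\mathbb{R}^d\setminus\{0\})$ homogeneous of degree $0$ with $\psi(\xi_0)=1$, and $\eta\in C_0^\infty(\mathbb{R}^d)$ equal to $1$ near the origin, all in sufficiently small (conic) neighbourhoods so that $\sigma_m(P)$ is bounded away from $0$ on $\supp(\phi)\times\supp(\psi)$. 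Let $Q\in\Psi_c^{-m}(\Omega)$ be the operator with symbol
\[
q(x,\xi):=\frac{\phi(x)\psi(\xi)(1-\eta(\xi))}{\sigma_m(P)(x,\xi)}.
\]
By Theorem \ref{Theorem 3.13}, $A:=QP\in\Psi_c^0(\Omega)$ has principal symbol $\sigma_0(A)(x,\xi)=\phi(x)\psi(\xi)$, so $\sigma_0(A)(x_0,\xi_0)=1$. For any $\chi\in C_0^\infty(\Omega)$ I decompose
\[
A(\chi u_n)=Q(\chi Pu_n)+Q([P,\chi]u_n).
\]
The first summand tends to $0$ in $L^2$ by Theorem \ref{Theorem 3.14} since $\chi Pu_n\to 0$ in $H^{-m}_{\mathrm{comp}}$ by (i); the second tends to $0$ because $[P,\chi]$ is a compactly supported differential operator of order $m-1$, so $Q\circ[P,\chi]$ is a compactly supported PDO of order $-1$, hence compact from $L^2_{\mathrm{loc}}$ to $L^2$ by Remark \ref{compact}, and $u_n\rightharpoonup 0$. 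Theorem \ref{suporte} then yields $(x_0,\xi_0)\notin\supp(\mu)$.

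\textbf{(ii) $\Rightarrow$ (i).} It suffices to show $\|\phi Pu_n\|_{H^{-m}}\to 0$ for every $\phi\in C_0^\infty(\Omega)$. Pick $\chi\in C_0^\infty(\Omega)$ with $\chi=1$ near $\supp(\phi)$; by locality of $P$, $\phi Pu_n=\phi P(\chi u_n)$. With $\Lambda:=(1-\Delta)^{1/2}$, set $B:=\phi\Lambda^{-2m}\phi\in\Psi_c^{-2m}(\Omega)$ and $A:=\chi P^{\ast}BP\chi\in\Psi_c^0(\Omega)$. By Theorems \ref{Theorem 3.10'} and \ref{Theorem 3.13},
\[
\sigma_0(A)(x,\xi)=\chi(x)^2\phi(x)^2\,|\sigma_m(P)(x,\xi)|^2\,|\xi|^{-2m}.
\]
Taking $\chi'\in C_0^\infty(\Omega)$ with $\chi'=1$ on $\supp(\chi)$ and unwinding the adjoints,
\[
\bigl(A(\chi' u_n),\chi' u_n\bigr)=(BP\chi u_n,P\chi u_n)=(\Lambda^{-2m}\phi P\chi u_n,\phi P\chi u_n)=\|\phi Pu_n\|_{H^{-m}}^2,
\]
where the final equality uses $\chi\chi'=\chi$, the self-adjointness of multiplication by $\chi$, and the locality of $P$ combined with $\chi=1$ near $\supp(\phi)$. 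By Theorem \ref{Theorem 4.49} the left-hand side converges to $\int_{\Omega\times S^{d-1}}\chi^2\phi^2|\sigma_m(P)|^2\,d\mu$ (the factor $|\xi|^{-2m}=1$ on $S^{d-1}$), which vanishes because $\sigma_m(P)=0$ on $\supp(\mu)$ by hypothesis (ii). Thus $\phi Pu_n\to 0$ in $H^{-m}$, giving (i).

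\textbf{Main obstacle.} The delicate step is the parametrix construction in (i) $\Rightarrow$ (ii) together with the bookkeeping needed to verify that every composition written above is a \emph{compactly supported} PDO of the announced order with the announced principal symbol; this relies on inserting the cutoffs $\chi,\chi',\phi$ at the right places so that the non-properly-supported factor $\Lambda^{-2m}$ only appears sandwiched between $\phi$'s, and on ensuring that all the subprincipal remainders produced by Theorem \ref{Theorem 3.13} either vanish in the limit (via compactness of negative-order PDOs on $L^2$) or are absorbed into the factor $|\sigma_m(P)|^2$ that integrates to zero against $\mu$.
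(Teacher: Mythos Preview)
Your proof is correct. The direction (ii) $\Rightarrow$ (i) is essentially the paper's argument: both introduce the order-zero operator built from $P^{\ast}\,(\text{cutoff})\,\Lambda^{-2m}\,(\text{cutoff})\,P$, identify its principal symbol as $|\chi|^2|\sigma_m(P)|^2|\xi|^{-2m}$, and let the microlocal defect measure send the quadratic form to $\int|\chi|^2|\sigma_m(P)|^2\,d\mu$.

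Where you diverge is in (i) $\Rightarrow$ (ii). The paper does \emph{not} build a parametrix; instead it observes that the very same computation above runs as a chain of equivalences: (i) holds iff $\|\chi Pu_n\|_{H^{-m}}\to 0$ for every $\chi$, iff $\int_{\Omega\times S^{d-1}}|\chi|^2|\sigma_m(P)|^2\,d\mu=0$ for every $\chi$, iff (ii) (by the positivity criterion of Theorem \ref{invariante0}). This yields both implications with a single operator and no appeal to Theorem \ref{suporte}. Your parametrix route---inverting $\sigma_m(P)$ microlocally near $(x_0,\xi_0)$, decomposing $QP(\chi u_n)=Q(\chi Pu_n)+Q([P,\chi]u_n)$, and invoking Theorem \ref{suporte}---is a legitimate alternative; it is more constructive and closer in spirit to the classical elliptic-regularity argument, but it costs an extra layer of symbol bookkeeping (as you note in your ``main obstacle''), in particular the verification that $QP$ lies in $\Psi_c^0(\Omega)$ despite $P$ not having compactly supported symbol (handled by inserting a cutoff $\tilde\phi=1$ near $\supp\phi$ and checking $Q(1-\tilde\phi)$ is smoothing). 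The paper's route avoids all of this at the price of being less transparent about \emph{why} the support condition follows.
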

		\begin{proof}
			Observe that $(i)$ is equivalent to $\chi Pu_{n} \rightarrow 0$ in $H^{-m}(\Omega)$ for every $\chi \in C_{0}^{\infty}(\Omega)$. However, $P$ is a
			properly supported operator, since it is a differential operator, so that there exists $\psi \in C_{0}^{\infty}(\Omega)$ such that $\chi Pu_{n} = \chi P(\psi u_{n})$ for every $n \in \mathbb{N}$. Therefore, $(i)$ is also equivalent to
			\begin{eqnarray*}
				(\Lambda^{-m}(\chi P(\psi u_{n})), \Lambda^{-m}(\chi P(\psi u_{n}) ))_{L^{2}} & = & (\Lambda^{-2m}\chi P(\psi u_{n}), \chi P(\psi u_{n}))_{L^{2}} \\
				& = & (P^{\ast} \overline{\chi}\Lambda^{-2m}\chi P( \psi u_{n}), \psi u_{n})_{L^{2}} \to 0.																						
			\end{eqnarray*}
			If we set $B = P^{\ast}\overline{\chi}\Lambda^{-m}\chi P \in \Psi_{c}^{0}(\Omega)$, then the principal symbol of $B$ is $$\sigma_{0}(B) = |\chi(x)|^{2}|\sigma_{m}(P)(x, \xi)|^{2}|\xi|^{-2m}$$  and
			\begin{eqnarray*}
				(\Lambda^{-m}(\chi P(\psi u_{n})), \Lambda^{-m}(\chi P(\psi u_{n}) ))_{L^{2}} \rightarrow \int_{\Omega \times S^{d-1}}{|\chi(x)|^{2}|\sigma_{m}(P)(x, \xi)|^{2}}d\mu(x, \xi).
			\end{eqnarray*}
			Employing Theorem \ref{invariante0} we conclude that the condition
			\begin{eqnarray*}
				\int_{\Omega \times S^{d-1}}{|\chi(x)|^{2}|\sigma_{m}(P)(x, \xi)|^{2}}d\mu = 0
			\end{eqnarray*}
			for every $\chi \in C_{0}^{\infty}(\Omega)$ is equivalent to $(ii)$, which completes the proof.
		\end{proof}

		\begin{Theorem}\label{Theorem 4.60} Let $P$ be a self-adjoint differential operator of order $m$ on $\Omega$ and let $(u_n)_{n\in \mathbb{N}}$ be a bounded sequence in $L_{loc}^2(\Omega)$ that weakly converges to $0$, with a microlocal defect measure $\mu$. Suppose that $P u_n$ converges to $0$ in $H_{loc}^{-(m-1)}$. Then the support of $\mu$, $\supp(\mu)$, is a union of curves like $s\in I \mapsto \left(x(s), \frac{\xi(s)}{|\xi(s)|}\right)$, where $s\in I \mapsto (x(s),\xi(s))$ is a null-bicharacteristic of $p,$ where $p$ is the principal symbol of $P.$
		\end{Theorem}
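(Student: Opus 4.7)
The plan is to combine the characteristic-set localization of Theorem \ref{Theorem 4.55} with the transport identity of Theorem \ref{Theorem 4.56}, then descend from $T^*\Omega\setminus 0$ to the cosphere bundle by a degree-one rescaling of $p$, and conclude via flow-invariance of $\supp(\mu)$. First I note that since $H^{-(m-1)}_{loc}(\Omega) \hookrightarrow H^{-m}_{loc}(\Omega)$, the hypothesis $Pu_n\to 0$ in $H^{-(m-1)}_{loc}$ yields $Pu_n\to 0$ in $H^{-m}_{loc}$, so Theorem \ref{Theorem 4.55} places $\supp(\mu)$ inside $\Sigma:=\{(x,\xi)\in \Omega\times S^{d-1} : p(x,\xi)=0\}$, where $p:=\sigma_m(P)$. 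Next, since $P^*=P$, Theorem \ref{Theorem 4.56} applies and gives the transport identity
\[
\int_{\Omega\times S^{d-1}}\{a,p\}(x,\xi)\,d\mu(x,\xi)=0
\]
for every $a\in C^\infty(\Omega\times(\mathbb{R}^d\setminus 0))$ homogeneous of degree $1-m$ in $\xi$ with compact $x$-support. Because $p$ is homogeneous of degree $m$ in $\xi$, the bracket $\{a,p\}=H_p(a)$ is homogeneous of degree $0$ in $\xi$, so the integrand is well-defined as a function on $\Omega\times S^{d-1}$.

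The second step is to repackage the transport identity as an invariance statement on the cosphere bundle. Set $\tilde p(x,\xi):=|\xi|^{1-m}p(x,\xi)$, which is smooth on $T^*\Omega\setminus 0$ and homogeneous of degree $1$; then $H_{\tilde p}$ is homogeneous of degree $0$ on $T^*\Omega\setminus 0$, and its projection $\tilde X$ along the radial direction descends to a smooth vector field on $\Omega\times S^{d-1}$. Given any $b\in C^\infty_c(\Omega\times S^{d-1})$, I lift $b$ to a homogeneous degree-$0$ function $B(x,\xi):=\eta(|\xi|)\,b(x,\xi/|\xi|)$ with $\eta\in C^\infty(\mathbb{R}_+)$ equal to $1$ on a large compact interval and vanishing near $0$ and $\infty$, and take the test symbol $a:=|\xi|^{1-m}B$, which is homogeneous of degree $1-m$. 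A direct computation gives
\[
\{a,p\}=|\xi|^{1-m}\{B,p\}+B\{|\xi|^{1-m},p\};
\]
on $\Sigma$ the second term combines with the bracket of $|\xi|^{1-m}$ against $p$ to produce exactly $\tilde X(b)$ once the radial derivative term $B\,H_p(|\xi|^{1-m})|_{p=0}$ is re-expressed through $H_{\tilde p}$, and the cutoff $\eta$ may be chosen so that its derivative term contributes only where $b$ does not, hence not to $\int\{a,p\}\,d\mu$. Combining with Step~1 yields
\[
\int_{\Omega\times S^{d-1}}\tilde X(b)\,d\mu=0, \qquad \forall\,b\in C^\infty_c(\Omega\times S^{d-1}).
\]

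Finally, the above distributional identity $L_{\tilde X}\mu=0$ is equivalent to $\mu$ being invariant under the flow of $\tilde X$, so $\supp(\mu)$ is a closed union of integral curves of $\tilde X$. Lemma \ref{v12}, applied with the smooth positive factor $\lambda:=|\xi|^{1-m}$, shows that inside the common zero set $\{p=0\}=\{\tilde p=0\}$ the integral curves of $H_p$ and $H_{\tilde p}$ coincide up to reparametrization. The projection of a null-bicharacteristic $s\in I\mapsto(x(s),\xi(s))$ of $p$ onto $\Omega\times S^{d-1}$ is the curve $s\mapsto(x(s),\xi(s)/|\xi(s)|)$, which is precisely an integral curve of $\tilde X$. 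Hence $\supp(\mu)$ is a union of such curves, establishing the theorem. The main obstacle I anticipate is the rigorous computation of the third step, i.e.\ showing that on $\Sigma$ one has $\{a,p\}=\tilde X(b)$ modulo terms that integrate to zero against $\mu$; this requires the exact cancellation produced by the conformal rescaling together with the cutoff $\eta$ and the condition $p=0$ on $\supp(\mu)$, and it is the sole non-routine ingredient of the argument.
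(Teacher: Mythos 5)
Your overall architecture matches the paper's: localize $\supp(\mu)$ in the characteristic set via Theorem \ref{Theorem 4.55}, invoke the transport identity of Theorem \ref{Theorem 4.56}, rescale to the degree-one symbol $q=|\xi|^{1-m}p$, and conclude by flow invariance together with Lemma \ref{v12}. However, the step you yourself flag as "the sole non-routine ingredient" is not merely unfinished --- the identity you are aiming for is false in general. Writing $a=|\xi|^{1-m}B$ with $B$ homogeneous of degree $0$, one has
\begin{equation*}
H_p(a)=|\xi|^{1-m}H_p(B)+B\,H_p\bigl(|\xi|^{1-m}\bigr),
\end{equation*}
and on $\supp(\mu)\subset\{p=0\}\cap\{|\xi|=1\}$ the first term is exactly $H_{\tilde p}(B)=\tilde X(b)$, but the second term equals $(m-1)\,b\,(\xi\cdot\partial_x p)$, a zeroth-order term in $b$ that cannot be absorbed into $\tilde X(b)$ (it is $b$ times a function, not a derivative of $b$). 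Hence Theorem \ref{Theorem 4.56} yields $\int\bigl(\tilde X(b)+(m-1)(\xi\cdot\partial_x p)\,b\bigr)d\mu=0$, a transport equation \emph{with a damping term}, not $L_{\tilde X}\mu=0$. For $x$-independent $p$ or $m=1$ the extra term vanishes, but the theorem is stated for a general self-adjoint differential operator (and the paper explicitly applies the result to the variable-coefficient wave operator), so your claim that $\mu$ itself is invariant under the flow of $\tilde X$ is wrong; only $\supp(\mu)$ is invariant, because the damping factor is a positive exponential weight along each trajectory. Proving that support invariance from the damped equation is precisely the nontrivial content you defer. A secondary issue: your cutoff $\eta$ vanishing near infinity destroys the homogeneity of degree $1-m$ required by Theorem \ref{Theorem 4.56}; you should take $a=|\xi|^{1-m}b(x,\xi/|\xi|)$ with no radial cutoff.

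The paper circumvents this entirely by working in Lagrangian rather than Eulerian form: it transports the degree-$(1-m)$ homogeneous test function itself, setting $F(s)=\int\chi\,(\tilde g\circ\phi_s)\,d\mu$ with $\tilde g=|\xi|^{1-m}\phi(x)g(x,\xi/|\xi|)$ and $\phi_s$ the flow of $H_q$. Since $\phi_s$ commutes with dilations, $\tilde g\circ\phi_s$ remains homogeneous of degree $1-m$, so Theorem \ref{Theorem 4.56} applies verbatim at each $s$ and gives $F'(s)=0$; the weight $|\xi(s)|^{1-m}$ that appears is strictly positive, so vanishing of $\int g\,d\mu$ for $g\ge 0$ propagates along the flow, and a localized contradiction argument using Theorem \ref{invariante0} and a connectedness argument finishes the proof. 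To repair your write-up along your own lines you would need to (i) state the transport equation with the correct zeroth-order term, and (ii) prove support propagation for such a damped equation --- which in practice reduces to the same flow-transport computation the paper performs.
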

		\begin{proof}We first consider the function $q(x,\xi)=|\xi|^{1-m}p(x,\xi),$ which is smooth on $\Omega\times(\mathbb{R}^d\setminus\{0\})$ and homogeneous of degree $1$ in the variable $\xi.$
			
			We have already noticed that the null-bicharacteristics of $q$ are reparametrizations of the null-bicharacteristics of $p.$ Hence, it is enough to prove that $\supp(\mu)$ is a union of curves like $s\in I \mapsto \left(x(s), \frac{\xi(s)}{|\xi(s)|}\right)$, where $s\in I \mapsto (x(s),\xi(s))$ is a null-bicharacteristic of $q.$
			
			Notice that $\Omega\times S^{d-1}$ is covered by the bicharacteristics of $q$ (that is, the integral curves of the Hamiltonian $H_q$). Since $H^{1-m}_{loc}(\Omega)$ is continuously included in $H^{-m}_{loc}(\Omega),$ a previous result implies that $\supp(\mu)$ is a subset of $\{(x,\xi)\in\Omega\times S^{d-1}:p(x,\xi)=0\}=\{(x,\xi)\in\Omega\times S^{d-1}:q(x,\xi)=0\}.$
			
			If $s\in I \mapsto (x(s),\xi(s))$ is a bicharacteristic in $\Omega\times(\mathbb{R}^d\setminus\{0\}),$ in which $q$ never vanishes, then the homogeneity of $q$ in the second variable implies that $q(x(s),\xi(s)/|\xi(s)|)\neq0$ for all $s.$ Hence, the curve $s\in I \mapsto (x(s),\xi(s)/|\xi(s)|)$ never touches $\supp(\mu).$ It follows that $\supp(\mu)$ is a subset of the union of curves like $s\in I \mapsto \left(x(s), \frac{\xi(s)}{|\xi(s)|}\right)$, where $s\in I \mapsto (x(s),\xi(s))$ is a null-bicharacteristic of $q.$
			
			To complete the proof, we must show that, for a null-bicharacteristic of $q,$ $(x(s),\xi(s)),$ defined on an interval $I,$ such that $\left(x(s), \frac{\xi(s)}{|\xi(s)|}\right)\in\supp(\mu)$ for some $s_0\in I,$ we have $\left(x(s), \frac{\xi(s)}{|\xi(s)|}\right)\in\supp(\mu)$ for all $s\in I.$
			
			We first notice that it is enough to consider a local version of the above assertion. Indeed, the set $A=\left\{t\in I: \left(x(s), \frac{\xi(s)}{|\xi(s)|}\right)\in\supp(\mu)\right\}$ is closed in $I.$ Moreover, $A$ is open in $I$ if for each $s_0\in A,$ there exists $\varepsilon>0$ such that $\left(x(s), \frac{\xi(s)}{|\xi(s)|}\right)\in\supp(\mu),$ for all $s\in(s_0-\varepsilon,s_0+\varepsilon)\cap I.$ In this case, we have $A=I$ whenever $A\neq\emptyset.$
			
			By the remarks in the above paragraphs, the proof reduces to show that for each $s_0\in I$ such that $\left(x(s_0), \frac{\xi(s_0)}{|\xi(s_0)|}\right)\in\supp(\mu),$ there exists $\varepsilon>0$ such that $\left(x(s), \frac{\xi(s)}{|\xi(s)|}\right)\in\supp(\mu)$ for all $s\in(s_0-\varepsilon,s_0+\varepsilon)\cap I.$
			
			We will prove this by contradiction.
			
			Assume that $s_0\in I$ is such that $\left(x(s_0), \frac{\xi(s_0)}{|\xi(s_0)|}\right)\in\supp(\mu)$ and for all $\varepsilon>0$ we find  $s^{\ast}\in(s_0-\varepsilon,s_0+\varepsilon)\cap I$ such that
			$\left(x(s^{\ast}), \frac{\xi(s^{\ast})}{|\xi(s^{\ast})|}\right)\not\in\supp(\mu).$ Without loss of generality, we may assume $s^\ast\in(s_0,s_0+\varepsilon).$
			
			Let $(x_0,\xi_0)=(x(s_0),\xi(s_0))$ and $(x_0^{\ast},\xi_0^\ast)=(x(s^\ast),\xi(s^\ast)).$ By semigroup properties, we have $(x_0^{\ast},\xi_0^\ast)=(x(s^\ast-s_0,x_0,\xi_0),\xi(s^\ast-s_0,x_0,\xi_0)),$ where $(x(s,x,\xi),\xi(s,x,\xi))$ denote the bicharacteristics that satisfy $(x(0,x,\xi),\xi(0,x,\xi))=(x,\xi).$ Notice that we also have $(x_0,\xi_0)=(x(s_0-s^\ast,x_0^\ast,\xi_0^\ast),\xi(s_0-s^\ast,x_0^\ast,\xi_0^\ast)).$
			
			Since we are working only locally, we can assume that we are working on a connected component of $\Omega.$ Moreover, by choosing $\varepsilon>0$ sufficiently small, we may assume that $x_0^\ast\in B(x_0,r)\subset B[x_0,4r]\subset\Omega$ and that the bicharacteristics through the points on a compact set $B[x_0,4r]\times S^{d-1}\subset \Omega\times (\mathbb{R}^d\setminus\{0\})$ are all defined on a same interval $(-\varepsilon,\varepsilon).$ Such interval contains $s^\ast-s_0.$

			We claim that, by the continuous dependence on initial data, we may choose $\varepsilon>0$ so that, for all $s\in(-\varepsilon,\varepsilon)$ with $r\leq1/6$ sufficiently small and for all $(x,\xi)\in [B[x_0,4r]\setminus B(x_0,3r)]\times S^{d-1}$ we have $(x(s,x,\xi),\xi(s,x,\xi))\in (\Omega\setminus B[x_0,r])\times (\mathbb{R}^d\setminus\{0\}).$	
			
			Indeed, for each $(\tilde{x},\tilde{\xi})\in [B[x_0,4r]\setminus B(x_0,3r)]\times S^{d-1},$ the solution $(x(s,\tilde{x},\tilde{\xi}),\xi(s,\tilde{x},\tilde{\xi}))$ is contained in $(\Omega\setminus B(x_0,11r/4))\times (\mathbb{R}^d\setminus B(0,3r))$ for $s$ belonging to a sufficiently small interval $I_{\tilde{x},\tilde{\xi}}=[-\delta_{\tilde{x},\tilde{\xi}},\delta_{\tilde{x},\tilde{\xi}}].$ Recall that the continuous dependence on initial data in the system $\dot{x}=\partial q/\partial\xi$ and $\dot{\xi}=-\partial q/\partial x$ implies that for each $\varepsilon'>0,$ there exists $\delta=\delta(\delta_{\tilde{x},\tilde{\xi}},\varepsilon')>0$ such that for all $(x,\xi)$ satisfying $\|(x,\xi)-(\tilde{x},\tilde{\xi})\|_{\max}<\delta,$ we have $(x(s,x,\xi),\xi(s,x,\xi))$ defined on $I_{\tilde{x},\tilde{\xi}}$ and \[\|\left(x(s,x,\xi),\xi(s,x,\xi)\right)-(x(s,\tilde{x},\tilde{\xi}),\xi(s,\tilde{x},\tilde{\xi}))\|_{\max}<\varepsilon',\] for all $s\in I_{\tilde{x},\tilde{\xi}}.$ Hereafter, we assume that $\varepsilon_1<r/2$.	For each $s \in I_{\tilde{x},\tilde{\xi}}$ we define
			$$A_{\tilde{x},\tilde{\xi}}^{s}=\{(x,\xi) \in \Omega \times \mathbb{R}^{d}\setminus\{0\}: \|(x,\xi)-(x(s,\tilde{x},\tilde{\xi}),\xi(s,\tilde{x},\tilde{\xi}))\|<\eta \},$$
			where $\eta<\min\{\delta,r/4\}$.
			We also set
			$$A_{\tilde{x},\tilde{\xi}}=\bigcup_{s \in I_{\tilde{x},\tilde{\xi}}} A_{\tilde{x},\tilde{\xi}}^{s}$$
			and
			$$A=\bigcup_{(\tilde{x},\tilde{\xi}) \in B[x_0,4r]\setminus B(x_0,3r) \times S^{d-1}}A_{\tilde{x},\tilde{\xi}}.$$
			
			Note that the union of the sets $A_{\tilde{x},\tilde{\xi}}$ with $(\tilde{x},\tilde{\xi}) \in B[x_0,4r]\setminus B(x_0,3r) \times S^{d-1}$ does not intersect $B(x_0,5r/2)\times B(0,11r/4)$. Hence by virtue of the compactness of  $B[x_0,4r]\setminus B(x_0,3r) \times S^{d-1}$ there exist $(\tilde{x}_1,\tilde{\xi}_1),\dots, (\tilde{x}_N,\tilde{\xi}_N) \in B[x_0,4r]\setminus B(x_0,3r) \times S^{d-1}$ such that
			$$B[x_0,3r]\setminus B(x_0,2r) \times S^{d-1} \subset \bigcup_{i=1}^{N} A_{\tilde{x}_i,\tilde{\xi}_i}.$$
			Therefore, given $(x_1,\xi_1) \in B[x_0,4r]\setminus B(x_0,3r) \times S^{d-1}$, there exists $i_0 \in \{1,\dots, N\}$ such that
			$$(x_1,\xi_1) \in A_{\tilde{x}_{i_0},\tilde{\xi}_{i_0}}=\bigcup_{s \in I_{\tilde{x}_{i_0},\tilde{\xi}_{i_0}  }}  A_{\tilde{x}_{i_0},\tilde{\xi}_{i_0}}^{s}. $$
			Thus, there exists $s_0 \in  I_{\tilde{x}_{i_0},\tilde{\xi}_{i_0}}$ such that
			$$\|(x_1,\xi_1) -(x(s_0,\tilde{x}_{i_0},\tilde{\xi}_{i_0} ),\xi(s_0,\tilde{x}_{i_0},\tilde{\xi}_{i_0}))\|<\eta \leq \delta.$$
			Using the continuous dependence on data we deduce that
			$$\|(x(s,x_1,\xi_1),\xi(s,x_1,\xi_1))-(x(s+s_0,\tilde{x}_{i_0},\tilde{\xi}_{i_0}),\xi(s+s_0,\tilde{x}_{i_0},\tilde{\xi}_{i_0}))\|<\varepsilon_1<\frac{r}{2}$$
			for all $s \in  I_{\tilde{x}_{i_0},\tilde{\xi}_{i_0}}$ such that $s+s_0 \in I_{\tilde{x}_{i_0},\tilde{\xi}_{i_0}}$.
			Thus, for all $(a,b)\in B[x_0,r] \times B[0,r]$, we obtain
			\begin{equation*}
			\begin{gathered}
			\|(a,b) - (x(s,x_1,\xi_1),\xi(s,x_1,\xi_1))\| = 	\|(a,b) - (x(s+s_0,\tilde{x}_{i_0},\tilde{\xi}_{i_0}),\xi(s+s_0,\tilde{x}_{i_0},\tilde{\xi}_{i_0}))\|\\
			- \|(x(s+s_0,\tilde{x}_{i_0},\tilde{\xi}_{i_0}),\xi(s+s_0,\tilde{x}_{i_0},\tilde{\xi}_{i_0})) -(x(s,x_1,\xi_1),\xi(s,x_1,\xi_1))\| \\
			\geq \frac{3r}{2}-\frac{r}{2}=r \hbox{ for all }	s \in  I  \hbox{ such that } s+s_0 \in I,
			\end{gathered}
			\end{equation*}
			where $I$ denotes the smallest of the intervals $I_{\tilde{x}_i, \tilde{\xi}_i}$, for $i \in \{1,\dots, N\}$.
			The situation described above is illustrated in Figure \ref{Figure}.
			\begin{figure}[H]
				\centering
				\includegraphics[scale=0.6]{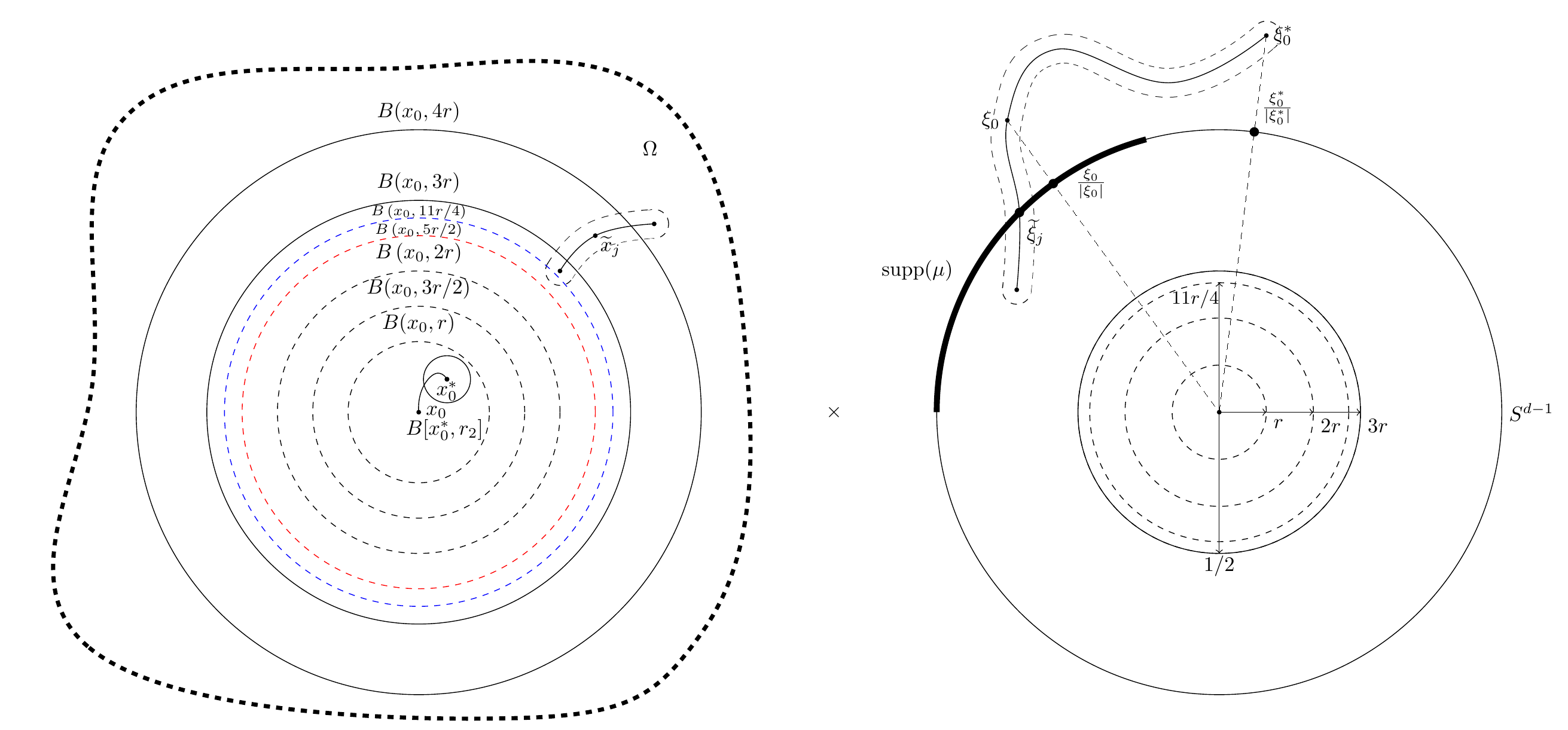}			
				\caption{\small Proof diagram}\label{Bicharacteristics}
				\label{Figure}
			\end{figure}

			Since $\left(x_{0}^{\ast}, \frac{\xi_{0}^{\ast}}{|\xi_0^{\ast}|}\right)\not\in\supp(\mu),$ it follows from Theorem \ref{invariante0} that there exists $g\in C^{\infty}_{0}(\Omega\times S^{d-1},[0,1])$ such that $g\left(x_0^{\ast}, \frac{\xi_0^{\ast}}{|\xi_0^{\ast}|}\right)>0$ and $\int_{\Omega\times S^{d-1}}gd\mu=0.$
			
			Let $\phi\in C^{\infty}_0(\Omega)$ be such that $\supp(\phi)\subset B[x_0^\ast,r_2]\subset B(x_0,r),$ $\phi(x_0^\ast)=1$ and $0\leq\phi\leq1.$
			
			We then define $\tilde{g}(x,\xi)=|\xi|^{1-m}\phi(x)g\left(x,\xi/|\xi|\right),$ $(x,\xi)\in\Omega\times (\mathbb{R}^{d}\setminus\{0\}).$ It follows that $\tilde{g}$ is smooth on $\Omega\times (\mathbb{R}^{d}\setminus\{0\}),$ homogeneous of degree $1-m$ in the variable $\xi,$ and $\tilde{g}$ has compact support in the variable $x.$ Notice that \[0\leq\int_{\Omega\times S^{d-1}}\tilde{g}d\mu=
			\int_{\Omega\times S^{d-1}}\phi(x)g(x,\xi)d\mu\leq
			\int_{\Omega\times S^{d-1}}gd\mu=0.\] Hence, $\int_{\Omega\times S^{d-1}}\tilde{g}d\mu=0.$
			
			Next, we use the bicharacteristics to bring the information given by $g$ in a neighborhood of $\left(x_{0}^{\ast}, \frac{\xi_{0}^{\ast}}{|\xi_0^{\ast}|}\right)$ to a neighborhood of $\left(x_0, \frac{\xi_0}{|\xi_0|}\right)$.
			
			
			
			

			

			For $(x,\xi)\in B(x_0,4r)\times S^{d-1},$ we define $f(x,\xi)=\tilde{g}(x(s^\ast-s_0,x,\xi),\xi(s^\ast-s_0,x,\xi)).$ It follows that $f$ is well-defined since we previously established that the bicharacteristics \linebreak$(x(s,x,\xi),\xi(s,x,\xi))$ are defined on a same interval which contains $s^\ast-s_0,$ for all $(x,\xi)\in B(x_0,4r)\times S^{d-1}.$ Notice that $f$ extends to a continuous function on $\Omega\times S^{d-1}$ since $f(x,\xi)=0$ for all $(x,\xi)\in(\Omega\setminus B[x_0,3r])\times S^{d-1}$ (recall that $(x(s^\ast-s_0,x,\xi),\xi(s^\ast-s_0,x,\xi))$ belongs to $(\Omega\setminus B[x_0,r])\times (\mathbb{R}^d\setminus\{0\}),$  for all $(x,\xi)\in(\Omega\setminus B[x_0,3r])\times S^{d-1}$). It follows that $f\in C_0(\Omega\times S^{d-1},[0,1]).$ In order to complete the proof, we will show that $f(x_0,\xi_0/|\xi_0|)>0$ and $\int_{\Omega\times S^{d-1}}fd\mu=0.$ Note that, by Theorem \ref{invariante0}, this implies a contradiction, since $(x_0,\xi_0/|\xi_0|)\in\supp(\mu).$
			
			Before we proceed, we note that for $\lambda>0$ we have \begin{equation}\label{1.34}(x(s,x,\lambda\xi),\xi(s,x,\lambda\xi))=(x(s,x,\xi),\lambda\xi(s,x,\xi))\end{equation} for all $(x,\xi)\in\Omega\times(\mathbb{R}^{d}\setminus\{0\}).$ Indeed, since $q$ is homogeneous of degree one in the variable $\xi,$ it follows that $\partial q/\partial\xi$ is homogeneous of degree zero and $\partial q/\partial x$ is homogeneous of degree one. Hence, \[\frac{d}{ds}x(s,x,\xi)=\frac{\partial q}{\partial\xi}(x(s,x,\xi),\xi(s,x,\xi))=\frac{\partial q}{\partial\xi}(x(s,x,\xi),\lambda\xi(s,x,\xi))\] and \[\lambda\frac{d}{ds}\xi(s,x,\xi)=-\lambda\frac{\partial q}{\partial x}(x(s,x,\xi),\xi(s,x,\xi))=-\frac{\partial q}{\partial x}(x(s,x,\xi),\lambda\xi(s,x,\xi)).\] It follows that both $(x(s,x,\xi),\lambda\xi(s,x,\xi))$ and $(x(s,x,\lambda\xi),\xi(s,x,\lambda\xi))$ are solutions through the point $(x,\lambda\xi).$ By uniqueness we conclude that \eqref{1.34} holds.
			
			It follows that \begin{align*}f(x_0,\xi_0/|\xi_0|)=&\tilde{g}(x(s^{\ast}-s_0,x_0,\xi_0/|\xi_0|),\xi(s^{\ast}-s_0,x_0,\xi_0/|\xi_0|))\\
			=&|\xi_0|^{m-1}\tilde{g}(x(s^{\ast}-s_0,x_0,\xi_0),\xi(s^{\ast}-s_0,x_0,\xi_0))\\
			=&|\xi_0|^{m-1}|\xi_0^{\ast}|^{1-m}\phi(x_0^\ast)g(x_0^{\ast},\xi_0^{\ast}/|\xi_0^\ast|)>0.\end{align*}
			
			In order to compute $\int_{\Omega\times S^{d-1}}fd\mu,$ we use the notation \[\phi_{s}(x,\xi)=(x(s,x,\xi),\xi(s,x,\xi))\] for $s\in(-\varepsilon,\varepsilon)$ and $(x,\xi)\in B(x_0,4r)\times S^{d-1}$. In particular, \[\phi_{s^\ast-s_0}(x,\xi)=(x(s^\ast-s_0,x,\xi),\xi(s^\ast-s_0,x,\xi))\] and \[0\leq \int_{\Omega\times S^{d-1}}fd\mu= \int_{\Omega\times S^{d-1}}\chi(\tilde{g}\circ\phi_{s^\ast-s_0})d\mu,\] where $\chi\in C^\infty_0(B(x_0,4r))$ is such that $0\leq\chi\leq1$ and $\chi\equiv1$ on a neighborhood of $B[x_0,3r].$
			
			We complete the proof by showing that \[\int_{\Omega\times S^{d-1}}\chi(\tilde{g}\circ\phi_{s^\ast-s_0})d\mu=\int_{\Omega\times S^{d-1}}\chi\tilde{g}d\mu,\] since \[\int_{\Omega\times S^{d-1}}\chi\tilde{g}d\mu\leq\int_{\Omega\times S^{d-1}}\tilde{g}d\mu=0.\]
			
			For $s\in(-\varepsilon,\varepsilon),$ we have
			
			\begin{eqnarray*}
				\frac{d}{ds}(\chi(\tilde{g}\circ\phi_{s}))(x,\xi)&=& \chi(x)\frac{d}{d\tau} \left(\tilde{g}\circ \phi_{s+\tau}\right)(x,\xi)|_{\tau=0}\\
				&=&   \chi(x)\frac{d}{d\tau}\left(\tilde{g}\circ \phi_s \circ\phi_\tau\right)(x,\xi)|_{\tau=0}\\
				&=& \chi(x)\{\tilde{g}\circ \phi_s,q\}(x,\xi).
			\end{eqnarray*}
			
			Observing that $\supp(H_q(\chi)(x,\xi))\subset B(x_0,4r)\setminus B[x_0,3r]$, it follows that the function $(\tilde{g}\circ \phi_s)(x,\xi)H_q(\chi)(x,\xi)$ vanishes identically on $\Omega\times S^{d-1}$. Thus, we also have \begin{align*}\{\chi(\tilde{g}\circ \phi_s),q\}(x,\xi)=&-H_q(\chi(\tilde{g}\circ \phi_s))(x,\xi)\\
			=&-\chi(x)H_q(\tilde{g}\circ \phi_s)-(\tilde{g}\circ \phi_s)(x,\xi)H_q(\chi)(x,\xi)\\
			=&-\chi(x)H_q(\tilde{g}\circ \phi_s)\\
			=&\chi(x)\{\tilde{g}\circ \phi_s,q\}(x,\xi).\end{align*}

			Hence,
			\begin{eqnarray}\label{1.35}
			\frac{d}{ds}\int_{\Omega \times S^{d-1}}\chi(\tilde{g}\circ \phi_s)\,d\mu&=& \int_{\Omega\times S^{d-1}}\{\chi(\tilde{g}\circ \phi_s),q\}\,d\mu\nonumber\\
			&=& \int_{\Omega \times S^{d-1}}\{\chi(\tilde{g}\circ \phi_s),p\}\,d\mu,
			\end{eqnarray} since $H_q=|\xi|^{1-m}H_p$ on the support of $\mu.$
			
			Applying Theorem \ref{Theorem 4.56}, it follows that \[\int_{\Omega \times S^{d-1}}\{\chi(\tilde{g}\circ \phi_s),p\}\,d\mu=0.\]
			
			Therefore, \[\int_{\Omega \times S^{d-1}}\chi(\tilde{g}\circ \phi_s)\,d\mu\] is constant on $(-\varepsilon,\varepsilon).$ Using $s=0$ and $s=s^{\ast}-s_0,$ we obtain
			\begin{equation*}\label{invariante}
			\int_{\Omega\times S^{d-1}} \chi(\tilde{g} \circ \phi_{s^\ast-s_0}) d \mu= \int_{\Omega \times S^{d-1}} \chi\tilde{g} d\mu,
			\end{equation*}
			which completes the proof.
		\end{proof}

		We finish this section by examining the case of the wave equation in an inhomogeneous medium:
		\begin{eqnarray*}
			P(x,D)u=-\rho(x) \partial_t^2u + \sum_{i,j=1}^d \partial_{x_i}( K(x) \partial_{x_j}u).
		\end{eqnarray*}
		whose principal symbol is given by
		\begin{eqnarray}\label{1.36}
		p(t,x,\tau,\xi)=-\rho(x)\tau^2 + \xi^{\top} \cdot K(x) \cdot \xi, \hbox{ where } \xi=(\xi_1,\cdots,\xi_d),
		\end{eqnarray}
		$t \in \mathbb{R}$, $x \in \Omega \subset \mathbb{R}^{d}$, $(\tau,\xi) \in \mathbb{R}^{d+1}$, $\rho \in C^\infty(\Omega)$, $0<\alpha\leq \rho(x)\leq \beta<\infty$, and $K(x)=(k_{ij}(x))_{1 \leq i,j \leq d}$ is a positive-definite matrix satisfying
		\begin{equation*}
		a|\xi|^2 \leq \xi^{\top} \cdot K(x) \xi \leq b |\xi|^2,
		\end{equation*}
		for $0<a<b<\infty$.
		
		Let's describe the bicharacteristics of $p$. From Lemma \ref{v12} the bicharacteristics do not change if we multiply $p$ by a non-zero function. So we can study the Hamiltonian curves of
		\begin{eqnarray}\label{1.37}
		\tilde{p}(t,x,\tau,\xi)=\frac{1}{2}\left(-\tau^2 + \xi^{\top} \cdot \frac{K(x)}{\rho(x)} \cdot \xi\right).
		\end{eqnarray}
		
		\begin{Proposition}\label{Prop 4.63}
			Up to a change of variables, the bicharacteristics of (\ref{1.37}) are curves of the form
			\begin{eqnarray*}
				s \mapsto \left(s, y(s), \tau, -\tau\left(\frac{K(y(s))}{\rho(y(s))}\right)^{-1}\dot{y}(s) \right),
			\end{eqnarray*}
			where $s \mapsto y(s)$ is a geodesic of the metric $G=\left( \frac{K}{\rho}\right)^{-1}$ on $\Omega$, parameterized by the arc length.
		\end{Proposition}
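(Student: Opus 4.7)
The plan is to write down the Hamilton--Jacobi system associated with $\tilde{p}$, use the trivial conservation laws to reduce the phase-space flow to a purely spatial one, and then invoke the classical correspondence between the Hamiltonian flow of a cometric and the geodesic flow of the dual metric.

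First I would set $A(x):=K(x)/\rho(x)=G(x)^{-1}$ and compute the Hamilton equations of $\tilde{p}$ with base coordinates $(t,x)\in\mathbb{R}\times\Omega$ and dual coordinates $(\tau,\xi)\in\mathbb{R}\times\mathbb{R}^d$:
\[
\dot{t}=\frac{\partial\tilde{p}}{\partial\tau}=-\tau,\quad \dot{x}=\frac{\partial\tilde{p}}{\partial\xi}=A(x)\xi,\quad \dot{\tau}=-\frac{\partial\tilde{p}}{\partial t}=0,\quad \dot{\xi}=-\tfrac{1}{2}\partial_x\bigl(\xi^{\top}A(x)\xi\bigr).
\]
The key observations are that $\tau$ is conserved and that $t$ depends affinely on the original parameter $s_{\text{old}}$, namely $t(s_{\text{old}})=t_0-\tau s_{\text{old}}$. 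This exhibits the natural affine change of variables.

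Next, assuming $\tau\neq 0$, I would reparametrize using $t$ itself as the new parameter by setting $s:=t$ and $y(s):=x(s_{\text{old}}(s))$. The chain rule gives $\dot{y}=\dot{x}\cdot(ds_{\text{old}}/ds)=-A(y)\xi/\tau$, which inverts to
\[
\xi=-\tau\,G(y)\,\dot{y},
\]
reproducing exactly the formula in the statement. Moreover, the null-bicharacteristic condition $\tilde{p}=0$ reads $\tau^2=\xi^{\top}A\xi=\tau^2\,\dot{y}^{\top}G\,\dot{y}$, so $|\dot{y}|_{G}=1$; thus $y$ is parametrized by arc length with respect to $G$.

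Finally, it remains to check that $y$ is a geodesic of $G$. Since $(x(s_{\text{old}}),\xi(s_{\text{old}}))$ is an integral curve of the cogeodesic Hamiltonian $H(x,\xi)=\tfrac{1}{2}\xi^{\top}A(x)\xi=\tfrac{1}{2}|\xi|_{G^{-1}}^{2}$, the classical correspondence between cometric Hamiltonian flow and metric geodesic flow yields that the $x$-projection is a geodesic of $G$ travelled at constant speed. To make the argument self-contained one substitutes $\xi=G(y)\dot{y}$ (up to the factor $-\tau$, irrelevant since we already performed an affine reparametrization) into $\dot{\xi}=-\tfrac{1}{2}\partial_x(\xi^{\top}A\xi)$ and verifies that the resulting second-order equation for $y$ coincides with $\ddot{y}^k+\Gamma^k_{ij}\dot{y}^i\dot{y}^j=0$, where $\Gamma^k_{ij}$ are the Christoffel symbols of $G$. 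The main technical obstacle here is strictly algebraic: translating derivatives of the inverse matrix $A=G^{-1}$ into derivatives of $G$ via the identity $\partial_k A^{ij}=-A^{im}A^{jn}\partial_k G_{mn}$ so as to recognize $\Gamma^k_{ij}$. Since arc-length reparametrizations of geodesics are geodesics and affine reparametrizations preserve Hamiltonian trajectories, combining these steps produces the claim.
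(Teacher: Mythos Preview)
Your proposal is correct and follows essentially the same route as the paper: write the Hamilton equations, use the conservation of $\tau$ and the affine dependence of $t$ to reparametrize by $t$, invert $\dot{x}=A(x)\xi$ to get $\xi=-\tau G(y)\dot{y}$, read off the unit-speed condition from $\tilde{p}=0$, and identify the remaining $\xi$-equation with the geodesic equation via the derivative-of-inverse identity $\partial_k A=-A(\partial_k G)A$. The only cosmetic differences are that the paper phrases the geodesic equation through the Euler--Lagrange equations of the arc-length functional rather than Christoffel symbols, and it also spells out the converse (starting from a geodesic and reconstructing the bicharacteristic), which you leave implicit.
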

		\begin{proof}
			Let's define a curve
			\begin{equation*}
			\begin{array}{cccl}
			\alpha:& I& \longrightarrow& \Omega \times \mathbb{R}^{d}\setminus\{0\}\\
			{} & s & \mapsto & (t(s), x(s), \tau(s), \xi(s))
			\end{array}
			\end{equation*}
			such that
			\begin{equation}\label{1.38}
			\left\{
			\begin{aligned}
			\dot{t}(s) &= \frac{\partial \tilde p}{\partial \tau}(\alpha(s)),\\
			\dot{x}_j(s) &= \frac{\partial \tilde p}{\partial \xi_j}(\alpha(s)), j = 1, \cdots,d,\\
			\dot{\tau}(s) &=-\frac{\partial \tilde p}{\partial t}(\alpha(s)),\\
			\dot{\xi}_j(s)&=-\frac{\partial \tilde p}{\partial x_j}(\alpha(s)),~j= 1, \cdots,d.
			\end{aligned}
			\right.
			\end{equation}
			
			From \eqref{1.37} and $\eqref{1.38}_1$, we have
			\begin{equation*}
			\dot{t}(s) = - \tau(s).
			\end{equation*}
			
			Also, from \eqref{1.37} and $\eqref{1.38}_2$ it follows that
			\begin{equation}\label{1.39}
			\dot{x}(s) = \frac{ K(x(s))}{\rho(x(s))} \cdot \xi(s).
			\end{equation}

			Equation \eqref{1.37} and $\eqref{1.38}_3$ ensure that
			\begin{equation*}
			\dot{\tau}(s) = - \frac{\partial \tilde{p}}{\partial t} (\alpha(s)) = 0.
			\end{equation*}
			
			Finally, from $\eqref{1.38}_3$ we obtain
			\begin{equation*}
			\dot{\xi}_j(s) =-\frac{\partial \tilde{p}}{\partial x_j}(\alpha(s)) =- \frac{1}{2} \sum_{ 1 \leq i, j \leq d} \frac{\partial}{\partial x_j} \left( \frac{k_{i j}(x(s))}{\rho(x(s))} \right) \xi_i(s) \cdot \xi_j(s).
			\end{equation*}
			
			Thus, the sought after curve must satisfy the equations
			\begin{equation}\label{1.40}
			\left\{
			\begin{aligned}
			&\dot{t}(s) = -\tau(s),\\
			&\dot{x}(s) = \frac{K(x(s))}{\rho(x(s))} \cdot \xi(s),\\
			&\dot{\tau}(s) = 0,\\
			& \dot{\xi}(s) = - \frac{1}{2} {(\xi(s))}^{\top} \cdot \nabla \left(\frac{K(x(s))}{\rho(x(s))}\right) \cdot \xi(s)
			\end{aligned}
			\right.
			\end{equation}
			
			Introducing the matrix $G(x(s)) = \left( \frac{K(x(s))}{\rho(x(s))}\right)^{-1},$ from the second equation of (\ref{1.40}) we obtain
			\begin{equation}\label{1.41}
			\xi(s) = G(x(s)) \dot{x}(s).
			\end{equation}
			
			Let $f:\operatorname{GL}(n,\mathbb{R}) \longrightarrow \operatorname{GL}(n,\mathbb{R})$ be defined by $f(X) = X^{-1}$ so that its derivative is given by $d (f)_{A} (V) = - A^{-1} \cdot V \cdot A^{-1}.$ So,
			\begin{equation*}
			\begin{aligned}
			\frac{\partial G}{\partial x_k} (x) &= d \left(f \circ \frac{K}{\rho}\right)_{x}(e_k)\\
			&= d(f)_{\left(\frac{K}{\rho}\right)(x)} \circ d \left( \frac{K}{\rho}\right)_{x} (e_k)\\
			&= d(f)_{\left(\frac{K}{\rho}\right)(x)} \left(\frac{\partial}{\partial x_k} \left( \frac{K(x)}{\rho(x)}\right) \right)\\
			&= - \left(\frac{K(x)}{\rho(x)}\right)^{-1} \cdot \frac{\partial}{\partial x_k} \left( \frac{K(x)}{\rho(x)}\right) \cdot \left(\frac{K(x)}{\rho(x)}\right)^{-1}.
			\end{aligned}
			\end{equation*}
			
			That is,
			\begin{equation}\label{1.42}
			\frac{\partial}{\partial x_k} \left(\frac{K(x)}{\rho(x)}\right) = - \frac{K(x)}{\rho(x)} \frac{\partial G}{\partial x_k}(x) \frac{K(x)}{\rho(x)}.
			\end{equation}
			
			Therefore, from $(\ref{1.41})$ and $(\ref{1.42})$, we obtain
			\begin{equation}\label{1.43}
			\begin{aligned}
			\dot{\xi}_k &= - \frac{1}{2} \sum_{1 \leq i,j \leq d} \frac{\partial}{\partial x_k} \left(\frac{k_{ij}(x(s)}{\rho(x)})\right) \xi_{i}(s) \cdot \xi_k(s)\\&= -\frac{1}{2} \xi^{\top} \cdot \frac{\partial}{\partial x_k} \left(\frac{K(x)}{\rho(x)}\right) \cdot \xi\\
			&= - \frac{1}{2} (G(x)\dot{x} )^{\top} \frac{\partial}{\partial x_k} \left(\frac{K(x)}{\rho(x)}\right) \cdot G(x)\dot{x} \\
			&= - \frac{1}{2} \dot{x}^{\top}  G(x)^{\top} \cdot \frac{\partial}{\partial x_k} \left(\frac{K(x)}{\rho(x)}\right) \cdot G(x) \dot{x}\\
			& = - \frac{1}{2} \dot{x}^{\top} G(x) \cdot \left( - \frac{K(x)}{\rho(x)}\cdot \frac{\partial G}{\partial x_k}(x) \cdot \frac{K(x)}{\rho(x)}\right) \cdot G(x) \dot{x}\\
			&= \frac{1}{2} \dot{x}^{\top}  G(x) \cdot \frac{K(x)}{\rho(x)}\cdot \frac{\partial G}{\partial x_k}(x) \cdot \frac{K(x)}{\rho(x)} \cdot G(x)  \dot{x}\\
			& = \frac{1}{2} \dot{x}^{\top} \cdot \frac{\partial G}{\partial x_k}(x) \cdot \dot{x}.
			\end{aligned}
			\end{equation}
			
			Thus, from (\ref{1.41}) and (\ref{1.43}), we have
			\begin{equation}\label{1.44}
			(G(x) \cdot \dot{x})^{\boldsymbol{\cdot}} = \dot{\xi} = \frac{1}{2} \dot{x}^{\top} \cdot \nabla G(x) \cdot \dot{x}.
			\end{equation}
			
			Since $\tilde{p}$ is 0 over each null bicharacteristic, it follows that
			\begin{equation*}
			\xi(s)^{\top} \cdot \frac{K(x(s))}{\rho(x(s))} \cdot \xi(s) = \tau(s)^{2}.
			\end{equation*}
			
			And since $\dot{\tau}(s) = 0,$ we have $\tau(s) = \tau,$ for some $\tau \in \mathbb{R}.$ Thus,
			\begin{equation*}
			\xi(s)^{\top} \cdot \frac{K(x(s))}{\rho(x(s))} \cdot \xi(s) = \tau^2.
			\end{equation*}
			
			On the other hand, from (\ref{1.39})
			\begin{equation}\label{1.45}
			\begin{aligned}
			\dot{x}(s)^{\top} \cdot G(x(s)) \cdot \dot{x}(s) &= \left( \frac{K(x(s))}{\rho(x(s))} \cdot \xi(s)\right)^{\top} \cdot G(x(s)) \cdot \left(\frac{K(x(s))}{\rho(x(s))} \cdot \xi(s) \right)\\
			&= \xi^{\top} \cdot \left(\frac{K(x(s))}{\rho(x(s))}\right)^{\top} \cdot G(x(s)) \cdot \frac{K(x(s))}{\rho(x(s))} \cdot \xi(s)\\
			&= \xi(s)^{\top}\cdot \frac{K(x(s))}{\rho(x(s))} \cdot \xi(s)\\
			&= \tau^2,
			\end{aligned}
			\end{equation}
			that is, $\dot{x}(s)^{\top} \cdot G(x(s)) \cdot x(s)$ over each null bicharacteristic of $\tilde{p}$. By (\ref{1.44}) and (\ref{1.45}) we can write
			\begin{equation}\label{1.46}
			\frac{d}{ds} \frac{G(x(s)) \cdot \dot{x}(s)}{\sqrt{\dot{x}(s)^{\top} \cdot G(x(s)) \cdot \dot{x}(s)}} = \frac{1}{2} \frac{ \dot{x}(s)^{\top} \cdot \nabla G(x(s)) \cdot \dot{x}(s)}{\sqrt{\dot{x}(s)^{\top} \cdot G(x(s)) \cdot \dot{x}(s)}}.
			\end{equation}
			
			Defining the arc length functional $L$ by
			\begin{equation*}
			L(x, \dot{x}) = \sqrt{\dot{x}^{\top} \cdot G(x) \cdot {x}},
			\end{equation*}
			from (\ref {1.46}) we obtain
			\begin{equation*}
			\frac{d}{ds} \frac{\partial}{\partial \dot{x}} L(x, \dot{x}) = \frac{ \partial L}{\partial x}(x, \dot{x}).
			\end{equation*}
			that is, $L$ satisfies the Euler Lagrange equations, so $x$ is a geodesic.
			
			Therefore, the bicharacteristics are curves of the form
			\begin{equation*}
			\gamma(s) = (- \tau s, x(s), \tau, G(x(s)) \dot{x}(s)).
			\end{equation*}
			
			Consider the function $\alpha(s) = - \frac{s}{\tau},$ thus $\alpha$ is the arc length parameter for $x.$ In fact, it suffices to show that for $y(s) = (x \circ \alpha)(s)$ we have
			\begin{equation*}
			\|\dot{y}(s)\|_{y(s) = 1.}
			\end{equation*}
			
			Indeed,
			\begin{equation*}
			\begin{aligned}
			\|\dot{y}(s)\|_{y(s)} &= \|(x \circ \alpha)^{\boldsymbol{\cdot}}(s)\|_{x(\alpha(s))}\\
			&= (x \circ \alpha)^{\boldsymbol{\cdot}}(s)^{\top} \cdot G(x(\alpha(s))) \cdot (x \circ \alpha)^{\boldsymbol{\cdot}}(s)\\
			&= (\dot{x}(\alpha(s))\dot{\alpha}(s))^{\top} \cdot G(x(\alpha(s))) \cdot \dot{x}(\alpha(s)) \dot{\alpha}(s)\\
			&= \left(\dot{x}(\alpha(s))\left(-\frac{1}{\tau}\right) \right)^{\top} \cdot G(x(\alpha(s))) \cdot \dot{x}(\alpha(s)) \left(- \frac{1}{\tau}\right)\\
			&= - \frac{1}{\tau} \dot{x}(\alpha(s)) \cdot G(x(\alpha(s))) \cdot \dot{x}(\alpha(s)) \left(- \frac{1}{\tau}\right)\\
			&= \frac{1}{\tau^{2}} \tau^{2}\\
			&=1.
			\end{aligned}
			\end{equation*}
			
			Note that, $(x \circ \alpha)^{\boldsymbol{\cdot}}(s) = \dot{x}(\alpha(s)) \dot{\alpha}(s) = - \frac{\dot{x}(\alpha(s))}{\tau}.$ So, $\dot{x}(\alpha(s)) = - \tau(x \circ \alpha)^{\boldsymbol{\cdot}}(s).$ Thus,
			\begin{equation*}
			\begin{aligned}
			(\gamma \circ \alpha)(s) &= \gamma(\alpha(s))\\
			&= (- \tau \alpha(s), x(\alpha(s)), \tau, G(x(\alpha(s)))\dot{x}(\alpha(s))\\
			&= (- \tau\left(- \frac{s}{\tau}\right), y(s), \tau, - \tau G(y(s)) \dot{y}(s))\\
			&= (s, y(s), \tau, - \tau G(y(s)) \dot{y}(s)).
			\end{aligned}
			\end{equation*}
			
			Therefore, less than one reparametrization, the bicharacteristics of $p$ are curves of the form
			\begin{equation*}
			s \mapsto (s, y(s), \tau, - \tau G(y(s)) \dot{y}(s))
			\end{equation*}
			where $y(s)$ is a geodesic of the $G$ metric parameterized by the arc length.
			
			Conversely, consider $s \mapsto y(s)$ a geodesic of the $G$ metric parameterized by the arc length. Let us show that, less than one reparameterization of the curve $\gamma$ given by
			\begin{equation*}
			s \mapsto (s, y(s), \tau, - \tau G(y(s)) \dot{y}(s))
			\end{equation*}
			is a bicharacteristics of $ p. $ For this, we must prove that the component functions satisfy the Hamilton-Jacobi equations.
			
			Let $ \beta(s) = - \tau s, $ then defining $x(s) = y(\beta(s)),$ we have
			\begin{equation*}
			\begin{aligned}
			\gamma(\beta(s)) &= (\beta(s), y(\beta(s)), \tau, - \tau G(y(\beta(s))) \dot{y}(\beta(s))\\
			&= (- \tau s, x(s), \tau, - \tau G(x(s)) (y \circ \beta)^{\boldsymbol{\cdot}}(s) \frac{1}{\dot{\beta}(s)} )\\
			&= (- \tau s, x(s), \tau, G(x(s)) \dot{x}(s)).
			\end{aligned}
			\end{equation*}
			
			Let $t(s) = - \tau s$ and $\xi(s) = G(x(s)) \dot{x}(s).$ Thus,
			\begin{equation}\label{1.47}
			\begin{aligned}
			\dot{t}(s) &= - \tau\\
			\dot{x}(s) &= \frac{K(x(s))}{\rho(x(s))} \cdot \xi(s)\\
			\dot{\tau}(s) &= 0.
			\end{aligned}
			\end{equation}
			
			It remains to show that
			\begin{equation*}
			\dot{\xi}(s) = - \frac{1}{2} \xi(s)^{\top} \cdot \nabla \left(\frac{K(x(s))}{\rho(x(s))} \right) \cdot \xi(s),
			\end{equation*}
			which is equivalent to
			\begin{equation*}
			(G(x(s) \dot{x}(s))^{\boldsymbol{\cdot}} = \frac{1}{2} \dot{x}(s) \cdot \nabla G(x(s)) \cdot \dot{x}(s).
			\end{equation*}
			
			Since $y(s)$ is parameterized by arc length, we have
			\begin{equation*}
			\| \dot{y}(s)\|_{y(s)} = 1,
			\end{equation*}
			that is,
			\begin{equation*}
			1 = \dot{y}(s)^{\top} \cdot G(y(s))\cdot \dot{y}(s).
			\end{equation*}
			
			From the Euler-Lagrange equations,
			\begin{equation*}
			(G(y(s)) \dot{y}(s))^{\boldsymbol{\cdot}} = \frac{1}{2} (\dot{y}(s)^{\top} \cdot \nabla G(y(s)) \cdot \dot{y}(s)).
			\end{equation*}
			
			Hence, taking $f(s) = G(y(s)) \dot{y}(s)$, we have
			\begin{equation}\label{1.48}
			(f \circ \beta)^{\boldsymbol{\cdot}}(s) = \dot{f}(\beta(s)) \dot{\beta}(s) = \frac{1}{2} \left(\dot{y} (\beta(s))^{\top} \cdot \nabla G(y(\beta(s))) \cdot \dot{y}(\beta(s))\right) \dot{\beta}(s).
			\end{equation}
			
			Thus, from \eqref{1.48} and recalling that $\dot{\beta}(s) = - \tau$, we obtain
			\begin{equation*}
			\begin{aligned}
			(G(x(s)) \dot{x}(s))^{\boldsymbol{\cdot}} &= (G(y(\beta(s))(y \circ \beta)^{\boldsymbol{\cdot}} (s))^{\boldsymbol{\cdot}}\\
			&= (G(y(\beta(s))\dot{y}(\beta(s))\dot{\beta}(s))^{\boldsymbol{\cdot}}\\
			&= - \tau (G(y (\beta(s)) \dot{y}( \beta(s)))^{\boldsymbol{\cdot}}\\
			&= - \tau \frac{1}{2} \left(\dot{y} (\beta(s))^{\top} \cdot \nabla G(y(\beta(s))) \cdot \dot{y}(\beta(s))\right) \beta^{\boldsymbol{\cdot}}(s)\\
			&= \tau^{2} \frac{1}{2} \left( \frac{\dot{x}(s)}{\tau}^{\top} \cdot \nabla G(x(s)) \cdot \frac{\dot{x}(s)}{\tau}\right)\\
			&= \frac{1}{2} (\dot{x}(s)^{\top} \cdot \nabla G(x(s)) \cdot \dot{x}(s)).
			\end{aligned}
			\end{equation*}
			
			Therefore,
			\begin{equation*}
			\dot{\xi}(s) = \frac{1}{2} (\dot{x}(s))^{\top} \cdot \nabla G(x(s)) \cdot \dot{x}(s)),
			\end{equation*}
			which ends the proof.
		\end{proof}

	\end{document}